\documentclass[11pt]{amsart}

\allowdisplaybreaks
\usepackage[normalem]{ulem}

\usepackage{parskip}
\setlength{\parindent}{0em} 

\usepackage{geometry} 
\geometry{left = 3cm, right = 3cm, top = 3cm, bottom = 3cm}
\usepackage{framed} 

\usepackage{tikz}
\tikzset{every picture/.style={line width=1pt}} 

\usepackage{setspace}
\setdisplayskipstretch{2.5}

\usepackage{amsthm} 
\usepackage{amsmath}
\usepackage{amssymb}

\usepackage{hyperref}

\usepackage{amsfonts} 
\newcommand\A{\mathcal{A}}
\newcommand\B{\mathcal{B}}
\newcommand\FB{\mathfrak{B}}

\newcommand\CC{\mathcal{C}}

\newcommand\BC{\mathbf{C}}
\newcommand\CF{\mathcal{F}}

\newcommand\G{\mathcal{G}}
\renewcommand\H{\mathcal{H}}

\newcommand\N{\mathbb{N}}

\newcommand\R{\mathbb{R}}
\renewcommand\S{\mathcal{S}}
\newcommand\Z{\mathbb{Z}}

\newcommand\one{\mathbf{1}}

\newcommand\w{\omega}
\newcommand\vphi{\varphi}
\renewcommand\phi{\vphi}

\usepackage{stmaryrd} 


\newcommand\sing{\textnormal{sing}}
\newcommand\reg{\textnormal{reg}}
\newcommand\spt{\textnormal{spt}}
\newcommand\Lip{\textnormal{Lip}}
\newcommand\dist{\textnormal{dist}}
\newcommand\graph{\textnormal{graph}}
\newcommand\ext{\mathrm{d}}
\newcommand\del{\partial}

\newcommand{\res}{\mathbin{\hspace{0.1em}\vrule height 1.3ex depth 0pt width 0.13ex\vrule height 0.13ex depth 0pt width 1.0ex}} 

\newcommand{\weakly}{\rightharpoonup}
\renewcommand{\div}{\textnormal{div}}

\makeatletter
\def\@tocline#1#2#3#4#5#6#7{\relax
  \ifnum #1>\c@tocdepth 
  \else
    \par \addpenalty\@secpenalty\addvspace{#2}%
    \begingroup \hyphenpenalty\@M
    \@ifempty{#4}{%
      \@tempdima\csname r@tocindent\number#1\endcsname\relax
    }{%
      \@tempdima#4\relax
    }%
    \parindent\z@ \leftskip#3\relax \advance\leftskip\@tempdima\relax
    \rightskip\@pnumwidth plus4em \parfillskip-\@pnumwidth
    #5\leavevmode\hskip-\@tempdima
      \ifcase #1
       \or\or \hskip 1em \or \hskip 2em \else \hskip 3em \fi%
      #6\nobreak\relax
    \dotfill\hbox to\@pnumwidth{\@tocpagenum{#7}}\par
    \nobreak
    \endgroup
  \fi}
\makeatother

\hypersetup{
           breaklinks=true,   
           pdfusetitle=true,  
        }

\newtheoremstyle{newtheoremstyle}
{3pt}
{3pt}
{\itshape}
{\parindent}
{\bfseries}
{.}
{0.5em}
{} 
\newtheoremstyle{newtheoremstyledefn}
{3pt}
{3pt}
{}
{\parindent}
{\bfseries}
{.}
{0.5em}
{} 

\theoremstyle{newtheoremstyle}
\newtheorem{theorem}{Theorem}
\newtheorem*{theorem*}{Theorem}
\newtheorem{lemma}[theorem]{Lemma}
\newtheorem{prop}[theorem]{Proposition}

\newtheorem{thmx}{Theorem}

\theoremstyle{newtheoremstyledefn}
\newtheorem{defn}[theorem]{Definition}

\numberwithin{equation}{part} 
\numberwithin{theorem}{part}

\newtheorem{theoremA}{Theorem}

\newtheorem{lemmaA}[theoremA]{Lemma}


\usepackage{fancyhdr}
\pagestyle{fancy}
\fancyhead[RO,LE]{\small\thepage}
\fancyhead[CO]{\small PAUL MINTER AND NESHAN WICKRAMASEKERA}
\fancyhead[CE]{\small STRUCTURE THEORY FOR STABLE CODIMENSION 1 VARIFOLDS}
\fancyhead[RE]{}
\fancyhead[LO]{}
\fancyfoot[L,R,C]{}

\setcounter{tocdepth}{2} 
\begin{document}

\title{A structure theory for stable codimension 1 integral varifolds with applications to area minimising hypersurfaces mod $p$}

\author{
	Paul Minter
	\and
	Neshan Wickramasekera
}
\thanks{\SMALL The first author was supported by the UK Engineering and Physical Sciences Research Council (EPSRC) grant EP/L016516/1 for the University of Cambridge Centre for Doctoral Training, the Cambridge Centre for Analysis.}

\address{\textnormal{Department of Pure Mathematics and Mathematical Statistics, University of Cambridge}}
\email{pdtwm2@cam.ac.uk\\
\and
N.Wickramasekera@dpmms.cam.ac.uk}

\begin{abstract}
For any $Q\in\{\frac{3}{2},2,\frac{5}{2},3,\dotsc\}$, we establish a structure theory for the class $\S_Q$ of stable codimension 1 stationary integral varifolds admitting no classical singularities of density $<Q$. This theory comprises three main theorems which describe the nature of a varifold $V\in \S_Q$ when: (i) $V$ is close to a flat disk of multiplicity $Q$ (for integer $Q$); (ii) $V$ is close to a flat disk of integer multiplicity $<Q$; and (iii) $V$ is close to a stationary cone with vertex density $Q$ and support the union of 3 or more half-hyperplanes meeting along a common axis. The main new result concerns (i) and gives in particular a description of $V\in \S_Q$ near branch points of density $Q$. Results concerning (ii) and (iii) directly follow from parts of the work \cite{wickramasekera2014general} (and are reproduced in Part~\ref{other-results}).

These three theorems, taken with $Q=p/2$, are readily applicable to codimension 1 rectifiable area minimising currents mod $p$ for any integer $p\geq 2$, establishing local structure properties of such a current $T$ as consequences of little, readily checked, information. Specifically, applying case (i) it follows that, for even $p$, if $T$ has one tangent cone at an interior point $y$ equal to an (oriented) hyperplane $P$ of multiplicity $p/2$, then $P$ is the unique tangent cone at $y$, and $T$ near $y$ is given by the graph of a $\frac{p}{2}$-valued function with $C^{1,\alpha}$ regularity in a certain generalised sense. This settles a basic remaining open question in the study of the local structure of such currents near points with planar tangent cones, extending the cases $p=2$ and $p=4$ of the result  which have been known since the 1970's from the De~Giorgi--Allard regularity theory  (\cite{allard1972first}) and the structure theory of White (\cite{white1979structure}) respectively. If $P$ has multiplicity $< p/2$ (for $p$ even or odd), it follows from case (ii) that $T$ is smoothly embedded near $y$, recovering a second well-known theorem of White (\cite{white1984regularity}). Finally, the main structure results obtained recently by De~Lellis--Hirsch--Marchese--Spolaor--Stuvard (\cite{de2021area}) for such currents $T$ all follow from case (iii).
\end{abstract} 

\maketitle

\tableofcontents

\part{Introduction, the main theorem, and notation} 
\setcounter{section}{1}
From the work of B.~White in the late 1970's and the early 1980's, it is known that an area minimising hypersurface (i.e.\ a codimension 1 rectifiable (representative) current) mod $p$ in $\R^{n+1}$ (or more generally, in an $(n+1)$-dimensional Riemannian manifold) near an interior point $y$ where one tangent cone is a multiplicity $q$ $(> 0)$ hyperplane is smoothly embedded if $q < p/2$ (\cite{white1984regularity}), and  smoothly immersed if $p=4$ and 
$q = p/2 = 2$ (\cite{white1979structure}). If $p=2$, then a planar tangent cone has multiplicity $q= 1$, and so in this case embeddedness holds near $y$ as a consequence of the De~Giorgi--Allard regularity theory (\cite{allard1972first}, \cite{simon1983lectures}). For even $p >4$, it has remained an open question as to what can be said about the structure of the hypersurface near such a point $y$ when $q = p/2$, including whether the tangent cone at $y$ must be unique. In this direction, a recent result of De~Lellis--Hirsch--Marchese--Spolaor--Stuvard (\cite{de2021area}) gives that all tangent cones at $y$ must be supported on hyperplanes. (In fact, this result does not require the minimising mod $p$ hypothesis and follows from simple connectivity properties of the set of tangent cones at a point combined with regularity theorems for stable hypersurfaces, see \cite[Chapter 7, Corollary 7]{minterthesis}).

Here we settle this uniqueness question affirmatively, and show in fact that when $q=p/2$ the current near $y$ is given by a multi-valued (in fact $\frac{p}{2}$-valued) Lipschitz graph over the tangent hyperplane, with $C^{1, \alpha}$ regularity in a certain generalised sense (Theorem~\ref{thm:D}(i) in Section~\ref{application} below).  Since the multiplicity of a (representative) mod $p$ minimiser is $\leq p/2$ a.e., understanding the case $q=p/2$ has been arguably the most basic remaining question concerning regularity of area minimising hypersurfaces mod $p$. 

As it turns out, this and a number of other results for mod $p$ minimising hypersurfaces are in fact very direct consequences of a much more general structure theory that is applicable to stable codimension 1 integral varifolds. Indeed,  for any given $Q \in \{\frac{3}{2}, 2, \frac{5}{2}, 3, \ldots\}$, we here consider the class $\S_{Q}$ of stable codimension 1 integral varifolds admitting no \textit{classical singularities} of density $< Q$ (i.e.\  no singularities near which the varifold is expressible as a sum of $< 2Q$ embedded $C^{1,\alpha}$ hypersurfaces-with-boundary, counted with multiplicity, meeting only along a common $(n-1)$-dimensional $C^{1,\alpha}$ boundary; see the definition in Section~\ref{notation}).  Our main result, Theorem~\ref{thm:A} (in Section~\ref{mainresult} below), shows that for $Q$ a positive integer, if a varifold $V \in \S_{Q}$ lies close to a multiplicity $Q$ hyperplane in an open cylinder, then in a smaller cylinder, it is given by a possibly branched $Q$-valued graph with generalised-$C^{1, \alpha}$ regularity (in the sense of Definition~\ref{genC1alpha}). We note that unless $Q=2$, $C^{1, \alpha}$ regularity in the usual sense can fail since for $Q \geq 3$, as a density $Q$ classical singularity need not be an immersed point (see Figure \ref{fig:0}). 

Further information on $\S_{Q}$ is provided by two additional results, both of which follow very directly from the work \cite{wickramasekera2014general}: Theorem~\ref{sheeting} (in Section~\ref{wic-sheeting} below), which says that a varifold $V \in \S_{Q}$ is embedded in the interior if $V$ is close to a flat disk of multiplicity $< Q$; and Theorem~\ref{thm:B} (in Section~\ref{other} below), which says that $V \in \S_{Q}$ has the structure of a density $Q$ classical singularity  in the interior whenever $V$ is, in a ball, sufficiently close to a stationary cone having vertex density $Q$ and support the union of 3 or more half-hyperplanes meeting along a common axis.  

Turning again to area minimising hypersurfaces mod $p$, it is easily seen, by a standard ``wedge-replacement'' comparison, that a 1-dimensional singular cone which is a locally length minimising rectifiable current mod $p$ with zero mod $p$ boundary must have density at the origin $\geq p/2$; consequently, any classical singularity of an $n$-dimensional area minimising hypersurface mod $p$ must have density $\geq p/2$. Thus for area minimising hypersurfaces mod $p$, Theorem~\ref{thm:A} (taken with $Q = p/2$) readily implies the aforementioned uniqueness-of-tangent-cone and structure results for even $p$, and Theorem~\ref{sheeting} (also taken with $Q = p/2$) readily implies the aforementioned embeddedness result of White for arbitrary $p$ (giving a new proof of that result). Likewise, Theorem~\ref{thm:B} (taken with $Q = p/2$) readily implies the structure results obtained recently in \cite{de2021area}\footnote{The proof given in \cite{de2021area} for the main structure results therein is similar to the proof (found in \cite[Section~16]{wickramasekera2014general}) of Theorem~\ref{thm:B}, although the former makes explicit use of the minimising hypothesis in places of the argument where the latter uses stability; Theorem~\ref{thm:B} is not stated in \cite{wickramasekera2014general} in its explicit form given in (Section~\ref{other} of) the present work, but the main decay result from which it follows (\cite[Lemma 16.9]{wickramasekera2014general}) is (see the proof of Theorem \ref{thm:B}); the authors of  \cite{de2021area} have informed us that their work had been carried out unaware of this.}; this is so in light of the additional fact that the density at a classical singularity of a mod $p$ area minimising hypersurface must in fact be equal to $p/2$---which, as observed in \cite[Proposition~3.2]{de2021area}, is seen by a further simple 1-dimensional comparison argument. These consequences to mod $p$ area minimising hypersurfaces are the content of Theorem~\ref{thm:D} in Section~\ref{application} below. 

Theorem~\ref{thm:A}, Theorem~\ref{sheeting}, and Theorem~\ref{thm:B} thus provide a unified regularity theory, which in particular shows, when taken with $Q = p/2$, that the regularity properties of codimension 1 locally area minimising rectifiable currents mod $p$ are in fact consequences of a small amount of readily extracted information: the stability of the regular part of the current and the absence of certain classical singularities. Though perhaps surprising at a first glance, this is entirely analogous to the fact that the regularity theory for codimension 1 area minimising \emph{integral} currents is an immediate consequence of the regularity theory for $\S_{\infty} \equiv \cap_{Q} \S_{Q}$, i.e.\ for stable codimension 1 integral varifolds with \emph{no} classical singularities (\cite{wickramasekera2014general}).

Our proof of Theorem~\ref{thm:A} relies in part on certain results and techniques developed in  \cite{wickramasekera2014general} (including Theorem~\ref{sheeting}) which in turn are based on the foundational work  of L.~Simon (\cite{simon1993cylindrical}), R.~Schoen \& L.~Simon (\cite{schoen1981regularity}), and of F.~J.~Almgren~Jr (\cite{almgrenalmgren}). Additionally,  we prove and use monotonicity of the frequency function associated with certain $Q$-valued functions arising from varifolds in $\S_{Q}.$ Almgren originally introduced the frequency function 
in \cite{almgrenalmgren} to study the branching behaviour of area minimising integral currents (of codimension $\geq 2$) and of the closely related $Q$-valued Dirichlet energy minimising functions. In the present context, there is no minimising hypothesis on the varifolds nor is 
there an a priori readily verifiable variational principle satisfied by the associated $Q$-valued functions; moreover, our use of the frequency function is for establishing uniform regularity estimates at branch points, a purpose different from its main use in Almgren's work, which was to bound the Hausdorff dimension of the branch set.

{\bf Guide to the paper:} The statement of the main new structure result (Theorem~\ref{thm:A}) is contained in Section~\ref{mainresult}; its corollary, with justification, to area minimising hypersurfaces mod $p$ (Theorem~\ref{thm:D}(i)) is contained in Section~\ref{application}. The readers primarily interested in these results, who are also familiar with the work \cite{wickramasekera2014general}, may wish to read the statements of Theorem~\ref{thm:A} and Theorem~\ref{thm:D}(i) and proceed directly to the proof of Theorem~\ref{thm:A} in Part~\ref{main-thm-proof} (page \pageref{main-thm-proof}). For the benefit of the general reader and the reader who wishes to gain a more comprehensive understanding, we have attempted to motivate this work  and to minimise the need to refer to \cite{wickramasekera2014general} by including a significant amount of introductory and preliminary material, all of which is contained in the rest of this introduction and in Part~\ref{other-results}. 

\subsection{Stable varifolds, branch points, and classical singularities} \label{branch-classical} Consider a stationary integral $n$-varifold $V$ on an open ball $B$ in ${\mathbb R}^{n+1}$ or, more generally,  on an $(n+1)$-dimensional Riemannian manifold. 
It is a well known consequence of the Allard regularity theory (\cite{allard1972first}, \cite{simon1983lectures}) that the regular part 
${\rm reg} \, V$ of the varifold (i.e.\ the smoothly embedded part of ${\rm spt} \, \|V\|$, where $\|V\|$ is the weight measure associated with $V$) is a dense open subset of ${\rm spt} \, \|V\|.$ It is not known whether the (interior) singular set 
${\rm sing} \, V =( {\rm spt} \, \|V\| \setminus {\rm reg} \, V) \cap B$ must have zero $n$-dimensional Hausdorff measure; nor is there much understanding about the behaviour of $V$ on approach to the singular set beyond the fact that non-trivial tangent cones can be produced at every singular point as limits of sequences of scalings of $V$ about that point, with different sequences of scalings conceivably producing different limits. The difficulty of these questions  lies primarily in issues arising from the occurrence of higher (i.e.\ $\geq 2$) multiplicity on sets of positive $n$-dimensional Hausdorff measure in weak limits of such varifolds (including in the given varifold $V$ itself or in its tangent cones); in particular, it remains a basic open question to understand the nature of $V$ in the vicinity of a \emph{branch point singularity}, i.e.\ a singular point where one tangent cone is a hyperplane with (constant) integer multiplicity $\geq 2$ and yet in no neighbourhood about that point is ${\rm spt} \, \|V\|$ the union of smoothly embedded minimal hypersurfaces.  

If however $V$ is \emph{stable}, i.e.\ if every two-sided portion of 
${\rm reg} \, V$ has non-negative second variation with respect to the mass functional for compactly supported normal deformations, then it is known (see \cite{wickramasekera2014general})  
that either there is a \emph{classical singularity} $y$ in $V$---that is to say, there is a point $y \in {\rm spt} \, \|V\| \cap B$, an ambient open ball $B(y)  \subset B$ centred at $y,$ and a number $\alpha \in (0, 1)$ such that 
${\rm spt} \, \|V\| \cap B(y)$ is made up of a finite number of at least 3 embedded $C^{1, \alpha}$ hypersurfaces-with-boundary coming together smoothly and transversely only along a common $C^{1, \alpha}$ embedded $(n-1)$-dimensional submanifold containing $y$---or else the entire singular set ${\rm sing} \, V$ has Hausdorff dimension at most $n-7$. (By \cite{krummel2014regularity}, the ``$C^{1, \alpha}$'' in the definition of classical singularity can be replaced by ``smooth'' if the ambient metric is smooth, or by ``real analytic'' if the ambient metric is so.) A central part of the proof of this result involves ruling out branch points altogether in the absence of classical singularities. This is achieved via a certain ``sheeting theorem'', \cite[Theorem~3.3]{wickramasekera2014general} (see Theorem~\ref{sheeting} below). 

In the presence of classical singularities, stable codimension 1 varifolds can develop branch points (\cite{simon2007stable}, \cite{krummel2019existence}). Since the absence of 
classical singularities in a punctured ball $B^{n+1}_{\rho}(y) \setminus \{y\}$ implies (by the definition of classical singularity) the absence of classical singularities in $B^{n+1}_{\rho}(y),$ 
the sheeting theorem \cite[Theorem~3.3]{wickramasekera2014general} implies that every branch point of a stable codimension 1 varifold must be a limit point of classical singularities. 

En route to this sheeting theorem, there are several intermediate results that have been established in \cite{wickramasekera2014general} that readily provide structural information, including uniqueness of certain non-planar tangent cones, for stable codimension 1 varifolds without the need to rule out \emph{all} classical singularities. For instance, the following two results (a) and (b) can be extracted from Section~16 and Section~14 of \cite{wickramasekera2014general} respectively: let $Q \in \{\frac{3}{2}, 2, \frac{5}{2}, 3, \ldots\}$ and let $V$ be a stable codimension 1 integral varifold with no classical singularities of density $< Q$.

\begin{itemize}
\item[(a)] If one tangent cone to $V$ at a point $Z$ is a classical cone of density (at the origin) equal to $Q,$ then it is the unique tangent cone at $Z$ and in fact $Z$ is a classical singularity of $V$ (\cite[Lemma~16.9 and the proof of Theorem~16.1]{wickramasekera2014general}; see Theorem~\ref{thm:B} in Section~\ref{other} below); 

\item[(b)] If $Z$ is a point of $V$ with density equal to $Q$ such that, at some small scale about $Z$, $V$ is close to a multiplicity $Q$ hyperplane $\mathbf{L}$ and is significantly closer to a (not necessarily stationary) classical cone $\BC$,  then $V$ has a (unique) classical tangent cone at $Z$  and hence (by (a) above) $Z$ is a classical singularity of $V$; here the degree of closeness of $V$ to ${\mathbf L}$ and to $\BC$ are determined by fixed thresholds, depending only on $n$ and $Q$, for the $L^{2}$ distance (height excess) ${E}(V, {\mathbf L})$ between $V$ and ${\mathbf L}$ and for the 
ratio  $\frac{E(V, \BC)}{\inf_{{\mathbf L}'} \, {E}(V, {\mathbf L}')}$ respectively (\cite[proof of Lemma~14.1]{wickramasekera2014general}; see Theorem~\ref{thm:fine_reg} below, which is a strengthening of this assertion giving uniform estimates where the constants are independent of $\BC$).
\end{itemize}
Here and subsequently, a \emph{classical cone} means a cone supported on at least 3 distinct half-hyperplanes meeting along a common $(n-1)$-dimensional axis, and a \textit{classical tangent cone} is a tangent cone which is a classical cone.
Both these results concern singularities with classical tangent cones (a priori or a posteriori, respectively), and they naturally raise the following question: what must a stable codimension 1 integral varifold $V$ with no classical singularities of density $< Q$ look like near a branch point of density $Q$?

\subsection{Varifold class $\S_{Q}$ and the main result: Theorem~A} \label{mainresult} 
We give an answer to the above question in Theorem~\ref{thm:A} below, which implies that, near a branch point, a varifold $V$ as above has the structure of a $Q$-valued graph with ``generalised-$C^{1, \alpha}$'' regularity, and in particular has unique tangent cones at every nearby 
point. 

First we define the class of varifolds we shall be concerned with in Theorem~\ref{thm:A} and subsequently. 

{\bf Definition:} Let $Q \in \{\frac{3}{2}, 2, \frac{5}{2}, 3, \ldots \}$. Let $\S_{Q}$ denote that class of integral $n$-varifolds $V$ on the open ball $B^{n+1}_2(0) \subset \R^{n+1}$ with $0 \in {\rm spt} \, \|V\|$, 
$\|V\|(B_{2}^{n+1}(0)) < \infty$ and which satisfy the following conditions: 

\begin{enumerate}
	\item [$(\S1)$\;\;\,] $V$ is stationary in $B^{n+1}_2(0)$ with respect to the area functional, in the following (usual) sense: for any given vector field $\psi\in C^1_c(B^{n+1}_2(0);\R^{n+1})$, $\epsilon>0$, and $C^2$ map $\phi:(-\epsilon,\epsilon)\times B^{n+1}_2(0)\to B^{n+1}_2(0)$ such that:
	\begin{enumerate}
		\item [(i)] $\phi(t,\cdot):B^{n+1}_2(0)\to B^{n+1}_2(0)$ is a $C^2$ diffeomorphism for each $t\in (-\epsilon,\epsilon)$ with $\phi(0,\cdot)$ equal to the identity map on $B^{n+1}_2(0)$,
		\item [(ii)] $\phi(t,x) = x$ for each $(t,x)\in (-\epsilon,\epsilon)\times\left(B^{n+1}_2(0)\setminus\spt(\psi)\right)$, and
		\item [(iii)] $\left.\del\phi(t,\cdot)/\del t\right|_{t=0} = \psi$,
	\end{enumerate}
	we have that
	$$\left.\frac{\ext }{\ext t}\right|_{t=0} \|\phi(t,\cdot)_\# V\|(B^{n+1}_2(0)) = 0;$$
	equivalently (see \cite[Section 39]{simon1983lectures}),
	$$\int_{B^{n+1}_2(0)\times G_n}\div_S\psi(X)\ \ext V(X,S) = 0,$$
	for every vector field $\psi\in C^1_c(B^{n+1}_2(0);\R^{n+1})$.
	\item [$(\S2)$\;\;\,] $\reg\,V$ is stable in $B^{n+1}_2(0)$ in the following (usual) sense: for each open ball $\Omega\subset B^{n+1}_2(0)$ with $\sing\,V\cap \Omega = \emptyset$ in the case $2\leq n\leq 6$ or $\H^{n-7+\gamma}(\sing\,V\cap \Omega) = 0$ for every $\gamma>0$ in the case $n\geq 7$, given any vector field $\psi\in C^1_c(\Omega\setminus \sing\,V;\R^{n+1})$ with $\psi(X)\perp T_X\reg\,V$ for each $X\in \reg\,V\cap \Omega$, we have
	$$\left.\frac{\ext^2}{\ext t^2}\right|_{t=0}\|\phi(t,\cdot)_\#V\|(B^{n+1}_2(0))\geq 0,$$
	where $\phi(t,\cdot)$, $t\in (-\epsilon,\epsilon)$, are the $C^2$ diffeomorphisms of $B^{n+1}_2(0)$ associated with $\psi$, described in $(\S1)$ above; equivalently (see \cite[Section 9]{simon1983lectures})\footnote{This equivalence requires two-sidedness of ${\rm reg} \, V$, which holds in a ball $\Omega$ as above in view of the smallness assumption on the singular set in $\Omega$ (see e.g.\ \cite{samelson}, where the proof assumes absence of singularities but carries over to the case of a small singular set).} for every such $\Omega$ we have
	$$\int_{\reg\,V\cap \Omega}|A|^2\zeta^2\ \ext\H^n \leq \int_{\reg\,V\cap \Omega}|\nabla\zeta|^2\ \ext\H^n\ \ \ \ \text{for all }\zeta\in C^1_c(\reg\,V\cap\Omega),$$
	where $A$ denotes the second fundamental form of $\reg\,V$, $|A|$ the length of $A$, and $\nabla$ the gradient operator on $\reg\,V$.
	\item[$(\S3)_Q$] No singular point of $V$ with density $< Q$ is a classical singularity of $V$ (see Definition~\ref{classical-sing}). 
\end{enumerate}

Note that $(\S3)_Q$ replaces \cite[Section~3, Condition $(\S3)$]{wickramasekera2014general}  (which rules out \emph{all} classical singularities), and it only rules out classical singularities with density $\in \{\frac{3}{2},2,\dotsc,Q-\frac{1}{2}\}$.

\begin{thmx}\label{thm:A}
	Let $Q\in \Z_{\geq 2}$. There is a number $\epsilon = \epsilon (n, Q) \in (0, 1)$ such that if $V \in \S_{Q}$, 
	$(\w_n 2^n)^{-1}\|V\|(B^{n+1}_2(0))<Q+1/2$, $Q-1/2 \leq \w_n^{-1}\|V\|(\R\times B^n_1(0))<Q+1/2$ and $\int_{\R \times B_{1}^{n}(0)} |x^{1}|^{2} \, \ext\|V\| < \epsilon$, then we have the following: 
there is a generalised-$C^{1,\alpha}$ ($Q$-valued) function $u:B^n_{1/2}(0)\to \A_Q(\R)$ such that: 
\begin{itemize}
\item[(i)] ${\mathcal R}_{u} = \pi \, (\{X \in \R \times B_{1/2}^{n}(0) \, : \, \Theta_{V}(X) < Q\});$ $\B_{u} \, \cup \, 
\CC_{u} = \pi \, (\{X \in \R \times B_{1/2}^{n}(0) \, : \, \Theta_{V}(X) \geq Q\}) = \pi \, (\{X \in \R \times B_{1/2}^{n}(0) \, : \, \Theta_{V}(X) = Q\})$ 
(where the notation is as in 
Definitions~\ref{genC1}, ~\ref{regular-branch-sets}, and ~\ref{genC1alpha}, and $\pi \, : \, \R^{n+1} \to \{0\} \times \R^{n}$ is the orthogonal projection); 
\item[(ii)] $$\sup_{B_{1/2}^n(0)} \, |u|+ \sup_{B_{1/2}^{n}(0) \setminus \CC_{u}} \, |Du| \leq C\left(\int_{\R\times B_1^n(0)}|x^1|^2\ \ext\|V\|(X)\right)^{1/2},$$
where $C = C(n, Q)$, and; 
\item[(iii)] $$V \res (\R \times B_{1/2}^{n}(0))  = ({\rm graph} \, u, \theta).$$
Here, writing $u(X) = \sum_{j=1}^{Q} \llbracket u_{j}(X) \rrbracket$ for $X \in B_{1/2}^{n}(0)$, we use the notation $\graph\,u = \{(u_{j}(X),X) \, : \, X \in B_{1/2}^{n}(0), \; j \in \{1, \ldots, Q\}\},$ and $({\rm graph} \, u, \theta)$ is the $n$-varifold on $\R \times B_{1/2}^{n}(0)$ induced by ${\rm graph} \, u,$ whose multiplicity function $\theta$ at each point $(u_{j}(X),X) \in {\rm graph} \, u$ is given by 
$\theta(u_{j}(X),X) = \# \{k \, : \, u_{k}(X) = u_{j}(X)\}$ for each $j=1, 2, \ldots, Q.$
\end{itemize}	
In particular, every singular point $Y$ of $V \res ({\mathbb R} \times B_{1/2}^{n}(0))$ is either a density $Q$ classical singularity or a density $Q$ branch point, with a unique tangent cone 
$\BC_{Y}$ at $Y$ in either case and with $\pi^{-1}(\pi(Y)) = \{Y\}$.

Moreover, we have that
$$\rho^{-n-2}\int_{{\mathbb R} \times B^{n}_{\rho}(\pi(Y))} {\rm dist}^{2} \, (X, {\rm spt} \, \|\BC_{Y}\|) \, \ext\|V\|(X) \leq C\rho^{2\alpha} \int_{{\mathbb R} \times B_{1}^{n}(0)} |x^{1}|^{2} \ext\|V\|(X) \;\;\;\; \forall \rho \in (0, 1/4]$$
and that
$${\rm dist}_{\mathcal H} \, ({\rm spt} \, \|\BC_{Y_{1}}\| \cap B_{1}^{n+1}(0),  {\rm spt} \, \|\BC_{Y_{2}}\| \cap B_{1}^{n+1}(0)) \leq C|Y_{1} - Y_{2}|^{\alpha} \left(\int_{\R\times B_1^n(0)}|x^1|^2\ \ext\|V\|(X)\right)^{1/2}$$
for any singular points $Y, Y_{1}, Y_{2}$ of $V \res ({\mathbb R} \times B_{1/2}^{n}(0)),$ where $C = C(n, Q) \in (0, \infty)$ and $\alpha = \alpha(n, Q) \in (0, 1)$.  
\end{thmx}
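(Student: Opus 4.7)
The plan is to construct the $Q$-valued graph $u$ by stratifying $\spt\|V\| \cap (\R \times B_{1/2}^n(0))$ according to density and treating each stratum separately. By upper semicontinuity of $\Theta_V$ and the mass bound $\w_n^{-1}\|V\|(\R \times B_1^n(0)) < Q + 1/2$, the monotonicity formula forces $\Theta_V \leq Q$ throughout the cylinder, so $S := \{X : \Theta_V(X) = Q\}$ is closed and its complement in $\spt\|V\|$ consists of points of density at most $Q - \tfrac{1}{2}$. At any $Y \in \spt\|V\|\setminus S$, the smallness of $\int |x^1|^2 \,\ext\|V\|$ combined with the density bound at $Y$ forces $V$ to lie close to a multiplicity $<Q$ flat disk in a small cylinder about $Y$, so Theorem~\ref{sheeting} gives smooth graphical structure nearby. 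A standard covering/patching argument assembles these local graphs into $Q$ smooth minimal sheets over $B_{1/2}^n(0) \setminus \pi(S)$, which I take as the regular part $\mathcal{R}_u$ of $u$, satisfying the required $C^0$ and $C^1$ bounds via $L^2$--$L^\infty$ estimates for minimal graphs.

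On $S$ I split into classical-singular and branch-point cases. At any $Z \in S$, every tangent cone has density $Q$ at the origin; by the structure theory of density-$Q$ stationary cones under $(\S3)_Q$, such a cone must be either a multiplicity-$Q$ hyperplane or a classical cone of density $Q$. In the classical-cone case, Theorem~\ref{thm:B} gives uniqueness of the tangent cone and shows that $Z$ is a classical singularity, contributing to $\CC_u$; an application of Theorem~\ref{thm:fine_reg} additionally furnishes the required Campanato decay around $Z$. The remaining density-$Q$ points are those whose tangent cones are all multiplicity-$Q$ hyperplanes, and these correspond to $\B_u$ once the analysis below is carried out.

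The core of the proof is the analysis at such branch points $Z$. I would perform a coarse blow-up of $V$ about $Z$, following \cite{wickramasekera2014general}, to produce a limit $v : B_1^n(0) \to \A_Q(\R)$ whose average $\bar v$ is harmonic. The new ingredient I would then introduce is the monotonicity (or at worst an almost-monotonicity with a controllable error) of an Almgren-type frequency function $N_Z(\rho)$ associated with $V$ about $Z$, directly at the varifold level. \textbf{This is the step I expect to be the principal obstacle}: in the absence of a minimising hypothesis there is no a priori variational principle for $v$, so the frequency identity must be derived by carefully combining stationarity of $V$, the stability inequality applied to suitable normal vector fields on $\reg V$, the sheeting structure from Step~1 on $\mathcal{R}_u$, and cutoffs around the unknown singular set $S$ whose error terms can be absorbed using the absence of classical singularities of density $<Q$.

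Once frequency monotonicity is in hand, uniqueness of the tangent cone $\BC_Z$ at every $Z \in S$ follows by the standard dichotomy-with-lower-bound argument, and a Campanato-type iteration gives
\[
\rho^{-n-2}\int_{\R \times B_\rho^n(\pi(Z))} \dist^2(X, \spt\|\BC_Z\|)\,\ext\|V\|(X) \leq C \rho^{2\alpha} \int_{\R\times B_1^n(0)} |x^1|^2 \,\ext\|V\|
\]
uniformly over $Z \in S$. The Hölder estimate on $\BC_{Y_1}, \BC_{Y_2}$ then follows by applying this decay at each of $Y_1,Y_2$ at the scale $\rho \sim |Y_1-Y_2|$, combined with the triangle inequality between cones close to $\R \times \{0\}$ at that scale. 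Finally, the graphical identity $V \res (\R \times B_{1/2}^n(0)) = (\graph u, \theta)$, the extension of the sup-norm and gradient bounds across $S$, and the fibre bound $\pi^{-1}(\pi(Y)) = \{Y\}$ at singular $Y$ follow from standard graphical-covering arguments, using the local uniqueness of the tangent cone $\BC_Y$ to glue the smooth sheets on $\mathcal{R}_u$ consistently across each singular point.
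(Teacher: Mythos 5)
Your outline gets the coarse architecture right (sheeting away from the density-$Q$ set, Theorems~\ref{thm:B}/\ref{thm:fine_reg} near classical singularities, a blow-up plus frequency argument at branch points), but the step you yourself flag as the principal obstacle is where the proposal breaks down, and not only because it is hard. You propose to prove monotonicity of an Almgren frequency ``directly at the varifold level.'' For that one must measure the height of $V$ relative to some reference — a plane, the average of the sheets, or a centre manifold — and for general $Q$ no such reference is available before the graph structure is established (this is exactly why the paper's remark on the branch set defers a varifold-level frequency to a future centre-manifold construction). The proof instead works with the frequency of the \emph{coarse blow-up} $v\in\FB_Q$, a $Q$-valued function on the base, and even there the monotonicity is conditional: the squash identity degrades to an inequality (Lemma~\ref{squash}) because energy need not converge in the blow-up, and the squeeze identity (Lemma~\ref{squeeze}) can only be verified for blow-ups already known to be of class $GC^1$, by combining an energy non-concentration estimate near the branch set (Lemma~\ref{noncon2}), Theorem~\ref{thm:fine_reg} near classical singularities of $v$, and harmonicity elsewhere, glued by a partition of unity. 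Since a priori the branch set of $v$ could have positive measure, your plan of ``cutoffs around the unknown singular set whose error terms can be absorbed'' cannot close without the non-concentration estimate. This conditionality forces a bootstrap you do not account for: first establish $GC^1$ regularity away from the spine for homogeneous degree 1 blow-ups, use the resulting frequency monotonicity and a capacity argument to classify them (Theorem~\ref{classification}), and only then obtain the uniform $GC^{1,\alpha}$ decay for general blow-ups (Theorem~\ref{coarse_reg}).

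Two further steps are asserted rather than proved. First, you classify tangent cones at density-$Q$ points as multiplicity-$Q$ planes or classical cones by appeal to ``the structure theory of density-$Q$ stationary cones under $(\S3)_Q$''; no such off-the-shelf classification exists. Under rescaling about a density-$Q$ point the coarse excess relative to every plane can grow, so a tangent cone there need not be close to any plane, and ruling out other stationary cones is an output of the excess-decay analysis, not an input to it. Second, frequency monotonicity by itself does not yield uniqueness of tangent cones or the Campanato decay; the paper derives these from an excess-decay dichotomy at the varifold level (either improve the approximating plane at a fixed smaller scale, or detect a scale at which $V$ is much closer to a cone in $\CC_Q$ than to every plane and invoke Theorems~\ref{thm:B7} and~\ref{thm:fine_reg}), and the proof of that dichotomy is by contradiction against the full decay estimate of Theorem~\ref{coarse_reg}. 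Using Theorem~\ref{thm:fine_reg} only at points already known to carry classical tangent cones, as you do, misses its essential role in disentangling branch points from classical singularities scale by scale.
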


See Definitions~\ref{genC1} and \ref{genC1alpha} below for the definition of what we mean by a $Q$-valued \textit{generalised}-$C^{1,\alpha}$ function, which is a slightly weaker notion than that of a 
$Q$-valued $C^{1, \alpha}$ function. The difference stems from the fact that (for $Q \geq 3$), a density $Q$ classical stationary cone (which consists of half-hyperplanes) need not be the sum of \emph{full} hyperplanes, even when it lies close to a hyperplane (see Figure \ref{fig:0}).

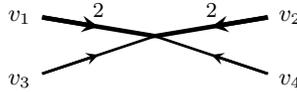
\begin{figure}[h]
\centering
\begin{tikzpicture}
    \draw [line width = 0.6mm] (-1.5,0.25) -- (0,0);
    \draw [-stealth, line width = 0.6mm] (-1.5,0.25) -- (-0.75,0.125);
    \draw [line width = 0.6mm] (1.5,0.25) -- (0,0);
    \draw [-stealth, line width = 0.6mm] (1.5,0.25) -- (0.75,0.125);
    \draw (-1.5,-0.5) -- (0,0);
    \draw [-stealth] (-1.5,-0.5) -- (-0.75,-0.25);
    \draw (1.5,-0.5) -- (0,0);
    \draw [-stealth] (1.5,-0.5) -- (0.75,-0.25);
    \node at (-0.75,0.35) {\SMALL $2$};
    \node at (0.75,0.35) {\SMALL $2$};
    \node at (-1.8,0.25) {\footnotesize $v_1$};
    \node at (1.8,0.25) {\footnotesize $v_2$};
    \node at (-1.8,-0.6) {\footnotesize $v_3$};
    \node at (1.8,-0.6) {\footnotesize $v_4$};
\end{tikzpicture}
\caption{\footnotesize Cross-section of a 1-parameter family of stationary $n$-dimensional classical cones in ${\mathbb R}^{n+1}$ consisting of 6 half-hyperplanes, none of which are the sum of three hyperplanes; the half-hyperplanes with multiplicity 2 are indicated. The unit vectors in the inward pointing directions are given by $v_1 = (\sqrt{1-\epsilon^2},-\epsilon)$, $v_2 = (-\sqrt{1-\epsilon^2},-\epsilon)$, $v_3 = (\sqrt{1-4\epsilon^2},2\epsilon)$, and $v_4 = (-\sqrt{1-4\epsilon^2},2\epsilon)$. In particular, we have as $\epsilon\downarrow 0$ that these cones limit onto a multiplicity 3 hyperplane. The fact that each example does not contain any hyperplane follows readily from the expressions of the position vectors.}
\label{fig:0}
\end{figure}
 
Note that  the assumption requiring absence of classical singularities of density $<Q$ is necessary for the conclusions of Theorem \ref{thm:A}: indeed, in the $Q=3$ case a union of a catenoid and a plane provides a simple example, and in the $Q=2$ case, a truncated catenoid with a disk provides an example; each of these can be made arbitrarily close to a plane by scaling it down. We stress that these examples are stable on their regular parts as defined in $(\mathcal{S}2)$. Figures \ref{fig:1} and \ref{fig:2} below provide illustrations of these.

\begin{figure}[h]
		\centering
		\begin{minipage}{0.45\textwidth}
		\centering
			\begin{tikzpicture}
				\draw[rotate = -90] (-0.3,3) parabola bend (0,0.5) (0.3,3);
				\draw[rotate = 90] (-0.3,3) parabola bend (0,0.5) (0.3,3);
				\draw (-3,0) -- (3,0);
			\end{tikzpicture}
			\caption{\footnotesize Cross-section of a catenoid with a plane which is close to a (multiplicity 3) plane, but not a $3$-valued graph over it.}
			\label{fig:1}
		\end{minipage}\hfill
		\begin{minipage}{0.55\textwidth}
		\centering
			\begin{tikzpicture}
		\draw (0,0) .. controls (-0.25,0.45) and (-1,0.4) .. (-2,0.45);
				\draw (0,0) .. controls (-0.25,-0.45) and (-1,-0.4) .. (-2,-0.45);
				\draw (1,0) .. controls (1.25,0.45) and (2,0.4) .. (3,0.45);
				\draw (1,0) .. controls (1.25,-0.45) and (2,-0.4) .. (3,-0.45);
				\draw (0,0) -- (1,0);
				\draw[dotted] (0,0) -- (-0.5,0.9);
				\draw[dotted] (-2,0) -- (3,0);
				\draw[dotted] (-0.15,0.5) to[bend left = 75] (0.7,0);
				\node at (0.25,0.25) {\SMALL 120$^\circ$};
	\end{tikzpicture}
	\caption{\footnotesize Cross-section of a modified catenoid which has stable regular part, yet is not expressible as a two-valued graph over the horizontal plane.}
	\label{fig:2}
		\end{minipage}
\end{figure}

{\bf Remark (Structure of the branch set):} Let $V \in \S_{Q}$. If $Q=2$, the countable $(n-2)$-rectifiability of the set $\B_{V}$ of density $Q$ branch points of $V$ follows from Theorem~\ref{thm:A} and the work  \cite{krummel2021fine}; see Theorem~\ref{thm:C} in Section~\ref{other} below. For general $Q$, 
Theorem~\ref{thm:A} does not immediately tell us anything about the size or the nature of $\B_{V}$, but it reduces the study of $\B_{V}$ to a PDE question.  
In view of the local graph description and the estimates provided by Theorem~\ref{thm:A}, including decay towards a unique tangent plane at branch points, perseverance with an approach similar to that seen in \cite{krummel2021fine} and \cite{krummel2017fine} is now likely to prove fruitful for the analysis of $\B_{V}$ for general $Q$. Indeed, the analysis in \cite{krummel2021fine} uses monotonicity of a frequency function (established in \cite{simon2016frequency}) for the height of the varifold (given by a two-valued $C^{1, \alpha}$ graph) relative to the average of the (two) sheets; similarly, the rectifiability and uniqueness of  blow-up results proved in \cite{krummel2017fine} for the branch set of Dirichlet energy minimising $Q$-valued functions, for arbitrary $Q,$ uses the frequency function for the height of the graph of the function relative to the average of the sheets. For the analysis of branch points of varifolds in $\S_{Q}$ for general $Q$, a natural way forward is to use (as a replacement for literally taking the average of the sheets) a centre manifold, as first introduced in Almgren's work \cite{almgrenalmgren} and for which a streamlined construction is given in \cite{DelSpa}, and then follow Almgren's argument to establish frequency monotonicity for the height relative to the centre manifold, and from there on proceed as in \cite{krummel2021fine}, \cite{krummel2017fine}. In the absence of an area minimising hypothesis as in Almgren's work, the key to such a centre manifold construction and a frequency monotonicity argument is provided by Theorem~\ref{thm:A} which gives a graph structure with uniform decay estimates. We also note that a key step in Almgren's proof of the dimension bound for the branch set of area minimising integral currents is establishing the corresponding dimension bound for the branch set of Dirichlet energy minimising multi-valued functions. Such functions provide the appropriate linear theory when one performs a blow-up in the area minimising setting. In the present setting, the blow-up class (discussed in Section \ref{initial-cb}) does not, of course, satisfy an energy minimising property. In Appendix \ref{app:A} we will show that the branch set for functions in our blow-up class has codimension at least 2, providing a first step in the 
program outlined above to further analyse ${\mathcal B}_{V}$ for general $Q$.

\subsection{A brief discussion of the proof of Theorem~A} Our proof of Theorem~\ref{thm:A} is based on a blow-up (or linearisation) argument, the idea of which in its most basic (namely, multiplicity 1) setting is contained in the works of De~Giorgi (\cite{DG}) and Allard (\cite{allard1972first}). The essence of the method in that setting is to approximate the varifold, when it is close to a multiplicity 1 plane, by the graph of a (single-valued) harmonic function, and use standard decay estimates for harmonic functions to obtain a decay estimate for the varifold. Iteration of this estimate ultimately leads to the conclusion that the varifold is an embedded graph in the interior. 

In \cite{wickramasekera2014general} a higher multiplicity version of this method is carried out, where it is shown that if the (codimension 1) varifold is stable and has no classical singularities, and is close to a multiplicity $Q$ hyperplane, then either the singular set must be lower dimensional (and hence by \cite{schoen1981regularity} the varifold must already be embedded as $Q$ ordered graphs), or the support of the varifold is well approximated by the graph of a single-valued harmonic function. Iterating this information leads to the embeddedness conclusion again.  

In the present setting, the embeddedness conclusion is false. A pair of intersecting hyperplanes illustrates this, and in fact there are more elaborate examples (\cite{simon2007stable}, \cite{krummel2019existence}) with both branch points and classical singularities that can be made (by scaling around a branch point) arbitrarily close to a hyperplane. 
The present setting requires approximation of the varifold by the graph of a function that belongs to an appropriate class of $Q$-valued functions. These $Q$-valued functions are obtained as ``vertical'' scaling limits (\emph{coarse blow-ups}) of sequences of varifolds in $\S_{Q}$ converging to a multiplicity $Q$ flat disk. Since the varifolds are now allowed to have classical singularities (of density $Q$),  unlike in \cite{wickramasekera2014general} the coarse blow-ups will not in general be separate single-valued harmonic functions. Nonetheless, they are, ultimately, shown to satisfy a certain uniform generalised-$C^{1, \alpha}$ decay estimate (Theorem~\ref{coarse_reg}). 

A key step in our proof of this estimate is establishing the monotonicity of the frequency function associated with coarse blow-ups.
Almgren introduced the frequency function in his monumental work (\cite{almgrenalmgren}) to study, among other things,  coarse blow-ups of area minimising currents (of codimension $>1$) converging to a plane. In that setting the convergence of energy in the blow-up process is shown to hold, with coarse blow-ups shown to be Dirichlet energy minimising and hence stationary for Dirichlet energy; consequently 
they satisfy two variational identities, termed in \cite{almgrenalmgren} as ``squash'' and ``squeeze'' identities, which directly lead to frequency monotonicity.  
In a different setting, namely in \cite{simon2016frequency}, it is shown that $C^{1, \alpha}$ two-valued harmonic functions, which need not be energy minimising, similarly satisfy frequency monotonicity, by an argument that depends on their $W^{2, 2}$ regularity. 

In contrast to either of these settings, 
in the present context the monotonicity of frequency has to be established having at our disposal neither the knowledge of a variational 
principle satisfied a priori by the coarse blow-ups, nor $W^{2, 2}$ regularity (unless $Q=2$), nor any control on the size of their branch set.
There does not seem to be any principle that would guarantee convergence of energy in the blow-up process; the standard first variation estimates only guarantee local weak $W^{1, 2}$ convergence, and the stability inequality does not provide, a priori, any further strengthening of this convergence, not least since stability is assumed only on the regular set of the varifolds which a priori could be very small in measure. This lack of convergence of energy means that in place of the squash identity, we get the weaker squash inequality (Lemma~\ref{squash}); this however, together with the squeeze identity (discussed next), suffices for the purposes of frequency monotonicity. The failure of $W^{2, 2}$ regularity of the coarse blow-ups is illustrated (in the case $Q=3$) by the possibility that (the graph of) a coarse blow-up may consist of six half-hyperplanes joining up along a common axis to form a picture where three of the half-hyperplanes lie on one half-space of $\R^{n+1}$ and three on the other, without them forming three full hyperplanes (such as in Figure \ref{fig:0}).  

As for the proof of the squeeze identity (Lemma~\ref{squeeze} below), our approach is to couple it with generalised-$C^{1}$ regularity of the coarse blow-ups, and establish both facts simultaneously. We first use a combination of variational and non-variational arguments to establish that generalised-$C^1$ regular coarse blow-ups $v$ satisfy the squeeze identity. 
To do this, we use: (i) (an elementary version of) a certain energy non-concentration estimate established in \cite{BKW2021} (Lemma~\ref{noncon}); this is used to show that, for the average-free part of $v$, the contribution to the integral on the left hand side of the squeeze identity  from a small neighbourhood of the (potentially large) ``branch set'' $\B_{v}$ of $v$ is negligible; (ii) a regularity result (Theorem~\ref{thm:fine_reg}) together with a first variational argument for the varifolds, to establish the squeeze identity locally near the set $\CC_{v}$ of classical singularities of $v$; and (iii) stability of the varifolds,  
which implies (via an application of another regularity result---Theorem~\ref{sheeting}) that the coarse blow-ups are classical harmonic functions locally away from $\B_{v} \cup \CC_{v}$. We  then combine these results with the help of a partition of unity to establish the squeeze identity everywhere (i.e.\ for arbitrary compactly supported test functions) for generalised-$C^{1}$ coarse blow-ups. This then implies frequency monotonicity for generalised-$C^{1}$ coarse blow-ups, which we ultimately use to prove that 
general (i.e.\ $W^{1, 2}$) coarse blow-ups are in fact of class generalised-$C^{1, \alpha}$ for some fixed $\alpha \in (0, 1)$, and satisfy a uniform decay estimate. 

The key purpose of the frequency function, as far as Theorem~\ref{thm:A} is concerned, is not to control the size of the singular set of the coarse blow-ups (which is its main purpose in Almgren's work \cite{almgrenalmgren}) but to classify ($Q$-valued, $W^{1, 2}_{\rm loc}$) homogeneous degree 1 coarse blow-ups as a step in our proof of their generalised-$C^{1, \alpha}$ regularity; this step can be thought of as establishing 
an integrability condition (see e.g.\ \cite{allard-almgren}, \cite{simon1993cylindrical} where this condition is used in a multiplicity 1 setting, 
and \cite{krummel2017fine} where it is shown to be satisfied at a.e.\ branch point for Almgren's multi-valued Dirichlet energy minimisers). Once the generalised-$C^{1, \alpha}$ regularity for the coarse blow-ups is established, it is possible to return to the frequency function to obtain further information about the branch set of the coarse blow-ups. For instance, by considering the average-free part and arguing in a way similar to \cite{almgrenalmgren}, we get that the Hausdorff dimension of the branch set of a blow-up is $\leq n-2$; the principal difference in our setting is in proving strong $W^{1,2}$ convergence for rescalings of the average-free part, which in \cite{almgrenalmgren} follows from the energy minimising property, but in our setting is achieved by means of continuity and energy non-concentration estimates that we establish for the average-free part (see Appendix \ref{app:A}). 
We note that this dimension bound however is not necessary for the proof of Theorem~\ref{thm:A}. 

\subsection{Notation and definitions}\label{notation}
The following notation will be used throughout the paper:
\begin{itemize}
\item $n$ is a fixed positive integer $\geq 2,$  ${\R}^{n+1}$ denotes the $(n+1)$-dimensional Euclidean space and 
$(x^{1}, x^{2}, y^{1}, y^{2}, \ldots, y^{n-1}),$ which we shall sometimes abbreviate  as 
$(x^{1}, x^{2}, y),$ denotes a general point in ${\R}^{n+1}$. We shall identify 
${\R}^{n}$ with the hyperplane $\{x^{1} = 0\}$ of ${\R}^{n+1}$ and ${\R}^{n-1}$ with the subspace 
$\{x^{1} = x^{2} = 0\}.$

\item For $Y \in {\R}^{n+1}$ and $\rho >0$, $B_{\rho}^{n+1}(Y) := \{X \in {\R}^{n+1} \, : \, |X - Y| < \rho\}.$

\item For $Y \in {\R}^{n}$ and $\rho >0$, $B_{\rho}(Y) := \{X \in {\R}^{n} \, : \, |X -Y| < \rho\}.$ We shall often abbreviate $B_{\rho}(0)$ as $B_{\rho}.$

\item For $Y \in {\R}^{n+1}$ and $\rho>0$, $\eta_{Y, \rho} \, : \, {\R}^{n+1} \to {\R}^{n+1}$ is the map defined by 
$\eta_{Y, \rho}(X) := \rho^{-1}(X - Y)$ and $\eta_{\rho}$ abbreviates $\eta_{0, \rho}$.

\item ${\mathcal H}^{k}$ denotes the $k$-dimensional Hausdorff measure in ${\R}^{n+1}$, and 
$\omega_{n} = {\mathcal H}^{n} \, (B_{1}(0)).$

\item For $A, B \subset {\R}^{n+1}$, ${\rm dist}_{\mathcal H} \, (A, B)$ denotes the Hausdorff distance between $A$ and $B$.  

\item For $X \in {\R}^{n+1}$ and $A \subset {\R}^{n+1}$, ${\rm dist} \, (X, A) := \inf_{Y \in A} \, |X - Y|.$

\item For $A \subset {\R}^{n+1}$, $\overline{A}$ denotes the closure of $A$.

\item $G_{n}$ denotes the space of $n$-dimensional subspaces of ${\R}^{n+1}.$ 

\item ${\mathcal A}_{Q}(\R) := \left\{\sum_{j=1}^{Q} 
\llbracket a_{j} \rrbracket \, : \, a_{j} \in \R \text{ for each $j=1, 2,\ldots, Q$} \right\}$,  the set of  unordered $Q$-tuples of points 
$a_{1}, \ldots, a_{Q} \in \R$ identified with Dirac masses $\llbracket a_{j} \rrbracket$ at $a_{j} \in \R$.

\item ${\mathcal G}$ denotes the metric on ${\mathcal A}_{Q}(\R)$ defined, for $a= \sum_{j=1}^{Q} \llbracket a_{j}\rrbracket , b= \sum_{j=1}^{Q} \llbracket b_{j} \rrbracket \in {\mathcal A}_{Q}(\R)$, by  
$${\mathcal G}\left(a, b\right) := \inf_{\sigma} \sqrt{\sum_{j=1}^{Q} |a_{j} - b_{\sigma(j)}|^{2}}$$ 
where the $\inf$ is taken over all permutations $\sigma$ of $\{1, 2, \ldots, Q\}.$ When $b = Q\llbracket 0 \rrbracket$, we simply write 
$|a| := {\mathcal G}(a, Q\llbracket 0 \rrbracket).$ 

\item For $A \subset \R^{n}$ and a function $f  \, : \, A \to  \A_{Q}(\R)$, we write
$f(x) = \sum_{j = 1}^{Q} \llbracket f^{j}(x) \rrbracket$ where we shall always 
choose indices so that $f^{1}(x) \leq \cdots \leq f^{Q}(x)$ for each $x \in A$.

\item  For $A \subset \R^{n}$ and a function $f  \, : \, A \to  \A_{Q}(\R)$, we let 
$${\rm graph} \, f = \{(f^{j}(x), x) \, \; \, x \in A, \; j \in \{1, \ldots, Q\}\}$$ and note that   
$${\rm graph} \, f  = \bigcup_{j=1}^{Q} {\rm graph} \, f^{j}.$$ 
Thus in particular ${\rm graph} \, f \subset \R \times A$. 
\end{itemize}

\noindent
{\bf Remark:} Note that the distance with respect to the metric ${\mathcal G}$ is equal to the ``ordered distance'', that is to say,  
$${\mathcal G}\left(\sum_{j=1}^{Q} \llbracket a_{j}\rrbracket , \sum_{j=1}^{Q} \llbracket b_{j} \rrbracket\right) = 
\sqrt{\sum_{j=1}^{Q} |a_{j} - b_{j}|^{2}}$$ 
if the indices are chosen such that $a_{1} \leq a_{2} \leq \cdots \leq a_{Q}$ and $b_{1} \leq b_{2} \leq \cdots \leq b_{Q}.$ 
Thus we can isometrically embed ${\mathcal A}_{Q}(\R) \hookrightarrow \R^{Q}$ via the map 
$ \sum_{j=1}^{Q} \llbracket a_{j} \rrbracket \mapsto (a_{1}, a_{2}, \ldots, a_{Q}),$ where the indices for points $a = \sum_{j=1}^{Q} \llbracket a_{j} \rrbracket \in {\mathcal A}_{Q}(\R)$ are so that $a_{1} \leq a_{2} \leq \cdots \leq a_{Q}$. We shall often use this embedding in subsequent parts of the paper without further comment. 

For an $n$-varifold $V$ on an open subset $\Omega$ of ${\R}^{n+1}$ (\cite{allard1972first}; see also \cite[Chapter 8]{simon1983lectures}), an open subset $\widetilde{\Omega}$ of $\Omega$, a Lipschitz mapping $f \, : \, \Omega \to {\R}^{n+1},$ and a countably $n$-rectifiable subset $M$ of $\Omega$ with locally finite ${\mathcal H}^{n}$-measure, we use the following notation:

\begin{itemize}
\item $V \, \res \,\widetilde{\Omega}$ abbreviates the restriction $V \, \res \, (\widetilde{\Omega} \times G_{n})$ of $V$ to $\widetilde{\Omega} \times G_{n}.$

\item $\|V\|$ denotes the weight measure on $\Omega$ associated with $V$. 

\item ${\rm spt} \, \|V\|$ denotes the support of $\|V\|$.

\item $\Theta_V(X)$ denotes the density of $V$ at $X$.

\item $f_{\#} \, V$ denotes the image varifold under the mapping $f.$ 

\item For $Z \in {\rm spt} \, \|V\| \cap \Omega$, ${\rm VarTan} \, (V, Z)$ denotes the set of tangent cones to $V$ at $Z$.
 
\item ${\rm reg} \, V$ denotes the (interior) regular part of 
${\rm spt} \, \|V\|$. Thus, $X \in {\rm reg} \,V$ if and only if $X \in {\rm spt} \, \|V\| \cap \Omega$ and there exists $\rho >0$ such that $\overline{B^{n+1}_{\rho}}(X) \cap {\rm spt} \, \|V\|$ is a smooth, compact, connected, embedded hypersurface-with-boundary, with its boundary contained in $\partial B^{n+1}_{\rho}(X).$

\item ${\rm sing} \, V$ denotes the interior singular set of ${\rm spt} \, \|V\|.$ Thus, ${\rm sing} \, V = ({\rm spt} \, \|V\| \setminus {\rm reg} \, V) \cap \Omega.$

\item  For $\theta \, : \, M \to \R$ a positive, locally $\H^{n}$-integrable function, $(M, \theta)$ denotes the varifold $V$ on $\Omega$ defined by $V(\varphi) := \int_{M} \varphi(X, T_{X} M) \, \theta(X) \ \ext{\H}^{n}(X)$ for all $\varphi \in C_{c}(\Omega \times G_{n});$ we call $\theta$ the \emph{multiplicity function} of  $V$.

\item $|M|$ denotes the multiplicity 1 varifold on $\Omega$ associated with $M$, i.e.\ $|M| = (M, 1)$.
\item For an open set $U \subset \R^{n}$ and $g  \, : \, U  \to  \A_{Q}(\R)$ such that $\graph \, g$ is a countably $n$-rectifiable subset (of $\R \times U$) with locally finite $\H^{n}$-measure, $${\bf v}(g) = (\graph \, g, \theta)$$ denotes the (integral) $n$-varifold  on $\R \times U$ whose multiplicity function 
$\theta \, : \, \graph \, g \to \N$ is given by $\theta(g^{\alpha}(x),x) = \# \{\beta \, : \, g^{\beta}(x) = g^{\alpha}(x)\}$ for $\alpha = 1, 2, \ldots, Q$ and $x\in U$. 
\end{itemize}

We shall use the following terminology to describe certain very specific types of cones and singularities associated with varifolds: 

\begin{defn}[Classical cones and classical singularities]\label{classical-sing}\hfill\\
\vspace{-.25in}
\begin{itemize}
\item A \emph{classical cone} in ${\mathbb R}^{n+1}$ is an integral varifold $\BC$ of the type $\BC = \sum_{j=1}^{N} q_{j}|H_{j}|$, where $N$ is an integer $\geq 3$, 
$H_{1}, \ldots, H_{N}$ are half-hyperplanes with a common boundary (the \textit{spine}) $L = \partial H_{j}$ for all $j=1, 2, \ldots, N$, and $q_{j}$ are positive integers; we let $S(\BC)$ denote the  spine $L$ of $\BC$. 

\item Let $T \subset \R^{n+1}$. A \emph{$C^{1}$ classical singularity} of $T$ is a point $y \in T$ such that for some $\rho >0$ and some integer $N \geq 3$,  
$$T \cap B_{\rho}^{n+1}(y) = \bigcup_{j=1}^{N} M_{j},$$ 
where $M_{j} \subset B_{\rho}^{n+1}(y)$ are embedded $C^{1}$ submanifolds-with-boundary having the same $(n-1)$-dimensional $C^{1}$ boundary $L = \partial  M_{j} \text{ for each } j=1, 2, \ldots, N$,  with $y \in L$, $M_{i} \cap M_{j} = L$ for $i \neq j$, and with $M_{i}$ and $M_{j}$ intersecting transversely at every point of $L$ for at least one pair of indices $i$, $j$. 

For $\alpha \in (0, 1)$, a point $y \in T$ is a \emph{$C^{1, \alpha}$ classical singularity} if $y$ satisfies the requirements of the definition (of $C^{1}$ classical singularity) above with $C^{1, \alpha}$ in place of $C^{1}$. 

We shall say a point $y \in T$ is a \emph{classical singularity} of $T$ if $y$ is a $C^{1, \alpha}$ classical singularity of $T$ for some $\alpha \in (0, 1)$.  

\item If $V$ is an $n$-varifold on $\R^{n+1}$, a point $y \in {\rm spt} \, \|V\|$ is a \emph{$C^{1}$ classical singularity} (respectively \emph{$C^{1, \alpha}$ classical singularity}, \emph{classical singularity}) of $V$ if it is a $C^{1}$ classical singularity (respectively $C^{1, \alpha}$ classical singularity, classical singularity) of ${\rm spt} \, \|V\|$. 
\end{itemize}
\end{defn} 
 
We next set up some notation needed to discuss singularities of varifolds satisfying the hypotheses of Theorem~\ref{thm:A}, and also singularities of ``coarse blow-ups'' obtained from sequences of such varifolds converging to $Q|\{0\} \times {\mathbb R}^{n}|$ in ${\mathbb R} \times B_{1}^{n}(0)$. 
\begin{itemize}
\item We write $\CC_Q$ for the set of classical cones of vertex density $Q$ of the following special type: each cone $\BC\in \CC_Q$ can be written as a rotation (in ${\mathbb R}^{n+1}$) of a cone of the form
$$\widetilde{\BC} = \sum^Q_{i=1}|\graph(h_i)| + \sum^Q_{i=1}|\graph(g_i)|,$$
where $h_i:\R^n_+  \to \R$, $g_i:\R^n_-  \to \R$ are of the form $h_i(x^2,\dotsc,x^{n+1}) = \lambda_i x^2$ and $g_i(x^2,\dotsc,x^{n+1}) = \mu_i x^2$ and such that $\spt\|\widetilde{\BC}\|$ is not a single hyperplane; here $\R^n_+ := \{(x^2,\dotsc,x^{n+1})\in \R^n: x^2>0\}$, $\R^n_-:= \{(x^2,\dotsc,x^{n+1})\in \R^n: x^2<0\}$. 

\item If $\psi \, : \, \R^{n} \to {\mathcal A}_{Q}(\R)$ is such that ${\bf v}(\psi) \in \CC_{Q}$, we let $S(\psi) = S({\bf v}(\psi))$, i.e.\ $S(\psi)$ is the spine of ${\bf v}(\psi);$ 
\item For $V\in \S_Q,$ we denote by $\CC_V$ the set of singular points $Y$ where one tangent cone belongs to $\CC_Q$ (which from Theorem \ref{thm:B} we know is the unique tangent cone, and moreover that this cone determines the local structure of $V$ near $Y$ as described by Theorem~\ref{thm:B}).

\item For $V\in \S_Q,$ we denote by $\B_V$ the set of singular points where at least one tangent cone is supported on a hyperplane (this will necessarily be a multiplicity $\geq Q$ hyperplane, by Theorem~\ref{sheeting}). 

\item For $A \subset \R^{n}$ and a function $f \, : \, A \to {\mathcal A}_{Q}(\R)$, we denote by $\CC_f$ the set of points $x \in A$ such that 
$f^{1}(x) = f^{2}(x) = \cdots = f^{Q}(x)$ and the point $(f^{1}(x),x)$ $(= (f^{2}(x),x) = \cdots = (f^{Q}(x),x))$ is a $C^{1}$ classical singularity of 
$\graph \, f$. 
\end{itemize}

\noindent
{\bf Remark:} Let $A \subset \R^{n}$ and $f \, : \, A \to {\mathcal A}_{Q}(\R)$. Then $\CC_{f}$ is the set of points $y \in A$ for which there is $\rho_{y} >0$ with $B_{\rho_{y}}(y) \subset A$, an $(n-1)$-dimensional subspace $L_{y} \subset {\mathbb R}^{n}$ and 
a $C^{1}$ function $\gamma_{y} \, : \, y + L_{y}  \to L_{y}^{\perp} \subset \R^{n}$  with $\gamma_{y}(y) = 0$
such that, letting $\Omega_{y}^{+}$, $\Omega_{y}^{-}$ denote the two connected components of $B_{\rho_{y}}^{n}(y) \setminus \graph \, \gamma_{y}$ (where ${\rm graph} \, \gamma_{y} = \{x + \gamma_{y}(x) \, : \, x \in y + L_{y}\}$),  the following holds: 
\begin{enumerate}
\item[(a)] $\left.f^{1}\right|_{\overline{\Omega_{y}^{+}}}, \ldots, \left.f^{Q}\right|_{\overline{\Omega_{y}^{+}}} \in C^{1}(\overline{\Omega_{y}^{+}})$ and $\left.f^{1}\right|_{\overline{\Omega_{y}^{-}}}, \ldots,\left.f^{Q}\right|_{\overline{\Omega_{y}^{-}}} \in C^{1}(\overline{\Omega_{y}^{-}});$
\item[(b)] $\left. f^{i}\right|_{B^{n}_{\rho_{y}}(y)\cap \partial \Omega_{y}^{+}} = \left. f^{j}\right|_{B^{n}_{\rho_{y}}(y)\cap\partial \Omega_{y}^{-}} \;\; \forall i, j \in \{1, \ldots, Q\}$;
\item[(c)] either $\left.Df^{1}\right|_{\overline{\Omega_{y}^{+}}}(z) \neq \left.Df^{Q}\right|_{\overline{\Omega_{y}^{+}}}(z) \;\; \forall z \in \partial \Omega_{y}^{+}$ or $\left.Df^{1}\right|_{\overline{\Omega_{y}^{-}}}(z) \neq \left.Df^{Q}\right|_{\overline{\Omega_{y}^{-}}}(z) \;\; \forall z \in \partial \Omega_{y}^{-}$ (or both hold).
\end{enumerate}

We next define the notions of \textit{generalised}-$C^1$ and \textit{generalised}-$C^{1,\alpha}$ regularity for an ${\mathcal A}_{Q}(\R)$-valued  function $f$.

\begin{defn} (Generalised-$C^{1}$). \label{genC1}
Let $U\subset {\mathbb R}^{n}$ be an open set. We say that a function $f\, :\, U\to \A_Q(\R)$ belongs to $GC^1(U)$, or equivalently, $f$ is  of class \textit{generalised}-$C^1$ in $U$, if: 
\begin{itemize}
\item[(i)] $f$ is differentiable (as a function on $U$) at every point $y \in U \setminus \CC_{f}$ in the classical sense, i.e.\ taking $f$ to be an $\R^{Q}$-valued function $x \mapsto (f^{1}(x), f^{2}(x), \ldots, f^{Q}(x))$ (with $f^{1} \leq f^{2} \leq \cdots \leq f^{Q}$, as always), and; 
\item[(ii)] the derivative $Df$ is continuous on $U \setminus \CC_{f}.$
\end{itemize}
\end{defn}

\begin{defn} (Regular and branch sets). \label{regular-branch-sets}
Let $U\subset {\mathbb R}^{n}$ be an open set, and let $f\, :\, U\to \A_Q(\R)$ be a generalised-$C^{1}$ function. 

We denote by ${\mathcal R}_{f}$ the set of points $y \in U$ for which there is $\rho_{y}  \in (0, {\rm dist} \, (y, \partial U))$ such that 
$\left.f^{1} \right|_{B_{\rho_{y}}(y)}, \ldots , \left.f^{Q}\right|_{B_{\rho_{y}}(y)}  \in C^{1}(B_{\rho_{y}}(y); \R).$ We call ${\mathcal R}_{f}$ the \emph{regular set} of $f$. 

We set ${\mathcal B}_{f} := U \setminus ({\mathcal R}_{f} \cup \CC_{f}),$ and call ${\mathcal B}_{f}$ the \emph{branch set} of $f$. 
\end{defn}

\noindent
{\bf Remarks:}  Let $U \subset \R^{n}$ be (non-empty and) open, and let $f \, : \, U \to {\mathcal A}_{Q}(\R)$ be of class $GC^{1}(U)$.
\begin{itemize}
\item[(1)] The sets ${\mathcal R}_{f}$, $\CC_{f}$ and $\B_{f}$ are pairwise disjoint and uniquely determined by $f$; moreover, 
${\mathcal R}_{f}$ is non-empty and open in $U$, $\B_{f}$ is relatively closed in $U$, and $\CC_{f}$ 
is an embedded $(n-1)$-dimensional submanifold of $U \setminus \B_{f}$ with $U\cap \partial \CC_{f} = \B_{f}$; in particular,  we have that 
$\CC_{f} = \emptyset \implies \B_{f} = \emptyset$.

\item[(2)] With the notation as in the remark preceding Definition~\ref{genC1}, if we additionally have for each $y \in \CC_{f}$ that $\left.Df^{j}\right|_{\overline{\Omega_{y}^{+}}}(y) = \left.Df^{Q - j +1}\right|_{\overline{\Omega_{y}^{-}}}(y)$ for all $j = 1, 2, \ldots, Q$, then $\graph\,  f$ is a $C^{1}$ immersion near each $y \in \CC_{f}$, and $f$ is a function in $C^{1}(U; {\mathcal A}_{Q}(\R))$ in the classical sense.

\item[(3)]  It follows from the definition of $\CC_{f}$ and condition (ii) of Definition~\ref{genC1} that the map taking a point $y \in \CC_{f} \, \cup \, \B_{f}$ to ${\rm spt} \, \|\BC_{y}\| \, \cap \, \overline{B_{1}^{n+1}(0)}$, where $\BC_{y}$ is the (unique) varifold tangent cone to ${\bf v}(f)$ at $(y, {\rm spt} \, f(y))$, is continuous (on $\CC_{f} \, \cup \, \B_{f}$) with respect to 
the Hausdorff metric (note that if $f(y) = Q\llbracket z \rrbracket$ for some $z = z(y)$, then $\spt\, f(y) = z$).

\item[(4)] It follows from condition (ii) of Definition~\ref{genC1} that for each compact set $K \subset U$, $$\lim_{\epsilon \to 0^{+}} \, \sup_{y \in K \cap {\mathcal R}_{f} \cap (\B_{f})_{\epsilon}} \, \inf_{z \in \B_{f}} \G(Df(y), Df(z)) \to 0.$$
\end{itemize}

\begin{defn}\label{genC1alpha}
Let $U\subset {\mathbb R}^{n}$ be open and $\alpha \in (0, 1)$. We say that a function $f \, : \, U\to \A_Q(\R)$ belongs to 
$GC^{1, \alpha}(U)$, or equivalently, $f$ is of class 
\textit{generalised}-$C^{1, \alpha}$ in $U,$ if $f \in GC^{1}(U)$ and, with the notation as in Definition~\ref{regular-branch-sets}: (a) for each compact set $K \subset U$, each $y \in {\mathcal R}_{f}$ and for the largest $\rho_{y}$ corresponding to $y$, the functions $f^{j},$ $j=1, 2, \ldots, Q$, are 
in $C^{1, \alpha}(\overline{B}_{\rho_{y}}(y) \cap K)$; (b)  each $y \in \CC_{f}$ is a $C^{1, \alpha}$ classical singularity of ${\rm graph} \, f$; 
(c) the map $Df$  is in 
$C^{0, \alpha}({\mathcal R}_{f} \cup \B_{f}; {\mathcal A}_{Q}({\mathcal M}_{1 \times n})),$ i.e.\ for each compact set 
$K \subset {\mathcal R}_{f} \cup \B_{f}$, 
$\sup_{x_{1}, x_{2} \in K; \, x_{1} \neq x_{2}} \, \frac{\G(Df(x_{1}),Df(x_{2}))}{|x_{1} - x_{2}|^{\alpha}} < \infty$.
\end{defn}

Finally, we will need to work with various different excesses throughout this work, so it is convenient to set up the notation now as follows:

\begin{itemize}
	
\item 	For $V\in \S_Q$ and $P$ a hyperplane of ${\mathbb R}^{n+1}$ (through the origin), we define the \textit{one-sided height excess of $V$ relative to $P$} by:
	$$\hat{E}_{V,P}^2:=\int_{\pi_P^{-1}(P\cap B^{n+1}_1(0))}\dist^2(X,P)\ \ext\|V\|(X),$$
	where $\pi_P:\R^{n+1}\to P$ is the orthogonal projection onto $P$.
	
	We abbreviate $\hat{E}_{V, \{0\} \times \R^{n}}$ as $\hat{E}_{V}$, so $\hat{E}_{V}^{2} = \int_{\R \times B^n_{1}} |x^{1}|^{2} \, 
	\ext\|V\|(X).$  
	
\item 	For $\BC\in\CC_Q$, we define the \textit{one-sided height excess of $V$ relative to $\BC$} by:
	$$E_{V,\BC}^2:= \int_{\R \times B^{n}_1(0)}\dist^2(X,\spt\|\BC\|)\ \ext\|V\|(X).$$

\item 	For $\BC\in \CC_Q$, we define the \textit{two-sided height excess of $V$ relative to $\BC$} by:
	$$Q_{V,\BC}^2= \int_{\R \times B^{n}_1(0)}\dist^2(X,\spt\|\BC\|)\ \ext\|V\|(X)\; +\; \int_{(\R \times B^{n}_{1/2}(0))\setminus \{r_\BC<1/16\}}\dist^2(X,\spt\|V\|)\ \ext\|\BC\|(X),$$
	where $r_\BC(X):= \dist(X,S(\BC))$, and $S(\BC)$ is the \textit{spine} of $\BC$, i.e. $S(\BC):= \{X\in \spt\|\BC\|: \Theta_{\BC}(0) = \Theta_{\BC}(X)\}$.
\end{itemize}

\noindent
{\bf Remark:} Throughout the proof of our main result, Theorem~\ref{thm:A} (starting in Part~\ref{main-thm-proof}), we shall only need to consider excesses $\hat{E}_{V}$ and $Q_{V, \BC}$ for varifolds $V \in \S_{Q}$  and cones $\BC \in \CC_{Q}$ for which $\hat{E}_{V}$ is small and ${\rm spt} \, \|\BC\| \cap (\R \times B_{1})$ is close to $\{0\} \times B^n_{1}$. 

\part{Previously known results for $\S_{Q}$, and applications of new and old results to area minimising hypersurfaces mod $p$}\label{other-results}
\setcounter{section}{2}{}
\setcounter{subsection}{0}
\setcounter{equation}{0}

\subsection{Varifolds in $\S_{Q}$ near a flat disk of multiplicity $< Q$: embeddedness} \label{wic-sheeting} For later purposes we will need the following result from \cite{wickramasekera2014general}, guaranteeing embeddedness of a varifold in $V \in \S_{Q}$ lying close to a flat disk of multiplicity $< Q$. Since by $(\S3)_Q$ and the upper semi-continuity of density there are no classical singularities in a varifold in $\S_{Q}$ when it is sufficiently close to a multiplicity $<Q$ flat disk, this result is immediate from \cite[Theorem 3.3]{wickramasekera2014general} and standard elliptic PDE theory.

\begin{thmx}[Sheeting Theorem (Theorem 3.3, \cite{wickramasekera2014general})]\label{sheeting}
 There exists a number $\epsilon_0 = \epsilon_0(n,Q)\in (0,1)$ such that if $V\in \S_Q$ satisfies:
	\begin{enumerate}
		\item [(i)] $(\w_n2^n)^{-1}\|V\|(B_{2}^{n+1}(0)) <Q-1/4$;
		\item [(ii)] $\int_{\R \times B^n_{1}} |x^{1}|^{2} \, \ext\|V\| <\epsilon_0$;
	\end{enumerate}
	then we have
	$$V\res (\R\times B_{1/2}) = \sum^q_{j=1}|\graph \, u_j|$$
	for some $q\in \{1,2,\dotsc,\lfloor Q-1/2\rfloor\}$, where $\lfloor x\rfloor$ denotes the integer part of $x$, $u_j\in C^{1, \beta}(B^n_{1/2})$ for each $j$, $u_1\leq u_2\leq\cdots\leq  u_q$, and:
	$$|u_j|_{C^{1,\beta}(B^n_{1/2})}\leq C\left(\int_{\R\times B^n_1}|x^1|^2\ \ext\|V\|(X)\right)^{1/2},$$
	where $C = C(n, Q) \in (0, \infty)$ and $\beta = \beta(n, Q) \in (0, 1)$. Furthermore, $u_j$ solves the minimal surface equation weakly on $B_{1/2}$, and hence in fact $u_j\in C^\infty(B^n_{1/2})$ for each $j=1,2,\dotsc,q$. 
	\end{thmx}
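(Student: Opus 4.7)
The plan is to reduce directly to \cite[Theorem~3.3]{wickramasekera2014general}. That theorem asserts exactly the graph decomposition and $C^{1,\beta}$ estimate claimed in Theorem~\ref{sheeting}, under the same hypotheses (i) and (ii) but with the stronger structural assumption $(\S3)$ (absence of all classical singularities) in place of $(\S3)_Q$. So the whole new content here lies in showing that, for $\epsilon_0 = \epsilon_0(n,Q)$ sufficiently small, hypotheses (i), (ii), together with $(\S3)_Q$, force $V$ to have no classical singularities whatsoever in (say) $\R \times B^n_{1/2}(0)$. The $C^\infty$ conclusion at the end is then a standard elliptic regularity observation.

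I would carry out the no-classical-singularities reduction by contradiction and compactness, the essential inputs being the monotonicity formula and upper semi-continuity of density. Suppose there exists a sequence $V_k \in \S_Q$ satisfying (i) and $\hat{E}^2_{V_k} < 1/k$, and points $Y_k \in \spt\|V_k\| \cap (\R \times B^n_{1/2}(0))$ that are classical singularities of $V_k$. By Allard's compactness theorem, a subsequence converges as varifolds on $B^{n+1}_2(0)$ to a stationary integral limit $V_\infty$; the height-excess bound (ii) combined with the uniform lower density bound implies $\spt\|V_\infty\| \cap (\R \times B^n_1(0)) \subset \{0\} \times \R^n$, so by the constancy theorem $V_\infty \res (\R \times B^n_1(0)) = q\,|\{0\} \times B^n_1(0)|$ for some non-negative integer $q$.

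The monotonicity formula at the origin together with hypothesis (i) gives
$$\Theta_{V_k}(0) \leq (\w_n 2^n)^{-1}\|V_k\|(B^{n+1}_2(0)) < Q - 1/4,$$
and passing to the limit via lower semi-continuity of mass yields $q = \Theta_{V_\infty}(0) \leq Q - 1/4$; since $q$ is a non-negative integer, $q \leq Q - 1/2$. Passing to a further subsequence, $Y_k \to Y_\infty \in \{0\} \times \overline{B^n_{1/2}(0)}$, so upper semi-continuity of density under varifold convergence delivers $\limsup_k \Theta_{V_k}(Y_k) \leq \Theta_{V_\infty}(Y_\infty) = q \leq Q - 1/2$. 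But a classical singularity satisfies $\Theta_{V_k}(Y_k) \geq 3/2$ (at least three half-hyperplanes of positive integer multiplicity meet there), so for $k$ large we obtain $3/2 \leq \Theta_{V_k}(Y_k) < Q$, contradicting $(\S3)_Q$ (which forbids precisely classical singularities of density $< Q$).

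Thus, for $\epsilon_0$ small depending only on $n$ and $Q$, the restriction $V \res (\R \times B^n_{1/2}(0))$ admits no classical singularities and hence satisfies conditions $(\S1)$, $(\S2)$ and the full $(\S3)$ of \cite{wickramasekera2014general}. Applying \cite[Theorem~3.3]{wickramasekera2014general} then produces the decomposition $V \res (\R \times B_{1/2}) = \sum_{j=1}^q |\graph\, u_j|$ with $u_j \in C^{1,\beta}(B^n_{1/2})$ and the asserted $L^2$-excess estimate; stationarity of $V$ forces each $u_j$ to be a weak solution of the minimal surface equation on $B^n_{1/2}$, and a standard Schauder bootstrap for this quasilinear elliptic equation (with $C^{1,\beta}$ solutions) upgrades each $u_j$ to $C^\infty(B^n_{1/2})$. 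The only non-routine step is the compactness-and-upper-semi-continuity reduction above; everything else is a direct invocation of previously established results.
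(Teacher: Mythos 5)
Your proposal is correct and follows essentially the same route as the paper, which justifies the theorem in one sentence: by $(\S3)_Q$ and upper semi-continuity of density there are no classical singularities when $V$ is close to a multiplicity $<Q$ flat disk, so \cite[Theorem~3.3]{wickramasekera2014general} and standard elliptic PDE theory apply directly. Your compactness-and-contradiction argument is simply the standard detailed implementation of that upper semi-continuity step (using that a classical singularity has density $\geq 3/2$ while the limit plane has integer multiplicity $q \leq Q-1/2$), so there is nothing genuinely different here.
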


\subsection{Structure of varifolds in $\S_{Q}$ near classical cones: Theorem~C}\label{other}
Let $\BC_{0}$ be a stationary classical cone. It is shown in \cite[Section~16]{wickramasekera2014general} that a ``minimum distance theorem'' holds, which says that there is a constant $\epsilon = \epsilon({\mathbf C}_{0}) >0$ such that  a stable codimension 1 integral $n$-varifold $V$  with no classical singularities cannot be $\epsilon$-close to $\BC_{0}$ in the unit ball. The proof given in \cite{wickramasekera2014general} of the minimum distance theorem is by induction on the vertex density $\Theta_{\BC_{0}}(0)$ of $\BC_{0}$ (which must take values in $\{\frac{3}{2}, 2, \frac{5}{2}, 3, \ldots\}$); the argument of the inductive step contains a proof that if $V$ is sufficiently close to $\BC_{0}$ then in the interior it has the structure of a classical singularity (\cite{wickramasekera2014general} provides a direct contradiction to the no classical singularities hypothesis). This argument uses not that $V$ has no classical singularities, but only that it has no classical singularities of density $< \Theta_{\BC_{0}}(0)$ (assumed inductively). Thus for any $Q \in \{\frac{3}{2}, 2, \frac{5}{2}, 3\ldots\}$, the argument readily gives Theorem~\ref{thm:B} below concerning stable varifolds with no classical singularities of density $< Q$ (from which  assertion (a) in Section~\ref{branch-classical} above follows). 

In accordance with \cite[Section~16]{wickramasekera2014general}, in this theorem we shall use the following notation: $\BC_{0} = \sum^N_{i=1}q_i^{(0)} |H_i^{(0)}|$ is a stationary classical cone in $\R^{n+1}$ with density $\Theta_{\BC_{0}}(0) = Q$ and spine 
$L_{\BC_{0}} = \{(0, 0)\} \times \R^{n-1},$ where $q_{i}^{(0)}$ are integers $\geq 1$, $H_{i}^{(0)}$ are distinct half-hyperplanes with $\partial H_{i}^{(0)} = L_{\BC_{0}}$ for each $i= 1, 2, \ldots, N$\footnote{ By virtue of stationarity of $\BC_{0}$ we must have that $q_{i}^{(0)} \leq Q - 1/2$ for 
each $i \in \{1, \ldots, N\}.$}; thus $H_{i}^{(0)} = R_{i}^{(0)} \times \R^{n-1}$ where $R_{i}^{(0)} = \{t{\bf w}_{i}^{(0)} \, : \, t >0\}$ for 
distinct unit vectors  ${\bf w}_{1}^{(0)} , \ldots, {\bf w}_{N}^{(0)}$ in $\R^{2}$.

We let $\sigma_{0} = \max \, \{{\bf w}_{i}^{(0)} \cdot {\bf w}_{k}^{(0)} \, : \, i, k = 1, 2, \ldots, N, \; i\ \neq k\}$ and let $N(H_{i}^{(0)})$  denote the conical neighbourhood of $H_{i}^{(0)}$ defined by 
$$N(H_{i}^{(0)}) = \left\{ (x, y) \in \R^{2} \times \R^{n-1} \, : \, x \cdot {\bf w}_{i}^{(0)} > |x|\sqrt{\frac{1 + \sigma_{0}}{2}}\right\}.$$ 
Denote by $\widetilde{H}_{i}^{(0)}$ the hyperplane containing $H_{i}^{(0)},$ and by $\left(\widetilde{H}_{i}^{(0)}\right)^{\perp}$ the orthogonal complement of $\widetilde{H}_{i}^{(0)}$ in $\R^{n+1}$. 

\begin{thmx}\label{thm:B}
	Let $Q\in \{\frac{3}{2},2,\frac{5}{2},3,\dotsc\}$ and let $\BC_{0}$ be  as above. Let $\alpha \in (0, 1)$. There is a constant $\epsilon = \epsilon({\BC}_{0}, \alpha)$ such that the following holds: If $V \in \S_{Q}$, $\Theta_{V}(0) \geq \Theta_{\BC_{0}}(0)$, 
	$$\int_{B_{1}^{n+1}(0)} {\rm dist}^{2} \, (X, {\rm spt} \, \|\BC_{0}\|) \, \ext\|V\| < \epsilon, \; \mbox{and}$$ 
	$$\|V\|\left((B_{1/2}^{n+1}(0) \setminus \{r(X) < 1/8\}) \cap N(H_{i}^{(0)})\right) \geq \left(q_{i}^{(0)} - \frac{1}{4}\right) {\mathcal H}{^n} \left((B_{1/2}^{n+1}(0) \setminus \{r(X) < 1/8\}) \cap H_{i}^{(0)}\right)$$  
  for each $i \in \{1, \ldots, N\}$, then for each $i\in \{1,\dotsc,N\}$ there is a  function 
  $$\gamma_{i} \in C^{1, \alpha} \left(\overline{L_{\BC_{0}} \cap B_{1/2}^{n+1}(0)};\, \widetilde{H}_{i}^{(0)} \cap \{X \, : \, {\rm dist} \, (X, L_{\BC_{0}}) < 1/16\}\right),$$ 
  and functions $u_{i,j}:\Omega_i \to \left(\widetilde{H}_i^{(0)}\right)^\perp$ for $j=1,\dotsc,q_i^{(0)}$, where $\Omega_{i}$ is the connected component of $\widetilde{H}_{i}^{(0)} \cap B_{1/2}^{n+1}(0) \setminus \{x + \gamma_{i}(x) \, : \, x \in L_{\BC_{0}} \cap B_{1/2}^{n+1}(0)\}$ with  
  $\left(H_{i}^{(0)} \setminus \{X \, : \, {\rm dist} \, (X, L_{\BC_{0}}) < 1/16\}\right)  \, \cap \, B_{1/2}^{n+1}(0) \subset \Omega_{i}$ 
  such that: 
  \begin{itemize}
  \item[(i)]  $u_{i, 1} \cdot \nu_{i} \leq u_{i, 2} \cdot \nu_{i} \leq \cdots \leq u_{i, q_{i}^{(0)}} \cdot \nu_{i}$, where $\nu_{i}$  is a constant unit normal to $\widetilde{H}_{i}^{(0)};$ 
  \item[(ii)]  $u_{i, j} \in C^{1, \alpha}\left(\overline{\Omega}_i\,; \left(\widetilde{H}_i^{(0)}\right)^\perp\right)$ and $u_{i, j} \cdot \nu_{i}$ solve the minimal surface equation on $\Omega_{i};$
  \item[(iii)]  writing $\widetilde{u}_{i,j}(x):= x+u_{i,j}(x)$ for $x \in \overline{\Omega_{i}},$ we have 
  $$ \widetilde{u}_{i_{1}, j_{1}}(x + \gamma_{i_{1}}(x)) = \widetilde{u}_{i_{2}, j_{2}}(x + \gamma_{i_{2}}(x))
  $$
 for all $x \in \overline{L_{\BC_{0}} \cap B_{1/2}^{n+1}(0)}$ and all values of the indices $i_1,i_2 \in \{1, \ldots, N\}$, $j_1 \in \{1, \ldots, q_{i_1}^{(0)}\}$ and $j_2 \in \{1, \ldots, q_{i_2}^{(0)}\};$
\item[(iv)]  $$V\res B^{n+1}_{1/2}(0) = \sum^N_{i=1}\sum^{q_i}_{j=1}|\graph(u_{i,j}) \cap B_{1/2}^{n+1}(0)|; \;\; and$$
 \item[(v)]  $$\{Z \, : \, \Theta_{V}(Z) \geq Q\} \cap B_{1/2}^{n+1}(0) = \{Z \, : \, \Theta_{V}(Z) = Q\} \cap B_{1/2}^{n+1}(0) = \widetilde{u}_{i, k}(\partial \, \Omega_{i} \cap B_{1/2}^{n+1}(0))$$
	for each $i \in \{1, \ldots, N\}$ and $k \in \{1, \ldots, q_{i}^{(0)}\}$; 
	\end{itemize}
	moreover, for each each $i \in \{1,\dotsc,N\}$ and $j \in \{1,\dotsc,q_i^{(0)} \}$, we have
	$$|u_{i,j}|_{1, \alpha;\Omega_i} \leq C\left(\int_{B^{n+1}_{1}(0)}\dist^2(X,\spt\|\BC_{0}\|)\ \ext\|V\|(X)\right)^{1/2}.$$
	Here $C = C(\BC_{0}, \alpha) \in (0, \infty)$.
\end{thmx}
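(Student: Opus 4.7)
The plan is to derive Theorem~\ref{thm:B} essentially as a corollary of the decay lemma \cite[Lemma 16.9]{wickramasekera2014general}, combined with a standard iteration and a patching argument away from the spine. The key observation, as indicated in the excerpt, is that the inductive step in \cite[Section 16]{wickramasekera2014general} only invokes the absence of classical singularities of density $< \Theta_{\BC_0}(0) = Q$, which is precisely our hypothesis $(\S3)_Q$. Thus the statement to verify is not new content, but a clean repackaging of what is already proved there.

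First, I would check that \cite[Lemma 16.9]{wickramasekera2014general} applies in our hypotheses: for $V\in\S_Q$ satisfying the closeness and mass lower bounds to a classical cone $\BC$ with $\Theta_\BC(0)=Q$, the lemma produces a new classical cone $\BC'$ (with vertex density $Q$ and parameters $\{{\bf w}_i'\}$ close to those of $\BC$) and a scale $\theta\in(0,1/2)$, $\nu\in(0,1)$ depending only on $\BC_0$ and $\alpha$, such that
$$\theta^{-n-2}\int_{B^{n+1}_\theta(0)}\dist^2(X,\spt\|\BC'\|)\,\ext\|V\|(X) \;\leq\; \nu\int_{B^{n+1}_1(0)}\dist^2(X,\spt\|\BC\|)\,\ext\|V\|(X).$$
Re-reading the proof of this lemma, one sees that the only use of the no-classical-singularities hypothesis is in invoking Theorem~\ref{sheeting} and certain fine regularity facts at density-$q_i^{(0)}$ planar pieces (where each $q_i^{(0)}\leq Q-1/2$), for which it suffices that there be no classical singularities of density $<Q$.

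Next, I would iterate the decay, producing a sequence $\BC_k\in\CC_Q$ of classical cones with
$$\theta^{-k(n+2)}\int_{B^{n+1}_{\theta^k}(0)}\dist^2(X,\spt\|\BC_k\|)\,\ext\|V\|(X)\leq \nu^k\int_{B^{n+1}_1(0)}\dist^2(X,\spt\|\BC_0\|)\,\ext\|V\|(X).$$
The parameters $\{{\bf w}_i^{(k)}\}$ and the spines $S(\BC_k)$ form Cauchy sequences at a geometric rate, so there is a unique limiting cone at $0$, and the decay rate converts into a $C^{1,\alpha}$ modulus on the spine. Running the same argument at every $Z\in\spt\|V\|\cap B^{n+1}_{1/2}(0)$ with $\Theta_V(Z)\geq Q$ — noting that upper semicontinuity of density together with the excess decay forces $\Theta_V(Z)=Q$ there and the local structure to be a classical cone — yields a unique classical tangent cone at each such $Z$, varying $C^{0,\alpha}$-continuously in Hausdorff distance. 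The locus of these points forms an $(n-1)$-dimensional $C^{1,\alpha}$ submanifold, which one writes as $\{x+\gamma_i(x):x\in L_{\BC_0}\cap B^{n+1}_{1/2}(0)\}$ for a $C^{1,\alpha}$ function $\gamma_i$ into $\widetilde H_i^{(0)}$ near $L_{\BC_0}$; this gives (v) and identifies the perturbed spine.

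Third, away from the spine, each piece of $V$ lies close to a multiplicity $q_i^{(0)}<Q$ hyperplane in a cylinder over $\Omega_i\setminus\{\dist(x,L_{\BC_0})<1/16\}$, and by Theorem~\ref{sheeting} (applied after rotation aligning with $\widetilde H_i^{(0)}$), $V$ is represented there as the sum $\sum_{j=1}^{q_i^{(0)}} |\graph\, u_{i,j}|$ of $C^{1,\beta}$ graphs solving the minimal surface equation, ordered by $u_{i,j}\cdot\nu_i$. These graphs are then extended all the way down to the spine using the pointwise classical-cone structure: for $x$ converging to the spine, the excess decay guarantees that $u_{i,j}(x)$ converges to the common spine value, which gives (iii). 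Standard Schauder theory on $\Omega_i$ (with boundary on the $C^{1,\alpha}$ curve $\{x+\gamma_i(x)\}$) promotes the interior $C^{1,\beta}$ regularity to uniform $C^{1,\alpha}$ regularity up to the spine, with the claimed bound coming from combining Theorem~\ref{sheeting}'s bound (scaled appropriately) with the geometric-decay output.

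The main obstacle is therefore not new analytical content but bookkeeping: one must confirm that Lemma 16.9 and its prerequisites in \cite{wickramasekera2014general} (particularly the use of Theorem~\ref{sheeting} and the fine $\epsilon$-regularity near classical cones) genuinely only require $(\S3)_Q$ and not the stronger no-classical-singularity hypothesis of \cite{wickramasekera2014general}. Once this is in hand, the iteration, the graph description away from the spine, and the Schauder-type estimate follow by well-established arguments, with the footnote from the excerpt indicating that exactly this reading is what was intended.
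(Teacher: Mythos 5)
Your proposal is correct and follows essentially the same route as the paper: both reduce the theorem to the excess decay lemma \cite[Lemma~16.9]{wickramasekera2014general}, observing that its proof uses the no-classical-singularities hypothesis only through the sheeting property at multiplicity $<Q$ and through ruling out density $<Q$ classical singularities, and then run the iteration of \cite[Theorem~16.1]{wickramasekera2014general} together with Schauder estimates for the final $C^{1,\alpha}$ bound. The only prerequisite you do not explicitly flag is the ``no significant gaps'' condition on $\{Z:\Theta_V(Z)\geq Q\}$ needed in the proof of Lemma~16.9, which the paper notes follows from the $\dim_{\mathcal H}\leq n-7$ bound on the singular set in regions where $\Theta_V<Q$; this is a minor bookkeeping item subsumed by your general verification step.
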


\noindent
{\bf Remark:} Once we have the conclusions of Theorem~\ref{thm:B}, it follows from the higher regularity theory of \cite{krummel2014regularity}  that indeed the boundary segments $\{x + \gamma_{i}(x) \, : \, x \in L _{\BC_{0}} \cap B_{1/4}^{n+1}(0)\}$ and the functions $\left.u_{i, j}\right|_{\overline{\Omega_{i}} \cap B_{1/4}^{n+1}(0)}$ are real analytic, with derivatives of $u_{i, j}$ of any order $k \in {\mathbb N}$  bounded uniformly in $\Omega_{i} \cap B_{1/4}^{n+1}(0)$ by a constant $C= C(k,\BC_0)$ times the height excess 
$\left(\int_{B^{n+1}_{1}(0)}\dist^2(X,\spt\|\BC_{0}\|)\ \ext\|V\|(X)\right)^{1/2}$. 
 
\begin{proof}[Proof of Theorem~\ref{thm:B}]
The theorem directly follows from the results in \cite{wickramasekera2014general}. There it is shown that if $W$ is a stable codimension 1 stationary integral varifold satisfying the condition that $W$ has no classical singularities (referred to as the $\alpha$\textit{-structural hypothesis} in \cite{wickramasekera2014general}), then: $(a)$ ${\rm sing} \, W$ has Hausdorff dimension $\leq n-7$ (\cite[Theorem~3.1]{wickramasekera2014general}); and (b) a ``sheeting property'' holds, i.e.\ if $W$ is close as a varifold to a hyperplane $P$ of some positive integer multiplicity, then in the interior $W$ is the sum of regular (embedded) graphs over $P$ with small curvature (\cite[Theorem~3.3]{wickramasekera2014general}). 

So if  $Q \in \{\frac{3}{2}, 2, \frac{5}{2}, 3, \ldots\},$ $W$ is a stable codimension 1 integral varifold with no classical singularities of density $< Q$, then by $(a)$ we have: (a$^{\prime}$) ${\rm dim}_{\mathcal H} \, ({\rm sing} \, W \res \Omega) \leq n-7$ for any open set $\Omega$ such that $\Theta_{W}(Y) < Q$ for all $Y \in \Omega$, and by (b) we have:  (b$^{\prime}$) if $W$ is close to a hyperplane with multiplicity $< Q$, then $W$ is regular in the interior.  (This is Theorem~\ref{sheeting}.)

To prove the present theorem, now proceed exactly as in \cite[Section~16]{wickramasekera2014general}. The key ingredient needed is the excess improvement result 
\cite[Lemma~16.9]{wickramasekera2014general}. The proof of this lemma carries over without any change to the present setting: simply drop the assumption that $V$ has no classical singularities, take (b$^{\prime}$) in place of the induction hypotheses (H1) therein, and  take the absence of density $< Q$ classical singularities (the present hypothesis) in place of the induction hypothesis (H2). In view of (a$^{\prime}$), the condition that there are ``no significant gaps'' in the set 
$\{Z \, : \, \Theta_{V}(Z) \geq \Theta_{{\BC}_{0}}(0)\} \cap B_{3/4}^{n+1}(0)$, needed for the proof, follows from the same reasoning given in \cite{wickramasekera2014general}. Once this excess decay lemma is established, the proof of \cite[Theorem~16.1]{wickramasekera2014general} carries over without any change to give the present theorem. Note that the asserted bound on $|u_{i, j}|_{1, \alpha; \Omega_{i}}$ follows from the estimates established in the proof of \cite[Theorem~16.1]{wickramasekera2014general} together with the standard global $C^{1, \alpha}$ Schauder estimates for weak solutions to second order uniformly elliptic equations. \end{proof}

{\bf Remarks on the proof of \cite[Lemma~16.9]{wickramasekera2014general}:} 

We digress a little to make a few comments.

\noindent
The proof of the excess decay result \cite[Lemma~16.9]{wickramasekera2014general}, in turn, follows closely the 
blow-up argument in the seminal work of L.~Simon in \cite{simon1993cylindrical},  although the setting in \cite{wickramasekera2014general} is technically one of higher multiplicity (unlike in \cite{simon1993cylindrical} where a multiplicity 1 class hypothesis is made). The two most crucial points to note here are: (i) in \cite[Lemma~16.9]{wickramasekera2014general} (in fact throughout the relevant part of \cite{wickramasekera2014general}, i.e.\ Section 16) a ``non-degenerate'' situation is assumed; that is to say, the varifold $V$ is assumed to be far from being flat at scale 1, by virtue of the assumption that it is close to a given, fixed classical cone $\BC_{0}$ at scale 1; and (ii) we have at our disposal a strong sheeting result (in \cite{wickramasekera2014general} this is the induction hypothesis (H1), and in the present setting this is the sheeting theorem \cite[Theorem~3.3]{wickramasekera2014general}, or more precisely, property (b$^{\prime}$) mentioned in the proof of Theorem~\ref{thm:B} above) which is applicable to $V$ away from the axis of $\BC_{0}$ by virtue of the fact that densities of the hyperplanes making up $\BC_{0}$ are constant integers $\leq \Theta_{\BC_{0}}(0)- 1/2$; this gives that $V$ in that region decomposes as the sum of regular, multiplicity 1 sheets satisfying good estimates, in particular \emph{ruling out branch points} in that region.  Thus many of the modifications necessary to the argument of \cite{simon1993cylindrical}---as documented in \cite[Section~16]{wickramasekera2014general}---are, though somewhat involved, of a technical nature. One place in the argument where a slightly different perspective is taken in \cite{wickramasekera2014general} is to use standard boundary regularity results for (single-valued) harmonic functions to deduce decay estimates for the blow-ups generated by sequences of stable varifolds converging to $\BC_{0}.$ Specifically, it is shown that there is a single $C^{2}$ (in fact smooth) function, defined on the axis of $\BC_{0},$ taking values orthogonal to the axis and satisfying an appropriate estimate, which determines a common boundary that joins together all of the ``sheets'' of the blow-up (defined as graphs over appropriate domains in the various half-hyperplanes making up $\BC_{0}$); the desired $C^{1, \alpha}$ decay estimate for the blow-up follows at once from this and a standard boundary regularity theorem for harmonic functions (see \cite[Theorem~16.7]{wickramasekera2014general}). 

More subtle adaptations of \cite{simon1993cylindrical} to other settings, including to other higher multiplicity settings, have recently been carried out in a number of situations. One such instance directly relevant to the present work is \cite[Sections~10--14]{wickramasekera2014general} where a degenerate situation is considered, in which the base cone $\BC_{0}$ is a higher-multiplicity hyperplane; the analysis done in \cite{wickramasekera2014general} in this case leads to Theorem~\ref{thm:fine_reg} below which plays an essential role in our proof of Theorem~\ref{thm:A}. See also: \cite{minter2021structure}  which studies a situation where branch points of the nearby varifolds do exist away from the 
axis of $\BC_{0}$; \cite{becker2017transverse}  where the ``no significant gaps'' condition  used in \cite{simon1993cylindrical} fails (see the proof of Theorem~\ref{thm:B} above); \cite{colombo2017singular} where the (multiplicity 1) base cone $\BC_{0}$ is singular away from its spine; and  \cite{krummel2013fine}, \cite{krummel2017fine}, \cite{krummel2021fine} where the ``cone'' $\BC_{0}$ is the graph (possibly with multiplicity $>1$) of a multi-valued 
homogeneous harmonic function $\varphi,$ with varying degrees of homogeneity, including degrees of homogeneity $< 1.$ 
 
Returning to Theorem~\ref{thm:B}, note that, just as for Theorem~\ref{thm:A}, the absence of classical singularities of density $<Q$ is a necessary hypothesis in Theorem~\ref{thm:B}. This is illustrated by the simple example in Figure \ref{fig:3}.

\begin{figure}[h]
		\centering
			\begin{tikzpicture}
				\draw (0,0.5) -- (-0.33,0.25) -- (-0.33,-0.25) -- (0,-0.5) -- (0.33,-0.25) -- (0.33,0.25) -- (0,0.5);
				\draw (0,0.5) -- (0,1.25);
				\draw (0,-0.5) -- (0,-1.25);
				\draw (-0.33,0.25) -- (-1,0.75);
				\draw (-0.33,-0.25) -- (-1,-0.75);
				\draw (0.33,-0.25) -- (1,-0.75);
				\draw (0.33,0.25) -- (1,0.75);
			\end{tikzpicture}
			\caption{\footnotesize An example of a multiplicity 1 stationary 1-dimensional varifold in ${\mathbb R}^{2}$ with stable regular part which (by scaling) can be made arbitrarily close to a classical cone of density $3$, but is not a $C^{1,\alpha}$ graph over it in the sense described in the conclusion of Theorem~\ref{thm:B}. All angles are $120^\circ$. Of course by taking the Cartesian product of this 
            with ${\mathbb R}^{n-1}$ we obtain higher dimensional examples.}
			\label{fig:3}
\end{figure}
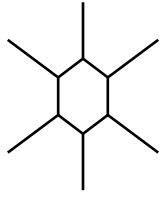

\subsection{The case $Q=2$: Theorem~D}

In the special case of $Q=2$, the conclusions of Theorems \ref{thm:A} and \ref{thm:B} can be strengthened; this is because the only classical singularities which are present are in fact (density 2) \emph{immersed} singular points. This is not true for $Q \geq 3$:

\begin{thmx}\label{thm:C}\footnote{
We do not need the full extent of the techniques employed in the present work to establish Theorem \ref{thm:C}; in particular, the frequency function argument used in the analysis of coarse blow-ups (carried out in Part~\ref{main-thm-proof} below for general $Q$) can be significantly simplified when $Q = 2$, capitalising on the fact that for stationary two-valued graphs we have that generalised-$C^{1} \implies C^{1}$ and consequently the (two-valued) coarse blow-ups are locally in $W^{2, 2}$.}
Suppose $V$ is a stable codimension 1 integral $n$-varifold in $B^{n+1}_2(0)$ such that $V$ has no triple junction singularities (i.e., classical singularities of density $\frac{3}{2}$), i.e.\ $V \in \S_{2}$. Suppose that 
$\BC_{0} = |{P}_{1}^{(0)}| + |P_{2}^{(0)}|$ is the multiplicity 1 cone supported on a distinct pair of hyperplanes $P_{1}^{(0)}$, $P_{2}^{(0)}$. Then:
\begin{itemize}
\item[(i)] there is $\epsilon_{1} = \epsilon_{1}(n) \in (0, 1)$ such that if $(\w_n 2^n)^{-1}\|V\|(B^{n+1}_2(0))<2+1/2$, $2-1/2 \leq \w_n^{-1}\|V\|(\R\times B_1)<2+1/2$, and $\int_{\R \times B_{1}} |x^{1}|^{2} \, \ext\|V\| < \epsilon_1$,
 then the conclusions of Theorem~\ref{thm:A} hold; moreover, every classical singularity of $V \res (\R \times B^n_{1/2})$ is an immersed point of ${\rm spt} \, \|V\|$, and we can take $u$ to be a $C^{1,1/2}$ two-valued function;
\item[(ii)] there is $\epsilon_{2} = \epsilon_{2}(\BC_{0}) \in (0, 1)$ such that if the hypotheses of Theorem~\ref{thm:B} taken with $\epsilon_{2}$ in place of $\epsilon$, $Q = 2,$ $N=2$ and $q_{1}^{(0)} = q_{2}^{(0)} = 1$ are satisfied, then  
$$V\res B^{n+1}_{1/2}(0) = |\graph(u_{1}) \cap B_{1/2}^{n+1}(0)| +  |\graph(u_{2}) \cap B_{1/2}^{n+1}(0)|,$$
where $u_{j} \, : \, P_{j}^{(0)} \cap B_{1/2}^{n+1}(0) \to \left(P_{j}^{(0)}\right)^{\perp}$, $j=1, 2,$ are smooth functions satisfying, for each $k=0, 1, \ldots$, the $C^{k}$ estimate  
$$|u_{j}|_{k, P^{(0)}_{j} \cap B_{1/2}^{n+1}(0)} \leq C \left(\int_{B^{n+1}_{1}(0)}\dist^2(X,\spt\|\BC_{0}\|)\ \ext\|V\|(X)\right)^{1/2},$$ where $C = C(n,k,\BC_0);$ 
\item[(iii)] the branch set of $V$ is countably $(n-2)$-rectifiable and, if non-empty on a ball $B$, then it has positive $(n-2)$-dimensional Hausdorff measure in $B$.
\end{itemize}
\end{thmx}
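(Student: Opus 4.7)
The plan is to reduce parts (i) and (ii) respectively to Theorem~\ref{thm:A} and Theorem~\ref{thm:B}, exploiting the rigidity of density-$2$ classical singularities in $\S_{2}$, and to treat (iii) via a frequency-function/blow-up analysis based on the two-valued graph furnished by (i).

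For \textbf{(i)} I would apply Theorem~\ref{thm:A} with $Q=2$ to produce the generalised-$C^{1,\alpha}$ two-valued function $u$. By $(\mathcal{S}3)_{2}$ together with Theorem~\ref{thm:A}(i), every classical singularity of $V\res(\R\times B_{1/2}^{n})$ has density exactly $2$, so its tangent cone $\BC=\sum_{j=1}^{N}q_{j}|H_{j}|$ satisfies $\sum q_{j}=4$ with $N\geq 3$. The case $N=3$ (multiplicities a permutation of $(2,1,1)$) is excluded because the stationarity relation $2{\bf w}_{1}+{\bf w}_{2}+{\bf w}_{3}=0$ for unit vectors ${\bf w}_{i}\in\R^{2}$ forces ${\bf w}_{2}={\bf w}_{3}$, contradicting distinctness of the half-hyperplanes. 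Hence $N=4$ with every $q_{j}=1$; a short direct computation then shows that four distinct unit vectors in $\R^{2}$ summing to zero must split into two antipodal pairs, so the tangent cone is supported on two full hyperplanes and the classical singular point is an immersed transverse crossing of two smooth sheets. Consequently the unordered pair of values of $u$ varies smoothly across $\CC_{u}$, so $u$ is $C^{1,\alpha}$ as an $\A_{2}(\R)$-valued function. The final upgrade of the branch-point H\"older exponent to the optimal value $\tfrac{1}{2}$ then follows from the frequency-function regularity theory for stationary two-valued $C^{1,\alpha}$ minimal graphs (\cite{simon2016frequency,krummel2021fine}), the exponent $\tfrac{1}{2}$ being dictated by the minimum frequency $\tfrac{3}{2}$ of non-planar homogeneous two-valued harmonic functions on $\R^{n+1}$.

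For \textbf{(ii)} I would view $\BC_{0}=|P_{1}^{(0)}|+|P_{2}^{(0)}|$ as a classical cone with spine $L=P_{1}^{(0)}\cap P_{2}^{(0)}$ and four multiplicity-$1$ half-hyperplanes (the two halves of each $P_{j}^{(0)}$), and apply Theorem~\ref{thm:B} with $N=4$, $q_{i}^{(0)}=1$. This produces four $C^{1,\alpha}$ half-graphs $\widetilde{u}_{i,1}$ together with $C^{1,\alpha}$ boundary curves $\gamma_{i}$. The pairing condition (iii) of Theorem~\ref{thm:B}, combined with the observation that the two halves of each $P_{j}^{(0)}$ share the common tangent plane $P_{j}^{(0)}$ along $L$, forces the two half-graphs associated with $P_{j}^{(0)}$ to meet with matching first-order data along the common boundary curve, and hence to glue into a single $C^{1,\alpha}$ graph $u_{j}\colon P_{j}^{(0)}\cap B_{1/2}^{n+1}(0)\to (P_{j}^{(0)})^{\perp}$ which satisfies the minimal surface equation weakly across the former seam. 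Standard interior elliptic bootstrap (iterated Schauder estimates together with Morrey's analyticity theorem) then yields the smooth (indeed real-analytic) regularity and the asserted scale-invariant $C^{k}$ estimates, which follow by standard interpolation from the quantitative $C^{1,\alpha}$ bound supplied by Theorem~\ref{thm:B}.

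For \textbf{(iii)}, by part (i) applied locally after translation and rescaling at each branch point (using that the branch points relevant here have multiplicity-$2$ planar tangent cones, which is the essential case in the $\S_{2}$ setting), $V$ admits a two-valued $C^{1,1/2}$ stationary minimal graph representation $u$ in a neighbourhood of $\B_{V}$; countable $(n-2)$-rectifiability of $\B_{V}=\B_{u}$ then follows directly from the branch-set rectifiability theorem of \cite{krummel2021fine}. For the positivity claim $\H^{n-2}(\B_{V}\cap B)>0$ whenever $\B_{V}\cap B\neq\emptyset$, my plan is: (a) work with the average-free part $u_{a}=u-\overline{u}$, which lies in $W^{2,2}_{\mathrm{loc}}$ by the symmetry of the two sheets about their average; (b) extract a non-trivial homogeneous two-valued harmonic blow-up $u_{a}^{\star}$ at any fixed branch point via Simon's frequency monotonicity (\cite{simon2016frequency}); (c) classify non-planar such blow-ups to show that their branch sets have positive $\H^{n-2}$-measure (along lines similar to \cite{krummel2017fine}); and (d) transfer this lower bound back to $\B_{V}$ itself via upper semicontinuity of frequency together with a standard density/Whitney covering argument. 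The main technical obstacle will be step (d): ensuring persistence of branch points under blow-up, which requires a quantitative non-concentration estimate near $\B_{V}$ for the separation $u_{a}$; in the $Q=2$ setting this is available by virtue of the $W^{2,2}$-regularity of $u_{a}$ together with the strict positivity of frequency at branch points.
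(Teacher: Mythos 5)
Your treatment of parts (i) and (ii) is correct and follows the same route as the paper: apply Theorem~\ref{thm:A} (resp.\ Theorem~\ref{thm:B}) with $Q=2$, use the rigidity of density~$2$ stationary classical cones to see that every such cone is a pair of transverse multiplicity~$1$ hyperplanes, conclude that the one-sided derivatives of $u$ match across $\CC_u$ so that generalised-$C^{1,\alpha}$ upgrades to genuine two-valued $C^{1,\alpha}$ (and the half-graphs in (ii) glue into two smooth graphs by elliptic regularity), and quote \cite{simon2016frequency} for the sharp exponent $\alpha=1/2$. The only thing you add is the explicit elementary computation that four distinct unit vectors in $\R^2$ with $\sum q_j{\bf w}_j=0$, $\sum q_j=4$, must form two antipodal pairs (the $(2,1,1)$ case being degenerate); the paper leaves this implicit, and your computation is correct.

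For part (iii) the paper does not give an argument at all: it cites \cite{krummel2021fine} for countable $(n-2)$-rectifiability and \cite{simon2016frequency} for the dimension/positive-measure statement, both of which apply verbatim once (i) provides the two-valued $C^{1,1/2}$ stationary graph structure. Your proposed from-scratch proof of the positivity claim is not complete: the step you yourself flag as the obstacle --- transferring a lower $\H^{n-2}$ bound from the branch set of a homogeneous blow-up back to $\B_V$ --- is genuinely the hard part, and upper semicontinuity of the frequency together with $W^{2,2}$ regularity of the average-free part does not by itself give persistence of branch points under blow-up (a sequence of immersed points can converge to a branch point of the limit). The actual argument in \cite{simon2016frequency} runs differently (roughly, if $\H^{n-2}(\B_u\cap B)=0$ then $\B_u$ is removable for the squeeze identity and the two sheets separate, so $\B_u\cap B=\emptyset$). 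Since the result is available off the shelf, you should simply cite it rather than attempt steps (c)--(d) of your sketch.
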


{\bf Remark:} In conclusion (i), the fact that $u$ is $C^{1,1/2}$ (and not just $C^{1,\alpha}$ for some $\alpha$) follows from \cite{simon2016frequency}, and it is the sharp regularity conclusion. 

\begin{proof}[Proof of Theorem~\ref{thm:C}] For conclusion~(i), note that we already know from Theorem \ref{thm:A} that we can locally express $V$ as a generalised-$C^{1,\alpha}$ two-valued graph. The difference here is that at any singular point $Y$ where there is a classical tangent cone $\BC$, the cone $\BC$ has density 2 at the origin and is stationary, hence it must be the sum of two distinct multiplicity 1 hyperplanes (not just 4 half-hyperplanes meeting along a common axis). In particular, since the function $u$ defining $V$ is generalised-$C^1$ and hence each half hyperplane of $\BC$ defines the one-sided derivative  of $u$ at the point in its domain corresponding to $Y$, the fact that the cone consists of the union of two full hyperplanes means that the two one-sided derivatives of $u$ on each half of a given hyperplane in the cone must agree. Thus every classical singularity of $V$ is an immersed point, and $V \res ({\mathbb R} \times B_{1/2})$ is expressible as a $C^{1,\alpha}$ stationary graph (i.e., generalised-$C^{1,\alpha}$ two-valued codimension one functions are necessarily classically $C^{1,\alpha}$ two-valued functions when their graphs are stationary). The fact that we can take $\alpha = 1/2$ follows from \cite{simon2016frequency}. Conclusion~(ii) follows similarly from Theorem~\ref{thm:B} and standard elliptic regularity results. 
Conclusion~(iii) on the rectifiability of the branch set follows from \cite{krummel2021fine}, with the $(n-2)$-dimension bound first being established in \cite{simon2016frequency}. 
\end{proof}

\subsection{Applications to area minimising hypersurfaces mod $p$: Theorem~E} \label{application} One class of stationary integral varifolds to which Theorem~\ref{thm:A}, Theorem~\ref{thm:B} and Theorem~\ref{sheeting} above can directly be applied is the class of integral varifolds associated with $n$-dimensional rectifiable currents $T$ in $B_{2}^{n+1}(0)$ that are area minimising mod $p$ and have $\partial \, T \res B_{2}^{n+1}(0) = 0 \;\;({\rm mod} \; p)$ (\cite[Section~4.2]{federer}). For any $p\in \Z_{\geq 2}$, such a current $T$ has an associated  integral varifold $V = |T|_{(p)}$ (where $|T|_{(p)} := (\spt\|T\|,\theta)$ for $\theta(x):= |\Theta_{T}(x)\ (\text{mod } p)|$) which is stationary in $U$ and has a stable regular part. Moreover, it is easy to check that if such a current is a classical cone, then the cross-section of it must be a 1-dimensional length minimising rectifiable current mod $p$ with zero mod $p$ boundary. Hence the cross-section must consist of (a finite number of) rays all oriented the same way, i.e.\ inwards towards the vertex or outwards from 
it (for if two rays have opposite orientation, then a standard ``wedge-replacement'' process---where a piece of the wedge formed by those two rays is replaced by a line segment---gives a comparison current still having zero boundary mod $p$ but smaller length), whence the number of rays must be an integer multiple of $p$ and so the density of the classical cone must be $\geq p/2$.  This readily implies that $T$ has no classical singularities of density $ < p/2$. 

Thus we have, for any integer $p \geq 2$: 
\begin{align*}
&\left\{|T|_{(p)} \, : \, T \; \text{is an }n\text{-dimensional area minimising rectifiable}\right.\\
 &\hspace{1in}\left.\; \text{current mod }p\text{ in }B^{n+1}_2(0)\text{ with } \partial T \res B_{2}^{n+1}(0) = 0 \;\;({\rm mod} \; p)\right\} \subset \S_{p/2}.
 \end{align*}

Theorem~\ref{thm:A}, Theorem~\ref{thm:B} and Theorem~\ref{sheeting} can therefore be applied with $Q=p/2$ to deduce regularity results for currents $T$ as above. 
Specifically, by applying Theorem~\ref{thm:B}, we obtain Theorem~\ref{thm:D}(ii) below. As noted in the introduction, this result has very recently been obtained in \cite{de2021area}. 
By applying Theorem~\ref{sheeting}, we obtain Theorem~\ref{thm:D}(iii) below, which says that if $T$ is close to a hyperplane of multiplicity $< p/2$ then it is regular in the interior. This result was originally proved by B.~White (\cite{white1984regularity}) with an argument that makes essential use of the minimising property of $T$ and valid in fact for codimension 1 rectifiable currents mod $p$ minimising general even parametric functionals. 

Since planar tangent cones to a mod $p$ area minimising (representative) current $T$ have constant integer multiplicity $\leq p/2$,  we see from Theorem~\ref{thm:D}(iii) that if $p$ is odd, no tangent cone to $T$ at a singular point can be supported on a plane. The same assertion holds if $p=2$ by the De~Giorgi--Allard regularity theory (\cite{allard1972first}, \cite{simon1983lectures}). If however $p$ is an even integer $\geq 4$, there may exist singular points $y$ where one tangent cone to $T$ is a multiplicity $p/2$ hyperplane; if $p=4$, it follows from a result of White (\cite{white1979structure}) that $T$ corresponds to a smoothly immersed hypersurface near such a point $y$. For general even $p$, Theorem~\ref{thm:A} readily provides regularity of the current in a neighbourhood of such a point as a $\frac{p}{2}$-valued generalised-$C^{1, \alpha}$ graph. This is Theorem~\ref{thm:D}(i).

\begin{thmx}\label{thm:D} Let $p\in \Z_{\geq 2}$  and suppose that $T$ is an $n$-dimensional area minimising rectifiable current mod $p$ in $B_{2}^{n+1}(0)$ 
with $\partial T \res B_{2}^{n+1}(0)= 0 \;\; \mbox{mod} \;\; p$.  
\begin{itemize}
\item[(i)] If $p$ is even, then the conclusions of Theorem~\ref{thm:A} hold with $Q = p/2$ and $V = |T|_{(p)}$ (the varifold associated with $T$) provided the small excess and mass hypotheses of Theorem~\ref{thm:A} are satisfied with $V = |T|_{(p)}$  and with $\epsilon$ equal to $\epsilon(n, p/2),$ the constant given by Theorem~\ref{thm:A}; 
\item[(ii)] \textnormal{(\cite{de2021area})} For arbitrary $p$ (even or odd), given a classical cone $\BC_{0}$ as in Theorem~\ref{thm:B} with $Q = p/2$ and a number $\alpha \in (0, 1)$, the conclusions of Theorem~\ref{thm:B} hold with $Q = p/2$ and $V = |T|_{(p)}$ provided the small excess and mass hypotheses of Theorem~\ref{thm:B} are satisfied with $V = |T|_{(p)}$ and  with $\epsilon$ equal to $\epsilon(\BC_{0}, \alpha),$ the constant given by Theorem~\ref{thm:B};
\item[(iii)] \textnormal{(\cite{white1984regularity})} For arbitrary $p$ (even or odd), the conclusions of Theorem~\ref{sheeting} hold with $V = |T|_{(p)}$  provided the small excess and mass hypotheses of Theorem~\ref{sheeting} are satisfied with $V = |T|_{(p)}$, $Q= \frac{p}{2}$ and 
$\epsilon_{0}$ equal to $\epsilon_{0}(n, \frac{p}{2})$, the constant given by Theorem~\ref{sheeting}.
\end{itemize}
\end{thmx}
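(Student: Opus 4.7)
The plan is to reduce all three parts to the direct application of Theorems~\ref{thm:A}, \ref{thm:B}, and~\ref{sheeting} respectively, by first establishing that $V := |T|_{(p)} \in \mathcal{S}_{p/2}$ whenever $T$ is an $n$-dimensional area minimising rectifiable current mod $p$ in $B_2^{n+1}(0)$ with $\partial T \res B_2^{n+1}(0) = 0 \; (\operatorname{mod} p)$. This single inclusion is essentially the content of the discussion immediately preceding the theorem, and once it is in hand the three conclusions follow with no further work (noting that part~(i) needs $p$ even precisely because Theorem~\ref{thm:A} is stated for integer $Q$).

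To verify $V \in \mathcal{S}_{p/2}$, I would check the three defining conditions in turn. For $(\mathcal{S}1)$, stationarity of $V$ in $B_2^{n+1}(0)$ follows from area minimality of $T$ together with the fact that the mass of $V$ coincides with the mass of $T$ modulo $p$ (using $|\Theta_T(\cdot)\,(\operatorname{mod} p)|$ as the multiplicity), so that any compactly supported deformation that would decrease $\|V\|$ would produce a competitor to $T$ contradicting minimality. For $(\mathcal{S}2)$, on $\reg V$ the current $T$ is locally represented by smoothly embedded oriented hypersurfaces, each with constant integer multiplicity; a two-sided portion admits a smooth unit normal field, and area minimality implies the standard non-negativity of the second variation with respect to compactly supported normal variations — hence the stability inequality. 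The regularity of $\reg V$ and the smallness assumption on $\sing V$ needed in $(\mathcal{S}2)$ follow from the interior regularity theory for mod $p$ area minimisers (Allard-type bounds for $\mathcal{H}^n$-a.e.\ multiplicity, combined with White's dimension bound on the singular set).

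The heart of the matter is $(\mathcal{S}3)_{p/2}$: the exclusion of classical singularities of density $< p/2$. Suppose, for contradiction, that $y$ is a classical singularity of $V$ of density $q < p/2$. Near $y$, $\spt\|V\| = \bigcup_{j=1}^N M_j$ with $N \geq 3$ embedded $C^{1,\alpha}$ hypersurfaces-with-boundary meeting along a common $(n-1)$-dimensional $C^{1,\alpha}$ boundary, with total counted multiplicity $2q$. Blowing up $T$ at $y$ (along a subsequence, using standard compactness for area minimising rectifiable currents mod $p$) produces a tangent cone $C$ which is itself area minimising mod $p$ with zero mod $p$ boundary, and whose support coincides with the tangent cone of $\spt\|V\|$ at $y$ — this cone inherits the classical structure: it is supported on $N \geq 3$ half-hyperplanes sharing a common $(n-1)$-plane spine, with the same total multiplicity $2q$. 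Slicing $C$ by an affine 2-plane orthogonal to its spine yields a 1-dimensional length minimising rectifiable current mod $p$ in $\mathbb R^2$, with zero mod $p$ boundary at the origin, supported on at least three rays from $0$. Now apply the wedge-replacement argument: any two rays carrying opposite orientations (as mod $p$ currents) can be replaced with a chord across the wedge they enclose, yielding a competitor with strictly smaller length and still zero boundary mod $p$ — contradicting minimality. Therefore all rays must carry consistent orientation, and the zero boundary condition at $0$ forces the sum of their (equi-signed) integer multiplicities to be a multiple of $p$; summing over the two directions along the spine gives total density $\geq p/2$, contradicting $q < p/2$.

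With $V \in \mathcal{S}_{p/2}$ established, parts (i), (ii), (iii) are immediate: for (i), the hypotheses of Theorem~\ref{thm:A} are transferred to $V$ verbatim (using that $p/2 \in \mathbb Z_{\geq 2}$ since $p$ is even) and its conclusion is the assertion; for (ii), similarly apply Theorem~\ref{thm:B} with $Q = p/2$; for (iii), apply Theorem~\ref{sheeting} with $Q = p/2$. The main obstacle in this plan is the rigorous execution of the wedge-replacement/orientation argument at the level of cross-sections of a cone tangent to $T$, ensuring that the mod $p$ boundary condition and the minimality property do pass to the tangent cone and to its 1-dimensional slice; this is precisely the simple 1-dimensional comparison used in \cite[Proposition~3.2]{de2021area} and requires only the standard slicing theory for rectifiable currents mod $p$.
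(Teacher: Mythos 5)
Your proposal is correct and follows essentially the same route as the paper: the entire content is the inclusion $|T|_{(p)} \in \S_{p/2}$, with stationarity and stability of the regular part being standard for mod $p$ minimisers and $(\S3)_{p/2}$ coming from the wedge-replacement comparison on the $1$-dimensional cross-section of a classical tangent cone, after which parts (i)--(iii) are immediate applications of Theorems~\ref{thm:A}, \ref{thm:B} and~\ref{sheeting}. This matches the discussion preceding Theorem~\ref{thm:D} in the paper, whose formal proof is exactly this reduction.
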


\begin{proof} As discussed above, part (i) is an immediate consequence of Theorem~\ref{thm:A}, part (ii) is an immediate consequence of Theorem~\ref{thm:B}, and part (iii) is an immediate consequence of Theorem~\ref{sheeting}.
\end{proof}

{\bf Remark:} It is a well known fact that for sequences of area minimising integral currents, convergence in the integral flat norm ${\mathcal F}$ implies measure-theoretic convergence of the associated varifolds (\cite{simon1983lectures}); this fact extends in a straightforward manner to the mod $p$ minimising setting, with the mod $p$ flat (semi-)norm ${\mathcal F}^{p}$ taking the place of ${\mathcal F}.$  Thus in all parts of Theorem~\ref{thm:D}, we may replace the assumption that $|T|_{(p)}$ has height excess $< \epsilon$ relative to a plane or a classical cone (as a varifold) with the assumption that for appropriate $\epsilon^{\prime} = \epsilon^{\prime}(n, p)$, the current $T$ is $\epsilon^{\prime}$-close to a plane or a classical cone in ${\mathcal F}^{p}.$ 

{\bf Remark:} As mentioned above, if an $n$-dimensional classical cone $\BC$ in ${\mathbb R}^{n+1}$ is the varifold associated with an area minimising rectifiable current mod $p$, then by an elementary construction of a 
1-dimensional comparison current, one quickly gets that the density $\Theta_{\BC}(0) \geq p/2$.  By a similar construction it can in fact be shown that the density $\Theta_{\BC}(0) = p/2$ (see \cite[Proposition 3.5]{de2021area}). Thus (as pointed out in \cite{de2021area}) once we have Theorem~\ref{thm:D}(ii), it in fact provides an asymptotic description of a codimension 1 rectifiable current $T$ that minimises area mod $p$ near any point where one tangent cone to $T$ is a classical cone. We note that, although in general such singularities in $T$ obviously do arise, they can in certain special circumstances be ruled out; for instance, by a theorem of F.~Morgan (\cite{morgan}), if $T$ is such that 
$\partial \, T = B \;\; \mbox{mod} \; p$ for $B$ an extremal compact embedded $(n-1)$-dimensional $C^{1}$ submanifold with at most $p/2$ components, then $T$ has no classical singularities away from $\partial \, T$, and the interior singular set of $T$ is of Hausdorff dimension $\leq n-7$. 

{\bf Remark:} If $T$ is as in Theorem~\ref{thm:D}, $y \in \spt \, T \cap B_{2}^{n+1}(0)$ and $\BC_{y}$ is a tangent cone to $T$ at $y$, then we have the following: 
\begin{itemize}
\item[(a)] by Theorem~\ref{thm:D}(ii) and the preceding remark, if $\BC_{y}$ is a classical cone, then $y$ is a classical singularity; 
\item[(b)] by Theorem~\ref{thm:D}(iii), 
if $\BC_{y} = q\llbracket P \rrbracket$ for some oriented hyperplane $P$ and (integer) $q < p/2$, then $T$ is embedded in a neighbourhood of $y;$ 
\item[(c)] by Theorem~\ref{thm:D}(i), if $\BC_{y}  = \frac{p}{2} \llbracket P \rrbracket$ for some oriented hyperplane $P$ (which can hold only if $p$ is even),  then in a neighbourhood of $y$, the current $T$ is given by a $\frac{p}{2}$-valued function over $P$ of class $GC^{1, \alpha}$. 
\end{itemize}

{\bf Remark:} We have, of course,  that Theorem~\ref{thm:C} (the special case of Theorem~\ref{thm:A} for $Q=2$, giving additional information) is applicable to codimension 1 area minimising currents mod~4 in the same way that Theorem~\ref{thm:A} is applicable to mod $2Q$ minimising currents for general $Q$. However, while density 2 branch points do occur in stable codimension 1 integral varifolds (and so the conclusions of Theorem~\ref{thm:C} are sharp for those), by \cite{white1979structure}, it is known that branch points do not occur in codimension 1 area minimising currents mod 4 and 
in fact such a current is immersed away from a closed set of codimension at least 7.  

Whilst the above results summarise the state-of-the-art for the regularity of codimension one area minimising currents mod $p$, there are several key questions which remain open. Theorem~\ref{thm:D}(i) has since been used by De Lellis--Hirsch--Marchese--Spolaor--Stuvard (\cite{de2022fine}) to establish that the branch set of such an $n$-dimensional current $T$ has Hausdorff dimension at most $n-2$. As far as the authors are aware, there is no known example of such a $T$ which has a \emph{genuine} branch point, namely a branch point where locally $T$ does not decompose as a sum of smoothly embedded minimal hypersurfaces. In light of White's result mentioned in the previous remark, certainly there are no genuine branch points in codimension 1 mod $4$ minimisers. There are two related questions to this: (i) can $T$ have a genuine branch point when every classical singularity in $T$ is assumed to be (smoothly) immersed; and (ii) can $T$ have a (genuine) branch point which is a limit of non-immersed classical singularities? The latter question amounts to asking whether $GC^{1,\alpha}$ multi-valued functions are necessary to describe the local structure near branch points, or whether one can in fact use $C^{1,\alpha}$ multi-valued functions.

\subsection{Varifolds in $\S_{Q}$ with small coarse excess and significantly smaller fine excess}\label{sec:fine-to-coarse}

The following \textit{fine excess $\epsilon$-regularity theorem} will play a key role in our later analysis. 

\begin{theorem}[Fine excess $\epsilon$-regularity theorem]\label{thm:fine_reg}
	Let $Q\geq 2$ be an integer and $\alpha \in (0, 1)$. There exists $\epsilon_{0} = \epsilon_{0}(n,Q, \alpha)\in (0,1)$ and $\gamma_{0} = \gamma_{0}(n,Q, \alpha) \in (0, 1)$ such that the following is true: if for some $\epsilon \in (0, \epsilon_{0}]$ and some 
	$\gamma \in (0, \gamma_{0}],$ a varifold $V\in \S_Q$  and a cone $\BC \in \CC_{Q}$ with $S(\BC) = \{(0, 0)\} \times \R^{n-1}$ satisfy:
	\begin{enumerate}
		\item [(i)] $\Theta_V(0)\geq Q$, $(\w_n2^n)^{-1}\|V\|(B^{n+1}_2(0))<Q+1/2$, $\w_n^{-1}\|V\|(\R\times B_1)<Q+1/2$;
		\item [(ii)] $\hat{E}_{V}^{2} = \int_{\R \times B_{1}} |x^{1}|^{2} \, \ext\|V\|(X) < \epsilon$;
		\item [(iii)] $\hat{E}_{V}^2 < \frac{3}{2}  \inf_{P}\, \int_{\R \times B_{1}} {\rm dist}^{2} \, (X, P) \, \ext\|V\|(X)$, where the infimum is taken over all hyperplanes $P$ with $S(\BC) \subset P$; 
	\item [(iv)] $Q^2_{V,\BC}< \gamma \hat{E}^{2}_{V}$;
	\end{enumerate}
	then there is a cone ${\BC}_{0}\in \CC_Q$ with spine $S({\BC}_{0}) = \{(0, 0)\} \times \R^{n-1}$ and 
	$$\dist_\H(\spt\|{\BC}_{0}\|\cap(\R\times B_1),\spt\|\BC\|\cap (\R\times B_1))\leq CQ_{V,\BC},$$
	and an orthogonal rotation $\Gamma \, : \, \R^{n+1} \to \R^{n+1}$ with $|\Gamma(e_{1}) - e_{1}| \leq CQ_{V, \BC}$ and 
	$|\Gamma(e_{j})  - e_{j}| \leq C {\hat E}_{V}^{-1}Q_{V, \BC}$ for $j=2, 3, \ldots, n+1$, such that ${\BC}_{0}$ is the unique tangent cone to $\Gamma_{\#}^{-1} \, V$ at $0$ and 
	$$\sigma^{-n-2}\int_{\R\times B_\sigma}\dist^2(X,\spt\|{\BC}_{0}\|)\ \ext\|\Gamma_{\#}^{-1} \, V\|\leq C\sigma^{2\alpha}Q_{V,\BC}^2\ \ \ \ \text{for all }\sigma\in (0,1/2).$$
	Furthermore, there is a generalised-$C^{1,\alpha}$ function $u:B_{1/2}\to \A_Q(\R)$ such that:
	\begin{enumerate}
		\item $V\res (\R\times B_{1/2}) = {\bf v}(u);$
		\item  $\B_u\cap B_{1/2} = \emptyset;$ 
		\item ${\rm graph} \, (u) \cap (\R \times \CC_u) = \sing\,V\cap (\R\times B_{1/2});$ moreover, 
		\begin{itemize} 
		\item[(i)] $\sing\,V\cap (\R\times B_{1/2}) = {\rm graph} \, \varphi$ where  $\varphi = (\varphi_{1}, \varphi_{2}) \, : \, B_{1/2}^{n-1}(0) = S({\BC}) \cap (\R \times B_{1/2}) \to \R^{2} \times \{0\}$ is 
		of class $C^{1, \alpha}$ over $\overline{B_{1/2}^{n-1}(0)}$ with $|\varphi_{1}|_{1, \alpha; B_{1/2}^{n-1}(0)} \leq CQ_{V,\BC}$ and 
		$|\varphi_{2}|_{1, \alpha; B_{1/2}^{n-1}(0)} \leq C\hat{E}_{V}^{-1}Q_{V, \BC};$ 
		\item[(ii)] If $\Omega^\pm$ denote the two components of $B_{1/2}\setminus \CC_u = B_{1/2} \setminus {\rm graph} \, \varphi_{2},$ and if we write $u = \sum^Q_{j=1}\llbracket u^{j}\rrbracket$ with $u^{1} \leq u^{2} \leq \cdots \leq u^{Q}$, then $\left.u^{j}\right|_{\Omega^\pm}\in C^{1,\alpha}(\overline{\Omega^\pm})$ and $|u^{j}|_{1,\alpha;\Omega^\pm} \leq C\hat{E}_V$ for each $j \in \{1,\dotsc,Q\};$
		\end{itemize}
		\item If $\BC_Z\in \CC_Q$ denotes the (unique) tangent cone to $V$ at $Z \in \sing\,V\cap (\R\times B_{1/2})$, then $c\hat{E}_V \leq \dist_\H(\spt\|\BC_Z\|\cap (\R \times B_1), \{0\}\times B_1) \leq C\hat{E}_V$; moreover, if 
		$Z_{1}, Z_{2} \in \sing\,V\cap (\R\times B_{1/2})$ then
		$$\dist_\H(\spt\|\BC_{Z_1}\|\cap (\R \times B_1),\spt\|\BC_{Z_2}\|\cap (\R \times B_1)) \leq C|Z_1-Z_2|^\alpha Q_{V,\BC}.$$
	\end{enumerate}
	Here, $C = C(n,Q, \alpha)$ and $c = c(n,Q, \alpha)$; in particular, these constants are independent of $\BC$.
\end{theorem}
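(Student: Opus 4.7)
The plan is to derive Theorem~\ref{thm:fine_reg} from the iterative excess-decay strategy of \cite[Section~14]{wickramasekera2014general} (see in particular the proof of \cite[Lemma~14.1]{wickramasekera2014general}), whose non-uniform form (with constants possibly dependent on the specific cone $\BC$) carries over directly to our setting because that argument only needs absence of classical singularities of density $< \Theta_{\BC}(0) = Q$---exactly our hypothesis $(\S3)_{Q}$. Concretely, away from $S(\BC)$ each half-hyperplane of $\BC$ carries multiplicity $q_{i}^{(0)} \leq Q - 1/2$, so Theorem~\ref{sheeting} applied in small balls over each such half-hyperplane expresses $V$ there as a sum of regular sheets satisfying uniform $C^{1, \beta}$ estimates, yielding the structure claimed in conclusion~(3)(ii). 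Near the spine one extracts a ``fine blow-up'' by renormalising by $Q_{V, \BC}$; the stability inequality together with standard boundary regularity for harmonic functions classifies homogeneous degree~1 fine blow-ups and produces an excess-decay-at-a-smaller-scale estimate, whose iteration yields the limit cone $\BC_{0}$, the rotation $\Gamma$, the singular-set parametrisation $\varphi = (\varphi_{1}, \varphi_{2})$, and the remaining claimed bounds. The two different scales appearing in the bounds on $\varphi_{1}, \varphi_{2}$ reflect the fact that the spine can shift transverse to $\{0\} \times \R^{n}$ only at order $Q_{V, \BC}$, whereas it can slide within that hyperplane at order $\hat{E}_{V}^{-1} Q_{V, \BC}$.

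The substantively new content is the uniformity of $\epsilon_{0}, \gamma_{0}, C, c$ in $\BC$, which I would establish by contradiction. Suppose that sequences $V_{k} \in \S_{Q}$, $\BC_{k} \in \CC_{Q}$ satisfy (i)--(iv) with $\epsilon_{k}, \gamma_{k} \to 0$ but that some quantitative conclusion of the theorem fails along $V_{k}$ with a constant $C_{k} \to \infty$. Normalising by $\hat{E}_{V_{k}}$ produces, after extracting subsequences, (i) a coarse blow-up $v \, : \, B_{1} \to \A_{Q}(\R)$ (in the sense to be developed in the main body of the paper), and (ii) a limit $\ell_{\infty}$ of the rescaled cones $\hat{E}_{V_{k}}^{-1} \BC_{k}$, which is a $Q$-valued linear function whose graph is a classical cone at the blow-up scale. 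Hypothesis~(iv) with $\gamma_{k} \to 0$ forces $v = \ell_{\infty}$, while hypothesis~(iii) prevents $\ell_{\infty}$ from reducing to a single affine function. The classification of such linear blow-ups combined with uniform boundary regularity for the harmonic graphs representing $v$ off its spine then yields a decay estimate at the blow-up scale whose constants depend only on $n, Q, \alpha$; by standard improvement-of-excess compactness this propagates back to $V_{k}$ at a fixed smaller scale, contradicting $C_{k} \to \infty$.

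The main obstacle is establishing sufficiently strong convergence of the renormalised varifolds $\hat{E}_{V_{k}}^{-1} V_{k}$ to $v$ in a uniform neighbourhood of the moving spines $S(\BC_{k})$, since this is what permits the identification $v = \ell_{\infty}$ and the clean transfer of estimates. This in turn requires a uniform Hausdorff-distance bound between $\sing V_{k} \cap (\R \times B_{1/2})$ and $S(\BC_{k})$, together with the absence of branch points in that region---i.e.\ conclusion~(2) of the theorem. Both follow from $(\S3)_{Q}$ together with Theorem~\ref{sheeting} applied to each half-hyperplane of $\BC_{k}$ individually: because the multiplicity there is strictly less than $Q$, Theorem~\ref{sheeting} delivers a graphical description with uniform estimates, simultaneously ruling out branch points and forcing $\sing V_{k}$ to lie in a uniformly thin neighbourhood of $S(\BC_{k})$, as needed to close the compactness argument.
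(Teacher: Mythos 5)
There is a genuine gap, and it appears in both halves of your argument. The recurring claim that Theorem~\ref{sheeting} can be applied ``to each half-hyperplane of $\BC$ individually'' because its multiplicity is $q_i^{(0)}\leq Q-1/2$ is the reasoning appropriate to the \emph{non-degenerate} setting of Theorem~\ref{thm:B}, where the cone is fixed and its half-hyperplanes are separated by a definite angle. Theorem~\ref{thm:fine_reg} is the degenerate case: by hypothesis (ii) every half-hyperplane of $\BC$ is the graph of $x^2\mapsto \lambda x^2$ with $|\lambda|=O(\hat E_V)$, so the vertical separation of the sheets at horizontal distance $d$ from the spine is $O(\hat E_V d)$ --- after rescaling to any ball of radius comparable to $d$ this is still $O(\hat E_V)\to 0$. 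There is therefore \emph{no} scale at which $V$ is close to a plane of multiplicity $<Q$, and Theorem~\ref{sheeting} as stated never applies; the region away from the spine must be handled at multiplicity exactly $Q$, by first showing (via \cite[Lemma~9.1]{wickramasekera2014general}, cf.\ Proposition~\ref{prop:sheets}(ii)) that there are no density $\geq Q$ points in $\{|x^2|>\tau\}$, hence by $(\S3)_Q$ no classical singularities at all there, and only then invoking the multiplicity-$Q$ sheeting theorem of \cite{wickramasekera2014general}. This error also invalidates your proposed mechanisms for conclusion (2) (absence of branch points) and for confining $\sing V$ to a thin neighbourhood of $S(\BC)$: in the paper these come instead from the decay estimates at density $\geq Q$ points together with Theorem~\ref{thm:B}, and from sheeting at scale $\dist(y,\pi(\{\Theta_V\geq Q\}))$ about points off the projected singular set.

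The compactness argument for uniformity also does not close. First, it misplaces the difficulty: the constants in \cite[Lemmas~13.3 and 14.1]{wickramasekera2014general} are already independent of $\BC$ --- that is precisely the point of the degenerate analysis of \cite[Sections~10--14]{wickramasekera2014general}, where the cones range over all of $\CC_Q$ near the plane. What actually must be added is (a) a relaxation of hypothesis (iii) to a general constant $M_1$, and (b) the base-point-shifting step: one must arrange, via Hypothesis~$(\star\star)$ and a possible replacement of $\BC$ by a cone $\BC'$ with fewer half-hyperplanes satisfying $Q_{V,\BC'}\leq CQ_{V,\BC}$, that the hypotheses reproduce themselves at \emph{every} density $\geq Q$ point $Z\in\R\times B_{9/16}$. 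Without (b), conclusions (3) and (4) --- which concern every singular point of $V\res(\R\times B_{1/2})$ and the H\"older dependence of $\BC_Z$ on $Z$ --- are out of reach; your proposal never addresses this. Second, the blow-up in your contradiction argument is the coarse blow-up, normalised by $\hat E_{V_k}$, and the identity $v=\ell_\infty$ forced by (iv) exhausts the information it carries; but the conclusions are decay estimates at the much finer scale $Q_{V,\BC}\ll\hat E_V$. The object that must be classified is the \emph{fine} blow-up (normalised by $Q_{V_k,\BC_k}$), whose boundary regularity is the substantive content of \cite[Sections~11--13]{wickramasekera2014general}; your uniformity argument as written never engages with it.
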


\begin{proof} {\bf Step 1:} First note that the argument in \cite[Lemma 14.1]{wickramasekera2014general} gives the following: for any given $M_{1} \in [1, \infty)$, there exist constants $\epsilon_1 = \epsilon_1(n,Q, M_{1}, \alpha)\in (0,1)$ and $\gamma_1 = \gamma_1(n,Q, M_{1}, \alpha)\in (0,1)$ such that if a varifold $V\in \S_Q$ and 
a cone $\BC \in \CC_{Q}$ with $S(\BC) = \{(0, 0)\} \times \R^{n-1}$ satisfy (i), (ii) and (iv) in the statement above for some $\epsilon \in (0, \epsilon_{1}]$ and $\gamma \in (0, \gamma_{1}]$, and also satisfies, in place of (iii), the more general condition
\begin{equation}\label{condM} 
\hat{E}_{V}^2 < \frac{3}{2}M_{1}  \inf_{P}\, \int_{\R \times B_{1}} {\rm dist}^{2} \, (X, P) \, \ext\|V\|(X),
\end{equation}
where the infimum is taken over all hyperplanes $P$ with $S(\BC) \subset P$ (i.e.\ over $P$ of the form $P = \{x^{1} = \lambda x^{2}\}$ for $\lambda \in \R$), 
then we can find: a sequence of numbers $\left(\sigma_{k}\right)_{k}$ with $\sigma_{0} = 1$ and 
$\frac{\sigma_{k+1}}{\sigma_{k}} \in [\overline{\theta}, \theta]$ where $\overline{\theta}, \theta \in (0, 1/2)$ are fixed constants depending only on $n$, $Q$, $M_{1}$ and $\alpha$ (in fact for each $k = 0, 1, 2, \ldots$, we have $\frac{\sigma_{k+1}}{\sigma_{k}} \in \{\theta_{1}, \theta_{2}, \ldots, \theta_{2Q-3}\}$, where each $\theta_{j} \in (0, 1/2)$ depends only on $n$, $Q$, $M_{1}$ and $\alpha$); 
orthogonal rotations $\Gamma_{k},  \Gamma: \, \R^{n+1} \to \R^{n+1}$ with $\Gamma_{0} =$ Identity and, for each $k=1, 2, \ldots,$ 
  \begin{equation}\label{E:rotate}
  |\Gamma(e_1) - \Gamma_{k}(e_1)| \leq C\sigma_{k}^{\alpha}Q_{V,\BC}\ \ \ \ \text{and}\ \ \ \ |\Gamma(e_j) - \Gamma_{k}(e_j)| \leq C\sigma_{k}^{\alpha}\hat{E}_V^{-1}Q_{V,\BC}
  \end{equation}
 for $j=2,\dotsc,n+1$; a cone ${\BC_{0}}$ (denoted ${\mathbf H}$ in \cite{wickramasekera2014general}) belonging to $\CC_Q$ with $S({\BC_{0}}) = \{(0, 0)\} \times \R^{n-1}$ such that:
	\begin{equation}\label{E:H}
	\dist_\H(\spt\|\BC_{0}\|\cap (\R\times B_1),\spt\|\BC\|\cap (\R\times B_1)) \leq C Q_{V,\BC}; 
	\end{equation}
	\begin{equation}\label{E:a}
	\sigma^{-n-2}\int_{\R\times B_\sigma}\dist^2(X,\spt\|\BC_{0}\|)\ \ext\|\Gamma^{-1}_{\#} V\|(X) \leq C\sigma^{2\alpha}Q^2_{V,\BC}\ \ \ \ \text{for all }\sigma\in (0,1/2);
	\end{equation}
and also, for $k=1, 2, \ldots$, 
\begin{equation}\label{E:aa}
\sigma_{k}^{-n-2}\int_{\R \times B_{\sigma_{k}}} {\rm dist}^{2} \, (X, {\rm spt} \, \|\BC_{0}\|) \, \ext \|\Gamma_{k \, \#}^{-1}V\|(X) \leq C\sigma_{k}^{2\alpha} Q_{V, \BC}^{2}\; ,\ \; \; \mbox{and},
\end{equation}
\begin{equation}\label{E:HH}
\sigma_{k}^{-n-2}\int_{\R \times \left(B_{\sigma_{k}/2} \setminus \{|x^{2}|<\sigma_{k}/16\}\right)} {\rm dist}^{2} \, (X, {\rm spt} \, \|\Gamma_{k \, \#}^{-1}V\|) \, \ext \|\BC_{0}\|(X) \leq C\sigma_{k}^{2\alpha} Q_{V, \BC}^{2},
\end{equation}
where $C = C(n, Q, M_{1}, \alpha).$  There are a few small modifications to the argument in \cite[Lemma 14.1]{wickramasekera2014general} which we need to make to get this exact statement: first, by \cite[Lemma~9.1]{wickramasekera2014general}, \cite[(10.2)]{wickramasekera2014general} and the argument of \cite[(9.4)]{wickramasekera2014general}, we see that for any given $\tau \in (0, 1/16]$, if $\epsilon_{1} = \epsilon_{1}(n,Q, M_{1}, \tau)$, $\gamma_{1} = \gamma_{1}(n,Q, M_{1}, \tau)$ are sufficiently small, the assumptions of the present theorem imply that  
\begin{equation}\label{good-density-pts} 
\{Z:\Theta_{V}(Z)\geq Q\}\cap (\R\times (B_{1/2}\setminus \{|x^2|<\tau\})) = \emptyset.
\end{equation} 
This with $\tau = 1/16$ is required in \cite[Lemma~14.1]{wickramasekera2014general}. Secondly, for any given $M_{1} \in [1, \infty)$ as above, note that 
\cite[Lemma~13.3]{wickramasekera2014general} holds (with the same proof) in a slightly more general form where: (a) instead of requiring that \cite[Hypothesis~($\star$)]{wickramasekera2014general} holds with $M = \frac{3}{2}M_{0},$ we require that   
\cite[Hypothesis~($\star$)]{wickramasekera2014general} holds with $M = \frac{3}{2}M_{1}M_{0}$, where $M_{0} = M_{0}(n, Q)$ is as in \cite{wickramasekera2014general} (given by the explicit expression in \cite[Section~10]{wickramasekera2014general}),  and 
(b) we allow all of the constants in \cite[Lemma~13.3]{wickramasekera2014general} (i.e.\ $\epsilon$, $\gamma,$ $\kappa$, $C_{0}$, $\nu_{1}, \ldots, \nu_{2Q-3}$, $\overline{C}_{1}$ and $C_{2}$) to depend also on $M_{1}$. Then, arguing as in the proof of \cite[Lemma 14.1]{wickramasekera2014general}, applying this slightly modified version of Lemma~13.3 iteratively in place of \cite[Lemma~13.3]{wickramasekera2014general}, we arrive at \cite[(14.2)--(14.8)]{wickramasekera2014general}, with the constants $C$, $C_{2}$, $\theta_{1}, \ldots, \theta_{2Q-3}$ all depending only on $n$, $Q$, $M_{1}$ and $\alpha$; in particular, we know then that $(\Gamma_k)_k$ is a Cauchy sequence of rotations of $\R^{n+1}$, and so converges to some rotation $\Gamma$ of $\R^{n+1}$. Also, the sequence ${\rm spt} \, \|\BC_{k}\| \cap (\R \times B_{1})$, where 
$\BC_{k} \in \CC_{Q}$ (produced by the modified Lemma~13.3) are as in the proof of  \cite[Lemma 14.1]{wickramasekera2014general}, is a Cauchy sequence in Hausdorff distance, and hence converges in Hausdorff distance to ${\rm spt} \, \|\BC_{0}\| \cap (\R \times B_{1})$ for some $\BC_{0} \in \CC_{Q}.$ It is then not difficult to see using \cite[(14.5)]{wickramasekera2014general} that in fact $\BC_{k} \to \BC_{0}$ as varifolds, after possibly redefining the (constant integer) multiplicities on the half-spaces making up ${\rm spt} \, \|\BC_{0}\|$. These facts and an interpolation between scales in the usual way establish (\ref{E:rotate})--(\ref{E:HH}).

We also have, by \cite[(14.10) and (14.11)]{wickramasekera2014general}, that  
\begin{equation}\label{E:hatE}
C^{-1}\hat{E}_{V} \leq \hat{E}_{\eta_{0, \sigma_{k} \, \#} \Gamma_{k \, \#}^{-1} \, V} \leq C\hat{E}_{V}
\end{equation}
for $k=1, 2, \ldots$, where $C = C(n, Q, M_{1}, \alpha) \in [1, \infty).$ 
	
From (\ref{E:H}) and (\ref{E:HH}), it follows that $\Gamma_{\#} \, \BC_{0}$ is the unique tangent cone to $V$ at $0$, so that $\eta_{0, \rho \, \#} \, V \rightharpoonup \Gamma_{\#} \, \BC_{0}$ as $\rho \to 0$. 
 
 \noindent
{\bf Step 2:}  Still assuming (\ref{condM}) for some $M_{1},$ we wish to repeat Step 1 after shifting the (density $\geq Q$) base point. To be able to ensure that after shifting the base point the required hypotheses (i.e.\ (i), (ii), (iv) for appropriate $\epsilon$, $\gamma$, and hypothesis (\ref{condM}) with a fixed choice of a constant (depending only on $n$, $Q$, $\alpha$ and $M_{1}$) in place of $M_{1}$) are satisfied, we need to introduce an auxiliary condition, namely \cite[Hypothesis $(\star\star)$]{wickramasekera2014general}; by switching to an appropriate different cone $\BC^{\prime}$ (in place of $\BC$) and choosing $\epsilon$, $\gamma$ sufficiently small, we can arrange for this condition to always be satisfied. We now provide details of this argument. 
	
 Write $\CC_Q(p)$ for the set of $\widetilde{\BC}\in \CC_Q$ with $S(\widetilde{\BC}) = \{(0, 0)\} \times \R^{n-1}$ for which the number of distinct half-hyperplanes in $\spt\|\widetilde{\BC}\|$ is $p$. If $V \in \S_{Q}$, $\BC \in \CC_{Q}$ are as in the theorem but with (\ref{condM}) in place of hypothesis (iii), then for $\epsilon = \epsilon(n,Q,M_{1}, \alpha)$, $\gamma = \gamma(n,Q, M_{1}, \alpha)$ sufficiently small, we know by \cite[Lemma~9.1]{wickramasekera2014general} that we necessarily have $\BC\in \cup_{p=4}^{2Q}\CC_Q(p)$. Set
$$Q^*_V(p) := \inf_{\widetilde{\BC}\in \cup^p_{k=4}\CC_Q(k)}Q_{V,\widetilde{\BC}}\; .$$
Following \cite{wickramasekera2014general}, consider, for $\beta \in (0, 1/2)$: 
	 
{\sc hypothesis $(\star\star)$}. Either 
\begin{enumerate}
    \item [(I)] $\BC\in \CC_Q(4)$, or
    \item [(II)] $Q\geq 3$, $\BC\in \CC_Q(p)$ for some $p\in \{5,\dotsc,2Q\}$, and $Q_{V,\BC}^2 < \beta \left(Q^*_V(p-1)\right)^2$.
\end{enumerate}
	
By the argument of \cite[Proposition 10.5]{wickramasekera2014general} we have the following: \emph{suppose that (\ref{condM}) holds for some $M_{1} \in [1, \infty),$ and let $\widetilde{\epsilon}, \widetilde{\gamma}  \in (0, 1/2)$ be given. There exists $\beta_{0} = \beta_{0}(n, Q, M_{1}, \alpha), \epsilon_{2} = \epsilon_{2}(n, Q, M_{1}, \alpha, \widetilde{\epsilon}, \widetilde{\gamma})$, and $\gamma_{2}(n, Q, M_{1}, \alpha, \widetilde{\epsilon}, \widetilde{\gamma})  \in (0, 1/2)$ such that if hypotheses \textnormal{(i)}, \textnormal{(ii)}, \textnormal{(iv)} and Hypothesis~$(\star\star)$ hold with $\epsilon = \epsilon_{2}$, $\gamma = \gamma_{2},$ and $\beta = \beta_{0}$, then for any $Z\in \spt\|V\|\cap (\R\times B_{9/16})$ with $\Theta_V(Z)\geq Q$, if we set $V_Z = (\eta_{Z,1/8})_\# V$,  we have (see \cite[p.~912]{wickramasekera2014general}) that (\ref{condM}) holds with $V_{Z}$ in place of $V$ and with $M_{1}M_{0}$ in place of $M_{1}$, i.e.\   
	\begin{equation}\label{E:hyp3Z} 
	\hat{E}_{V_Z}^2 < \frac{3}{2}M_{1}M_0\inf\int_{\R\times B_1}\dist^2(X,P)\ \ext\|V_Z\|(X),
	\end{equation}
	where $M_0 = M_0(n,Q) \in [1, \infty)$  is an explicit constant (as in \cite{wickramasekera2014general}) and where the infimum is  taken over all hyperplanes of the form $P = \{x^1=\lambda x^2\}$, $\lambda\in \R$, and we also have that hypotheses  
	\textnormal{(i)}, \textnormal{(ii)}, \textnormal{(iv)} hold with $V_Z$ in place of $V$, with the same cone $\BC$ and with $\epsilon = \widetilde{\epsilon}$, 
	$\gamma = \widetilde{\gamma}$. Furthermore, we have 
	by \cite[(10.32)]{wickramasekera2014general} that 
	\begin{equation}\label{E:hyp4Z}
	\hat{E}_{V_{Z}} \geq C\hat{E}_{V} \; \mbox{and,}
	\end{equation}
	by combining \cite[Corollary~10.2(a)]{wickramasekera2014general} and \cite[(10.33)]{wickramasekera2014general}, that 
\begin{equation}\label{E:hyp5Z}
Q_{V_{Z}, \BC} \leq CQ_{V, \BC}\; ,
\end{equation}
where $C = C(n, Q, M_{1}, \alpha).$} We emphasise that $\beta_{0}$ is independent of $\widetilde{\epsilon}$ and $\widetilde{\gamma}$ (and note that the purpose of Hypothesis~($\star\star$) is to enable us to use \cite[Theorem~10.1(a)]{wickramasekera2014general} in the proof of 
\cite[Proposition~10.5]{wickramasekera2014general}; we do not need to verify that Hypothesis~($\star\star$) holds with $V_{Z}$ in place of $V$). 
	
On the other hand, for any given $\beta \in (0, 1/2)$, $V \in \S_{Q}$ and $\BC \in \CC_{Q}$, if Hypothesis $(\star\star)$ fails, then we must have $Q \geq 3$ and $\BC \in \CC_{Q}(p)$ for some $p \in \{5, \ldots, 2Q\}$, and we can find a number $\ell \in \{4, \ldots, p-1\}$ and a cone $\BC^{\prime} \in \CC_{Q}(\ell)$  such that $Q_{V, \BC^{\prime}} \leq \left(\frac{3}{2\beta}\right)^{m} Q_{V, \BC}$  for some $m \in \{1, 2, \ldots, p-4\}$ and $\BC^{\prime}$ satisfies Hypotheses~($\star\star$), i.e.\ either we have $\BC^{\prime} \in \CC_{Q}(4)$ or we have $\ell \in \{5, \ldots, p-1\}$ and $Q_{V, \BC^{\prime}}^{2} < \beta \left(Q_{V}^{\star}(\ell - 1)\right)^{2}$. 

Thus for any given $\beta \in (0, 1/2)$, $V \in \S_{Q}$, $\BC \in \CC_{Q}$, we can always find a cone $\BC^{\prime} \in \CC_{Q}$ (possibly with $\BC^{\prime} = \BC$) such that  
\begin{enumerate} 
\item [(A)] $Q_{V, \BC^{\prime}} \leq \left(\frac{3}{2\beta}\right)^{2Q-4} Q_{V, \BC}$, and 
\item [(B)]  either 
\begin{enumerate}  
\item[(I)$^{\prime}$] $\BC^{\prime} \in \CC_{Q}(4)$, or 
\item[(II)$^{\prime}$]  $Q \geq 3$, $\BC^{\prime} \in \CC_{Q}(p)$ for some $p \in \{5, \ldots, 2Q\}$ and $Q_{V, \BC^{\prime}}^{2} < \beta \left(Q_{V}^{\star}(p - 1)\right)^{2}.$
\end{enumerate}
\end{enumerate}

So taking $\beta = \beta_{0}$ and applying the preceding discussion taking $\BC^{\prime}$ in place of $\BC$ and replacing $\gamma_{2}$ with 
$\left(\frac{3}{2\beta_{0}}\right)^{-(2Q-4)}\gamma_{2}$, we deduce the following:  

\emph{Claim: For each $M_{1} \in [1, \infty)$, there exists $\beta_{0} = \beta_{0}(n, Q, M_{1}, \alpha) \in (0, 1/2)$ such that 
if (\ref{condM}) holds, then for any given $\widetilde{\epsilon}, \widetilde{\gamma} \in (0, 1/2),$  there exist numbers $\epsilon_{2} = \epsilon_{2}(n, Q, M_{1}, \alpha, \widetilde{\epsilon}, \widetilde{\gamma})$, $\gamma_2 = \gamma_{2}(n, Q, M_{1}, \alpha, \widetilde{\epsilon}, \widetilde{\gamma})  \in (0, 1/2)$ such that whenever a varifold $V \in \S_{Q}$ and a cone $\BC \in \CC_{Q}$ satisfy hypotheses \textnormal{(i)}, \textnormal{(ii)}, \textnormal{(iv)} with $\epsilon = \epsilon_{2}$, $\gamma = \gamma_{2},$  there is a cone 
$\BC^{\prime} \in \CC_{Q}$  satisfying:    
\begin{enumerate} 
\item [(A)] 
\begin{equation}\label{new-cone}
Q_{V, \BC^{\prime}} \leq \left(\frac{3}{2\beta_{0}}\right)^{2Q-4} Q_{V, \BC}\, , \;\; \mbox{and}
\end{equation} 
\item [(B)]  either 
\begin{enumerate}  
\item[(I)$^{\prime}$] $\BC^{\prime} \in \CC_{Q}(4)$ or 
\item[(II)$^{\prime}$]  $Q \geq 3$, $\BC^{\prime} \in \CC_{Q}(p)$ for some $p \in \{5, \ldots, 2Q\}$ and $Q_{V, \BC^{\prime}}^{2} < \beta_{0} \left(Q_{V}^{\star}(p - 1)\right)^{2}$
\end{enumerate}
\end{enumerate}
such that for any $Z\in \spt\|V\|\cap (\R\times B_{9/16})$ with $\Theta_V(Z)\geq Q$, if we set $V_Z = (\eta_{Z,1/8})_\# V$, we have that hypotheses  
	\textnormal{(i)}, \textnormal{(ii)}, \textnormal{(iv)} hold with $V_Z$ in place of $V$, $\BC^{\prime}$ in place of $\BC$, 
	and with $\epsilon = \widetilde{\epsilon}$, 
	$\gamma = \widetilde{\gamma}$; we also have that (\ref{condM}) holds with $V_{Z}$ in place of $V$ and with $M_{1}M_{0}$ in place of $M_{1},$ where $M_{0} = M_{0}(n, Q) \in [1, \infty).$ Additionally, we have (by (\ref{E:hyp4Z}) and (\ref{E:hyp5Z})) that  
		\begin{equation}\label{E:hyp4Z-1}
	\hat{E}_{V_{Z}} \geq C\hat{E}_{V} \; \mbox{and}
	\end{equation}
	\begin{equation}\label{E:hyp5Z-2}
Q_{V_{Z}, \BC^{\prime}} \leq CQ_{V, \BC^{\prime}}\; ,
\end{equation}
where $C = C(n, Q, M_{1}, \alpha).$} 

Note also that in view of the fact that $\BC^{\prime}$ satisfies (B) above, and that (i), (ii), (iv) hold with 
$\eta_{0, 1/8 \, \#} \, V$ in place of $V,$ $\BC^{\prime}$ in place of $\BC$, and with $\epsilon = \widetilde{\epsilon}$, $\gamma = \widetilde{\gamma}$, and that (\ref{condM}) holds, 
it follows from \cite[Theorem~10.1(a)]{wickramasekera2014general} (taken with $\tau = 1/16$, say, and with $\BC^{\prime}$ in place of $\BC$) and (\ref{new-cone}) that, provided $\epsilon_{2}$, $\gamma_{2}$ are sufficiently small depending only on $n$, $Q$, $\alpha,$
\begin{equation}\label{cone-dist-bound}
{\rm dist}_{\mathcal H} \, ({\rm spt} \, \|\BC^{\prime}\| \cap (\R \times B_{1}), {\rm spt} \, \|\BC\| \cap (\R \times B_{1})) \leq CQ_{V, \BC}\; ,
\end{equation} 	
where $C = C(n, Q, M_{1}, \alpha).$ 
	
\noindent
{\bf Step 3:} Suppose that (\ref{condM}) holds for some $M_{1} \in [1, \infty)$, and let $\overline{\epsilon}_{1} = \epsilon_1 = \epsilon_{1}(n, Q, M_{1}M_{0}, \alpha)$, $\overline{\gamma}_{1} = \gamma_1 = \gamma_{1}(n, Q, M_{1}M_{0}, \alpha),$ where $\epsilon_{1}$, $\gamma_{1}$ are as in Step~1. Suppose also that (i), (ii), (iv) hold with $\epsilon \in (0, \epsilon_2]$, $\gamma \in (0, \gamma_2]$ where $\epsilon_{2} = \epsilon_{2}(n, Q, M_{1}M_{0}, \alpha,\overline{\epsilon}_{1}, \overline{\gamma}_{1})$, $\gamma_{2} = \gamma_{2}(n, Q, M_{1}M_{0}, \alpha, \overline{\epsilon}_{1}, \overline{\gamma}_{1})$, are as in the Claim in Step~2. 
	
In view of the Claim in Step 2, there is a cone $\BC^{\prime} \in \CC_{Q}$ such that (by applying Step 1) we have the conclusions as in Step~1 with $\BC^{\prime}$ in place of $\BC$ and uniformly at each ``base point'' $Z\in \spt\|V\|\cap (\R\times B_{9/16})$ with $\Theta_V(Z)\geq Q$, i.e.\ for each such $Z$ we can find: a sequence of numbers $\left(\sigma_{k}^{Z}\right)_{k}$ with $\sigma_{0} = 1$ and 
	\begin{equation}\label{scale-bounds}
	\frac{\sigma_{k+1}^{Z}}{\sigma_{k}^{Z}} \in [\overline{\theta}, \theta];
	\end{equation}
	 orthogonal rotations $\Gamma_{k}^{Z},  \Gamma_{Z}: \, \R^{n+1} \to \R^{n+1}$ with $\Gamma_{0}^{Z} =$ Identity and, for each $k=1, 2, \ldots,$ 
  \begin{equation}\label{E:rotateZ}
  |\Gamma_{Z}(e_1) - \Gamma_{k}^{Z}(e_1)| \leq C\left(\sigma_{k}^{Z}\right)^{\alpha}Q_{V_{Z},\BC^{\prime}}\ \ \ \ \text{and}\ \ \ \ |\Gamma_{Z}(e_j) - \Gamma_{k}^{Z}(e_j)| \leq C\left(\sigma_{k}^{Z}\right)^{\alpha}\hat{E}_{V_{Z}}^{-1}Q_{V_{Z},\BC^{\prime}}
\end{equation}  
for $j=2,\dotsc,n+1$;	
	a cone ${\BC_{Z}}\in \CC_Q$ with $S({\BC_{Z}}) = \{(0, 0)\} \times \R^{n-1}$, such that:
	\begin{equation}\label{E:HZ}
	\dist_\H(\spt\|\BC_{Z}\|\cap (\R\times B_1),\spt\|\BC^{\prime}\|\cap (\R\times B_1)) \leq C Q_{V_{Z},\BC^{\prime}} \;\; \mbox{and} 
	\end{equation}
	\begin{equation}\label{E:aZ}
	\sigma^{-n-2}\int_{\R\times B_\sigma}\dist^2(X,\spt\|\BC_{Z}\|)\ \ext\|\Gamma_{Z \, \#}^{-1}V_{Z}\|(X) \leq C\sigma^{2\alpha}Q^2_{V_{Z},\BC^{\prime}}\ \ \ \ \text{for all }\sigma\in (0,1/8),
	\end{equation}
and also, for $k=1, 2, 3, \ldots$,
\begin{equation}\label{E:aaZ}
\left(\sigma_{k}^{Z}\right)^{-n-2}\int_{\R \times B_{\sigma^{Z}_{k}}} {\rm dist}^{2} \, (X, {\rm spt} \,\|\BC_{Z}\|) \, \ext \|\left(\Gamma_{k}^{Z}\right)^{-1}_{ \, \#} V_{Z}\|(X)\leq C\left(\sigma_{k}^{Z}\right)^{2\alpha} Q_{V_{Z}, \BC^{\prime}}^{2}; 
\end{equation}
\begin{equation}\label{E:HHZ}
\left(\sigma_{k}^{Z}\right)^{-n-2}\int_{\R \times \left(B_{\sigma_{k}^{Z}/2} \setminus \{|x^{2}|<\sigma_{k}^{Z}/16\}\right)} {\rm dist}^{2} \, (X, {\rm spt} \, \|\left(\Gamma_{k}^{Z}\right)^{-1}_{ \, \#} V_{Z}\|) \, \ext \|\BC_{Z}\|(X) \leq C\left(\sigma_{k}^{Z}\right)^{2\alpha} Q_{V_{Z}, \BC^{\prime}}^{2} \;\; \mbox{and}
\end{equation}
\begin{equation}\label{E:hatEZ}
C^{-1}\hat{E}_{V_{Z}} \leq \hat{E}_{\eta_{0, \sigma_{k}^{Z} \, \#} \left(\Gamma_{k}^{Z}\right)^{-1}_{\#} \, V_{Z}} \leq C\hat{E}_{V_{Z}}, 
\end{equation}
where $C = C(n, Q, M_{1}, \alpha).$ Furthermore, by  (\ref{new-cone}), (\ref{E:hyp4Z-1}) and (\ref{E:hyp5Z-2}), 
we also have that 
		\begin{equation}\label{E:EZ}
	\hat{E}_{V_{Z}} \geq C\hat{E}_{V}\; , \; \mbox{and}
	\end{equation}
\begin{equation}\label{E:QZ}
Q_{V_{Z}, \BC^{\prime}} \leq \overline{C}Q_{V, \BC} 
\end{equation}
where $\overline{C} = \overline{C}(n, Q, M_{1}, \alpha).$

From these, we can draw the following conclusions:  
\begin{itemize}
\item for every $Z\in \spt\|V\|\cap (\R\times B_{9/16})$ with $\Theta_{V}(Z) \ge Q,$ $\Gamma_{Z \, \#} \, \BC_{Z} \in \CC_{Q}$ is the unique tangent cone to $V$ at $Z$; 
\item  $\{\Theta_V\geq Q\}\cap (\R\times B_{9/16}(0)) = \{\Theta_V = Q\}\cap (\R\times B_{9/16}(0));$  
\item $\{\Theta_V\geq Q\}\cap (\R\times B_{9/16}(0)) = \CC_V\cap (\R\times B_{9/16}(0));$ this is so by hypothesis $(\S3)_{Q}$ and an application of Theorem~\ref{thm:B}; 
\item $\B_V \cap (\R \times B_{9/16}) = \emptyset;$ this is so by the preceding fact since by Theorem~\ref{sheeting} we have that $\Theta_{V}(Z) \geq Q$ for every $Z \in \B_{V}$. 
\end{itemize}

\noindent
{\bf Step 4:} Continue to assume that (\ref{condM}) holds for some $M_{1} \in [1, \infty)$, and also, for $\widetilde{\epsilon}$, $\widetilde{\gamma}$ to be chosen depending only on $n$, $Q$, $M_{1},$ that (i), (ii), (iv) 
hold with 
\begin{align}\label{parameter-cond}
\begin{split}
\epsilon & \in (0,\epsilon_{2}(n, Q, M_{1}M_{0}, \widetilde{\epsilon}, \widetilde{\gamma}, \alpha)],\\
\gamma & \in (0,  \gamma_{2}(n, Q, M_{1}M_{0}, \widetilde{\epsilon}, \widetilde{\gamma}, \alpha)],
\end{split}
\end{align}
where $\epsilon_{2}$, $\gamma_{2}$ 
are as in the Claim in Step 2.

Set $S = \pi(\{\Theta_V \geq Q\})$ where $\pi:\R^{n+1}\to \{0\}\times \R^n$ is the orthogonal projection. If $y\in B_{1/2}(0)$ has $\dist(y,S)>1/16$, then
	$$(1/16)^{-n-2}\int_{\R\times B_{1/16}(y)}|x^1|^2\ \ext\|V\|(X) \leq  (1/16)^{-n-2}\hat{E}_V^2$$
	and so if $\epsilon < 16^{-n-2}\epsilon_{0}$  where $\epsilon_{0} = \epsilon_{0}(n, Q)$ is as in Theorem~\ref{sheeting}, as $V$ has no classical singularities of density $< Q$ in the region $\R\times B_{1/16}(y)$, we may apply Theorem~\ref{sheeting} to see that  $V$ is regular in $\R\times B_{1/32}(y)$, with 
	\begin{equation}\label{E:embedded}
	V \res \, (\R \times B_{1/32}(y)) = \sum_{j=1}^{Q} |{\rm graph} \, u_{j}|,
	\end{equation}
	 for $u_{j} \in C^{2}(B_{1/32}(y))$ satisfying $u_{1} \leq \cdots \leq u_{Q}$ and $|u|_{2, B_{1/32}(y)} \leq C\hat{E}_{V}$, where $C = C(n, Q)$;  if on the other hand $y \in B_{1/2}$ is such that $0<\dist(y,S)\leq 1/16,$ we choose $z\in S\cap B_{9/16}(0)$ for which $\dist(y,S) = \dist(y,z)$, set $\sigma_y = \dist(y,z)/4,$ choose $Z = (\zeta^1,\zeta^2,\eta)$ with $\Theta_{V}(Z) \ge Q$ and $\pi (Z) = z$ and choose the integer $k$ such that $\sigma_{k+1}^{Z} < 40\sigma_{y} \leq \sigma_{k}^{Z},$ and note that
	\begin{align*}
	&\sigma_y^{-n-2}\int_{\R\times B_{\sigma_y}(y)}|x^1 - \zeta^{1}|^2\ \ext \|V\|(X) \leq 
	5^{n+2}\cdot (5\sigma_y)^{-n-2}\int_{\R\times B_{5\sigma_y}(z)}|x^{1} - \zeta^{1}|^{2} \ext\|V\|(X) \nonumber\\
	&\hspace{1in}\leq C\overline{\theta}^{-n-2}(\sigma_{k}^{Z}/4)^{-n-2}\int_{\R \times B_{\sigma_{k}^{Z}/4}(0)} |x^{1}|^{2} \ext\|V_{Z}\|(X)\nonumber\\
	&\hspace{1in} \leq C_{1}\left(\hat{E}_{V_{Z}}^{2} + \hat{E}_{V_{Z}}^{-2}Q_{V_{Z}, \BC^{\prime}}^{2}\right) \leq C_{1}(\widetilde{\epsilon} + \widetilde{\gamma}),
	\end{align*}
where $C_{1} = C_{1}(n, Q, M_{1}, \alpha),$ and we have used (\ref{E:rotateZ}) and (\ref{E:hatEZ}) in the third inequality and Step~2 in the last inequality. 

Now choose 
\begin{equation}\label{parameter-cond-1}
\widetilde{\epsilon} = \min\{\overline{\epsilon}_{1}, (2C_{1})^{-1}\epsilon_{0}\} \; \mbox{and} \; \widetilde{\gamma} = \min\{\overline{\gamma}_{1}, (2C_{1})^{-1}\epsilon_{0}\}, 
\end{equation}
where $\overline{\epsilon}_{1} = \epsilon_{1}(n, Q, M_{1}M_{0}, \alpha)$, $\overline{\gamma}_{1} = \gamma_{1}(n, Q, M_{1}M_{0}, \alpha)$ are as in Step~3  and $\epsilon_{0} = \epsilon_{0}(n, Q)$ is as in Theorem~\ref{sheeting}.
Then, if (\ref{condM}) holds for some $M_{1} \in (0, \infty)$, and if hypotheses (i), (ii), (iv) 
hold with $\epsilon$, $\gamma$,  
satisfying (\ref{parameter-cond}),  we have the conclusions at the end of Step~3, and moreover, by the preceding estimate, that 
$\sigma_y^{-n-2}\int_{\R\times B_{\sigma_y}(y)}|x^1 - \zeta^{1}|^2\ \ext \|V\|(X) < \epsilon_{0}$. Whence, 
applying Theorem~\ref{sheeting} again, we get that $V \res (\R \times B_{\sigma_{y}/2}(y))$ is given by $Q$ embedded, ordered minimal graphs over $B_{\sigma_{y}/2}(y)$ with small gradient. Since this holds for every $y \in B_{1/2} \cap \{x \, : \, 0 < {\rm dist} \, (x, S) < 1/16\}$, and (as we have already seen) $\{\Theta_{V} \geq  Q\} \cap \left(\R \times B_{9/16}\right)$ consists of $C^{1, \alpha}$ classical singularities, it follows that there is a function $u \, : \, B_{1/2} \to {\mathcal A}_{Q}(\R)$ of class $GC^{1, \alpha}$ with $\CC_{u} = S \cap B_{1/2}$ and $\B_{u} = \emptyset$ such that $V \res (\R \times B_{1/2}) = {\mathbf v}(u).$ This establishes conclusions (1), (2) and the first assertion of conclusion (3).

\noindent
{\bf Step 5:} Now we suppose that hypotheses (i)--(iv) as in the statement of the theorem hold with 
$\epsilon   \in (0, \epsilon_{2}(n, Q, M_{0}, \alpha, \epsilon^{\prime}, \gamma^{\prime})]$ and $\gamma  \in (0, \gamma_{2}(n, Q, M_{0}, \alpha, \epsilon^{\prime}, \gamma^{\prime})]$, where $\epsilon_{2}$, $\gamma_{2}$  are as in Step~2 and $\epsilon^{\prime}$, $\gamma^{\prime}$ are to be chosen depending only on $n$, $Q$, and $\alpha$.  In particular, we require that 
$\epsilon^{\prime} < \widetilde{\epsilon}$ and $\gamma^{\prime} < \widetilde{\gamma}$, where $\widetilde{\epsilon}$, $\widetilde{\gamma}$ are as in (\ref{parameter-cond-1}) with $M_{1}=1$. Since $M_{0}$ depends only on $n$, $Q$, our eventual choice of $\epsilon^{\prime}$, $\gamma^{\prime}$ will imply that $\epsilon$, $\gamma$ and $\beta$ depend only on $n$, $Q$, and $\alpha$.

We wish to establish conclusion (4) and conclusions (3)(i) and (3)(ii). Since (by Step 4 taken with $M_{1} = 1$) we have conclusions (1), (2) and the first assertion of conclusion (3), this will complete the proof of the theorem. 

First note that the inequalities $c\hat{E}_V\leq \dist_\H(\spt\|\BC_Z\|\cap B_1,\{0\}\times B_1)\leq C\hat{E}_V,$ for some constants $c= c(n, Q, \alpha) \in (0, \infty)$ and $C = C(n, Q, \alpha) \in (0, \infty),$ follow from (\ref{E:HZ}) and hypothesis (iv). 

For the H\"older continuity estimate in conclusion (4), we proceed as follows: pick any two points $Z_1,Z_2 \in \CC_{V} \cap (\R \times B_{1/2})$, and set $\sigma = |Z_1-Z_2|$. If $\sigma \ge 1/32$ the estimate holds trivially, so assume $\sigma<1/32$, and choose $k$ such that $\sigma_{k+1}^{Z_{2}} < 16\sigma \leq \sigma_{k}^{Z_{2}}$. Set $\widetilde{V}= (\eta_{0,\sigma_{k}^{Z_{2}}}\circ 
	\left(\Gamma^{Z_2}_{k}\right)^{-1})_{\#} \,  V_{Z_2}$ and  
	$\widetilde{Z} = (\sigma_{k}^{Z_{2}}/8)^{-1}\left(\Gamma^{Z_2}_{k}\right)^{-1}(Z_1-Z_2)$. Then 
	$\widetilde{V}_{\widetilde{Z}} = (\eta_{0,\sigma_{k}^{Z_{2}}}\circ 
	\left(\Gamma^{Z_2}_{k}\right)^{-1})_{\#} \,  V_{Z_1},$ and $\widetilde{Z}$ is a density $Q$ point of $\widetilde{V}$,  and moreover by (\ref{E:hatEZ}) and (\ref{E:hyp3Z}) (with $M_{1} = 1$) we have that 
	$$\hat{E}_{\widetilde V} \leq C\hat{E}_{V_{Z_{2}}} \leq \frac{3}{2}CM_0\inf _{P = \{x^{1} = \lambda \, x^{2} \, : \, \lambda \in \R\}} \int_{\R\times B_1}\dist^2(X,P)\ \ext\|V_{Z_{2}}\|(X)\; ,$$ where $C = C(n, Q, \alpha).$ Also, by (\ref{E:aaZ}), (\ref{E:HHZ}), (\ref{E:hatEZ}), (\ref{E:EZ}) and (\ref{E:QZ}) we have that 
	\begin{equation}\label{fine-coarse-Z} 
	Q_{\widetilde{V},\BC_{Z_2}} \leq C\sigma_{k}^\alpha Q_{V_{Z_{2}},\BC^{\prime}} \leq C\sigma_{k}^{\alpha} Q_{V, \BC} \leq C\gamma\hat{E}_{V} \leq C\gamma\hat{E}_{V_{Z_{2}}} \leq C^{\prime}\gamma \hat{E}_{\eta_{0, \sigma_{k}^{Z_{2}} \, \#} \left(\Gamma_{k}^{Z_{2}}\right)^{-1}_{\#} \, V_{Z_{2}}} = C^{\prime}\gamma\hat{E}_{\widetilde{V}}\; ,
	\end{equation} 
where $C^{\prime} = C^{\prime}(n, Q, \alpha)$. We also clearly have that 
$\hat{E}_{V_{Z_{2}}} < C\epsilon$, whence $\hat{E}_{\widetilde{V}} < C\epsilon$. 

Thus  (\ref{condM}) is satisfied with $\widetilde{V}$ in place of $V$ and $M_{1} = CM_{0}$, and hypotheses (i), (ii), (iv) are satisfied with $\widetilde{V}$ in place of $V$, $\BC_{Z_{2}}$ in place of $\BC$ and with 
$C\epsilon$ in place of $\epsilon$ and $C^{\prime}\gamma$ in place of $\gamma$. So if we choose $\epsilon$, $\gamma$ sufficiently small depending only on $n$, $Q$, and $\alpha$, we can apply the Claim in Step~2 to find a cone $\BC^{\prime\prime} \in \CC_{Q}$, which, by 
(\ref{E:QZ}) and (\ref{cone-dist-bound}) satisfies 
\begin{equation}\label{fine-excess-est} 
Q_{\widetilde{V}_{\widetilde{Z}}, \BC^{\prime\prime}} \leq \overline{C} Q_{\widetilde{V}, \BC_{Z_{2}}}\; ,
\end{equation}
where $\overline{C}$ is the constant as in (\ref{E:QZ}) with $M_{1} = CM_{0}$, and  
\begin{equation}\label{cone-dist-bound-final}
{\rm dist}_{\mathcal H} \, ({\rm spt} \, \|\BC^{\prime\prime}\| \cap (\R \times B_{1}), {\rm spt} \, \|\BC_{Z_{2}}\| \cap (\R \times B_{1})) \leq 
CQ_{\widetilde{V}, \BC_{Z_{2}}}\; ,
\end{equation} 	
where $C = C(n, Q,\alpha),$ 
and such that 
(i), (ii), (iv) hold with $\widetilde{V}_{\widetilde{Z}}$ in place of $V$, $\BC^{\prime\prime}$ in place of $\BC$ and with 
$\epsilon_{1} (n, Q, CM_{0}^{2}, \alpha)$ in place of $\epsilon$ and $\overline{C}^{-1}\gamma_{1}(n, Q, CM_{0}^{2}, \alpha)$ in place of $\gamma$, where $\epsilon_{1}$, $\gamma_{1}$ are as in Step~1. 

Thus we can apply Step~1 to obtain a cone $\widetilde{\BC}_{\widetilde{Z}}\in \CC_Q$ with $S(\widetilde{\BC}_{\widetilde{Z}}) = \{(0, 0)\} \times \R^{n-1}$ and a rotation $\widetilde{\Gamma}_{\widetilde{Z}}$ such that 
\begin{equation}\label{E:H3-a}
	\dist_\H(\spt\|\widetilde{\BC}_{\widetilde{Z}}\|\cap (\R\times B_1),\spt\|\BC^{\prime
	\prime}\|\cap (\R\times B_1)) \leq C Q_{\widetilde{V}_{\widetilde{Z}},\BC^{\prime\prime}} \; \mbox{and}
	\end{equation}
	\begin{equation}\label{E:rotate-est}
  |\widetilde{\Gamma}_{\widetilde{Z}}(e_1) - e_1| \leq CQ_{\widetilde{V}_{\widetilde{Z}},\BC^{\prime\prime}}\ \ \ \ \text{and}\ \ \ \ |\widetilde{\Gamma}_{\widetilde{Z}}(e_j) - e_j| \leq C\hat{E}_{\widetilde{V}_{\widetilde{Z}}}^{-1}Q_{\widetilde{V}_{\widetilde{Z}},\BC^{\prime\prime}}
\end{equation}  
for $j=2,\dotsc,n+1$.	These together with (\ref{fine-excess-est}) and (\ref{cone-dist-bound-final}) imply 
\begin{equation}\label{E:H3}
	\dist_\H(\spt\|\widetilde{\Gamma}_{\widetilde{Z} \, \#} \, \widetilde{\BC}_{\widetilde{Z}}\|\cap (\R\times B_1),\spt\|\BC_{Z_2}\|\cap (\R\times B_1)) \leq C Q_{\widetilde{V},\BC_{Z_{2}}}. 
	\end{equation}
	Now as $\left(\widetilde{\Gamma}_{\widetilde{Z}}\right)_\#\widetilde{\BC}_{\widetilde{Z}}$ is also the unique tangent cone of $\widetilde{V}$ at $\widetilde{Z}$, we can readily check that
	\begin{equation}\label{rotation-equality}
	\left(\widetilde{\Gamma}_{\widetilde{Z}}\right)_{\#} \widetilde{\BC}_{\widetilde{Z}} = 
	\left[\left(\Gamma_{k}^{Z_2}\right)^{-1}\circ{\Gamma_{Z_1}}\right]_\# \BC_{Z_1}
	\end{equation}
	which combined with (\ref{E:H3}) and (\ref{E:rotateZ})  gives 
\begin{equation}
	\dist_\H(\spt\|\Gamma_{Z_{1} \, \#} \, \BC_{Z_{1}}\|\cap (\R\times B_1),\spt\|\Gamma_{Z_{2} \, \#} \, \BC_{Z_2}\|\cap (\R\times B_1)) \leq C Q_{\widetilde{V},\BC_{Z_{2}}} + C \left(\sigma_{k}^{Z_{2}}\right)^{\alpha}Q_{V_{Z_{2}},\BC^{\prime}}. 
\end{equation} 	
By (\ref{fine-coarse-Z}) and (\ref{E:QZ}), this readily gives 
\begin{equation}
	\dist_\H(\spt\|\Gamma_{Z_{1} \, \#} \, \BC_{Z_{1}}\|\cap (\R\times B_1),\spt\|\Gamma_{Z_{2} \, \#} \, \BC_{Z_2}\|\cap (\R\times B_1)) 
	\leq C |Z_{1} - Z_{2}|^{\alpha}Q_{V,\BC} 
\end{equation} 	
which is conclusion (4). 

Finally, to see (3)(i) and (3)(ii), note first that by Theorem~\ref{thm:B} it follows that for each point $(0, 0, y) \in \{(0, 0)\} \times \R^{n-1} \cap B_{1/2} \equiv B_{1/2}^{n-1}(0)$, 
we have that $\left(\R^{2} \times \{y\}\right) \cap \{\Theta_{V} = Q\} \neq \emptyset;$ moreover, this set consists precisely of one point $Z_{y}$. (To see this last assertion, note that if for some $(0, 0, y) \in B_{1/2}^{n-1}(0)$ we have two distinct points 
$Z_{1}, Z_{2}  \in \left(\R^{2} \times \{y\}\right) \cap \{\Theta_{V} = Q\}$, then we can choose $k$ such that $\sigma_{k+2}^{Z_{1}} < |Z_{1} - Z_{2}| \leq \sigma_{k+1}^{Z_{1}}$ and use (\ref{E:aaZ})--(\ref{E:QZ}) with $Z = Z_{1}$ to see that (\ref{good-density-pts}) must hold with 
$V^{\star} = \eta_{Z^{1}, \sigma_{k}^{Z_{1}} \, \#} \left(\Gamma_{k}^{Z_{1}}\right)^{-1}_{\#} \, V$ in place of $V$ and  $\tau = (\overline{\theta})^{2}/2$, where $\overline{\theta}$ is as in (\ref{scale-bounds}) taken with $M_{1} = CM_{0},$ provided $\epsilon_{2}, \gamma_{2}$ are sufficiently small depending only on $n$, $Q$, $\alpha$; but since $\theta \geq \frac{|Z_{1} - Z_{2}|}{\sigma_{k}^{Z_{1}}} \geq (\overline{\theta})^{2},$  
$Z^{\star}  = \left(\Gamma_{k}^{Z_{1}}\right)^{-1}(Z_{2} - Z_{1})/\sigma_{k}^{Z_{1}}$ satisfies $\Theta_{V^{\star}}(Z^{\star}) \geq Q$ and $\pi_{(0, 0) \times \R^{n-1}}(Z_{1} - Z_{2}) = 0$, this is a contradiction.) So define $\phi \, : \, B_{1/2}^{n-1}(0) \to \R^{2}$ by setting  $\phi(y) = (\phi_{1}(y), \phi_{2}(y), y) = Z_{y}$.  Then $\phi$ is of class $C^{1, \alpha}$ since 
${\rm graph} \, \phi = \CC_{V}$ and, by Theorem~\ref{thm:B}, $\CC_{V}$ is an $(n-1)$-dimensional $C^{1, \alpha}$  submanifold. Moreover, 
the tangent plane to $\CC_{V}$ at a point $Z \in \CC_{V}$ is the spine of the cone $\Gamma_{Z \, \#} \BC_{Z}$, which is $\Gamma_{Z}(\{(0, 0)\} \times \R^{n-1}).$ Thus for any $y \in \R^{n-1}$ with $|y| < 1/2$, 
$D\phi_{1}(y)(\{(0, 0)\} \times \R^{n-1}) = \pi_{e_{1}} \, \Gamma_{\phi(y)}(\{(0, 0)\} \times \R^{n-1})$ and $D\phi_{2}(y)(\{(0, 0)\} \times \R^{n-1}) = \pi_{e_{2}} \, \Gamma_{\phi(y)}(\{(0, 0)\} \times \R^{n-1}),$ where $\pi_{e_{i}}$ denotes the orthogonal projection 
$X \mapsto (X \cdot e_{i})e_{i},$ $X \in \R^{n+1}$. 

On the other hand, by (\ref{E:rotate-est}) and (\ref{rotation-equality}), we have that for any $j=3, 4, \ldots, n+1$, if we let $a_j = \sum_{\ell=3}^{n+1}a_{j\ell}e_\ell\in \{(0,0)\}\times\R^{n-1}$ be such that
$$\widetilde{\Gamma}_{\widetilde{Z}}(a_j) = \left[\left(\Gamma_k^{Z_2}\right)^{-1}\circ\Gamma_{Z_1}\right](e_j)$$
then we have, by the triangle inequality, $\dist(\pi_{e_1}\Gamma_{Z_1}(e_j), \pi_{e_1}\Gamma_{Z_2}(\{(0,0)\}\times\R^{n-1})) \leq |e_1\cdot\Gamma_{Z_1}(e_j) - e_1\cdot \Gamma_{Z_2}(a_j)| = |\Gamma^{-1}_{Z_2}(e_1)\cdot ((\Gamma_{Z_2}^{-1}\circ\Gamma_{Z_1})(e_j) - a_j)| \leq |(\Gamma_{Z_2}^{-1}(e_1)-e_1)\cdot(\Gamma^{-1}_{Z_2}\circ\Gamma_{Z_1}(e_j)-a_j)| + |e_1\cdot(\Gamma_{Z_2}^{-1}\circ\Gamma_{Z_1}(e_j)-a_j)| \leq CQ_{V,\BC}|\Gamma_{Z_2}^{-1}\circ\Gamma_{Z_1}(e_j) - a_j| + |e_1\cdot(\Gamma_{Z_2}^{-1}\circ\Gamma_{Z_1}(e_j)-a_j)|$, where in the last inequality we have used (\ref{E:rotateZ}). To bound the first term note that $|\Gamma^{-1}_{Z_2}\circ\Gamma_{Z_1}(e_j)-a_j| \leq |(\Gamma_k^{Z_2})^{-1}\circ\Gamma_{Z_1}(e_j)-a_j| + |(\Gamma_{Z_2}^{-1}-(\Gamma_k^{Z_2})^{-1})(\Gamma_{Z_1}(e_1))| \leq |\widetilde{\Gamma}_{\widetilde{Z}}(a_j) - a_j| + |\Gamma^{-1}_{Z_2}-(\Gamma^{Z_2}_k)^{-1}||\Gamma_Z(e_1)| \leq C\sigma_k^\alpha$, using (\ref{E:rotate-est}), (\ref{fine-excess-est}), (\ref{fine-coarse-Z}), and (\ref{E:rotateZ}). Thus now we have
$$\dist(\pi_{e_1}\Gamma_{Z_1}(e_j),\pi_{e_1}\Gamma_{Z_2}(\{(0,0)\}\times\R^{n-1})) \leq C|Z_1-Z_2|^\alpha Q_{V,\BC} + |e_1\cdot (\Gamma_{Z_2}^{-1}\circ\Gamma_{Z_1}(e_j)-a_j)|$$
for some $C = C(n,Q,\alpha)$. To deal with the remaining term note that, by the triangle inequality, $|e_1\cdot(\Gamma_{Z_2}^{-1}\circ\Gamma_{Z_1}(e_j)-a_j)| \leq |e_1\cdot((\Gamma_k^{Z_2})^{-1}\circ\Gamma_{Z_1}(e_j)-a_j)| + |e_1\cdot(\Gamma_{Z_2}^{-1}\circ\Gamma_{Z_1}(e_j)-(\Gamma_k^{Z_2})^{-1}\circ\Gamma_{Z_1}(e_j))| \leq |e_1\cdot(\widetilde{\Gamma}_{\widetilde{Z}}(a_j)-a_j)| + |(\Gamma_{Z_1}^{-1}\circ\Gamma_{Z_2}(e_1) - \Gamma_{Z_1}^{-1}\circ\Gamma_{k}^{Z_2}(e_1))\cdot e_j| \leq |e_1\cdot\widetilde{\Gamma}_{\widetilde{Z}}(a_j)| + |(\Gamma^{-1}_{Z_1}\circ(\Gamma_{Z_2}-\Gamma^{Z_2}_k)(e_1)| \leq |(e_1-\widetilde{\Gamma}_{\widetilde{Z}}(e_1))\cdot\widetilde{\Gamma}_{\widetilde{Z}}(a_j)| + |\Gamma_{Z_1}^{-1}||\Gamma_{Z_2}(e_1)-\Gamma_k^{Z_2}(e_1)| \leq |e_1-\widetilde{\Gamma}_{\widetilde{Z}}(e_1)| + C|\Gamma_{Z_2}(e_1)-\Gamma_k^{Z_2}(e_1)|$, where we have used that $a_j\cdot e_1 = 0$, $|a_j| =1$, and (\ref{E:rotateZ}) to bound $|\Gamma_{Z_1}^{-1}|$. Thus using (\ref{E:rotate-est}), (\ref{fine-excess-est}), (\ref{fine-coarse-Z}), and (\ref{E:rotateZ}) we see $|e_1\cdot(\Gamma_{Z_2}^{-1}\circ\Gamma_{Z_1}(e_j)-a_j)| \leq C\sigma_k^\alpha Q_{V,\BC}$, and thus
$$\dist(\pi_{e_1}\Gamma_{Z_1}(e_j),\pi_{e_1}\Gamma_{Z_2}(\{(0,0)\}\times\R^{n-1}))\leq C|Z_1-Z_2|^\alpha Q_{V,\BC}$$
for each $j=3,4,\dotsc,n+1$, for some $C = C(n,Q,\alpha)$. Similar reasoning, using the estimate $|\widetilde{\Gamma}_{\widetilde{Z}}(e_2)-e_2|\leq C\hat{E}_{\widetilde{V}_{\widetilde{Z}}}^{-1}Q_{\widetilde{V}_{\widetilde{Z}},\BC^{\prime\prime}}$, gives
$$\dist(\pi_{e_2}\Gamma_{Z_1}(e_j),\pi_{e_2}\Gamma_{Z_2}(\{(0,0)\}\times\R^{n-1})) \leq C|Z_1-Z_2|^\alpha\hat{E}_{V}^{-1}Q_{V,\BC}$$
for $j=3,4,\dotsc,n+1$. These bounds readily give the desired H\"older continuity estimates for $D\phi_1$ and $D\phi_2$. To see the supremum bound on just $D\phi_1$, $D\phi_2$, note similarly to the above that for each such $\phi(y) = Z$ and each $j=3,\dotsc,n+1$,
$$|e_1\cdot \Gamma_{Z}(e_j)| = |(e_1-\Gamma_Z(e_1))\cdot \Gamma_Z(e_j)| \leq |e_1-\Gamma_Z(e_1)| \leq CQ_{V,\BC}$$
and the bound on $D\phi_2$ follows in the same way. The supremum bounds on $\phi_1,\phi_2$ follow immediately from \cite[Corollary 10.2]{wickramasekera2014general}. This completes the proof of the theorem.
\end{proof}

\subsection{Coarse blow-ups of varifolds in $\S_{Q}$ and their initial properties}\label{initial-cb} 

We now recall the definition of the coarse blow-up class, $\FB_Q$, as defined in \cite[Section 5]{wickramasekera2014general}. 

Let $(V_k)_k$ be a sequence of $n$-dimensional stationary integral varifolds on $B^{n+1}_2(0)$ such that for each $k=1,2,3,\dotsc$:
\begin{equation}\tag{$\star$}
(\w_n 2^n)^{-1}\|V_k\|(B^{n+1}_2(0))<Q+1/2;\ \ \ \ Q-1/2 \leq \w_n^{-1}\|V_k\|(\R\times B^n_1(0))<Q+1/2.
\end{equation}
Assume also that $\hat{E}_k\to 0$, where $\hat{E}_k$ is the one-sided height excess of $V_k$ relative to $\{0\}\times\R^n$, i.e.
$$\hat{E}_k^2 \equiv \hat{E}_{V_k}^2 = \int_{\R\times B^n_1(0)}|x^1|^2\ \ext\|V_k\|(X)\;,$$
where $X =(x^1,x^2,\dotsc,x^{n+1})$. Let $\sigma\in (0,1)$. By applying \cite[Corollary 3.11]{almgrenalmgren}, for all sufficiently large $k$, there exist Lipschitz functions $u^j_k:B_\sigma^n(0)\to \R$, $j=1,\dotsc,Q$, with $u^1_k\leq u^2_k\leq \cdots\leq u^Q_k$ and $\Lip(u^j_k)\leq 1/2$ for each $j\in \{1,2,\dotsc,Q\}$ and such that
\begin{equation}\label{good-set}
\spt\|V_k\|\cap (\R\times(B_\sigma\setminus\Sigma_k)) = \bigcup_{j=1}^{Q}\graph(u^j_k)\cap (\R\times(B_\sigma\setminus\Sigma_k)\;,
\end{equation}
where for each $k$, $\Sigma_k\subset B_\sigma$ is a measurable subset with 
\begin{equation}\label{badset-est}
\H^n(\Sigma_k) + \|V_k\|(\R\times\Sigma_k) \leq C\hat{E}^2_k
\end{equation}
for some $C = C(n,Q,\sigma)$; we set $\Omega_k:= B^n_1(0)\setminus \Sigma_k$. Now set $v^j_k(x):= \hat{E}^{-1}_k u^j_k(x)$ for $x\in B_\sigma$, and write $v_k = (v^1_k,v_k^2,\dotsc,v_k^{Q})$. Then $v_k$ is Lipschitz on $B_\sigma$, and moreover it can be readily checked (see \cite[inequalities~(5.8) \& (5.9)]{wickramasekera2014general}) that $\|v_k\|_{W^{1,2}(B_\sigma)} \leq C$ for some $C = C(n,Q,\sigma)$. Thus as $\sigma\in (0,1)$ is arbitrary, we can apply Rellich's compactness theorem and a diagonal argument to obtain a function $v\in W^{1,2}_{\text{loc}}(B_1;\R^Q)\cap L^2(B_1;\R^Q)$ and a subsequence $(k_j)$ of $(k)$ such that $v_{k_j}\to v$ as $j\to\infty$, strongly in $L^2(B_\sigma;\R^Q)$ and weakly in $W^{1,2}(B_\sigma;\R^Q)$, for every $\sigma\in (0,1)$.

\begin{defn}
	Let $v\in W^{1,2}_{\text{loc}}(B_1;\R^Q)\cap L^2(B_1;\R^Q)$ correspond, in the manner described above, to (a subsequence of) a sequence $(V_k)_k$ of stationary integral $n$-varifolds of $B_2^{n+1}(0)$ satisfying $(\star)$ and with $\hat{E}_{V_k}\to 0$. We call such a $v$ a \textit{coarse blow-up} of the sequence $(V_k)_k$.
\end{defn}

\begin{defn}
	We write $\FB_Q$ for the collection of all coarse blow-ups of sequences of varifolds $(V_k)_k\subset \S_Q$ satisfying $(\star)$ and for which $\hat{E}_{V_k}\to 0$.
\end{defn}

In the same way as \cite[Section 8]{wickramasekera2014general}, we can show that $\FB_{Q}$ satisfies the following properties, where (in $(\FB4\textnormal{I})$) the constant $C = C(n, Q) \in (0, \infty)$ depends only on $n$ and $Q$:
\begin{enumerate}
	\item [$(\FB1)$] $\FB_Q\subset W^{1,2}_{\text{loc}}(B_1;\R^Q)\cap L^2(B_1;\R^Q)$;
	\item [$(\FB2)$] If $v\in \FB_Q$, then $v^1\leq v^2\leq \cdots \leq v^Q$ a.e.\ in $B_{1}$;
	\item [$(\FB3)$] If $v\in \FB_Q$, then $\Delta v_a = 0$ in $B_1$, where $v_a = Q^{-1}\sum^Q_{j=1}v^j$ a.e.\ in $B_{1}$;
	\item [$(\FB4)$] For each $v\in \FB_Q$ and $z\in B_1$, either $(\FB4\textnormal{I})$ or $(\FB4\textnormal{II})$ below is true:
	\begin{enumerate}
		\item [$(\FB4\textnormal{I})$] The \textit{Hardt-Simon inequality}
		$$\sum^Q_{j=1}\int_{B_{\rho/2}(z)} R_z^{2-n}\left(\frac{\del((v^j-v_a(z))/R_{z})}{\del R_z}\right)^2 \leq C\rho^{-n-2}\int_{B_\rho(z)}|v-\ell_{v,z}|^2$$
		holds for each $\rho\in (0,\frac{3}{8}(1-|z|)]$, where $R_z(x) = |x-z|$, $\ell_{v,z}(x) = v_a(z) + Dv_a(z)\cdot (x-z)$ and $v-\ell_{v,z} = (v^1-\ell_{v,z},v^2-\ell_{v,z},\dotsc,v^Q-\ell_{v,z})$;
		\item [$(\FB4\textnormal{II})$] There exists $\sigma = \sigma(z)\in (0,1-|z|]$ such that (after redefining $\left.v \right|_{ B_{\sigma}(z)}$ on a set of measure zero) $\Delta v = 0$ in $B_\sigma(z)$;
	\end{enumerate}
	\item [$(\FB5)$] If $v\in \B_{Q}$, then
	\begin{enumerate}
		\item [$(\FB5\textnormal{I})$] $v_{z,\sigma}(\cdot) \equiv \|v(z+\sigma(\cdot))\|^{-1}_{L^2(B_1(0))}v(z+\sigma(\cdot))\in \B_{Q}$ for each $z\in B_1$ and $\sigma\in (0,\frac{3}{8}(1-|z|)]$ such that $v\not\equiv 0$ in $B_\sigma(z)$;
		\item [$(\FB5\textnormal{II})$] $v\circ\gamma\in\B_{Q}$ for each orthogonal rotation $\gamma$ of $\R^n$;
		\item [$(\FB5\textnormal{III})$] $\|v-\ell_v\|^{-1}_{L^2(B_1(0))}(v-\ell_v)\in \B_{Q}$ whenever $v-\ell_v\not\equiv0$ in $B_1$, where $\ell_v(x) = v_a(0) + Dv_a(0)\cdot x$ for $x\in \R^n$ and $v-\ell_v = (v^1-\ell_v,v^2-\ell_v,\dotsc,v^Q-\ell_v)$;
	\end{enumerate}
	\item [$(\FB6)$] If $(v_k)_{k=1}^\infty\subset \B_{Q}$ then there exists a subsequence $(k')$ of $(k)$ and a function $v\in \B_{Q}$ such that $v_{k'}\to v$ locally in $L^2(B_1)$ and locally weakly in $W^{1,2}(B_1)$.
\end{enumerate}

The only slight difference to note in proving $(\FB1)-(\FB6)$ compared to that seen in \cite{wickramasekera2014general} is in the proof of $(\FB4)$. If for some $v\in \FB_Q$ and $z\in B_1$ we have that $(\FB4\text{I})$ does not hold, then in the same way as \cite[Equation (8.8)]{wickramasekera2014general} we can show that there is $\sigma_1>0$ such that, if $(V_k)_k\subset\S_Q$ is a sequence of stationary integral varifolds generating $v$, then for all sufficiently large $k$:
$$Z\in \spt\|V_k\|\cap (\R\times B_{\sigma_1}(z))\ \ \Longrightarrow\ \ \Theta_{V_k}(Z)<Q.$$
In particular, there are no classical singularities of density $\geq Q$ in $\spt\|V_k\|\cap (\R\times B_{\sigma_1}(z))$. But by assumption $(S3)_Q$ there are no classical singularities in $V_k$ of density $<Q$ in $\spt\|V_k\|\cap (\R\times B_{\sigma_1}(z))$. Thus there are no classical singularities in $\spt\|V_k\|\cap (\R\times B_{\sigma_1}(z))$ for all $k$ sufficiently large, and so we can apply the sheeting theorem \cite[Theorem 3.3]{wickramasekera2014general} and standard elliptic PDE theory to conclude that for all $k$ sufficiently large
$$V_k\res (\R\times B_{\sigma_1/2}(z)) = \sum^Q_{j=1}|\graph(u^j_k)|\;,$$
where $u^j_k:B_{\sigma_1/2}(z)\to \R$ are $C^2$ functions satisfying
$$\sup_{B_{\sigma_1/2}(z)}\left(\sum^Q_{j=1}|Du^j_k| + |D^2 u^j_k|\right) \leq C\hat{E}_k$$ 
and solving the minimal surface equation on $B_{\sigma_1/2}(z)$, where $C = C(n,Q,\sigma_1)\in (0,\infty)$. This readily shows that $(\FB4\text{II})$ holds, with $\sigma = \sigma_1/2$.

{\bf Notation:} For $v\in \FB_Q$ we write $$\Gamma_{v}^{\textnormal{HS}} = \{y \in B_{1} \, : \, (\FB4\textnormal{I}) \text{ holds with $z = y$}\}\text{,} \;\;\;\; \text{and}\;\;\;\; \Gamma_v = \Gamma_v^{\textnormal{HS}}\setminus\Omega_v\;,$$
where
$$\Omega_v = \{x\in B_1:\text{there exists }\rho\in (0,1-|x|]\text{ such that } v^1=v^2=\cdots=v^Q\text{ a.e. in }B_\rho(x)\}.$$
Note that it follows from property $(\FB4)$ that $\Gamma_v^{\textnormal{HS}}$, $\Gamma_{v}$, are relatively closed subsets of $B_{1}^{n}(0).$

\noindent
{\bf Remark:}  We note the following: let $v \in \FB_{Q}$ be the coarse blow-up of a sequence $(V_{k})_{k} \subset \S_{Q}$ 
with $\Theta_{V_{k}}(0) \geq Q$ for each $k$. Then  $v_{a}(0) = 0$ and 
$(\FB4\textnormal{I})$ holds with $z = 0$. 

To see this, note that by exactly the same argument leading to \cite[inequality~(8.9)]{ wickramasekera2014general}, we obtain that for any $\rho \in (0, 3/8]$, any $\sigma \in (0, \rho/4)$ and sufficiently large $k$, 
\begin{align*}
\left(\frac{\sigma^{2}}{\delta_{k}^{2} + \sigma^{2}}\right)^{\frac{n+2}{2}} \sum^Q_{j=1}\int_{B_{\rho/2}(0) \setminus (B_{\sigma}(0) \cup \Sigma_{k})} 
R^{2-n}\left(\frac{\del(u_{k}^{j}/R)}{\del R}\right)^2\\ 
&\hspace{-1in}\leq C_{2} \rho^{-n-2}\int_{\R \times B_\rho(0)}|x^{1}|^{2} \, \ext\|V_{k}\|,
\end{align*}
where $u_{k}$, $\Sigma_{k}$ are as in (\ref{good-set}) and (\ref{badset-est}), 
$R(x) = |x|,$ $\delta_{k} \to 0$ and $C_{2} = C_{2}(n, Q)$. 
Dividing this inequality  by $\hat{E}_{k}^{2}$ and letting $k \to \infty$, 
and then letting $\sigma \to 0$, we see that 
$$\sum^Q_{j=1}\int_{B_{\rho/2}(0)} R^{2-n}\left(\frac{\del(v^j/R)}{\del R}\right)^2 \leq C\rho^{-n-2}\int_{B_\rho(0)}|v|^{2}.$$ By the triangle inequality, this in particular implies that 
$\int_{B_{\rho/2}(0)} R^{2-n}\left(\frac{\del(v_{a}/R)}{\del R}\right)^2 < \infty$ which in turn readily implies, since $v_{a}$ is $C^{1}$, 
that $v_{a}(0) = 0.$  To see that $(\FB4\textnormal{I})$ with $z=0$ holds, note first that if $v^{j}(x) = Dv_{a}(0)\cdot x$ for all $j=1, 2, \ldots, Q$ and all $x \in B_{1}$, then $(\FB4\textnormal{I})$ with $z=0$ holds with both sides equal to zero. Otherwise, let $L(x) = Dv_{a}(0) \cdot x$ and note that for each fixed sufficiently large $\sigma \in (0, 1)$, choosing for each $k$ an appropriate orthogonal rotation $\Gamma_{k}$ of $\R^{n+1}$ that takes the hyperplane ${\rm graph} \, \hat{E}_{k} L$ to $\{0\} \times \R^{n}$,  and passing to a subsequence of $(k)$ without relabelling, we have that the coarse blow-up of the sequence $(W_{k})_k$ where $W_{k} = \Gamma_{k \, \#} \, \eta_{0, \sigma \, \#} \, V_{k}$ is $w(x) \equiv \|v(\sigma(\cdot)) - \sigma L\|_{L^{2}(B_{1})}^{-1}(v(\sigma x) -\sigma L(x)),$ and moreover, we have $\Theta_{W_{k}}(0) \geq Q$. So we can apply the above inequality with $w$ in place of $v,$ and then let $\sigma \to 1$ to see that $(\FB4\textnormal{I})$ holds for $v$ with $z= 0$. 

In the present setting, property $(\FB7)$ of \cite[Section 4]{wickramasekera2014general} is no longer true;
instead, $(\FB7)$ is replaced by an $\epsilon$\textit{-regularity property} (Theorem~\ref{thm:B7} below) for the coarse blow-ups. This relaxation of property $(\FB7)$ allows coarse blow-ups to contain branch points, making their analysis  considerably more involved than that seen in \cite[Section 4]{wickramasekera2014general}. 
In particular, for this purpose we shall use Almgren's \textit{frequency function} which we show is monotone subject to a regularity assumption on the coarse blow-ups (and ultimately unconditionally, once we establish regularity). 

We will also need the following preliminary result concerning the structure of coarse blow-ups in $\FB_Q$ which are close to elements in $\CC_Q$; this result is a direct consequence of \cite[Lemma~9.1]{wickramasekera2014general}. We shall improve its conclusions in Theorem~\ref{thm:B7} below. 

\begin{prop}\label{prop:sheets}
	 Let $v\in \FB_Q$ and suppose that $0\in \Gamma_v$ and $\|v\|_{L^2(B_1(0))} = 1$. Let $\psi:\R^n\to \A_Q$ be such that $\psi_a\equiv 0$, $\mathbf{v}(\psi)\in \CC_Q,$ the spine 
	 $S(\psi) = \{(0, 0)\} \times \R^{n-1},$ and 
	$$\int_{B_1(0)}\G(v,\psi)^2<\epsilon,$$
	where $\epsilon \in (0, 1)$. Then:
	\begin{enumerate}
		\item[(i)] there is $\epsilon_{0} = \epsilon_{0}(n, Q)$ such that if the above holds with $\epsilon \leq \epsilon_{0}$ and if we write $\psi|_{\R^n_+} = \sum^Q_{\alpha=1}\llbracket h^\alpha \rrbracket$ and $\psi|_{\R^n_-} = \sum^Q_{\alpha=1}\llbracket g^\alpha \rrbracket$ where $h^\alpha:\R^n_+\to \R$, $g^\alpha:\R^n_-\to \R$ are of the form $h^\alpha(x^2,\dotsc,x^{n+1}) = \lambda^\alpha x^2$ and $g^\alpha(x^2,\dotsc,x^{n+1}) = \mu^\alpha x^2$, where $\lambda^1\leq \cdots\leq \lambda^Q$ and $\mu^1\leq \cdots\leq \mu^Q$, then $|\lambda^1-\lambda^Q|\geq C$ and $|\mu^1-\mu^Q|\geq C$ for some $C = C(n,Q)\in (0,\infty)$;
		\item[(ii)] for each $\tau\in (0,1/2)$, $\sigma\in (1/2,1)$, there is $\epsilon_{0} = \epsilon_{0}(n, Q, \tau, \sigma) \in (0, 1)$ such that if the above holds with $\epsilon \leq \epsilon_{0}$ then for any sequence of varifolds $(V_j)_j\subset\S_Q$ that generates $v$, we have that for all $j$ sufficiently large,
		$$(\R\times B_{\sigma})\cap\{|x^2|>\tau\}\subset \{Z:\Theta_{V_j}(Z)<Q\};$$
		\item[(iii)] for each $\tau\in (0,1/2)$, $\sigma\in (1/2,1)$, there is $\epsilon_{0} = \epsilon_{0}(n, Q, \tau, \sigma) \in (0, 1)$ such that if the above holds with 
		$\epsilon \leq \epsilon_{0}$ then $v$ is harmonic on $B_{(1+\sigma)/2}\cap \{|x^2|>\tau/2\}$ and, with the notation as in \textnormal{(i)},  
		$$\sum_{\alpha=1}^{Q} \left(\sup_{x \in \{x^{2} > \tau\} \cap B_{\sigma}} \, |v^{\alpha}(x) - h^{\alpha}(x)|^{2} + \sup_{x \in \{ x^{2} < -\tau\} \cap B_{\sigma}} \, 
		|v^{\alpha}(x) - g^{\alpha}(x)|^{2} \right)\leq C\int_{B_1(0)}\G(v,\psi)^2,$$ 
		where $C = C(n, Q, \tau, \sigma)$.
	\end{enumerate}
\end{prop}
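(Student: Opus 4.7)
The strategy is to transfer the statement to the varifold level and invoke \cite[Lemma~9.1]{wickramasekera2014general}, the $\epsilon$-regularity result for stable codimension-1 varifolds close to a classical cone. Let $(V_k)_k \subset \S_Q$ be a sequence of varifolds generating the coarse blow-up $v$, with $\hat{E}_{V_k} \to 0$. The Lipschitz graph approximation in the definition of coarse blow-up translates the hypothesis $\int_{B_1} \G(v,\psi)^2 < \epsilon$ into a height excess bound
\[
\int_{\R \times B_1} \dist^2(X, \spt\|\BC_k\|)\, \ext\|V_k\|(X) < C\epsilon\, \hat{E}_{V_k}^2
\]
for $k$ sufficiently large, where $\BC_k = {\bf v}(\hat{E}_{V_k}\psi) \in \CC_Q$ is the scaled classical cone of height excess comparable to $\hat{E}_{V_k}$. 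This places $V_k$ exactly in the setting of Wickramasekera's lemma.

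\textbf{For (i),} I would argue by contradiction. Suppose along $\epsilon_k \to 0$ one has $|\lambda_k^1 - \lambda_k^Q| \to 0$. Since $\psi_a \equiv 0$ gives $\sum_\alpha \lambda_k^\alpha = 0$, each $\lambda_k^\alpha \to 0$, so $\psi_k$ collapses to zero on $\R^n_+$. Via $(\FB6)$ extract $v_k \to v_\infty \in \FB_Q$; then $v_\infty \equiv 0$ on $\R^n_+ \cap B_1$ and $v_\infty = \sum \llbracket \mu_\infty^\alpha x^2 \rrbracket$ on $\R^n_- \cap B_1$. The average $v_\infty^a$ is harmonic on $B_1$ by $(\FB3)$; vanishing on a half-ball, it vanishes throughout by unique continuation, forcing $\sum \mu_\infty^\alpha = 0$. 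Then at the varifold level, a sequence $(W_j) \subset \S_Q$ generating $v_\infty$ is substantially flatter on the $\R^n_+$ side than the coarse excess $\hat{E}_{W_j}$ indicates, so the $W_j$ are asymptotically closer to $Q|\{0\} \times \R^n_+|$ on $\R^n_+$ than to the non-trivial cone on $\R^n_-$. This one-sided degeneracy is incompatible with the quantitative content of \cite[Lemma~9.1]{wickramasekera2014general}, which forces the slopes of the cone approximating a stable varifold at a given scale to be uniformly bounded apart on both sides; the same argument with the roles of $\R^n_\pm$ interchanged handles $|\mu^1 - \mu^Q|$.

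\textbf{For (ii),} once (i) is in hand, the slopes of $\psi$ are uniformly separated, so for $k$ large the cone ${\bf v}(\hat{E}_{V_k}\psi)$ has definite angles between all half-hyperplanes. The absence of density-$\geq Q$ points of $V_k$ in $(\R \times B_\sigma) \cap \{|x^2| > \tau\}$ then follows directly from \cite[Lemma~9.1]{wickramasekera2014general}, with constants depending on $n, Q, \tau, \sigma$. \textbf{For (iii),} on the slightly larger region $\{|x^2| > \tau/2\} \cap B_{(1+\sigma)/2}$, conclusion (ii) together with $(\S3)_Q$ rules out all classical singularities of $V_k$ for $k$ large, so Theorem~\ref{sheeting} applies on suitable balls, decomposing $V_k$ there as $Q$ ordered smooth minimal graphs. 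The coarse blow-ups of these graphs are classical harmonic functions, giving harmonicity of $v$ on this region; the $C^0$ estimate $\sup |v^\alpha - h^\alpha|^2 + \sup |v^\alpha - g^\alpha|^2 \leq C \int_{B_1} \G(v,\psi)^2$ then follows from standard interior elliptic estimates on the bounded harmonic functions $v^\alpha - h^\alpha$ and $v^\alpha - g^\alpha$, using the $L^2$ bound as input.

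\textbf{Main obstacle.} The genuine difficulty lies in (i): non-degeneracy of the slopes cannot be extracted by soft compactness alone and requires the variational/stability content of \cite[Lemma~9.1]{wickramasekera2014general}. Once (i) is established, (ii) and (iii) are near-direct corollaries of the $\epsilon$-regularity lemma together with the sheeting theorem and standard elliptic regularity.
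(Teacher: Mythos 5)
Your treatment of (i) and (iii) essentially reproduces the paper's proof. For (i) you use the closure property $(\FB6)$ where the paper runs an explicit diagonal extraction over a double sequence $(V_{k,\ell})$, but the endpoint is the same: a coarse blow-up in $\FB_Q$ whose graph is a classical cone degenerating to a multiplicity $Q$ half-hyperplane on one side, contradicting \cite[Lemma~9.1]{wickramasekera2014general}. For (iii), once (ii) is known, $(\S3)_Q$ removes all classical singularities away from the axis, the sheeting theorem gives $Q$ ordered minimal graphs there, and elliptic estimates finish — exactly as in the paper.

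The gap is in (ii). As it is used here, \cite[Lemma~9.1]{wickramasekera2014general} is a statement about the \emph{structure of coarse blow-ups} (it forbids blow-ups whose graph is a classical cone all of whose sheets coincide on one side of the axis); it says nothing directly about where the density-$\geq Q$ points of the varifolds $V_j$ may sit, so (ii) does not ``follow directly'' from it even after (i) is known. The missing mechanism is the Hardt--Simon inequality. If (ii) fails along a sequence, one extracts (by the same compactness as in (i)) a limiting $\psi$ with ${\mathbf v}(\psi)\in\CC_Q$ which is the coarse blow-up of a sequence $(W_j)\subset\S_Q$ admitting points $Z_j\in(\R\times B_\sigma)\cap\{|x^2|>\tau\}$ with $\Theta_{W_j}(Z_j)\geq Q$. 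By the argument establishing property $(\FB4)$, the persistence of such points forces $(\FB4\textnormal{I})$ to hold for $\psi$ at some $z\in\overline{B}_\sigma\cap\{|x^2|\geq\tau\}$, and then Lemma~\ref{continuity} gives $\psi^1(z)=\cdots=\psi^Q(z)=\psi_a(z)=0$ at a point with $z^2\neq 0$; hence all slopes on that side coincide, contradicting (i). Without this step, separation of the extreme slopes $\lambda^1,\lambda^Q$ by itself does not preclude density-$Q$ points on the wings. Note also that the alternative varifold-level route you gesture at (sheeting near each half-hyperplane of ${\bf v}(\hat E_{V_k}\psi)$ on $\{|x^2|>\tau\}$) faces a uniformity obstruction: (i) separates only $\lambda^1$ from $\lambda^Q$, the intermediate slopes may cluster arbitrarily, and the mass ratio of $V_k$ in any ball of fixed radius in that region tends to $Q$, not to something $<Q-1/4$; since $\epsilon_0$ must depend only on $(n,Q,\tau,\sigma)$ and not on $\psi$, one cannot split $V_k$ there into multiplicity-$<Q$ pieces with uniform constants. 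The blow-up-level contradiction argument is what circumvents this.
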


\begin{proof}
	To see (i) we argue by contradiction. If (i) were not true then for each $k= 1, 2, 3, \ldots$ there is $v_k\in \FB_Q$ with $0\in \Gamma_{v_k}$, $\|v_k\|_{L^2(B_1(0))} = 1,$ and 
	$\psi_k:\R^n\to \A_Q$ with $(\psi_k)_a\equiv 0$ and $\mathbf{v}(\psi_{k})\in \CC_Q$ such that, if $h^\alpha_k, g^\alpha_k, \lambda^\alpha_k, \mu^\alpha_k$ are the functions and coefficients as in (i) for $\psi_k$:
	$$\int_{B_1(0)}\G(v_k,\psi_k)^2 \to 0\ \ \ \ \text{and}\ \ \ \ \min\{|\lambda^1_k-\lambda^Q_k|,\ |\mu^1_k-\mu^Q_k|\}\to 0$$
	as $k\to\infty$. We may then pass to a subsequence to ensure $\psi_k\to \psi$ for some $\psi\in \CC_Q$; we know that $\mathbf{v}(\psi)$ is not a supported on a single hyperplane as $\|\psi\|_{L^2(B_1(0))} = 1$ and $\psi_a\equiv 0$. Without loss of generality we can assume that $|\lambda^1_k-\lambda^Q_k|\to 0$, and so if $\lambda^1,\dotsc,\lambda^Q$ are the corresponding quantities for $\psi$, we see $\lambda^1 =\cdots= \lambda^Q = 0$, i.e. on $\R^n_+$, $\mathbf{v}(\psi)$ coincides with a multiplicity $Q$ half-hyperplane.
	
	Since $v_k\in \FB_Q$ we may find a sequence $(V_{k,\ell})_\ell \subset \S_Q$ with $V_{k,\ell}\weakly Q|\{0\}\times\R^n|$ in $\R\times B_1(0)$ as $\ell\to \infty$. Since $0\in \Gamma_{v_k}$ we have  that for each $k$ and all $\ell$ sufficiently large, 
	$\Theta_{V_{k,\ell}}(Z_{k,\ell})\geq Q$ for some sequence of points $(Z_{k,\ell})_{\ell}$ with $Z_{k,\ell}\to 0$ as $\ell\to \infty$ (see the argument establishing property $(\FB4),$ given immediately following the list of properties $(\FB1)-(\FB6)$). By translating and rescaling $V_{k,\ell}$ appropriately, and relabelling the indices, we may assume without loss of generality that $\Theta_{V_{k,\ell}}(0)\geq Q$ for each $k$ and $\ell$; this ensures by the monotonicity formula that the mass bounds $(\star)$ in Section \ref{initial-cb} hold for the $V_{k,\ell}$. Write $u_{k,\ell}$ for the $Q$-valued Lipschitz functions approximating $V_{k,\ell}$ (on $B_{\sigma_\ell}(0)$ with $\sigma_\ell\uparrow 1$) which generate $v_k$ after scaling by the height excess $\hat{E}_{V_{k,\ell}}$. Passing to a subsequence we can ensure that for all $k$
	$$\int_{B_1(0)}\G(v_k,\psi)^2< 1/k$$
	and additionally, for any given $\sigma\in (0,1)$ and each $k$, we can find some positive integer $\ell_k$ such that
	$$\int_{B_\sigma(0)}\G(\hat{E}^{-1}_{V_{k,\ell_{k}}}u_{k,\ell_{k}},v_k)^2 < 1/k,$$
	whence
	$$\int_{B_\sigma(0)}\G(\hat{E}^{-1}_{V_{k,\ell_k}}u_{k,\ell_k},\psi)^2<2/k.$$
	In particular we have that $\psi$ is the coarse blow-up of $V_{k,\ell_k}$, and so $\psi\in \FB_Q$. But then this is a direct contradiction to 
	\cite[Lemma~9.1]{wickramasekera2014general} taken with $q = Q$; note that even though the statement of \cite[Lemma~9.1]{wickramasekera2014general} assumes $\psi$ (denoted $v^{\star}$ in the notation of   \cite[Lemma~9.1]{wickramasekera2014general}) is a coarse blow-up of a sequence $V_{j}$ of stable codimension 1 integral varifolds with \emph{no} classical singularities, its proof requires only that $V_{j}$ have no classical singularities of density $< Q$, i.e.\ that $V_{j} \in \S_{Q}$.
	
	To prove (ii) we follow the same contradiction argument, now using the result from (i). In particular if (ii) were not true, by the same argument as for (i) we can find sequences $(v_k)_k\subset \FB_Q$ and $(\psi_k)_k$ with $0\in \Gamma_{v_k}$, $\|v_k\|_{L^2(B_1(0))}=1$, $(\psi_k)_a\equiv 0$ and $\mathbf{v}(\psi_{k})\in \CC_Q$ which obey $\int_{B_1(0)}\G(v_k,\psi_k)^2\to 0$. We may pass to a subsequence to ensure that $\psi_k\to \psi$ for some $\psi\in \CC_Q$, where from (i) we now know that if $\lambda^\alpha,\mu^\alpha$ are as in (i), then $|\lambda^1-\lambda^Q|\geq C$ and $|\mu^1-\mu^Q|\geq C$ for some $C = C(n,Q)\in (0,\infty)$. Following the construction in (i), under the assumption that (ii) fails, we find that $\psi$ is the coarse blow-up of a sequence $(W_j)_j\subset \S_Q$ such that for each $j$ we have a point $Z_j\in (\R\times B_{\sigma})\cap \{|x^2|>\tau\}$ with $\Theta_{W_j}(Z_j)\geq Q$. But then by the argument establishing property $(\FB4)$ we see that we must have that $(\FB4\text{I})$ holds, with $\psi$ in place of $v$, for some $z \in\overline{B}_{\sigma} \cap \{|x^2|\geq\tau\}$. Hence $\psi^1(z) = \cdots = \psi^Q(z) = 0$, whence $\min\{|\lambda^1-\lambda^Q|, |\mu^1-\mu^Q|\} = 0$, a contradiction. Thus (ii) must hold as claimed. 

To see (iii), note that if $\epsilon = \epsilon(n, Q, \tau, \sigma)$ is sufficiently small, then from (ii) it follows that $V_j$ has no classical singularities in the region $(\R\times B_{(3+\sigma)/4})\cap\{|x^2|>\tau/4\}$, and so we may apply Theorem~\ref{sheeting} to get that
$$V_j\res\left((\R\times B_{(1+\sigma)/2})\cap \{x^2<-\tau/2\}\right) = \sum^Q_{\alpha=1}|\graph \, u^{(\alpha, \; -)}_j|$$
and
$$V_j\res\left((\R\times B_{(1+\sigma)/2}) \cap \{x^2>\tau/2\}\right) = \sum^Q_{\alpha=1}|\graph \, u^{(\alpha, +)}_j|,$$
where $u^{(1, \; -)}_j\leq \cdots \leq u^{(Q, \; -)}_j$ and $u^{(1, \; +)}_j \leq \cdots \leq u^{(Q, \; +)}_j$ are $C^{2}$ functions on $B_{(1+\sigma)/2}\cap \{x^2<-\tau/2\}$ and $B_{(1+\sigma)/2}\cap \{x^2>\tau/2\}$ respectively, solving the minimal surface equation and satisfying 
$$\|u^{(\alpha, \; -)}_{j}\|_{C^{1}(B_{(1+\sigma)/2} \cap \{x^{2} < -\tau/2\})} +  \|u^{(\alpha, \; +)}_{j}\|_{C^{1}(B_{(1+\sigma)/2} \cap \{x^{2} > \tau/2\})} \leq C\hat{E}_j\;,$$
where $C = C(n, Q, \tau, \sigma)$. These and consequent higher derivative estimates imply that $\hat{E}_{j}^{-1}u^{(\alpha, \; -)}_{j} \to v^{\alpha}$ on $B_{(1+\sigma)/2} \cap \{x^{2} < -\tau/2\}$ and 
$\hat{E}_{j}^{-1}u^{(\alpha, \; +)}_{j} \to v^{\alpha}$ on  $B_{(1+ \sigma)/2} \cap \{x^{2} > \tau/2\},$ where the convergence is in the $C^{2}$ norm on the respective domains. It follows that 
$v^{\alpha}$ is harmonic in $B_{(1+\sigma)/2} \cap \{|x^{2}| > \tau/2\}$; the estimate asserted in (iii) is the result of a standard estimate for harmonic functions and the fact that 
the multi-valued distance ${\mathcal G}(a, b)$ is equal to the ``ordered'' distance $|a - b| = \sqrt{\sum_{\alpha=1}^{Q} |a^{\alpha} - b^{\alpha}|^{2}}$ for $a = \sum_{\alpha=1}^{Q} \llbracket 
a^{\alpha} \rrbracket \in {\mathcal A}_{Q}(\R)$, $b = \sum_{\alpha=1}^{Q} \llbracket b^{\alpha} \rrbracket \in {\mathcal A}_{Q}(\R)$ whenever the labeling is so chosen that $a^{1} \leq a^{2} \leq \cdots \leq a^{Q}$ and $b^{1} \leq b^{2} \leq \cdots \leq b^{Q}$. 
\end{proof}

\part{Proof of Theorem~A}\label{main-thm-proof}
\setcounter{section}{3}
\setcounter{subsection}{0}
\setcounter{equation}{0}
  We now begin the proof of Theorem~\ref{thm:A}. A central step in the proof is Theorem~\ref{coarse_reg} below, which gives a uniform decay estimate for the coarse blow-ups constructed in Section~\ref{initial-cb}. Our first goal is to establish this estimate. To do this, we start by deriving, in part from the properties 
  $(\FB1)$-$(\FB6)$ recorded in Section~\ref{initial-cb}, the following key additional properties of the coarse blow-ups: 
(i) H\"older continuity (Lemma~\ref{continuity}), and a homogeneity continuation property (Lemma~\ref{unique-cont}); (ii) an $\epsilon$-regularity property (Theorem~\ref{thm:B7}); (iii) the squash inequality (Lemma~\ref{squash}); (iv) an energy non-concentration estimate (Lemma~\ref{noncon2}), and (v) the squeeze identity and frequency monotonicity under the assumption of generalised-$C^{1}$ regularity (Lemma~\ref{squeeze} and Theorem~\ref{frequency}).

\subsection{A continuity estimate for coarse blow-ups}

We have the following  as a direct consequence of property $(\FB4)$ (in subsection~\ref{initial-cb}): 

\begin{lemma}[Coarse blow-up continuity estimate]\label{continuity} 
If $v \in \FB_{Q}$ then (after re-defining $v$ on a set of measure zero) we have that for any $\beta \in (0, 1)$, $\left.v\right|_{B_{\gamma}} \in C^{0, \beta}(\overline{B}_{\gamma}; {\mathcal A}_{Q}(\R))$ for each $\gamma \in (0, 1)$, and $v$ satisfies the estimate 
$$\sup_{B_{\gamma}} \, |v|^{2} + \sup_{x_{1}, x_{2} \in B_{\gamma}, \, x_{1} \neq x_{2}} \, \frac{|v(x_{1}) - v(x_{2})|^{2}}{|x_{1} - x_{2}|^{2\beta}} \leq C \int_{B_{1}} |v|^{2}\;,$$
where $C = C(n, Q, \beta, \gamma) \in (0, \infty);$ moreover, for each $z \in \Gamma_{v}^{\textnormal{HS}}$, we have that $v^{\alpha}(z) = v_{a}(z)$ for each $\alpha \in \{1, 2, \ldots, Q\}$.
\end{lemma}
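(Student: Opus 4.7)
The plan is a Campanato-type argument driven by the dichotomy in property $(\FB4)$: at each $z \in B_{1}$ either $(\FB4\textnormal{II})$ holds, giving a ball $B_{\sigma(z)}(z)$ of harmonicity for $v$, or $(\FB4\textnormal{I})$ holds, so $z \in \Gamma_{v}^{\textnormal{HS}}$ and the Hardt-Simon inequality is available. My goal is to prove that for each $z \in B_{\gamma}$ there is a constant $\mu_{z} \in \A_{Q}(\R)$ and a scale $\sigma_{0}>0$ (both appropriately uniform) such that
$$\sigma^{-n-2\beta} \int_{B_{\sigma}(z)} \G(v, \mu_{z})^{2} \, \ext x \leq C \int_{B_{1}} |v|^{2}, \qquad \sigma \in (0, \sigma_{0}],$$
with $C = C(n,Q,\beta,\gamma)$. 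Campanato's integral characterisation of H\"older continuity, applied componentwise via the isometric embedding $\A_{Q}(\R) \hookrightarrow \R^{Q}$ recorded in Section~\ref{notation}, then delivers $v \in C^{0,\beta}(\overline{B}_{\gamma}; \A_{Q}(\R))$ together with the asserted H\"older seminorm bound; the sup-bound $\sup_{B_{\gamma}} |v|^{2} \leq C \int_{B_{1}} |v|^{2}$ will follow from this H\"older bound combined with the $L^{2}$ control and standard interior harmonic estimates applied to $v_{a}$ (which by $(\FB3)$ is harmonic on $B_{1}$).

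For $z \in \Gamma_{v}^{\textnormal{HS}}$ I take $\mu_{z} = Q\llbracket v_{a}(z) \rrbracket$. Applying $(\FB4\textnormal{I})$ with $\rho := \tfrac{3}{16}(1-\gamma)$ and passing to polar coordinates centred at $z$, the Hardt-Simon inequality becomes
$$\sum_{\alpha} \int_{0}^{\rho/2} R \int_{S^{n-1}} |\partial_{R} g^{\alpha}(R,\omega)|^{2} \, \ext\omega \, \ext R \leq K_{z}\;,$$
where $g^{\alpha}(R,\omega) := (v^{\alpha}(z+R\omega) - v_{a}(z))/R$ and $K_{z} := C \rho^{-n-2} \int_{B_{\rho}(z)} |v - \ell_{v,z}|^{2}$. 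A mean-value argument over $R_{0} \in [\rho/4, \rho/2]$ produces an anchor radius with $\|g^{\alpha}(R_{0}, \cdot)\|_{L^{2}(S^{n-1})}^{2} \leq CK_{z}$. On $R \in (0, R_{0}]$, Cauchy-Schwarz applied to $\int_{R}^{R_{0}} \partial_{\tilde R} g^{\alpha} \, \ext\tilde R$ against the weights $\tilde R^{-1/2}$, $\tilde R^{1/2}$ yields $\|g^{\alpha}(R, \cdot) - g^{\alpha}(R_{0}, \cdot)\|_{L^{2}(S^{n-1})}^{2} \leq \log(R_{0}/R) \cdot K_{z}$, hence $\|g^{\alpha}(R, \cdot)\|_{L^{2}(S^{n-1})}^{2} \leq CK_{z}(1 + |\log(R_{0}/R)|)$. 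Converting back via $\int_{B_{\sigma}(z)} |v^{\alpha} - v_{a}(z)|^{2} = \int_{0}^{\sigma} R^{n+1} \|g^{\alpha}(R, \cdot)\|_{L^{2}(S^{n-1})}^{2} \, \ext R$ and computing $\int_{0}^{\sigma} R^{n+1} |\log R| \, \ext R$ gives
$$\sigma^{-n-2\beta} \int_{B_{\sigma}(z)} \G(v, Q\llbracket v_{a}(z) \rrbracket)^{2} \, \ext x \leq CK_{z}\sigma^{2(1-\beta)}(1 + |\log \sigma|),$$
which is bounded on $(0, R_{0}]$ precisely because $\beta < 1$. Since $v_{a}$ is harmonic on $B_{1}$, standard interior estimates yield $\|v_{a}\|_{C^{1}(\overline{B}_{(1+\gamma)/2})} \leq C\|v\|_{L^{2}(B_{1})}$, so $\|\ell_{v,z}\|_{L^{\infty}(B_{\rho}(z))}$ is uniformly bounded and $K_{z} \leq C\|v\|_{L^{2}(B_{1})}^{2}$ uniformly in $z \in B_{\gamma} \cap \Gamma_{v}^{\textnormal{HS}}$.

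For $z \in B_{\gamma} \setminus \Gamma_{v}^{\textnormal{HS}}$, let $z' \in \Gamma_{v}^{\textnormal{HS}}$ realise $\delta(z) := \dist(z, \Gamma_{v}^{\textnormal{HS}})$; such a $z'$ exists since $\Gamma_{v}^{\textnormal{HS}}$ is relatively closed in $B_{1}$ by $(\FB4)$. The estimate proved in the preceding paragraph applied at $z'$ gives (via its implied H\"older bound) that $\|v^{\alpha} - v_{a}(z')\|_{L^{\infty}(B_{\delta(z)}(z))} \leq C\delta(z)^{\beta} \|v\|_{L^{2}(B_{1})}$; since $v$ is harmonic on $B_{\delta(z)}(z)$, the interior gradient estimate produces $\|\nabla v^{\alpha}\|_{L^{\infty}(B_{\delta(z)/2}(z))} \leq C\delta(z)^{\beta-1} \|v\|_{L^{2}(B_{1})}$. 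Integrating this Lipschitz bound over $B_{\sigma}(z)$ for $\sigma \leq \delta(z)/2$ produces $\sigma^{-n-2\beta} \int_{B_{\sigma}(z)} \G(v, v(z))^{2} \, \ext x \leq C(\sigma/\delta(z))^{2(1-\beta)} \|v\|_{L^{2}(B_{1})}^{2} \leq C\|v\|_{L^{2}(B_{1})}^{2}$, while for $\sigma \in (\delta(z)/2, \sigma_{0}]$, the Campanato bound at $z'$ applied to $B_{\sigma}(z) \subset B_{3\sigma}(z')$ transfers the estimate to $B_{\sigma}(z)$ (with $\mu_{z}$ taken to be $Q\llbracket v_{a}(z') \rrbracket$ at those scales). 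Combining these yields the uniform Campanato estimate on $B_{\gamma}$, and Campanato's criterion concludes the H\"older assertion. The identity $v^{\alpha}(z) = v_{a}(z)$ for $z \in \Gamma_{v}^{\textnormal{HS}}$ is then immediate from the established continuity of $v$ and the Campanato decay, which force $v(x) \to Q\llbracket v_{a}(z) \rrbracket$ as $x \to z$. The main obstacle is the Hardt-Simon-to-Campanato step in the second paragraph: the weight $R_{z}^{2-n}$ inevitably produces a logarithmic loss in the $L^{2}(S^{n-1})$-bound on $g^{\alpha}(R, \cdot)$, and that loss is accommodated by the factor $\sigma^{2(1-\beta)}$ only when $\beta < 1$, explaining why the approach yields H\"older continuity for every $\beta \in (0,1)$ but stops short of Lipschitz continuity.
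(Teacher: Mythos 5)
Your argument is correct and follows the same overall strategy as the paper: a Campanato-type decay estimate at points of $\Gamma_{v}^{\textnormal{HS}}$ obtained from the Hardt--Simon inequality $(\FB4\textnormal{I})$, combined with harmonicity of $v$ on $B_{1}\setminus\Gamma_{v}^{\textnormal{HS}}$ from $(\FB4\textnormal{II})$. The difference is in how the key decay at a Hardt--Simon point is extracted. The paper applies the divergence theorem to the vector field $\varphi(R_{z}/\rho)^{2}\eta_{\delta}(R_{z})R_{z}^{-n+\beta-2}|v-v_{a}(z)|^{2}(x-z)$ and absorbs the radial-derivative term via Cauchy--Schwarz and $(\FB4\textnormal{I})$, which yields the clean weighted bound $\int_{B_{\rho/4}(z)}|v-v_{a}(z)|^{2}R_{z}^{-n-2+\beta}\leq C\rho^{-n-2+\beta}\int_{B_{\rho}(z)}|v-v_{a}(z)|^{2}$ and hence decay $\sigma^{2-\beta}$ with no logarithm; it then invokes a ready-made Campanato lemma to handle the off-$\Gamma_{v}^{\textnormal{HS}}$ points. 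You instead integrate $\partial_{R}\bigl((v^{\alpha}-v_{a}(z))/R\bigr)$ along rays from an anchor radius chosen by a mean-value argument, accepting a $\log(R_{0}/R)$ loss that is then absorbed by $\sigma^{2(1-\beta)}$, and you carry out the two-scale matching at non-Hardt--Simon points by hand. Both routes land on the same Campanato criterion and the same limitation $\beta<1$; yours is more elementary (no test vector field, no cutoff $\eta_{\delta}$ or monotone convergence step) at the cost of the logarithm and of reproving the Campanato-combination lemma explicitly.

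One step is misattributed as written: the bound $\|v^{\alpha}-v_{a}(z')\|_{L^{\infty}(B_{\delta(z)}(z))}\leq C\delta(z)^{\beta}\|v\|_{L^{2}(B_{1})}$ cannot come from ``the implied H\"older bound'' at $z'$, since the H\"older continuity of $v$ is exactly what the argument is still in the process of establishing --- as stated this is circular. It is nonetheless true and needs no continuity of $v$: since $B_{\delta(z)}(z)\cap\Gamma_{v}^{\textnormal{HS}}=\emptyset$, each $v^{\alpha}-v_{a}(z')$ is harmonic there, so the interior sup estimate gives $\sup_{B_{3\delta(z)/4}(z)}|v^{\alpha}-v_{a}(z')|^{2}\leq C\delta(z)^{-n}\int_{B_{\delta(z)}(z)}|v^{\alpha}-v_{a}(z')|^{2}\leq C\delta(z)^{-n}\int_{B_{2\delta(z)}(z')}|v^{\alpha}-v_{a}(z')|^{2}$, and the right-hand side is controlled by the $L^{2}$ Campanato decay already proved at $z'\in\Gamma_{v}^{\textnormal{HS}}$. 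With that substitution (and the routine cap on $\delta(z)$ so that the relevant balls stay inside $B_{(1+\gamma)/2}$), the proof goes through.
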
 

\begin{proof}
For $\delta > 0$, let $\eta_{\delta} \in C^1([0,\infty))$ be a non-decreasing function such that $\eta_{\delta}(t) = 0$ for $t \in [0,\delta/2]$, $\eta_{\delta}(t) = 1$ for $t \in [\delta,\infty)$ and 
$|D\eta_{\delta}| \leq 3/\delta$ for all $t \in [0,\infty)$. Let $\varphi \in C^{1}([0, \infty))$ be such that $\varphi(t) = 1$ for $t \in [0, 1/4]$, $\varphi(t) = 0$ for $t \in [3/8, \infty)$ and 
$|\varphi^{\prime}(t)| \leq 10$ for all $t \in [0, \infty)$. Note that since $v \in W^{1,2}_{\rm loc}(B_1(0);\mathcal{A}_Q(\R))$ we have that $|v - v_{a}(z)|^2 \in W^{1,1}_{\text{loc}}(B_1(0))$  for any fixed $z \in B_{1}$.   For any fixed $\beta >0,$ fixed $z \in B_{1/2}$ such that $(\FB4\textnormal{I})$ holds, and any fixed $\rho \in (0, 1/4)$, set $X^i(x) = \varphi(R_{z}/\rho)^2 \eta_{\delta}(R_{z}) R_{z}^{-n+\beta-2} |v(x) - v_{a}(z)|^2 (x^i - z^{i})$  where $R_{z} = |x - z|$. Then by the divergence theorem we have $\int_{\mathbb{R}^n} D_i X^i = 0$,  whence 
\begin{align}\label{cont-1} 
	&\beta \int \varphi(R_{z}/\rho)^2 \eta_{\delta}(R_{z}) R_{z}^{-n+\beta-2} |v - v_{a}(z)|^2 
	= -\int \varphi(R_{z}/\rho)^2 \eta_{\delta}(R_{z}) R_{z}^{1-n+\beta} \frac{\partial  (R_{z}^{-2} |v - v_{a}(z)|^2)}{\partial \, R_{z}} \nonumber\\
	&\hspace{2.4in} - 2 \int \rho^{-1}\varphi(R_{z}/\rho) \varphi'(R_{z}/\rho) \eta_{\delta}(R_{z}) R_{z}^{-n+\beta -1} |v - v_{a}(z)|^2 \nonumber\\
	&\hspace{3in}- \int \varphi(R_{z}/\rho)^2 \eta'_{\delta}(R_{z}) R_{z}^{-n+\beta-1} |v - v_{a}(z)|^2. 
\end{align}
Noting that $\frac{\partial \, (R_{z}^{-2} |v - v_{a}(z)|^2)}{\partial R_{z}} = \sum_{\alpha =1}^{Q}2 R_{z}^{-1}(v^{\alpha} - v_{a}(z)) \frac{\partial \, ((v^{\alpha} - v_{a}(z))/R_{z})}{\partial R_{z}}$ a.e. in $B_1(0)$, we see, using the Cauchy-Schwartz inequality (in the form $2ab \leq \epsilon a^{2} + \epsilon^{-1}b^{2}$) and property $(\FB4\textnormal{I})$, that this implies 
\begin{align*} \label{keyest_cor_eqn2}
	&\int \varphi(R_{z}/\rho)^2 \eta_{\delta}(R_{z}) R_{z}^{-n+\beta-2} |v - v_{a}(z)|^2\\ 
	&\leq C\int \left( \varphi(R_{z}/\rho)^2 R_{z}^{2-n+\beta} \left(\frac{\partial \, (v - v_{a}(z))/R_{z})}{\partial \, R_{z}}\right)^2 
		+ \rho^{-2}\varphi^{\prime}(R_{z}/\rho)^2 R_{z}^{-n+\beta} |v - v_{a}(z)|^2 \right) \nonumber \\
		&\leq C \rho^{-n-2+\beta}\int_{B_{\rho}(z)}  |v - v_{a}(z)|^2 \;,
\end{align*}
where $C = C(n,Q, \beta) \in (0,\infty)$ and we have discarded the last integral on the right hand side of (\ref{cont-1}).  
Letting $\delta \downarrow 0$ and using the monotone convergence theorem we deduce from this that 
$$\int_{B_{\rho/4}(z)} \frac{|v - v_{a}(z)|^{2}}{R_{z}^{n+2 -\beta}} \leq C \rho^{-n-2+\beta}\int_{B_{\rho}(z)} |v - v_{a}(z)|^{2}$$
for every $\rho \in (0, 1/4)$ and every $z \in B_{1/2} \cap \Gamma^{\textnormal{HS}}_{v}$, whence 
$$\sigma^{-n}\int_{B_{\sigma}(z)} |v - v_{a}(z)|^{2} \leq C \left(\frac{\sigma}{\rho}\right)^{2- \beta} \rho^{-n}\int_{B_{\rho}(z)} |v - v_{a}(z)|^{2}$$
for every $\sigma$, $\rho$ with $0 < \sigma \leq \rho/4 \leq 1/16$ and every $z \in B_{1/2} \cap \Gamma_{v}^{\textnormal{HS}}$. Since by property $(\FB4)$ we have that $v$ is harmonic in $B_{1} \setminus \Gamma_{v}^{\textnormal{HS}}$, the desired conclusion for 
$\gamma = 1/2$, with values of $v$ for each $z \in B_{1/2} \cap \Gamma_{v}^{\textnormal{HS}}$ defined by $v^{\alpha}(z) = v_{a}(z)$ for $\alpha = 1, 2, \ldots, Q$, follows from this (taken with $2 - 2\beta$ in place of $\beta$) and an appropriate version of the Campanato  lemma (e.g.\ as in \cite[Lemma 4.3]{wickramasekera2014general} or \cite{minter2021campanato}). The conclusion for $\gamma \in (1/2, 1)$ follows by applying the conclusion for $\gamma = 1/2$ to appropriately translated and rescaled  $v$ and using a covering argument.
\end{proof}

\subsection{A homogeneity continuation property} 
In our argument for the classification of homogeneous degree 1 blow-ups (Theorem~\ref{classification} below) and our proof of $GC^{1, \alpha}$ regularity of coarse blow-ups (Theorem~\ref{coarse_reg} below) we will use the following fact, which says that if a coarse blow-up is homogeneous of degree 1 in an annulus then it is homogeneous of degree 1 everywhere. This is an elementary consequence of properties $(\FB3)$, $(\FB4)$ (in subsection~\ref{initial-cb}) of coarse blow-ups and the unique continuation property of harmonic functions: 

\begin{lemma}\label{unique-cont}
Let $v \in \FB_{Q}$ and suppose that $v$ is homogeneous of degree 1 on an annulus $B_1\setminus \overline{B}_r$ for some $r \in (0, 1)$, i.e.\ $\frac{\del(v/R)}{\del R} = 0$ a.e. in $B_1\setminus\overline{B}_r$ where $R(x) = |x|$. Then $v$ is homogeneous of degree 1 in $B_1$.
\end{lemma}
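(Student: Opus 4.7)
The plan is to use property $(\FB3)$ to first show that $v_a$ must be linear on $B_1$, then reduce to the case $v_a \equiv 0$ via $(\FB5\textnormal{III})$, and finally propagate homogeneity from the annulus into the inner ball. Specifically, by $(\FB3)$, $v_a$ is harmonic on $B_1$, and by hypothesis it is degree-$1$ homogeneous on the annulus $A := B_1 \setminus \overline{B}_r$. Expanding $v_a$ in spherical harmonics about the origin, $v_a = \sum_{k \geq 0} H_k$ with $H_k$ harmonic and degree-$k$ homogeneous, the degree-$1$ homogeneity on the non-empty open set $A$ together with the unique continuation property for harmonic functions forces $H_k \equiv 0$ for every $k \neq 1$, so $v_a = H_1$ is linear on $B_1$ and in particular $v_a(0) = 0$. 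If $v \equiv v_a$ on $B_1$, then $v$ is itself linear (hence degree-$1$ homogeneous) and we are done; otherwise, by property $(\FB5\textnormal{III})$, the normalized function $\hat v := \|v - \ell_v\|_{L^2(B_1)}^{-1}(v - \ell_v)$ (where $\ell_v = v_a$) lies in $\FB_Q$, has $\hat v_a \equiv 0$, and is degree-$1$ homogeneous on $A$ (since $\ell_v$ is). Replacing $v$ by $\hat v$, we may assume $v_a \equiv 0$.

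In this reduced setting, I would introduce the degree-$1$ homogeneous extension $\tilde v$ of the trace $v|_{\partial B_s}$ for some fixed $s \in (r, 1)$, namely $\tilde v(x) := (|x|/s)\, v(sx/|x|)$ for $x \neq 0$ and $\tilde v(0) := Q \llbracket 0 \rrbracket$ (the choice of $s$ being immaterial by the hypothesis); then $\tilde v = v$ on $A$ and the task becomes to show $\tilde v = v$ on $B_r$. For this I would rely on property $(\FB4)$: on the open set $B_1 \setminus \Gamma_v^{\textnormal{HS}}$ where $(\FB4\textnormal{II})$ holds, each sheet $v^j$ is classically harmonic and, on any intersection with $A$, degree-$1$ homogeneous, hence linear by the classification of degree-$1$ homogeneous harmonic functions; unique continuation of harmonic functions then propagates this linearity to the entire connected component, yielding $v = \tilde v$ there.

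The main obstacle will be handling $\Gamma_v^{\textnormal{HS}} \cap B_r$, where classical harmonicity is not available and only the one-sided Hardt-Simon inequality $(\FB4\textnormal{I})$ applies. To treat this, I would combine scale-invariance $(\FB5\textnormal{I})$ with compactness $(\FB6)$: assuming for contradiction that $v$ fails to be degree-$1$ homogeneous at some point in $B_r$, I would blow up $v$ about that point to produce a limit $w \in \FB_Q$ whose Hardt-Simon integrand $\sum_j \int R^{2-n} |\partial_R(w^j/R)|^2$ remains positive at some scale. On the other hand, rescaled versions of the homogeneity of $v$ on $A$ together with Lemma~\ref{continuity} will force the Hardt-Simon integrand of $w$ to vanish on suitable annuli of fixed proportions, and chasing this through the scaling behaviour of $J_v(\rho) := \sum_j \int_{B_\rho} R^{2-n} |\partial_R(v^j/R)|^2$ under $(\FB5\textnormal{I})$ should produce the required contradiction, giving $\partial_R(v^j/R) \equiv 0$ a.e.\ in $B_1$ and completing the proof.
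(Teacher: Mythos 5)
Your opening step is sound: $v_a$ is harmonic and homogeneous of degree $1$ on the annulus, hence linear (the paper gets this by applying unique continuation to the harmonic function $x\cdot Dv_a - v_a$ rather than via spherical harmonics, but both work), and the reduction to $v_a\equiv 0$ through $(\FB5\textnormal{III})$ is legitimate. The first problem is in your treatment of the harmonic region. A harmonic function that is homogeneous of degree $1$ on an open subset of the annulus is \emph{not} in general linear: the classification ``degree-$1$ homogeneous harmonic $\Rightarrow$ linear'' requires the function to be defined on a neighbourhood of the origin (equivalently, that the spherical eigenfunction live on all of ${\mathbb S}^{n-1}$); on a proper open cone in $\R^n$, $n\geq 3$, the equation $\Delta_{{\mathbb S}^{n-1}}g+(n-1)g=0$ has many solutions that are not restrictions of linear functions. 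What you actually need, and what the paper proves as its ``general fact'', is that harmonicity plus homogeneity on an open subset propagates \emph{homogeneity} to the whole connected component: apply unique continuation to the harmonic function $R^{2}\,\del(u/R)/\del R = x\cdot Du - u$. More importantly, your unique-continuation step only reaches those components of $B_1\setminus\Gamma_v^{\textnormal{HS}}$ that meet the annulus; a component contained in $B_r$ receives no information from it and is never addressed. Such components must be handled via boundary data and the maximum principle: by Lemma~\ref{continuity}, $v^\alpha=v_a$ on $\Gamma_v^{\textnormal{HS}}$, so on an interior component each sheet is harmonic with boundary values $v_a$ and hence equal to $v_a$ there.

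The more decisive gap is your plan for $\Gamma_v^{\textnormal{HS}}\cap B_r$. Blowing up at an interior point $y\in B_r$ cannot work as described: for small $\rho$ the rescalings $v_{y,\rho}$ depend only on $v$ near $y$ and retain no trace of the hypothesis that $v$ is homogeneous on $B_1\setminus\overline{B}_r$, so ``rescaled versions of the homogeneity of $v$ on $A$'' impose no constraint on the limit $w$; and there is no monotonicity of $J_v(\rho)$ available at this stage with which to transport the vanishing of the Hardt--Simon integrand inward (frequency monotonicity needs the squeeze identity, hence $GC^1$ regularity, which is established only later and relies on this very lemma). In fact the set you single out as the main obstacle is the trivial part: by Lemma~\ref{continuity}, $v\equiv Q\llbracket v_a\rrbracket$ on $\Gamma_v^{\textnormal{HS}}$, so each $v^\alpha-v_a$ is a $W^{1,2}$ function vanishing on that set, whence $D(v^\alpha-v_a)=0$ and therefore $\del(v^\alpha/R)/\del R=0$ a.e.\ there, $v_a$ being linear. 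The paper's proof is built around exactly this observation: it splits $B_r$ into $\{v=v_a\}$, where homogeneity is automatic, and the open set $U=\{v\neq v_a\}$, where $v$ is harmonic and where the maximum principle forces every component to have a boundary point on $\partial B_r$ at which $v\neq v_a$, so that homogeneity can be pulled in from the annulus by unique continuation. Restructure your argument around that decomposition; once $\del(v/R)/\del R=0$ a.e.\ in $B_1$, the radial integration estimate for continuous $W^{1,2}$ functions yields homogeneity.
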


\begin{proof}
First note the following general fact: if $\Omega \subset \R^{n}$ is a connected, open set, and if $u \, : \, \Omega \to \R$ is a harmonic function which is homogeneous of degree $1$ on some (non-empty) open subset of $\Omega$, then $u$ is homogeneous of degree $1$ on $\Omega$. Indeed, for $x \in \Omega \setminus \{0\}$, 
set $w:= \frac{\del \, (u/R)}{\del R} = \frac{x\cdot Du - u}{R^{2}}$ where $R(x) = |x|,$ and note that 
$\Delta(R^{2}w) = \Delta(x\cdot Du) - \Delta u = 0$ in $\Omega \setminus \{0\}$. Since $w\equiv 0$ on some open subset of $\Omega$, it follows from unique continuation for harmonic functions that $R^{2}w\equiv 0 $ on $\Omega \setminus \{0\}$ and hence $w\equiv 0$ on $\Omega \setminus\{0\}$. Thus $u$ is homogeneous of degree $1$ on $\Omega$.
	
	Now to prove the lemma, note that since $v_{a}$ is harmonic in $B_{1}$ 
	(by property $(\FB3)$) and homogeneous of degree 1 in $B_{1} \setminus B_{r}$, we have that $v_{a}$ is homogeneous of degree 1 in $B_{1}$ (and hence in fact is linear). Set $U = \{x \in B_{r} \, : \, v^{\alpha}(x) \neq v_{a}(x) \;\; \mbox{for some} \;\; \alpha \in \{1, \ldots, Q\}\}.$ Then $U$ is open by continuity of $v$ (Lemma~\ref{continuity}), and by property $(\FB4)$ and Lemma~\ref{continuity}, 
	for each $x \in U$, there is $\rho_{x} >0$ such that $\left. v^{\alpha}\right|_{B_{\rho_{x}}(x)}$ is harmonic for each $\alpha \in \{1, \ldots, Q\}$. If $U = \emptyset$, then 
	$v^{\alpha} = v_{a}$ on $B_{r}$ for each $\alpha$ and the lemma follows. So suppose $U \neq \emptyset$, and let $\Omega$ be any connected component of $U$. If $v^{\alpha}(x) = v_{a}(x)$ for each $\alpha$ and each 
	$x \in \partial \Omega$ (boundary taken in $B_{1}$), then since  $v^{\alpha}$ is harmonic on $\Omega$ for each $\alpha$, it follows from the weak maximum principle that $v^{\alpha} = v_{a}$ on $\overline{\Omega}$ for each $\alpha$, which is impossible since $\Omega \subset U$. So there is a point $x_{0} \in \partial \, \Omega$ such that $v^{\alpha}(x_{0}) \neq v_{a}(x_{0})$ for some $\alpha \in \{1, \ldots, Q\}$, whence (by $(\FB4)$) there is $\rho_{0}>0$ such that $v^{\alpha}$ is harmonic in $B_{\rho_{0}}(x_{0})$ for each $\alpha$. Now, it is not possible that $x_{0} \in B_{r}$ (for if $x_{0} \in B_{r}$ then $x_{0} \in U$ and hence (since $\Omega$ is a component) $x_{0} \not\in \partial \Omega$). Thus $x_{0} \in \partial B_{r} \cap \partial \Omega$. Then, since 
	$\Omega_{0} \equiv \Omega \cup B_{\rho_{0}}(x_{0})$   is connected, $v^{\alpha}$ is harmonic in $\Omega_{0}$ for each $\alpha$, and (by assumption) $v^{\alpha}$ is homogeneous of degree 1 in $B_{r_{0}}(x_{0}) \setminus B_{r}$, it follows  from the above general fact that $v^{\alpha}$ is homogeneous of degree 1 in $\Omega$ for each $\alpha$. This is true for every component of $U$, so $v$ is homogeneous of degree 1 in $U$. We also have, since 
	$v^{\alpha} = v_{a}$ on $B_{r} \setminus U$ for each $\alpha$, that $\frac{\partial \, (v^{\alpha}/R)}{\partial R} =0$ a.e.\ on $B_{r} \setminus U$ for each $\alpha$. Thus $\frac{\partial \, (v/R)}{\partial R} =0$ a.e.\ on $B_{r}.$ 
But for any function $h \in C^{1}(\overline{B_{r}}; \R^{Q})$, any $s \in (0, r)$ and any $\rho, \sigma \in [s, r],$  we have that 
	$\left|\frac{h(\rho\omega)}{\rho} - \frac{h(\sigma\omega)}{\sigma}\right| \leq s^{1-n}\int_{s}^{r} t^{n-1}\left|\frac{d(h(t\omega)/t)}{d t}\right| \ext t$ for any $\omega \in {\mathbb S}^{n-1}$, whence, integrating this with respect to $\omega,$
$\int_{{\mathbb S}^{n-1}} \left|\frac{h(\rho\omega)}{\rho} - \frac{h(\sigma\omega)}{\sigma}\right| \ext\omega \leq s^{1-n}\int_{B_{r} \setminus B_{s}} \left|\frac{\del (h/R)}{\del R}\right|;$ by an approximation argument, this extends to any $h \in C^{0}(\overline{B_{r}}; \R^{Q}) \cap W^{1, 2}(B_{r}; \R^{Q})$, so taking $h = v$, we conclude that $\frac{v(\rho \omega)}{\rho} = \frac{v(\sigma \omega)}{\sigma}$
for every $\omega \in {\mathbb S}^{n-1}$ and every $\rho, \sigma \in (0, r]$, i.e.\ $v$ is homogeneous of degree 1 on $B_{r}$ and hence on $B_{1}$. 
\end{proof}

\subsection{An $\epsilon$-regularity property for coarse blow-ups}\label{sec:epsilon_reg}

We have the following $\epsilon$-regularity property of coarse blow-ups in $\FB_Q$, which can be viewed as the analogue of Theorem \ref{thm:B} for $\FB_Q$:

\begin{theorem}[Coarse blow-up $\epsilon$-regularity property]\label{thm:B7}
	Let $\alpha \in (0, 1)$ and $\gamma \in (0, 1)$. There exists $\epsilon = \epsilon(n,Q, \alpha, \gamma)\in (0,1)$ such that whenever $v\in \FB_Q$ satisfies $0\in \Gamma_v,$ $\|v\|_{L^2(B_{1})} = 1,$ and 
$$\int_{B_1}\G(v,\psi)^2<\epsilon$$
for some $\psi:\R^n\to \A_Q(\R)$ with $\psi_a\equiv 0$ and ${\mathbf v}(\psi) \in {\mathcal C}_{Q},$
then we have the following: 
	\begin{itemize}
		\item [(i)] $v$ is of class generalised-$C^{1,\alpha}$ in $B_{1/2};$ moreover, in the notation of Definition \ref{genC1alpha},
		$\B_v \,\cap \, B_{1/2} = \emptyset$, $\CC_v \, \cap \, B_{1/2} = {\rm graph} \, \varphi \, \cap \, B_{1/2}$ for some function $\varphi \, : \, S(\psi) \cap B_{1/2}  \to S(\psi)^{\perp} \subset \R^{n}$ of class $C^{1, \alpha}$ with $|\varphi|_{1, \alpha; S(\psi) \cap B_{1/2}} \leq C\gamma$, and,  if $\Omega^\pm$ denote the two components of $B_{1/2}\setminus \CC_v$, then $\left.v\right|_{\Omega^\pm}\in C^{1,\alpha}(\overline{\Omega^\pm})$ and $|v|_{1,\alpha;\Omega^\pm}\leq C$, where $C = C(n, Q) \in (0, \infty)$.
	\item [(ii)] There is a function $\widetilde{\psi}:\R^n\to \A_Q(\R)$ with ${\mathbf v}(\widetilde{\psi})\in \CC_Q$ such that 
	$$\dist^2_\H(\graph(\widetilde{\psi})\cap (\R\times B_1), \graph(\psi)\cap (\R\times B_1))\leq C\gamma \;\; \mbox{and}$$
	$$\sigma^{-n-2}\int_{B_\sigma}\G(v(x),\widetilde{\psi}(x))^{2}\ \ext x \leq C\gamma\sigma^{2\alpha}\ \ \ \ \text{for all }\sigma\in (0,1/2),$$
	where $C = C(n, Q) \in (0, \infty).$
	\end{itemize}
\end{theorem}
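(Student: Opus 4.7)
The plan is to derive Theorem~\ref{thm:B7} from the fine excess $\epsilon$-regularity Theorem~\ref{thm:fine_reg} applied to a generating sequence $(V_k)_k\subset \S_Q$ of the coarse blow-up $v$, and pass the resulting uniform structural and decay estimates to the limit. Fix $(V_k)_k\subset \S_Q$ with $\hat{E}_k := \hat{E}_{V_k}\to 0$ generating $v$. Since $0\in \Gamma_v$, after translating by $o(1)$ (using density $Q$ points $Z_k\to 0$ produced in the course of verifying property $(\FB4)$) we may assume $\Theta_{V_k}(0)\geq Q$. For each $k$, construct a classical cone $\widehat{\BC}_k \in \CC_Q$ with spine $\{(0,0)\}\times\R^{n-1}$ whose half-hyperplanes have slopes $\hat{E}_k\lambda^\alpha,\hat{E}_k\mu^\alpha$, where $\lambda^\alpha,\mu^\alpha$ are the slopes of $\psi$ as in Proposition~\ref{prop:sheets}.

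Next, verify the hypotheses of Theorem~\ref{thm:fine_reg} for the pair $(V_k,\widehat{\BC}_k)$. The mass/density bounds (i) and the smallness (ii) $\hat{E}_{V_k} = \hat{E}_k\to 0$ are immediate. Condition (iii) (nondegeneracy of the coarse excess relative to planes through $S(\widehat{\BC}_k)$) follows from Proposition~\ref{prop:sheets}(i), which provides the quantitative lower bound $|\lambda^1-\lambda^Q|,|\mu^1-\mu^Q|\geq c(n,Q)>0$; this forces $\inf_P \int\dist^2(X,P)\,\ext\|V_k\|$ (over planes $P$ through $S(\widehat{\BC}_k)$) to be of order $\hat{E}_k^2$ and attained, modulo $o(\hat{E}_k^2)$, at a plane close to $\{0\}\times\R^n$, from which (iii) follows for $\epsilon$ small. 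Finally (iv) holds since the definition of coarse blow-up and the hypothesis $\int_{B_1}\G(v,\psi)^2<\epsilon$ give, by Fatou and dominated convergence, $\limsup_k \hat{E}_k^{-2}Q_{V_k,\widehat{\BC}_k}^2 \leq C\int_{B_1}\G(v,\psi)^2 < C\epsilon$; taking $\epsilon$ small, this is below any prescribed threshold $\gamma_0(n,Q,\alpha)$.

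Applying Theorem~\ref{thm:fine_reg} we obtain, for each large $k$, a generalised-$C^{1,\alpha}$ function $f_k:B_{1/2}\to\A_Q(\R)$ with $V_k\res(\R\times B_{1/2})=\mathbf{v}(f_k)$, $\B_{f_k}=\emptyset$, $\CC_{f_k}$ a $C^{1,\alpha}$ graph $\graph\,\varphi_{k,2}$ over $B_{1/2}^{n-1}(0)$ with $|\varphi_{k,1}|_{1,\alpha}\leq C\hat{E}_k\sqrt{\gamma}$ and $|\varphi_{k,2}|_{1,\alpha}\leq C\sqrt{\gamma}$, smooth pieces satisfying $|f_k^j|_{1,\alpha;\Omega_k^\pm}\leq C\hat{E}_k$, and a unique tangent cone $\BC_{k,0}\in\CC_Q$ at $0$ satisfying the decay $\sigma^{-n-2}\int_{\R\times B_\sigma}\dist^2(X,\spt\|\BC_{k,0}\|)\,\ext\|\Gamma^{-1}_{k,\#}V_k\|\leq C\gamma\sigma^{2\alpha}\hat{E}_k^2$. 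Rescaling $g_k:=\hat{E}_k^{-1}f_k$ and invoking Arzel\`a--Ascoli separately on each of the components $\Omega_k^\pm$ of $B_{1/2}\setminus \CC_{f_k}$ (which converge in Hausdorff to the two components of $B_{1/2}\setminus \graph\,\varphi$ for the $C^{1,\beta}$ limit $\varphi:=\lim\varphi_{k,2}$), we extract a subsequence along which $g_k\to v$ in $C^{1,\beta}_{\rm loc}(\overline{\Omega^\pm})$. The regularity structure and uniform estimates then transfer to $v$, establishing (i) with $\CC_v = \graph\,\varphi$ and $\B_v\cap B_{1/2}=\emptyset$. For (ii), setting $\widetilde\psi$ to be the limit (along a further subsequence) of $\hat{E}_k^{-1}\Gamma_{k,\#}\BC_{k,0}$ and dividing the above tangent cone decay by $\hat{E}_k^2$ yields the desired estimate, with $\dist_\H(\graph\,\widetilde\psi\cap(\R\times B_1),\graph\,\psi\cap(\R\times B_1))\leq C\sqrt\gamma$ coming from (\ref{E:H}) in Theorem~\ref{thm:fine_reg}. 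The principal technical obstacle is that the rotation $\Gamma_k$ provided by Theorem~\ref{thm:fine_reg} does not exactly fix the coordinates: using hypothesis (iv), its defect is $|\Gamma_k(e_j)-e_j|\leq C\sqrt\gamma$ (for $j\geq 2$) and $O(\sqrt\gamma\hat{E}_k)$ for $j=1$, which while small is not negligible in the base directions; this tilt must be tracked carefully so that the limiting tangent cone $\widetilde\psi$ and the $C^{1,\alpha}$ graph structure of $\CC_v$ are consistently expressed in the unrotated frame of $B_{1/2}$.
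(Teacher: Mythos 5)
Your overall architecture coincides with the paper's: translate so that $\Theta_{V_k}(0)\geq Q$, take $\BC_k=\mathbf{v}(\hat E_k\psi)$ as the comparison cone, verify the hypotheses of Theorem~\ref{thm:fine_reg}, and pass the resulting uniform structural and decay estimates to the limit. The genuine gap is in your verification of hypothesis (iv), which is where essentially all the work in the proof lies. The bound $\limsup_k\hat E_k^{-2}Q^2_{V_k,\BC_k}\leq C\int_{B_1}\G(v,\psi)^2$ does not follow from ``Fatou and dominated convergence'': there is no dominating object for the measures $\|V_k\|$, and the two-sided excess contains two contributions that the $L^2_{\rm loc}$ convergence of the rescaled Lipschitz approximations simply does not see. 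First, the one-sided term $\int_{\R\times B_1}\dist^2(X,\spt\|\BC_k\|)\,\ext\|V_k\|$ includes the boundary annulus $\R\times(B_1\setminus B_\sigma)$, where $V_k$ is not graphically related to $v$; a priori the height excess of $V_k$ could concentrate there. The paper excludes this by a mass-accounting argument: homogeneity of $\psi$ and $\|v\|_{L^2(B_1)}=1$ force $\int_{B_\sigma}|u_k|^2\geq\bigl((1-\sqrt\epsilon)\sigma^{(n+2)/2}-\sqrt\epsilon\bigr)^2\hat E_k^2$, so the excess in the annulus is at most $\bigl(1-(\cdots)^2\bigr)\hat E_k^2$ plus error terms, which can be made $<\tfrac{\gamma}{4}\hat E_k^2$ by taking $\sigma$ close to $1$ and $\epsilon$ small. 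Second, the reverse term $\int_{(\R\times B_{1/2})\setminus\{r_{\BC_k}<1/16\}}\dist^2(X,\spt\|V_k\|)\,\ext\|\BC_k\|$ requires showing that $\spt\|V_k\|$ passes close to \emph{every} point of the cone away from the spine; this is obtained by first using Proposition~\ref{prop:sheets}(ii) to exclude density-$Q$ points in $\{|x^2|>\tau\}$ and then Theorem~\ref{sheeting} to decompose $V_k$ there as $Q$ ordered minimal graphs converging in $C^1$ after rescaling to $v$. Neither step is a convergence-theorem formality, and without them hypothesis (iv) is not established.

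A smaller but still real issue is hypothesis (iii): Theorem~\ref{thm:fine_reg} as stated requires the specific constant, $\hat E_{V_k}^2<\tfrac{3}{2}\inf_P\int\dist^2(X,P)\,\ext\|V_k\|$. The slope-spread bound of Proposition~\ref{prop:sheets}(i) only yields $\inf_P\geq c(n,Q)\hat E_k^2$ for some possibly small $c$, which does not produce the factor $\tfrac{2}{3}$. What actually makes $\{0\}\times\R^n$ (almost) the optimal plane is that $\psi_a\equiv 0$: for any $\ell(x)=\lambda x^2$ one has $|\psi-\ell|^2=|\psi|^2+Q|\ell|^2$ pointwise, hence $\int_{B_1}\G(\psi,\ell)^2\geq\int_{B_1}|\psi|^2\geq(1-\sqrt\epsilon)^2$, and a short contradiction argument then gives (iii) for $\epsilon$ small; this is the mechanism you should cite in place of the slope spread. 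Your remaining steps --- applying Theorem~\ref{thm:fine_reg}, rescaling by $\hat E_k^{-1}$, extracting $C^{1,\beta}$ limits on the two components of the complement of the junction graph, and tracking the $O(\sqrt\gamma)$ tilt of the rotations --- match the paper's proof and are fine.
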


\begin{proof} Fix $v\in \FB_Q$ with $0\in \Gamma_v$,  $\|v\|_{L^2(B_1(0))} = 1$ and $\psi:\R^n\to \A_Q(\R)$ with $\psi_a\equiv 0$, $\mathbf{v}(\psi)\in \CC_Q$, and $\int_{B_1}\G(v,\psi)^2<\epsilon$
where $\epsilon$ is to be chosen depending only on $n,Q$.  By property $(\FB5\text{II})$ we can without loss of generality rotate to assume that the $(n-1)$-dimensional spine $S(\psi) = \{(0,0)\}\times\R^{n-1}$. Let $(\widetilde{V}_{j})_j\subset\S_Q$ be any sequence of varifolds with $\widetilde{V}_j\weakly Q|\{0\}\times\R^n|$ in $\R\times B_1$ such that $v$ is the coarse blow-up of  $(\widetilde{V}_{j})_{j}$.  Since $0\in \Gamma_v$, we have by the argument establishing property $(\FB4)$ (see the paragraph after the list of properties $(\FB1)$-$(\FB6)$) that after passing to a subsequence of $(j)$ without relabelling, $\Theta_{\widetilde{V}_j}(Z_j)\geq Q$ for some sequence of points $(Z_j)_j$ with $Z_j\to 0$ as $j\to\infty$. Set $V_{j} =\eta_{Z_{j}, 1 - |Z_{j}| \, \#} \, \widetilde{V}_{j}$.  Then $\Theta_{V_j}(0)\geq Q$ for each $j$, $\hat{E}_{V_{j}, \{0\} \times \R^{n}} \to 0$ and the coarse blow-up of $(V_{j})$ is still $v$. Write $u_{j}$ for the $Q$-valued Lipschitz function approximating $V_j$ (on $B_{\sigma_j}$ with $\sigma_j\uparrow 1$) which generates $v$ after scaling by the height excess $\hat{E}_{V_j}\equiv \hat{E}_j = \sqrt{\int_{\R\times B^n_1(0)}|x^1|^2\ \ext \|V_{j}\|}$. We claim that if $\epsilon \in (0, 1)$ is sufficiently small, then for all sufficiently large $j$  we have
\begin{equation}\label{best-plane} 
\hat{E}^2_j < \frac{3}{2}\inf_P\int_{\R\times B_1}\dist^2(X,P)\ \ext\|V_j\|(X)\;,
\end{equation}
where the infimum is taken over all hyperplanes $P$ of the form $P = \{x^1=\lambda x^2\}$, $\lambda\in\R$. Indeed, if this were false then we could pass to a subsequence (without relabelling) and find a sequence of hyperplanes $P_j = \{x^1=\lambda_j x^2\}$ such that
$\int_{\R\times B_1}\dist^2(X,P_j)\ \ext\|V_j\| \leq \frac{5}{6}\hat{E}^2_{j}.$
Thus for each $\sigma\in (1/2,1)$, for all $j$ sufficiently large we have
\begin{equation}\label{E:3.1}
(1+\lambda_j^2)^{-1}\sum^Q_{\alpha=1}\int_{B_\sigma\setminus\Sigma_{j}}|u_j^\alpha(x^2,y) - \lambda_j x^2|^2\ \ext x \leq \frac{5}{6}\hat{E}_j^2\;,
\end{equation}
from which it follows that  $(1 + \lambda_{j}^{2})^{-1} \lambda_{j}^{2} \int_{B_{1/2} \setminus \Sigma_{j}} |x^{2}|^{2} \, \ext x \leq \frac{11}{3}\hat{E}_j^{2}$ and hence $|\lambda_j|\leq C\hat{E}_{j}$ for all $j$ and some constant $C = C(n) \in (0,\infty)$, and so we may pass to a further subsequence (without relabelling) to assume that $\hat{E}_j^{-1}\lambda_j\to \lambda\in \R$.  Moreover, from (\ref{E:3.1}) and (\ref{badset-est}) it readily follows that
$$\sum^Q_{\alpha=1}\int_{B_\sigma}|u^\alpha_j-\lambda_j x^2|^2\ \ext x^2\ext y \leq \frac{5}{6}(1+\lambda_j^2)\hat{E}_j^2 + 2C_\sigma\sup_{B_\sigma}(|u_j|^2+\lambda_j^2|x^2|^2)\hat{E}_j^2\;,$$
where $C = C_\sigma\in (0,\infty)$ is a constant. Dividing this by $\hat{E}_j^2$, letting $j\to\infty$, and then $\sigma\uparrow 1$, we see, writing $\ell(x^2,\dotsc,x^{n+1}) := \lambda x^2$, that 
$\int_{B_1(0)}|v-\ell|^2\leq \frac{5}{6}$
whence by the triangle inequality $\int_{B_1(0)}|\psi-\ell|^2 \leq \left(\sqrt{\frac{5}{6}}+\sqrt{\epsilon}\right)^{2}$; but since $\psi$ is average-free we have $|\psi-\ell |^2 = |\psi|^2 - 2\ell\sum^Q_{\alpha=1}\psi^\alpha + Q|\ell|^2 = |\psi|^2 + Q|\ell|^2$, and so we get
$\int_{B_1(0)}|\psi|^2 + Q\int_{B_1(0)}|\ell|^2 \leq \left(\sqrt{\frac{5}{6}}+\sqrt{\epsilon}\right)^{2}$
which, as $\int_{B_1(0)}|\psi|^2 > (1-\sqrt{\epsilon})^{2}$, is a contradiction for $\epsilon<\frac{1}{4}\left(1 - \sqrt\frac{5}{6}\right)^{2}$. So (\ref{best-plane}) must hold for all sufficiently large $j$ as claimed if we choose 
$\epsilon \in \left(0, \frac{1}{4}\left(1 - \sqrt\frac{5}{6}\right)^{2}\right)$.

Our next aim is to show that for any given $\gamma \in (0, 1)$, if we choose $\epsilon = \epsilon(n, Q, \gamma) \in (0, 1)$ sufficiently small, then we have for all sufficiently large  $j$ that 
\begin{equation} \label{fine-excess-hyp}
Q_{V_j,\BC_j}^2<\gamma \hat{E}_{V_j}^2 
\end{equation}
for a suitable sequence of cones $\BC_j\in \CC_Q$ with $S(\BC_{j}) = \{(0, 0)\} \times \R^{n-1}$. Indeed, set $\BC_j := \mathbf{v}(\hat{E}_j\psi)$. Then we have
$$\int_{\R\times B_\sigma}\dist^2(X,\spt\|\BC_j\|)\ \ext\|V_j\| \leq 2\int_{B_\sigma}|u_j - \hat{E}_j \psi|^2 + C_\sigma\sup_{X\in \spt\|V_j\|\cap (\R\times B_\sigma)}\dist^2(X,\spt\|\BC_j\|)\hat{E}_j^2.$$
Since $\int_{B_{1}} \G(v, \psi)^{2} <\epsilon < 1$ and $\|v\|_{L^{1}(B_{1})} = 1$, we have by the triangle inequality that 
$\int_{B_1}|\psi|^2> \left(1 -  \sqrt{\epsilon}\right)^{2}$, 
and so as $\psi$ is homogeneous of degree 1 we see that for each $\sigma\in (0,1)$, 
$\int_{B_\sigma}|\psi|^2 > \left(1-\sqrt{\epsilon}\right)^{2}\sigma^{n+2}.$
So for fixed $\sigma\in (1/2,1)$, we have for all $j$ sufficiently large 
$\int_{B_\sigma}|u_j|^2\geq \left(\left(1-\sqrt{\epsilon}\right)\sigma^{\frac{n+2}{2}} - \sqrt{\epsilon}\right)^{2}\hat{E}_j^2$. 
Now a simple calculation shows
$$\int_{\R\times B_\sigma}|x^1|^2\ \ext\|V_j\|(X) \geq \int_{B_\sigma}|u_j|^2 - 2C_\sigma \hat{E}_j^2\sup_{B_\sigma}|u_j|^2$$
from which it follows that
$$\int_{\R\times (B_1\setminus B_\sigma)}|x^1|^2\ \ext\|V_j\|(X)\leq \left(1-\left(\left(1-\sqrt{\epsilon}\right)\sigma^{\frac{n+2}{2}} - \sqrt{\epsilon}\right)^{2} + 2C_\sigma\sup_{B_\sigma}|u_j|^2\right)\hat{E}_j^2.$$
Another simple calculation shows that for all sufficiently large $j$,
$$\int_{\R\times (B_1\setminus B_\sigma)}\dist^2(X,\spt\|\BC_j\|)\ \ext\|V_j\|(X) \leq 2\int_{\R\times (B_1\setminus B_\sigma)}|x^1|^2\ \ext\|V_j\|(X) + C\H^n(B_1\setminus B_\sigma)\hat{E}^2_j\;,$$
where $C = C(n,Q)\in (0,\infty)$. Combining these inequalities we see that
\begin{eqnarray*}\label{fine-excess-hyp1} 
&&\int_{\R\times B_1}\dist^2(X,\spt\|\BC_j\|)\ \ext\|V_j\| \nonumber\\ 
&& \hspace{2em}\leq 2\left(C_\sigma\sup_{X\in \spt\|V_j\|\cap (\R\times B_\sigma)}\dist^2(X,\spt\|\BC_j\|) + \int_{B_\sigma}|\hat{E}_j^{-1}u_j-\psi|^2\right. \nonumber\\
&&\hspace{3em}\left.+ 1-\left(\left(1-\sqrt{\epsilon}\right)\sigma^{\frac{n+2}{2}} - \sqrt{\epsilon}\right)^{2}  + \; 2C_\sigma\sup_{B_\sigma}|u_j|^2 + C\H^n(B_1\setminus B_\sigma)\right)\hat{E}_j^2\;,
\end{eqnarray*} 
where $C = C(n, Q).$ Hence, for any given $\gamma \in (0, 1)$,  we may fix $\sigma = \sigma(n,Q, \gamma)$ sufficiently close to $1$, and choose 
$\epsilon_0 = \epsilon_0(n,Q, \gamma)\in (0,1)$ such that if $\epsilon\leq \epsilon_0$, then
$$\int_{\R\times B_1}\dist^2(X,\spt\|\BC_j\|)\ \ext\|V_j\| \leq \frac{\gamma}{4}\hat{E}_j^{2}$$
for all sufficiently large $j$. Now from Proposition \ref{prop:sheets}(ii), if $\epsilon = \epsilon(n, Q) \in (0,1)$ is sufficiently small, then for all $j$ sufficiently large we must have
$$\Theta_{V_j}(Z)<Q \text{ for all }Z\in\spt\|V_j\|\cap(\R\times B_{5/8})\cap\{|x^2|>1/32\}.$$
Thus we see that $V_j$ has no classical singularities in the region $(\R\times B_{5/8})\cap\{|x^2|>1/32\}$, and so we may apply Theorem~\ref{sheeting} to get that
$$V_j\res\left((\R\times B_{9/16})\cap \{x^2<-3/64\}\right) = \sum^Q_{\alpha=1}|\graph \, u^{(\alpha, \; -)}_j|\res \left((\R\times B_{9/16})\cap \{x^2<-3/64\}\right)$$
and
$$V_j\res\left((\R\times B_{9/16})\cap \{x^2>3/64\}\right) = \sum^Q_{\alpha=1}|\graph \, u^{(\alpha, \; +)}_j|\res \left((\R\times B_{9/16})\cap \{x^2> 3/64\}\right),$$
where $u^{(1, \; -)}_j\leq \cdots \leq u^{(Q, \; -)}_j$ and $u^{(1, \; +)}_j \leq \cdots \leq u^{(Q, \; +)}_j$ are $C^2$ functions on $B_{5/8}\cap \{x^2<-3/64\}$ and $B_{5/8}\cap \{x^2> 3/64\}$ solving the minimal surface equation and satisfying 
$$\|u^{(\alpha, \; -)}_{j}\|_{C^{1}(B_{9/16}\cap \{x^2<-3/64\})} + \|u^{(\alpha, \; +)}_{j}\|_{C^{1}(B_{9/16}\cap\{x^2>3/64\})}\leq C\hat{E}_j$$
for some $C = C(n)$. Using the notation $\psi|_{\R^n_+} = \sum^Q_{\alpha=1}\llbracket h^\alpha \rrbracket$ and $\psi|_{\R^n_-} = \sum^Q_{\alpha=1}\llbracket g^\alpha \rrbracket$ where $h^\alpha:\R^n_+\to \R$, $g^\alpha:\R^n_-\to \R$ are of the form $h^\alpha(x^2,\dotsc,x^{n+1}) = \lambda^\alpha x^2$ and $g^\alpha(x^2,\dotsc,x^{n+1}) = \mu^\alpha x^2$, with $\lambda^1\leq \cdots\leq \lambda^Q$ and $\mu^1\leq \cdots\leq \mu^Q,$  we have that  
\begin{align*}
{\rm dist} \, ((x, \hat{E}_{j}h^{\alpha}(x)), {\rm spt} \, \|V_{j}\|) & \leq |\hat{E}_{j}h^{\alpha}(x) - u^{(\alpha, \; +)}_{j}(x)|\nonumber\\ 
& \leq \hat{E}_{j}(|h^{\alpha}(x) - v^{\alpha}(x)| + |v^{\alpha}(x) - \hat{E}_{j}^{-1}u_{j}^{(\alpha, \; +)}(x)|)
\end{align*}
for each $x \in B_{1/2} \cap \{x^{2} > 1/16\}$ and similarly, 
\begin{align*}
{\rm dist} \, ((x, \hat{E}_{j}g^{\alpha}(x)), {\rm spt} \, \|V_{j}\|) & \leq |\hat{E}_{j}g^{\alpha}(x) - u^{(\alpha, \; -)}_{j}(x)| \nonumber\\ 
& \leq \hat{E}_{j}(|g^{\alpha}(x) - v^{\alpha}(x)| + |v^{\alpha}(x) - \hat{E}_{j}^{-1}u_{j}^{(\alpha, \; -)}(x)|)
\end{align*} 
for each $x \in B_{1/2} \cap \{x^{2} < -1/16\};$ thus, 
\begin{align*}
&\int_{\R\times(B_{1/2}\setminus\{|x^2|>1/16\})}\dist^2(X,\spt\|V_j\|)\ \ext\|\BC_j\|(X) \nonumber\\
&\hspace{1in}\leq \left( 2\int_{B_{1}}\G(v, \psi)^{2} + \sum_{\alpha=1}^{Q}\int_{B_{1/2}}|v^{\alpha} - \hat{E}_{j}^{-1}u_{j}^{\alpha}|^{2}\right)\hat{E}_j^2
\end{align*}
and so, if $\epsilon = \epsilon(n,Q, \gamma)$ is sufficiently small, we have (\ref{fine-excess-hyp}) for all $j$ sufficiently large.
In particular, for any $\gamma \in (0, \gamma_{0}]$ where $\gamma_{0} = \gamma_{0}(n, Q, \alpha)$ is as in Theorem~\ref{thm:fine_reg}, and for 
$\epsilon = \epsilon(n, Q, \alpha) \in (0, 1)$ sufficiently small, we have that the hypotheses of Theorem~\ref{thm:fine_reg} are satisfied with $V_{j}$ in place of $V$ and $\BC_{j}$ in place of $\BC$ for all sufficiently large $j$. So applying Theorem~\ref{thm:fine_reg} we obtain for each sufficiently large $j,$  functions $\varphi_{1}^{(j)}, \varphi_{2}^{(j)} \in C^{1, \alpha} \, (\overline{B}_{1/2} \cap \left(\{(0, 0)\} \times \R^{n-1}\right); \R^{2})$ with $\varphi_{1}^{(j)}(0) = \varphi_{2}^{(j)}(0) = 0$, $|\varphi_{1}^{(j)}|_{1, \alpha; B_{1/2} \cap \left(\{(0, 0)\} \times \R^{n-1}\right)} \leq C\hat{E}_{V_{j}}$ and $|\varphi_{2}^{(j)}|_{1, \alpha;  B_{1/2} \cap \left(\{(0, 0)\} \times \R^{n-1}\right)} \leq C\hat{E}_{V_{j}}^{-1} Q_{V_{j}, \BC_{j}},$ 
and a function $\widetilde{u}_{j} = \sum_{i=1}^{Q} \llbracket \widetilde{u}_{j}^{i}\rrbracket \in 
GC^{1, \alpha}(B_{1/2}; {\mathcal A}_{Q})$ where $\widetilde{u}_{j}^{1} \leq \widetilde{u}_{j}^{2} \leq \cdots \leq \widetilde{u}_{j}^{Q}$,
such that: $V_j \res (\R\times B_{1/2}) = \mathbf{v}(\widetilde{u}_j)\res (\R\times B_{1/2});$
$\B_{\widetilde{u}_{j}} = \emptyset;$ $\CC_{\widetilde{u}_{j}} = {\rm graph} \, \varphi_{2}^{(j)} \cap B_{1/2};$    $\widetilde{u}_{j}^{i}(z, \varphi_{2}^{(j)}(z))= \varphi_{1}^{(j)}(z)$ for each $z \in \pi_{\{(0, 0)\} \times \R^{n-1}}(\CC_{\widetilde{u}_{j}})$ and $i \in \{1, 2, \ldots, Q\}$; 
and if $\Omega^\pm_j$ denote the two components of $B_{1/2}\setminus\CC_{\widetilde{u}_j}$, then $\left.\widetilde{u}_{j}\right|_{\Omega_{j}^\pm} \in C^{1,\alpha}(\overline{\Omega_{j}^\pm})$ and $|\widetilde{u}_{j}|_{1,\alpha;\Omega_{j}^\pm}\leq C\hat{E}_{V_j}.$ Here $C = C(n, Q)$. 
Moreover, again by Theorem~\ref{thm:fine_reg}, for each $j$ there is a cone ${\BC}_{0}^{(j)} \in \CC_Q$ with $S({\BC}_{0}^{(j)}) = \{(0, 0)\} \times \R^{n-1}$ and 
	\begin{equation}\label{fine-excess-reg2}
	\dist_\H(\spt\|{\BC}_{0}^{(j)}\|\cap(\R\times B_1),\spt\|\BC_{j}\|\cap (\R\times B_1))\leq CQ_{V_{j},\BC_{j}},
	\end{equation}
	and a rotation $\Gamma_{0}^{(j)} \, : \, \R^{n+1} \to \R^{n+1}$ with $\|\Gamma_{0}^{(j)} - I\| \leq C {\hat E}_{V_{j}}^{-1}Q_{V_{j}, \BC_{j}},$ where $I$ is the identity map on $\R^{n+1},$ such that  $\BC_{0}^{(j)}$ is the unique tangent cone to $\left(\Gamma_{0}^{(j)}\right)^{-1}_{\#} \, V_{j}$ at $0$ and 
	\begin{equation}\label{fine-excess-reg3}
	\sigma^{-n-2}\int_{\R\times B_\sigma}\dist^2(X,\spt\|{\BC}_{0}^{(j)}\|)\ \ext\|\left(\Gamma^{(j)}_{0}\right)^{-1}_{\#} \, V_{j}\|\leq C\sigma^{2\alpha}Q_{V_{j},\BC_{j}}^2\ \ \ \ \text{for all }\sigma\in (0,1/2),
	\end{equation}
where $C = C(n, Q)$.

From these estimates, it follows that $\left.v\right|_{B_{1/2}} \in GC^{1, \alpha}(B_{1/2}; {\mathcal A}_{Q}),$  
$\B_{v} \, \cap \, B_{1/2} = \emptyset$ and that there are functions $\varphi_{1}, \varphi_{2} \in C^{1, \alpha}(\overline{B}_{1/2} \cap \{(0, 0)\} \times \R^{n-1}; \R)$ with $|\varphi_{1}|_{1, \alpha; B_{1/2} \cap \{(0, 0)\} \times \R^{n-1}} \leq C$ and $|\varphi_{2}|_{1, \alpha;  B_{1/2} \cap \left(\{(0, 0)\} \times \R^{n-1}\right)} \leq C\gamma$ such that:  $\CC_{v} \, \cap \, B_{1/2} = {\rm graph} \, \varphi_{2} \, \cap \, B_{1/2}$; if $\Omega^\pm$ denote the two components of $B_{1/2}\setminus\CC_{v}$, then $\left.v\right|_{\Omega^\pm} \in C^{1,\alpha}(\overline{\Omega^\pm})$ and $|v|_{1,\alpha;\Omega^\pm}\leq C$; and 
$v^{i}(z, \varphi_{2}(z))= \varphi_{1}(z)$ for each $z \in \pi_{\{(0, 0)\} \times \R^{n-1}}(\CC_{v})$ and 
$i \in \{1, 2, \ldots, Q\}$. Here again $C = C(n, Q)$. Thus conclusion (i) of the theorem holds. Noting that $v(0) = 0$ and letting $\widetilde{\psi} \, : \, \R^{n} \to {\mathcal A}_{Q}(\R)$ be the unique function such that ${\bf v}(\widetilde{\psi}) = \sum_{j=1}^{Q}|H_{j}^{+}| + |H_{j}^{-}|$ where $H_{j}^{\pm}$ are the tangent half-planes to ${\rm graph} \, \left.v^{j}\right|_{\Omega^{\pm}}$ at $0$ for each $j \in \{1, \ldots, Q\}$, we  see from (\ref{fine-excess-reg2}) and (\ref{fine-excess-reg3}) that conclusion (ii) of the theorem follows. 
\end{proof}

\noindent
{\bf Remark:} Suppose that for some $\alpha \in (0, 1)$ and $\epsilon = \epsilon(n, Q, \alpha)$ as given by Theorem~\ref{thm:B7}, the hypotheses of Theorem~\ref{thm:B7} are satisfied by some $v \in \FB_{Q}$ and some $\psi \, : \, \R^{n} \to {\mathcal A}_{Q}(\R)$ with ${\bf v}(\psi) \in \CC_{Q}.$  Let $(\widetilde{V}_{k})_{k}$ be any sequence of varifolds in  $\S_{Q}$  whose coarse blow-up is $v$, and let 
$\Gamma \, : \, \R^{n+1} \to \R^{n+1}$ be the rotation $\Gamma(x^{1}, x^{\prime}) = \gamma(x^{\prime})$ where 
$\gamma \, : \, \R^{n} \to \R^{n}$ is a rotation such that $\gamma(S(\psi)) = \{(0, 0)\} \times \R^{n-1}$. The proof of Theorem~\ref{thm:B7} shows the following: after possibly passing to a subsequence of $(\widetilde{V}_{k})_{k}$, there is a sequence of points $(Z_{k})_{k}$  with $Z_{k} \to 0$ and $\Theta_{\widetilde{V}_{k}}(Z_{k}) \geq Q$ for all $k$, and moreover, for \emph{any} such sequence of points $(Z_{k})_k$, if we set $V_{k} = \Gamma_{\#} \circ \eta_{Z_{k}, 1 - |Z_{k}| \, \#} \, \widetilde{V}_{k},$ then $(V_{k})_{k} \subset \S_{Q},$ $\hat{E}_{V_{k}, \{0\} \times \R^{n}} \to 0$, $v$ is the coarse blow-up of $(V_{k})_k,$ and for all sufficiently large $k$, the hypotheses of Theorem~\ref{thm:fine_reg} are satisfied with $V_{k}$ in place of $V$ and ${\BC}_{k} = {\mathbf v}(\hat{E}_{V_{k}, \{0\} \times \R^{n}} \psi \circ \gamma)$ in place of $\BC$. 

\subsection{Squash inequality}
Here and subsequently, for $v \in \FB_{Q}$,  we shall use the notation
$$v_f= v-v_a =  (v^1-v_a,v^2-v_a,\dotsc,v^Q-v_a).$$ 
\begin{lemma}[Squash inequalities for coarse blow-ups]\label{squash}
	For each $v\in \FB_Q$ we have:
	$$\int_{B_1}|Dv|^2\zeta \leq - \int_{B_1}\sum^Q_{\alpha=1}v^\alpha Dv^\alpha\cdot D\zeta$$
	and 
	$$\int_{B_1}|Dv_{f}|^2\zeta \leq - \int_{B_1}\sum^Q_{\alpha=1}v_{f}^\alpha Dv_{f}^\alpha\cdot D\zeta$$
for every $\zeta\in C^1_c(B_{3/4};\R)$. In particular, we have 
	$$\int_{B_{1/2}}|Dv|^2 \leq 16\int_{B_1}|v|^2$$
	and
	$$\int_{B_{1/2}}|Dv_{f}|^2 \leq 16\int_{B_1}|v_{f}|^2.\ \ \ \ \ $$
\end{lemma}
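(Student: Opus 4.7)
The strategy is to derive the inequality by applying the stationary varifold first variation identity to the generating sequence and passing to the coarse blow-up limit, with the inequality direction (rather than equality) emerging from weak lower-semicontinuity of the Dirichlet functional to absorb any non-convergent energy. Let $v \in \FB_Q$ be the coarse blow-up of a sequence $(V_k)_k \subset \S_Q$ with $V_k \weakly Q|\{0\} \times \R^n|$ in $\R \times B_1$ and $\hat{E}_{V_k} \to 0$, and let $(u_k^\alpha)_{\alpha = 1}^Q$ and $\Sigma_k$ be the $Q$-valued Lipschitz approximation to $V_k$ on the good set $\Omega_k = B_\sigma \setminus \Sigma_k$ as in Section~\ref{initial-cb} (with $\sigma$ close to $1$), so that $v_k := u_k/\hat{E}_{V_k}$ converges to $v$ strongly in $L^2_{\textnormal{loc}}(B_1)$ and weakly in $W^{1,2}_{\textnormal{loc}}(B_1)$. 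For $\zeta \in C^1_c(B_{3/4}; \R)$ taken (for the essential case) to be nonnegative, I apply $(\S 1)$ with the squash vector field $\psi_k(X) = \zeta(\pi X)\, x^1 e_1$, where $\pi : \R^{n+1} \to \{0\} \times \R^n$ is the orthogonal projection, to obtain $0 = \int \div_S \psi_k \, \ext V_k$, and split this into contributions over $\R \times \Omega_k$ and $\R \times \Sigma_k$. A direct graph-by-graph computation identifies the good-region contribution as $\sum_\alpha \int_{\Omega_k} (|Du_k^\alpha|^2 \zeta + u_k^\alpha Du_k^\alpha \cdot D\zeta)/\sqrt{1+|Du_k^\alpha|^2} \, \ext x$, while the bad-region term $B_k$ is bounded by $C \hat{E}_{V_k}^2 (|\zeta|_\infty + |D\zeta|_\infty)$ using the Lipschitz approximation estimates $\H^n(\Sigma_k) + \|V_k\|(\R \times \Sigma_k) \leq C \hat{E}_{V_k}^2$, the hypothesis $\int |x^1|^2 \ext V_k = \hat{E}_{V_k}^2$, and Cauchy--Schwarz applied to $\int_{\R \times \Sigma_k} |x^1| \, \ext V_k$.

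Using the pointwise inequality $1/\sqrt{1+t^2} \leq 1$ (which for $\zeta \geq 0$ gives a one-sided comparison on the $|Du_k^\alpha|^2 \zeta$ term) together with the Allard height bound $\|u_k\|_\infty \to 0$ (which makes the error in replacing $u_k^\alpha Du_k^\alpha \cdot D\zeta / \sqrt{1+|Du_k^\alpha|^2}$ by $u_k^\alpha Du_k^\alpha \cdot D\zeta$ itself $o(\hat{E}_{V_k}^2)$), I then divide by $\hat{E}_{V_k}^2$ and pass to a subsequential limit. The strong $L^2$-weak $W^{1,2}$ convergence $v_k \to v$ handles the cross term via the standard strong-weak product: $\int v_k^\alpha Dv_k^\alpha \cdot D\zeta \to \int v^\alpha Dv^\alpha \cdot D\zeta$, while the Dirichlet term is controlled via the weak lower-semicontinuity $\int |Dv|^2 \zeta \leq \liminf_k \int |Dv_k|^2 \zeta$ (valid for $\zeta \geq 0$), which together yield the first squash inequality. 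The second inequality (for $v_f$) is equivalent to the first via the algebraic identity (using $\sum_\alpha v_f^\alpha = 0$)
\[ \int |Dv|^2 \zeta + \int \sum_\alpha v^\alpha Dv^\alpha \cdot D\zeta \;=\; Q \Big[\, \int |Dv_a|^2 \zeta + \int v_a Dv_a \cdot D\zeta \,\Big] + \int |Dv_f|^2 \zeta + \int \sum_\alpha v_f^\alpha Dv_f^\alpha \cdot D\zeta, \]
combined with the squash \emph{identity} for $v_a$, itself an easy consequence of $(\FB3)$: the harmonicity of $v_a$ gives $\int |Dv_a|^2 \zeta + \int v_a Dv_a \cdot D\zeta = 0$ by integration by parts of $\Delta v_a = 0$ against $v_a \zeta$, which annihilates the bracketed average term.

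For the $L^2$ estimates, the plan is a Caccioppoli-type argument: apply the squash inequality with $\zeta = \eta^2$ for a cutoff $\eta \in C^1_c(B_{3/4})$ with $\eta \equiv 1$ on $B_{1/2}$ and $|D\eta| \leq C$; then $D\zeta = 2\eta D\eta$, and the Cauchy--Schwarz bound $|\sum_\alpha v^\alpha Dv^\alpha| \leq |v|\,|Dv|$ combined with Young's inequality absorbs $\int \eta^2 |Dv|^2$ from the right-hand side into the left, yielding the stated bound (with the same reasoning giving the $v_f$-version). The principal technical obstacle is that the bad-set contribution $B_k$ is merely $O(\hat{E}_{V_k}^2)$---not $o(\hat{E}_{V_k}^2)$---so that after the $\hat{E}_{V_k}^{-2}$ rescaling it contributes an uncontrolled $O(1)$ error term that cannot simply be discarded in the limit; it is precisely for this reason that one obtains only the one-sided squash inequality (not an identity), with the $\leq$ direction of weak lower-semicontinuity absorbing this otherwise intractable remainder. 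This mechanism reflects, as emphasised in the introduction, the absence of an a priori energy convergence principle in the blow-up of stable varifolds without a minimising hypothesis.
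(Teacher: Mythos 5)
Your overall route---testing the first variation of $V_k$ with the vector field $\zeta\, x^1 e_1$, splitting the resulting identity over the good set $\Omega_k$ and the bad set $\Sigma_k$, and passing to the limit via strong-$L^2$/weak-$W^{1,2}$ convergence together with lower semicontinuity---is exactly the paper's route, and your reduction of the $v_f$-inequality to the $v$-inequality via harmonicity of $v_a$, as well as the Caccioppoli step with $\zeta=\eta^2$, match the paper. The genuine gap is in your treatment of the bad-set term. You bound $B_k=\int_{\R\times\Sigma_k}\div_S(\widetilde{\zeta}x^1e_1)\,\ext V_k$ only by $C\hat{E}_{V_k}^2(|\zeta|_\infty+|D\zeta|_\infty)$ and then assert that the resulting $O(1)$ remainder (after dividing by $\hat{E}_{V_k}^2$) is ``absorbed'' by weak lower semicontinuity. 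It cannot be: lower semicontinuity controls the Dirichlet term on one side of the identity, but it does nothing to cancel a signed additive error on the other side; if $B_k$ really were an uncontrolled $O(\hat{E}_{V_k}^2)$ term, you would only obtain the squash inequality with an extra $+C(|\zeta|_\infty+|D\zeta|_\infty)$ on the right, which is useless (in particular for the Caccioppoli step). The correct mechanism, which the paper uses, is to split $B_k$ into its two pieces: $\int_{\R\times\Sigma_k}\widetilde{\zeta}\,|\nabla^{V_k}x^1|^2\,\ext\|V_k\|$ is nonnegative for $\zeta\geq 0$ and sits on the left-hand side, so it may simply be discarded (this, together with lower semicontinuity, is the true source of the one-sidedness); the signed cross term $\int_{\R\times\Sigma_k}x^1\nabla^{V_k}x^1\cdot\nabla^{V_k}\widetilde{\zeta}\,\ext\|V_k\|$ is in fact $o(\hat{E}_{V_k}^2)$, since $\sup_{X\in\spt\|V_k\|\cap(\R\times B_{3/4})}|x^1|\to 0$ and $\int_{\R\times B_{3/4}}|\nabla^{V_k}x^1|^2\,\ext\|V_k\|\leq C\hat{E}_{V_k}^2$ give a bound of the form $\sup|x^1|\cdot|D\zeta|_\infty\cdot\bigl(\|V_k\|(\R\times\Sigma_k)\bigr)^{1/2}\bigl(\int|\nabla^{V_k}x^1|^2\bigr)^{1/2}$. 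You already invoke the height bound for the good region; you must apply it here too.

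A second, related, directional slip: replacing $|Du_k^\alpha|^2/\sqrt{1+|Du_k^\alpha|^2}$ by $|Du_k^\alpha|^2$ via $1/\sqrt{1+t^2}\leq 1$ bounds the good-region Dirichlet contribution from \emph{above} by $\int_{\Omega_k}\zeta|Du_k^\alpha|^2$, whereas your chain requires it bounded from \emph{below}: you need $\int\zeta|Dv|^2\leq\liminf_k\hat{E}_{V_k}^{-2}\sum_\alpha\int_{\Omega_k}\zeta|Du_k^\alpha|^2/\sqrt{1+|Du_k^\alpha|^2}$, and combining the upper bound with $\int\zeta|Dv|^2\leq\liminf\int\zeta|Dv_k|^2$ yields two lower bounds for the same $\liminf$ and hence nothing. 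Since only $\Lip(u_k^\alpha)\leq 1/2$ is available, the pointwise estimate $1/\sqrt{1+t^2}\geq 1-t^2/2$ leaves an $O(1)$ multiplicative loss; the clean fix is to apply lower semicontinuity to $w_k^\alpha:=(1+|Du_k^\alpha|^2)^{-1/4}Dv_k^\alpha\one_{\Omega_k}$, observing that $\|w_k^\alpha-Dv_k^\alpha\one_{\Omega_k}\|_{L^1(B_{3/4})}\leq C\hat{E}_{V_k}\|Dv_k^\alpha\|_{L^2}^2\to 0$, so that $w_k^\alpha\rightharpoonup Dv^\alpha$ weakly in $L^2$ and $\int\zeta|Dv^\alpha|^2\leq\liminf\int\zeta|w_k^\alpha|^2$. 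With these two corrections your argument coincides with the paper's proof.
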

\begin{proof}
	Let $\zeta\in C^1_c(B_{3/4};\R)$, and let $\widetilde{\zeta}$ denote the vertical extension of $\zeta$ given by $\widetilde{\zeta}(y,x):= \zeta(x)$ for $(y,x)\in \R\times B_{1}$. Choosing $x^1\widetilde{\zeta}e^{1}$ as the test vector field in the first variation formula for $V_j$ we have
	$$\int_{\R\times B_1}\widetilde{\zeta}|\nabla^{V_j} x^1|^2\ \ext\|V_j\|(x) = -\int_{\R\times B_1}x^1\nabla^{V_j}x^1\cdot \nabla^{V_j}\widetilde{\zeta}\ \ext\|V_j\|(x).$$
	 
 Omitting the non-graphical part of the left hand side, using the bound $J^\alpha_j\geq 1$ for the Jacobian of $u^\alpha_j$ and the bound $|Du_{j}^{\alpha}| \leq 1/2$, we have from the above:
	\begin{align*}
	\int_{B_1\cap\Omega_j}\zeta\sum^Q_{\alpha=1}|Du^\alpha_j(x)|^2\ \ext x \leq -\int_{B_1\cap\Omega_j}\sum^{Q}_{\alpha=1}&u^\alpha_j Du_j^\alpha\cdot D\zeta (1 + |Du_{j}^{\alpha}|^{2})^{-1} \, \ext x\\
	&-\int_{B_1\setminus\Omega_j}x^1\nabla^{V_j}x^1\cdot\nabla^{V_j}\tilde{\zeta}\ \ext\|V_j\|(x)\\
	&\hspace{-1in}\leq -\int_{B_1\cap\Omega_j}\sum^{Q}_{\alpha=1}u^\alpha_j Du_j^\alpha\cdot D\zeta \,\ext x + 
	C\left(\sup_{B_{3/4}} \, |u_{j}^{\alpha}||D\zeta|\right) \hat{E}_{j}^{2}\\
	& -\int_{B_1\setminus\Omega_j}x^1\nabla^{V_j}x^1\cdot\nabla^{V_j}\widetilde{\zeta}\ \ext\|V_j\|(x)\\
	&\hspace{-2.3in}\leq -\int_{B_1\cap\Omega_j}\sum^{Q}_{\alpha=1}u^\alpha_j Du_j^\alpha\cdot D\zeta \,\ext x + 
	C\left(\sup_{X \in {\rm spt} \, \|V_{j}\| \cap ( \R \times B_{3/4})} \, |x^{1}||D\zeta(\pi \, X)|\right) \hat{E}_{j}^{2}\\
	\end{align*}
	where $C = C(n, Q)$, and we used the bounds $\|V_j\|(B_{3/4}\setminus\Omega_j)\leq C\hat{E}^2_j$ 
	and $$\int_{\R \times B_{3/4}} |\nabla^{V_{j}} \, x^{1}|^{2} \, \ext\|V_{j}\|(x)\leq C \hat{E}_{j}^{2}$$ 
	with $C = C(n, Q)$, and the 
	Cauchy-Schwarz inequality in the last step. Note also that by a similar calculation, we also have that 
	$\sum_{\alpha=1}^{Q}\int_{B_{3/4}} |Du_{j}^{\alpha}|^{2} \leq C\hat{E}_{j}^{2}$. 
	 So dividing these inequalities by $\hat{E}_j^2$, we have that $\sum_{\alpha = 1}^{Q}\int_{B_{1} \cap \Omega_{j}} \zeta|Dv_{j}^{\alpha}|^{2} \leq -\int_{B_1\cap\Omega_j}\sum^{Q}_{\alpha=1}v^\alpha_j Dv_j^\alpha\cdot D\zeta \,\ext x + 
	C\left(\sup_{X \in {\rm spt} \, \|V_{j}\| \cap ( \R \times B_{3/4})} \, |x^{1}||D\zeta(\pi \, X)|\right)$ and 
	$\sum_{\alpha = 1}^{Q}\int_{B_{3/4}} |Dv^{\alpha}_{j}|^{2} \leq C$. By the weak convergence in $W^{1,2}(B_{3/4})$ of 
	$v_j \equiv {\hat E}_{j}^{-1}u_{j}$ to $v$, the strong convergence in $L^2(B_{3/4})$, the fact that $\one_{\Omega_j}\to 1$ a.e.\ in $B_{3/4}$ and the uniform bound $\int_{B_{3/4}} |Dv_{j}^{\alpha}|^{2} \leq C$, we see that $\int_{B_{1} \cap \Omega_{j}}v_{j}^{\alpha} Dv_{j}^{\alpha} \cdot D\zeta \to \int_{B_{1}} v^{\alpha} Dv^{\alpha} \cdot D\zeta$ and $\int_{B_{1}}\zeta|Dv^{\alpha}|^{2} \leq \liminf_{j \to \infty} \int_{B_{1} \cap \Omega_{j}}  \zeta |Dv_{j}^{\alpha}|^{2}$. So  since 
	$$\sup_{X \in {\rm spt} \, \|V_{j}\| \cap ( \R \times B_{3/4})} \, |x^{1}| \to 0,$$ we can let $j \to \infty$ to get 
	$$\int_{B_1}\zeta|Dv|^2 \leq -\int_{B_1}\sum^Q_{\alpha=1}v^\alpha Dv^\alpha\cdot D\zeta$$
which is the first assertion. Since $v^{\alpha} = v_{f}^{\alpha} + v_{a}$, $\sum_{\alpha = 1}^{Q} v_{f}^{\alpha} = 0$ and 
$\int_{B_1}\zeta|Dv_{a}|^2 =  -\int_{B_1}v_{a} Dv_{a}\cdot D\zeta$ (by virtue of the fact that $v_{a}$ is harmonic), the second assertion follows immediately from this. The third and the fourth assertions are immediate by taking $\zeta^{2}$ in place of $\zeta$ in these inequalities, using the Cauchy--Schwartz inequality on the right hand side of the resulting inequalities, and then choosing $\zeta$ appropriately. 
\end{proof}

\subsection{An energy non-concentration estimate} An important fact we will use to establish monotonicity of the Almgren frequency function associated with the coarse blow-ups is (a variant of) the following energy non-concentration estimate  proved in \cite{BKW2021}. 
\begin{lemma}[Energy non-concentration estimate, \cite{BKW2021}]\label{noncon}
	For each $v\in \FB_Q$ and every $\delta >0$ we have:
	$$\int_{B_{1/2}}\sum^Q_{\alpha=1}\one_{\{|v_f^\alpha|<\delta\}}|Dv^\alpha_f(x)|^2\ \ext x \leq C\delta\left(\int_{B_1}|v|^2\right)^{1/2}\;,$$
	where $C = C(n,Q)>0.$
\end{lemma}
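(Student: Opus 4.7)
The plan is to derive the estimate as a consequence of the first-variation identity for the approximating varifolds $V_j\in\S_Q$ from which $v$ is obtained as a coarse blow-up, applied to a carefully chosen vertical test vector field involving a truncation, and then to pass to the limit as $\hat E_j\to 0$. Concretely, take $(V_j)\subset\S_Q$ with $V_j\to Q|\{0\}\times\R^n|$ having coarse blow-up $v$, and write $V_j\res(\R\times B_{3/4})$ as $\bigcup_\alpha\graph u_j^\alpha$ outside a bad set $\R\times\Sigma_j$ with $\H^n(\Sigma_j)+\|V_j\|(\R\times\Sigma_j)\leq C\hat E_j^2$; here $v_j^\alpha:=\hat E_j^{-1}u_j^\alpha\to v^\alpha$ strongly in $L^2_{\rm loc}$ and weakly in $W^{1,2}_{\rm loc}$.

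Fix $\zeta\in C_c^1(B_{3/4})$ with $\zeta\equiv 1$ on $B_{1/2}$, let $T_\eta(t):=\max(\min(t,\eta),-\eta)$ denote the standard truncation, and write $\bar u_j:=Q^{-1}\sum_\beta u_j^\beta$ for the average of the sheets. The proposed test vector field is
\[
\psi(x^1,x')=\zeta(x')^2\,T_{\delta\hat E_j}\!\bigl(x^1-\bar u_j(x')\bigr)e_1.
\]
The scale $\delta\hat E_j$ is chosen precisely so that, when restricted to the graph of $u_j^\alpha$ (on which $x^1-\bar u_j(x')=\hat E_j v_f^{\alpha,j}(x')$), $\psi$ reduces to $\hat E_j T_\delta(v_f^{\alpha,j})\zeta^2 e_1$. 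Expanding $\div^S\psi=(e_1)^S\!\cdot\!\nabla\psi^1$ using the explicit tangent-plane projection $(e_1)^S=\bigl(\tfrac{|Du_j^\alpha|^2}{1+|Du_j^\alpha|^2},\tfrac{Du_j^\alpha}{1+|Du_j^\alpha|^2}\bigr)$ on $T\graph u_j^\alpha$, summing over $\alpha$ on the graphical part, dividing the first-variation identity $\int\div^S\psi\,dV_j=0$ by $\hat E_j^2$, and letting $j\to\infty$ (using the $L^2$ convergence of $v_j^\alpha$ to $v^\alpha$ and the a.e.\ convergence $\one_{\Omega_j}\to 1$) produces the limiting identity
\[
\int\zeta^2\sum_\alpha\one_{\{|v_f^\alpha|<\delta\}}Dv^\alpha\!\cdot\! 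Dv_f^\alpha+2\int\zeta D\zeta\!\cdot\!\sum_\alpha T_\delta(v_f^\alpha)Dv^\alpha=R,
\]
where $R:=\lim_{j\to\infty}\hat E_j^{-2}\int_{\R\times\Sigma_j}(-\div^S\psi)\,dV_j$ is the non-graphical contribution.

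Splitting $Dv^\alpha=Dv_a+Dv_f^\alpha$ in the first integral isolates the quantity $I:=\int\zeta^2\sum_\alpha\one_{\{|v_f^\alpha|<\delta\}}|Dv_f^\alpha|^2$ that dominates the left-hand side of the lemma. The residual cross term $\int\zeta^2 Dv_a\!\cdot\!\sum_\alpha\one_{\{|v_f^\alpha|<\delta\}}Dv_f^\alpha$ equals $\int\zeta^2 Dv_a\!\cdot\! D\bigl(\sum_\alpha T_\delta(v_f^\alpha)\bigr)$ by the chain rule for Lipschitz truncations, and integration by parts combined with $\Delta v_a=0$ from property $(\FB3)$ recasts it as $-2\int\zeta D\zeta\!\cdot\! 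Dv_a\sum_\alpha T_\delta(v_f^\alpha)$, whose absolute value is at most $C\delta\|v\|_{L^2(B_1)}$ since $|\sum_\alpha T_\delta(v_f^\alpha)|\leq Q\delta$ and $\|Dv_a\|_{L^2(B_{3/4})}\leq C\|v\|_{L^2(B_1)}$. The second integral in the displayed identity is bounded similarly by $C\delta\|v\|_{L^2(B_1)}$ using $|T_\delta|\leq\delta$, Cauchy--Schwarz, and the Caccioppoli-type bound $\|Dv\|_{L^2(B_{3/4})}\leq C\|v\|_{L^2(B_1)}$ supplied by the squash inequality (Lemma~\ref{squash}). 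Altogether $I\leq C\delta\|v\|_{L^2(B_1)}+|R|$, so the conclusion will follow once $|R|\leq C\delta\|v\|_{L^2(B_1)}$ is established.

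The main obstacle is therefore this refined control of the non-graphical term $R$. A straightforward bound $|\div^S\psi|\lesssim\one_{\{|x^1-\bar u_j|<\delta\hat E_j\}}(1+|D\bar u_j|)+\delta\hat E_j|D\zeta|$ combined with $\|V_j\|(\R\times\Sigma_j)\leq C\hat E_j^2$ yields only $|R|=O(1)$, which is insufficient by a factor $\delta^{-1}$. The sharper estimate exploits the fact that the dominant piece of $|\div^S\psi|$ is localised in a thin vertical slab of width $O(\delta\hat E_j)$ about the graph of $\bar u_j$: a slicing argument (using the coarea formula for the projection onto $\{0\}\times\R^n$, together with the higher-integrability/area-deficit of the Lipschitz approximation) shows that $\|V_j\|$ of the intersection of this slab with $\R\times\Sigma_j$ is in fact $O(\delta\hat E_j^2)$, so that after division by $\hat E_j^2$ one obtains $|R|\leq C\delta\|v\|_{L^2(B_1)}$ in the limit. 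This refined slicing/mass estimate for the non-graphical part is the technical heart of the proof and is carried out in \cite{BKW2021}; once it is in hand, combining it with the previous paragraph and using $\zeta\equiv 1$ on $B_{1/2}$ delivers the stated inequality.
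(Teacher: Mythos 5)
Your overall strategy --- testing the first variation of the approximating varifolds $V_j$ with a vertical field truncated at height $\delta\hat E_j$ about the average sheet and passing to the limit --- is indeed the spirit of the argument in \cite{BKW2021}, which is where the paper sends the reader for this lemma (the paper gives no proof of Lemma~\ref{noncon} itself, only the citation and a remark that the result holds in much greater generality). But as written the proposal has two genuine gaps. First, the passage to your ``limiting identity'' is not justified: the integrand $\one_{\{|v_{f}^{\alpha,j}|<\delta\}}Dv_j^\alpha\cdot Dv_{f}^{\alpha,j}$ is quadratic in gradients that converge only \emph{weakly} in $W^{1,2}_{\rm loc}$ and is multiplied by indicator functions of sublevel sets of functions converging only in $L^2$; products of weakly convergent sequences do not converge to the product of the limits, so no \emph{identity} survives in the limit (compare the proof of Lemma~\ref{squash}, where for precisely this reason only an inequality is obtained, via lower semicontinuity of the Dirichlet energy; here even the one-sided version is delicate because of the indicator). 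Second, and more seriously, the bound $|R|\leq C\delta\|v\|_{L^2(B_1)}$ on the bad-set contribution --- which you correctly identify as the point where the naive estimate loses a factor $\delta^{-1}$ --- is not proved: you state that the refined slicing/mass estimate ``is carried out in \cite{BKW2021}''. Deferring the technical heart of the argument to the reference means the proposal is not a proof.

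You should also note that in the present setting the estimate is available by a much shorter route that never returns to the varifolds: this is exactly Lemma~\ref{noncon2}. There one uses that $v$ is continuous (Lemma~\ref{continuity}) and that, by $(\FB4\textnormal{II})$, each $v_f^\alpha$ is harmonic on the open set $\{|v_f^\alpha|>0\}$; testing the weak harmonicity identity with $\eta_\epsilon(v_f^\alpha)\zeta$ for a truncation $\eta_\epsilon$ vanishing near $0$ gives $\int\zeta\,\eta_\epsilon'(v_f^\alpha)|Dv_f^\alpha|^2=-\int\eta_\epsilon(v_f^\alpha)Dv_f^\alpha\cdot D\zeta$, whose right-hand side is $O(\delta)\|Dv_f^\alpha\|_{L^2}$, and the energy is controlled by the squash inequality. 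Since $|v_f|^2=|v|^2-Qv_a^2\leq|v|^2$ pointwise, Lemma~\ref{noncon2} with $\rho=1/2$ implies Lemma~\ref{noncon}. If your aim is a self-contained proof, that is the argument to give; the first-variation route genuinely requires the bad-set analysis of \cite{BKW2021} that your proposal only gestures at.
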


\textbf{Remark.} Lemma~\ref{noncon} holds in much more generality than stated here, including in arbitrary codimension and just under the assumption of stationarity of the sequence of 
varifolds (converging to $Q|\{0\} \times {\mathbb R}^{n}|$) that produces $v$. No additional regularity hypothesis on $v$ is necessary. See \cite{BKW2021}. 

In the present context, in fact a stronger version of this can be established in a more elementary fashion, based on continuity of $v$ (Lemma~\ref{continuity}) and property $(\FB4).$

\begin{lemma}[Alternative energy non-concentration estimate]\label{noncon2}
	For every $v\in \FB_Q$ and $\delta >0$ we have
	$$\int_{B_{\rho}}\sum^Q_{\alpha=1}\one_{\{|v^\alpha_f|<\delta\}}|D v^\alpha_f(x)|^2\ \ext x \leq C(1-\rho)^{-2}\delta\left(\int_{B_1}|v_{f}|^2\right)^{1/2}$$
	for every $\rho \in (0, 1)$ and some $C = C(n,Q)\in (0,\infty)$.
\end{lemma}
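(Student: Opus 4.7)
The plan is to build a non-negative auxiliary function $\Psi$ out of $v_f$ whose distributional Laplacian dominates $\sum_\alpha\one_{\{|v_f^\alpha|<\delta\}}|Dv_f^\alpha|^2$, and then to control $\int\eta\,d(\Delta\Psi)$ via integration by parts together with a Caccioppoli bound for $Dv_f$ derived from the squash inequality. Concretely, set $\phi_\delta(t):=\mathrm{sign}(t)\min(|t|,\delta)$ with primitive $\Phi_\delta(t):=\int_0^t\phi_\delta$, so $\phi_\delta'=\one_{\{|\cdot|<\delta\}}$ a.e.\ and $0\leq\Phi_\delta(t)\leq\delta|t|$; define
\[
\Psi(x):=\sum_{\alpha=1}^{Q}\Phi_\delta(v_f^\alpha(x)).
\]
Since $\Phi_\delta$ is Lipschitz and $v_f\in W^{1,2}_{\mathrm{loc}}(B_1)\cap C^0(B_1)$ by Lemma~\ref{continuity}, one has $\Psi\in W^{1,2}_{\mathrm{loc}}(B_1)\cap C^0(B_1)$ with $0\leq\Psi\leq\delta\sqrt Q\,|v_f|$ pointwise and $|D\Psi|\leq\delta\sqrt Q\,|Dv_f|$ a.e.

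The heart of the argument is to establish subharmonicity of $\Psi$ on all of $B_1$. On the open set $U:=B_1\setminus\Gamma_v^{\mathrm{HS}}$, property $(\FB4\text{II})$ applies at every point, so locally $v$ is given by $Q$ un-ordered harmonic functions $H_1,\ldots,H_Q$, whence $v_a=Q^{-1}\sum_i H_i$ and the branches $K_i:=H_i-v_a$ are harmonic with $\sum_i K_i=0$. Since sums are invariant under reordering, $\Psi=\sum_i\Phi_\delta(K_i)$ locally on $U$ and is smooth there, and $\Delta K_i=0$ yields
\[
\Delta\Psi=\sum_i\phi_\delta'(K_i)|DK_i|^2=\sum_{\alpha=1}^{Q}\one_{\{|v_f^\alpha|<\delta\}}|Dv_f^\alpha|^2\ \geq\ 0\qquad\text{classically on }U.
\]
By Lemma~\ref{continuity}, $v_f\equiv 0$ on $\Gamma_v^{\mathrm{HS}}$, so $\Psi\equiv 0$ there, and since $\Psi\geq 0$ the submean-value inequality is trivial at every $z\in\Gamma_v^{\mathrm{HS}}$. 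Combined with classical subharmonicity on $U$, the continuous function $\Psi$ therefore satisfies the submean-value characterization on all of $B_1$, so $\Delta\Psi$ is a non-negative Radon measure on $B_1$ whose absolutely continuous part on $U$ equals $\sum_\alpha\one_{\{|v_f^\alpha|<\delta\}}|Dv_f^\alpha|^2\,dx$ (any singular part, supported in $\Gamma_v^{\mathrm{HS}}$, being non-negative). Hence for any non-negative $\eta\in C^1_c(B_1)$, the standard $W^{1,2}$ integration-by-parts gives
\[
\int_{B_1}\eta\sum_{\alpha=1}^{Q}\one_{\{|v_f^\alpha|<\delta\}}|Dv_f^\alpha|^2\,dx\ \leq\ \int_{B_1}\eta\,d(\Delta\Psi)\ =\ -\int_{B_1}D\Psi\cdot D\eta\,dx.
\]

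To conclude, the squash inequality (Lemma~\ref{squash}) applied to rescalings $v_{z,\sigma}\in\FB_Q$ (via $(\FB5\text{I})$), together with a standard covering, yields the Caccioppoli estimate $\int_{B_\sigma}|Dv_f|^2\leq C(1-\sigma)^{-2}\int_{B_1}|v_f|^2$ for every $\sigma\in(0,1)$, with $C=C(n,Q)$. Choosing $\eta\in C^1_c(B_1)$ with $0\leq\eta\leq 1$, $\eta\equiv 1$ on $B_\rho$, $\mathrm{supp}\,\eta\subset B_{(1+\rho)/2}$ and $\|D\eta\|_\infty\leq C(1-\rho)^{-1}$, and noting that the left-most integral above is non-negative (so $-\int D\Psi\cdot D\eta$ equals its absolute value), the pointwise bound $|D\Psi|\leq\delta\sqrt Q|Dv_f|$ combined with Cauchy--Schwarz and Caccioppoli at $\sigma=(1+\rho)/2$ gives
\[
\int_{B_\rho}\sum_{\alpha=1}^Q\one_{\{|v_f^\alpha|<\delta\}}|Dv_f^\alpha|^2\ \leq\ \|D\eta\|_\infty\|D\Psi\|_{L^1(B_{(1+\rho)/2})}\ \leq\ C\delta(1-\rho)^{-2}\|v_f\|_{L^2(B_1)},
\]
which is the stated estimate. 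The main obstacle will be rigorously propagating the classical subharmonicity of $\Psi$ on $U$ to subharmonicity on all of $B_1$---in other words, ensuring that the distributional Laplacian of $\Psi$ carries no negative singular contribution on the possibly-large set $\Gamma_v^{\mathrm{HS}}$---and it is precisely here that the continuity of $v_f$ and its vanishing on $\Gamma_v^{\mathrm{HS}}$ (guaranteed by Lemma~\ref{continuity}) are essential.
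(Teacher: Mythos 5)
Your proof is correct, and it takes a genuinely different route from the paper's. The paper's argument is more direct: since $(\FB4)$ together with Lemma~\ref{continuity} gives that $v_f^\alpha$ is harmonic on the open set $\{|v_f^\alpha|>0\}$, it tests that weak harmonicity against $\eta_\epsilon(v_f^\alpha)\zeta$, where $\eta_\epsilon$ is a truncation of the identity that vanishes identically on $[-\epsilon,\epsilon]$; because $\eta_\epsilon$ kills a neighbourhood of $0$, the test function is automatically supported inside $\{|v_f^\alpha|>\epsilon/2\}$, well away from $\Gamma_v^{\textnormal{HS}}$, so no assertion about crossing that (potentially large) set is ever needed. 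Rearranging, applying Cauchy--Schwarz and the squash/Caccioppoli bound, and letting $\epsilon\to 0$ (using $Dv_f^\alpha=0$ a.e.\ on $\{v_f^\alpha=0\}$) gives the estimate. Your version replaces the $\epsilon$-truncation by a global statement: $\Psi=\sum_\alpha\Phi_\delta(v_f^\alpha)$ is subharmonic on all of $B_1$, obtained from the local sub-mean-value characterization (trivial on $\Gamma_v^{\textnormal{HS}}$ because $\Psi\geq 0$ vanishes there, classical on the complement). This is a clean and valid way to cross $\Gamma_v^{\textnormal{HS}}$, and it isolates exactly why continuity of $v_f$ and its vanishing on $\Gamma_v^{\textnormal{HS}}$ (Lemma~\ref{continuity}) are the essential inputs; the price is the extra potential-theoretic step (local sub-mean-value $\Rightarrow$ subharmonic $\Rightarrow$ $\Delta\Psi$ is a non-negative Radon measure), which the paper's truncation sidesteps entirely. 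Two points worth making explicit in a final write-up: (i) $\Phi_\delta$ is only $C^{1,1}$, so on $U$ the identity $\Delta\Psi=\sum_\alpha\one_{\{|v_f^\alpha|<\delta\}}|Dv_f^\alpha|^2$ holds a.e.\ and distributionally rather than ``classically'' (this suffices, since $\Phi_\delta\circ K_i\in W^{2,\infty}_{\textnormal{loc}}(U)$ and the exceptional level sets $\{K_i=\pm\delta\}$ are either null or sets where $DK_i\equiv 0$); and (ii) the reduction of the left-hand integral from $B_1$ to $U$ uses that $Dv_f^\alpha=0$ a.e.\ on $\{v_f^\alpha=0\}\supset\Gamma_v^{\textnormal{HS}}$ --- the same fact the paper invokes at the end of its proof.
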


\begin{proof}
	Fix $\delta>0$.  For $\epsilon < \delta/2$, define $\eta_\epsilon:\R\to \R$ to be the odd extension to $\R$ of the function $t\mapsto \delta\one_{(\epsilon, \infty)}(t)\cdot \min\left\{\frac{t-\epsilon}{\delta - \epsilon}, 1\right\}$ for $t>0$. Let $\sigma \in (0, 1)$ and $\zeta\in C^1_c(B_{\sigma}).$ By Lemma~\ref{continuity}, $v$ is continuous in $B_{1}$, so for each $\alpha\in\{1,\dotsc,Q\}$, we have by $(\FB4)$ that $v_f^\alpha$ is harmonic on the open set $\{|v^\alpha_f|>0\}$, 
	so by an approximation argument we may take $\eta_{\epsilon}(v_f^\alpha)\zeta$ as a test function in the weak harmonicity identity for $v_f^\alpha$, i.e.,
	$$\int_{B_1}Dv^\alpha_f\cdot D(\eta_{\epsilon}(v^\alpha_f)\zeta) = 0.$$
Upon rearranging this becomes
$$\int_{B_1}\zeta\cdot \eta_{\epsilon}'(v_f^\alpha)\cdot|D v^\alpha_f|^2 = -\int_{B_1}\eta_{\epsilon}(v^\alpha_f)Dv^\alpha_f\cdot D \zeta,$$
from which we get
$$\frac{\delta}{\delta - \epsilon}\int_{B_1}\zeta\cdot\one_{\{\epsilon<|v^\alpha_f|<\delta\}}\cdot|D v^\alpha_f|^2 \leq \w_{n}^{1/2}\sup_\R|\eta_{\epsilon}| \, \sup_{B_{\sigma}} \, |D\zeta|\left(\int_{B_{\sigma}}|Dv_f^\alpha|^2\right)^{1/2}.$$
By Lemma~\ref{squash} we have that $\int_{B_{\sigma}}\sum_{\alpha = 1}^{Q}|D v_{f}^{\alpha}|^2 \leq C(1-\sigma)^{-2}\int_{B_{1}} \sum_{\alpha =1}^{Q} |v_{f}^{\alpha}|^2$
where $C = C(n)$, and so we get from the above, after summing over $\alpha$ and letting $\epsilon \to 0$ and noting that $D v_{f}^{\alpha} = 0$ a.e.\ on $\{v_{f}^{\alpha} = 0\}$, 
$$\int_{B_1} \sum_{\alpha = 1}^{Q}\zeta\cdot\one_{\{|v^\alpha_f|<\delta\}}\cdot|D v_{f}^{\alpha}|^2 \leq C\delta (1-\sigma)^{-1}\sup_{B_{\sigma}}|D\zeta|\left(\int_{B_{1}}|v_{f}|^2\right)^{1/2}.$$
For given $\rho \in (0, 1)$, choosing $\sigma = (1+\rho)/2$ and $\zeta$ such that $\zeta = 1$ on $B_{\rho}$, $|D\zeta|  \leq 2/(\sigma - \rho)$, this gives the desired conclusion.
\end{proof}

\subsection{Squeeze identity and frequency monotonicity for generalised-$C^{1}$ coarse blow-ups}\label{C1squeeze}

We start by proving the validity of the squeeze identity locally about classical singularities of coarse blow-ups.

\begin{lemma}[Squeeze identity at classical singular points]\label{squeeze_classical}
	Let $v\in \FB_Q$ and let $x_{0} \in \CC_{v}$. There is a number $r = r(v,x_0) \in (0,\frac{1}{2}(1-|x_0|))$ such that the following is true:
	\begin{enumerate}
		\item [(i)] $\Gamma_v\cap B_r(x_0)$ is equal to the graph of a $C^1$ function $\xi \, : \, B_r(x_0) \cap (x_{0} + L) \to L^{\perp}$ for some $(n-1)$-dimensional subspace $L \subset \R^{n}$, where $L^{\perp}$ is the orthogonal complement of $L$ in $\R^{n}$;
		\item [(ii)] $\Gamma_v\cap B_r(x_0) = \CC_v\cap B_r(x_0)$;
		\item [(iii)] \textnormal{(Squeeze Identity)} we have for every $\zeta\in C^1_c(B_r(x_0);\R^n)$,
		$$\int_{B_r(x_0)}\sum^Q_{\alpha=1}\sum^n_{i,j=1}\left(|Dv^\alpha|^2\delta_{ij} - 2D_i v^\alpha D_j v^\alpha\right)D_i\zeta^j = 0$$
		and
		$$\int_{B_r(x_0)}\sum^Q_{\alpha=1}\sum^n_{i,j=1}\left(|Dv_f^\alpha|^2\delta_{ij} - 2D_i v_f^\alpha D_j v_f^\alpha\right)D_i\zeta^j = 0.$$
	\end{enumerate}
\end{lemma}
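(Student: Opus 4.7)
The plan is to deduce (i) and (ii) from a local application of Theorem~\ref{thm:B7} to an appropriate rescaling of $v$ at $x_0$, and to prove the squeeze identity (iii) via first variation of an approximating sequence $(V_j)_j \subset \S_Q$ against a horizontal test vector field, using the uniform up-to-boundary $C^{1,\alpha}$ estimates furnished by Theorem~\ref{thm:fine_reg} to justify passing to the limit.

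For (i) and (ii): since $x_0 \in \CC_v$, the tangent cone of $\mathbf{v}(v)$ at $x_0$ is (after centring by $v_a(x_0)$) supported on a non-trivial element of $\CC_Q$, so an $L^2$-normalised rescaling of $v - v_a(x_0)$ at a sufficiently small scale about $x_0$ satisfies the hypotheses of Theorem~\ref{thm:B7} for a suitable $\psi$ with $\mathbf{v}(\psi) \in \CC_Q$. That theorem then yields $r > 0$ such that $\B_v \cap B_r(x_0) = \emptyset$, $\CC_v \cap B_r(x_0) = \graph\,\xi$ for some $C^{1, \alpha}$ function $\xi : (x_0 + L) \cap B_r(x_0) \to L^\perp$ over an $(n-1)$-dimensional subspace $L$, and $v$ is $C^{1, \alpha}$ up to $\graph\,\xi$ on each component $\Omega^\pm$ of $B_r(x_0) \setminus \graph\,\xi$; this gives (i) directly. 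For (ii), first $\CC_v \cap \Omega_v = \emptyset$ because a classical singularity forces distinct sheets on a positive-measure set in every punctured neighbourhood, while the $C^{1, \alpha}$ regularity together with the approximate degree-$1$ homogeneity of $v - \ell_{v, z}$ about any $z \in \CC_v$ makes $(\FB4\textnormal{I})$ hold at $z$ by a direct computation, so $\CC_v \cap B_r(x_0) \subset \Gamma_v$. Conversely, since $\B_v \cap B_r(x_0) = \emptyset$ we have $B_r(x_0) \setminus \CC_v = \mathcal{R}_v \cap B_r(x_0)$, and on this regular region the $Q$ sheets are separately $C^{1, \alpha}$ and pairwise distinct (by $C^{1, \alpha}$ proximity to a classical cone with pairwise separated slopes, ensured by Proposition~\ref{prop:sheets}(i)); any $z \in \mathcal{R}_v \cap B_r(x_0) \cap \Gamma_v^{\textnormal{HS}}$ would, by Lemma~\ref{continuity}, satisfy $v^\alpha(z) = v_a(z)$ for all $\alpha$, contradicting this distinctness, so $\Gamma_v \cap B_r(x_0) \subset \CC_v \cap B_r(x_0)$.

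For (iii), take $\zeta \in C^1_c(B_r(x_0); \R^n)$ and form its horizontal lift $\widetilde\zeta(y, x) = (0, \zeta(x))$. Choose a sequence $(V_j)_j \subset \S_Q$ producing $v$ as its coarse blow-up (after the standard translation and rescaling localising to $B_r(x_0)$), so that stationarity gives $\int \div_{V_j} \widetilde\zeta \, \ext \|V_j\| = 0$. By Theorem~\ref{thm:fine_reg}, $V_j \res (\R \times B_r(x_0)) = \mathbf{v}(\widetilde u_j)$ for some $GC^{1,\alpha}$ function $\widetilde u_j$ whose classical singular set is a $C^{1, \alpha}$ curve $\CC_{\widetilde u_j}$ dividing $B_r(x_0)$ into $\widetilde\Omega_j^\pm$, with sheets $\widetilde u_j^{\alpha, \pm} \in C^{1, \alpha}(\overline{\widetilde\Omega_j^\pm})$ satisfying the uniform bound $|\widetilde u_j^{\alpha, \pm}|_{1, \alpha; \overline{\widetilde\Omega_j^\pm}} \leq C \hat E_j$. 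A direct expansion of the tangential divergence to second order in $|Du|$ on each $C^{1, \alpha}$ minimal graph yields
\begin{align*}
\int_{\graph\,\widetilde u_j^{\alpha, \pm}} \div_{\graph\,\widetilde u_j^{\alpha, \pm}} \widetilde\zeta \, \ext \H^n &= \int_{\widetilde\Omega_j^\pm} \div\zeta + \tfrac{1}{2} \int_{\widetilde\Omega_j^\pm} |D\widetilde u_j^{\alpha, \pm}|^2 \div\zeta \\
&\quad - \int_{\widetilde\Omega_j^\pm} \sum_{i,k} D_i \widetilde u_j^{\alpha, \pm} D_k \widetilde u_j^{\alpha, \pm} D_i \zeta^k + O(\hat E_j^4).
\end{align*}
Summing over $\alpha = 1, \ldots, Q$ and both signs $\pm$, the leading-order contribution $Q\int_{B_r(x_0)} \div\zeta$ vanishes; dividing by $\hat E_j^2$ and letting $j \to \infty$, with $\hat E_j^{-1} \widetilde u_j^{\alpha, \pm} \to v^\alpha|_{\Omega^\pm}$ in $C^{1, \alpha}$ on compact subsets of $\Omega^\pm$ (the contribution from any fixed thin neighbourhood of $\CC_{\widetilde u_j}$ being uniformly controlled by the $C^{1, \alpha}$ bound and tending to zero as the neighbourhood shrinks), delivers the squeeze identity for $v$. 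The $v_f$-version follows by subtracting the same identity applied to $Q$ times the single harmonic function $v_a$ (harmonic by $(\FB3)$), using $\sum_\alpha Dv_f^\alpha \equiv 0$.

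The main obstacle is the passage to the limit across the classical singular curves $\CC_{\widetilde u_j}$: the two one-sided expansions of $\div_{\graph\,u} \widetilde\zeta$ do not match pointwise there, and a naive cancellation would require delicate boundary analysis of the energy-momentum tensor along the moving curves $\CC_{\widetilde u_j}$ and their limit $\CC_v$. It is precisely the uniform up-to-boundary $C^{1, \alpha}$ control afforded by Theorem~\ref{thm:fine_reg} that keeps the $O(|D\widetilde u_j|^4)$ errors of order $\hat E_j^4$ in $L^1$ and forces the near-singular contributions to vanish in the limit, bypassing the need for any explicit boundary evaluation.
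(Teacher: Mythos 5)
Your proposal is correct and follows essentially the same route as the paper: (i)–(ii) from the local classical-singularity structure (the paper reads these off from the definition of $\CC_v$ and Lemma~\ref{continuity}, while you route through Theorem~\ref{thm:B7}, which amounts to the same thing after the normalisation you describe), and (iii) by testing the first variation of an approximating sequence $(V_j)\subset\S_Q$ against a horizontal vector field, using the graph decomposition and uniform one-sided $C^{1,\alpha}$ estimates of Theorem~\ref{thm:fine_reg} to divide by $\hat E_j^2$ and pass to the limit, with the zeroth-order term killed by $\int\div\zeta=0$ and the $v_f$-version obtained by subtracting the harmonic average. The only point worth tightening is the reduction step: as in the paper, one should invoke $(\FB5\textnormal{I})$, $(\FB5\textnormal{III})$ and the validity of the identity for affine functions to justify the tilting by $Dv_a(x_0)$ that puts $v$ close to an average-free cone before Theorems~\ref{thm:B7} and~\ref{thm:fine_reg} are applied.
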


\begin{proof}
	The first two conclusions hold for suitably chosen $r = r(v,x_0)$ by the definition of $\CC_v$ and the fact that, by Lemma~\ref{continuity}, we have $v(z) = Q\llbracket v_{a}(z)\rrbracket$ for every $z \in \Gamma_{v}$. The second conclusion in (iii) follows from the first by writing $v^\alpha = v^\alpha_f + v_a$ and noting that $\sum^Q_{\alpha=1}v^\alpha_f = 0$ and that 
	$$\int_{B_r(x_0)}\sum^n_{i,j=1}\left(|Dv_a|^2\delta_{ij} - 2D_iv_a D_j v_a\right)D_i\zeta^j = 0,$$
	which is easily seen by integrating by parts, since $v_a$ is harmonic.
	To prove the squeeze identity for $v$ (the first conclusion in (iii)), note first that by properties $(\FB5\textnormal{I})$, $(\FB5\textnormal{III})$ together with the fact that this identity holds for affine functions, we may assume without loss of generality (after choosing $r$ sufficiently small, translating, ``tilting'' and rescaling, i.e.\ considering 
	$\widetilde{v}(x) = \|v(x_{0} + r(\cdot)) - (v_{a}(0) + rDv_{a}(0) \cdot (\cdot))\|_{L^{2}(B_{1})}^{-1}(v(x_{0} + rx) - (v_{a}(0) + rDv_{a}(0) \cdot x))$ in place of $v$) that $x_{0}=0,$ $r=1$, 
	$\|v\|_{L^{2}(B_{1})} = 1$, $v_{a}(0)=0$ and $Dv_{a}(0) = 0,$ and that $\int_{B_{1}} \mathcal G(v, \phi)^{2} < \epsilon$ where $\phi \, : \, \R^{n} \to {\mathcal A}_{Q}(\R)$ is such that ${\bf v}(\phi) \in \CC_{Q}$ and $(\phi)_{a} \equiv 0$, and $\epsilon$ is as in Theorem~\ref{thm:B7}. Now let $(\widetilde{V}_k)_k\subset \S_Q$ be a sequence of varifolds generating $v.$ Then by the remark following Theorem~\ref{thm:B7}, we have that for sufficiently large $k$, Theorem~\ref{thm:fine_reg} is applicable with $V_{k} = \Gamma_{\#} \, \eta_{Z_{k}, 1 - |Z_{k}| \, \#} \, \widetilde{V}_{k}$ in place of $V$ and 
	${\BC}_{k} = {\mathbf v}(\hat{E}_{V_{k}, \{0\} \times \R^{n}} \psi \circ \gamma)$ in place of $\BC, $ where $\gamma \, : \, \R^{n} \to \R^{n}$ is a rotation taking $S(\phi)$ to $\{(0, 0)\} \times \R^{n-1}$ and $\Gamma(x) = \gamma(\pi (x))$ for $x \in \R^{n+1}$. This provides, for any $\alpha \in (0, 1)$ and for all sufficiently large $k$, the following: 
	\begin{itemize}
	\item[(a)] The set $\gamma_{k} = \{Z \in \R \times B_{1/4} \, : \, \Theta_{V_{k}}(Z) \geq Q\}$ is the graph of a $C^{1, \alpha}$ function over an $(n-1)$-dimensional subspace $L_{k}$ of $\R^{n+1}$, and $\pi \, (\gamma_{k}) \to \Gamma_{v} \cap B_{1/4}$ in Hausdorff distance (in fact in $C^{1, \alpha}$) where $\pi \, : \, \R^{n+1} \to \{0\} \times \R^{n}$ is the orthogonal projection; 
	\item[(b)] $V_{k} \res (\R \times B_{1/4}) = \sum_{j=1}^{Q} |{\rm graph} \, u_{k}^{j}|,$ where $u_{k}^{j} \, : \, B_{1/4} \to \R$, $u_{k}^{1} \leq u_{k}^{2} \leq \cdots \leq u_{k}^{Q},$  and for each $k$, if $\Omega_{k}^{\pm}$ denote the two components of $B_{1/4} \setminus \pi(\gamma_{k}),$ then $\left.u_{k}^{j}\right|_{\Omega_{k}^{\pm}} \in C^{1, \alpha}(\overline{\Omega_{k}^{\pm}} \cap B_{1/4})$ with $|u_{k}^{j}|_{1, \alpha; \Omega_{k}^{\pm}} 
	\leq C \hat{E}_{V_{k},\{0\} \times \R^{n}},$ where $C = C(n, Q) \in (0, \infty).$  
	\end{itemize}
	
	 Now fix $\zeta\in C^1_c(B_{1/4}(0);\R)$ and let $\tilde{\zeta}\in C^1_c(\R \times B_{1/4};\R)$ be the extension of $\zeta$ to 
	 $\R\times B_{1/4}$ such that $\tilde{\zeta}(x^1,x^2,\dotsc,x^{n+1}) = \zeta(x^2,\dotsc,x^{n+1})$. Taking the vector field $X = \tilde{\zeta}(x)e_p$, $p\in \{2,\dotsc,n\}$, in the first variation formula for $V_k$, to get
	$$\int \nabla^{V_k}x^p\cdot \nabla^{V_j}\tilde{\zeta}(x)\ \ext\|V_k\|(x) = 0.$$
	By a direct computation, this gives that
	$$\sum_{j=1}^{Q}\int_{B_{1/4}} \left(D_{p} \zeta  - \frac{(Du_{k}^{j} \cdot D\zeta)D_{p}u_{k}^{j}}{1 + |Du_{k}^{j}|^{2}}\right) 
	\sqrt{1 + |Du_{k}^{j}|^{2}} = 0$$
	which implies, since $\int_{B_{1/4}} D_{p} \zeta = 0$, that 
		$$\sum_{j=1}^{Q}\int_{B_{1/4}} \left(\frac{|Du_{k}^{j}|^{2}}{1 + \sqrt{1 + |Du_{k}^{j}|^{2}}}D_{p} \zeta  - \frac{(Du_{k}^{j} \cdot D\zeta)D_{p}u_{k}^{j}}{1 + |Du_{k}^{j}|^{2}}\right) 
	\sqrt{1 + |Du_{k}^{j}|^{2}} = 0.$$

	Given a vector field $\zeta  = (\zeta^{1}, \ldots, \zeta^{n}) \in C^{1}_{c}(B_{1/4}; \R^{n}),$ we can take $\zeta = \zeta^{p}$ in this, sum over $p$, divide the resulting identity by $\hat{E}_{V_{k}, \{0\} \times \R^{n}}^{2}$ and let $k \to \infty$ along a subsequence; using the 
	estimate and the convergence $\partial \, \Omega_{k}^{\pm}   = \pi(\gamma_{k}) \cap B_{1/4} \to \Gamma_{v} \cap B_{1/4}$ provided by (a) and (b) above, this gives that the desired identity (with $r/4$ in place of the original $r$).      
\end{proof}

Now we may prove that the squeeze identity holds for $v \in \FB_{Q}$ in a region possibly containing branch points, under the assumption that $v$ is of class generalised-$C^1$ in that region.

\begin{lemma}[Squeeze identity for $GC^1$ coarse blow-ups]\label{squeeze}
	Let $v\in \FB_Q$ and let $\Omega\subset B_{1/2}(0)$ be open. Suppose that $v$ is of class $GC^1$ in $\Omega$. Then for every $\zeta\in C^1_c(\Omega;\R^n)$,
	$$\int_\Omega\sum^Q_{\alpha=1}\sum^n_{i,j=1}\left(|Dv^\alpha|^2\delta_{ij}-2D_iv^\alpha D_j v^\alpha\right)D_i\zeta^j = 0$$ and 
	$$\int_\Omega\sum^Q_{\alpha=1}\sum^n_{i,j=1}\left(|Dv_{f}^\alpha|^2\delta_{ij}-2D_iv_{f}^\alpha D_j v_{f}^\alpha\right)D_i\zeta^j = 0.$$
\end{lemma}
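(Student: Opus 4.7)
The plan is to first reduce to the average-free part. Since $v_a$ is harmonic in $B_1$ by $(\FB3)$, the squeeze identity for $v_a$ follows from a direct integration by parts, and $\sum_\alpha v_f^\alpha = 0$ makes the two identities in the statement equivalent; so I focus on the identity for $v_f$. The strategy is then threefold: (i) establish local versions of the squeeze identity on open sets away from $\B_v$; (ii) cut off near $\B_v$ to obtain a global identity modulo a cutoff error; and (iii) kill the error in the limit via Lemma~\ref{noncon2}.

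For (i), at each $y \in {\mathcal R}_v \cap \Omega$ the $Q$ sheets are separated $C^1$ functions on some $B_{\rho_y}(y)$, so property $(\FB4\mathrm{II})$ forces each sheet to be harmonic on $B_{\rho_y}(y)$ (after possibly shrinking $\rho_y$), and the squeeze identity there reduces to standard integration by parts. At each $y \in \CC_v \cap \Omega$, Lemma~\ref{squeeze_classical} directly supplies the identity on a ball $B_{r_y}(y)$. For (ii), given $\zeta \in C^1_c(\Omega;\R^n)$ I pick a Lipschitz cutoff $\chi_\epsilon$ equal to $0$ on $(\B_v)_\epsilon$ and $1$ off $(\B_v)_{2\epsilon}$ with $|D\chi_\epsilon| \leq C/\epsilon$. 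Then $\chi_\epsilon \zeta$ is compactly supported in $({\mathcal R}_v \cup \CC_v) \cap \Omega$, where a partition of unity subordinate to finitely many of the balls from (i) yields the squeeze identity for $\chi_\epsilon \zeta$. Writing $A_{ij} := \sum_\alpha (|Dv_f^\alpha|^2 \delta_{ij} - 2 D_i v_f^\alpha D_j v_f^\alpha)$ and expanding,
\[
\int_\Omega A_{ij} \chi_\epsilon D_i \zeta^j \;=\; -\int_\Omega A_{ij} \zeta^j D_i \chi_\epsilon.
\]
Dominated convergence, together with the bound $A_{ij} \in L^1(\Omega)$ from Lemma~\ref{squash}, sends the left side to $\int_\Omega A_{ij} D_i \zeta^j$ as $\epsilon \to 0$; the task reduces to showing the right side, which is bounded by $C \|\zeta\|_\infty \epsilon^{-1} \int_{(\B_v)_{2\epsilon}} |Dv_f|^2$, vanishes in the limit.

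For (iii), I would first note $Dv_f \equiv 0$ on $\B_v$: at any $y \in \B_v \cap \Omega$ the values $v^j(y)$ must coincide (otherwise, by continuity of each ordered $v^j$ and of $Dv$ on $\Omega \setminus \CC_v$, $y$ would lie in ${\mathcal R}_v$); combined with the ordering $v^1 \leq \cdots \leq v^Q$ and classical differentiability of $v$ at $y$ this forces $Dv^1(y) = \cdots = Dv^Q(y)$, hence $Dv_f(y) = 0$. Continuity of $Dv_f$ on ${\mathcal R}_v \cup \B_v$ then gives a modulus $\omega(\delta) := \sup_{({\mathcal R}_v \cup \B_v) \cap (\B_v)_\delta \cap \spt\,\zeta} |Dv_f| \downarrow 0$. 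Next, for $x \in \spt\,\zeta$ with $y \in \B_v$ satisfying $|x-y| = \dist(x, \B_v)$, I join $x$ to $y$ by a path in $({\mathcal R}_v \cup \{y\}) \cap \Omega$ of length $\leq 2|x-y|$; such a path exists because $\CC_v$ is locally an $(n-1)$-dimensional $C^1$ half-submanifold with boundary $\B_v$, and codimension one is enough for its complement in a small ball to be connected with the required diameter bound. Integrating $Dv_f$ along this path gives $|v_f(x)| \leq 2\epsilon\,\omega(2\epsilon)$ on $(\B_v)_\epsilon \cap \spt\,\zeta$. Lemma~\ref{noncon2} applied with $\delta := 4\epsilon\omega(4\epsilon)$ then yields $\int_{(\B_v)_{2\epsilon}} |Dv_f|^2 \leq C\epsilon\omega(4\epsilon)$, so the error in (ii) is $O(\omega(4\epsilon)) \to 0$, closing the argument.

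The main obstacle is step (iii): the sharp modulus $|v_f| = o(\dist(\cdot, \B_v))$, rather than the crude Lipschitz-type bound $O(\dist(\cdot, \B_v))$ that follows directly from boundedness of $Dv_f$, is essential. A merely bounded ratio $|v_f|/\dist(\cdot, \B_v)$ would leave $\epsilon^{-1} \int_{(\B_v)_{2\epsilon}} |Dv_f|^2$ bounded in $\epsilon$ after invoking Lemma~\ref{noncon2}, and the cutoff scheme would collapse. The $o$-improvement hinges on the identity $Dv_f = 0$ on $\B_v$ combined with continuity of $Dv_f$ up to $\B_v$ from ${\mathcal R}_v$, and on the path construction across $\CC_v$---which requires some care since ${\mathcal R}_v \cup \{y\}$ need not be star-shaped with respect to $y$, but is redeemed by the local half-submanifold structure of $\CC_v$ near $\B_v$.
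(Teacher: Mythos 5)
Your overall architecture matches the paper's: reduce to $v_f$ using harmonicity of $v_a$; obtain local squeeze identities on ${\mathcal R}_v$ (harmonicity) and near $\CC_v$ (Lemma~\ref{squeeze_classical}); and control the contribution near $\B_v$ using the two facts $Dv_f=0$ on $\B_v$ together with continuity of $Dv_f$ up to $\B_v$, and the energy non-concentration estimate. The genuine difference is the regularisation near $\B_v$. The paper truncates in the \emph{target}: it replaces $v_f^\alpha$ by $\gamma_\delta(v_f^\alpha)$ for a flattening function $\gamma_\delta$ with $|\gamma_\delta''|\leq C\delta^{-1}$; since $\gamma_\delta(v_f^\alpha)$ is smooth, integration by parts produces an error supported in $\{|v_f^\alpha|<\delta\}\cap B_\epsilon(\B_v)$ bounded by $C\delta^{-1}\sup_{B_\epsilon(\B_v)\cap\CC_v^c}|Dv_f|\cdot\int_{\{|v_f^\alpha|<\delta\}}|Dv_f^\alpha|^2\leq C\sup_{B_\epsilon(\B_v)\cap\CC_v^c}|Dv_f|$ by Lemma~\ref{noncon2}: the $\delta$'s cancel, the truncation scale is decoupled from the spatial scale, and only $\sup_{B_\epsilon(\B_v)}|Dv_f|\to 0$ is needed. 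Your spatial cutoff at scale $\epsilon$ instead forces the coupling $\delta\sim\epsilon\,\omega(\epsilon)$, so you must prove the sharper pointwise bound $|v_f|=o(\dist(\cdot,\B_v))$ — as you correctly observe, the crude $O(\dist)$ bound would leave the cutoff error merely bounded.

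That sharper bound is where your argument has a real gap. You justify the existence of a path from $x$ to its nearest point $y\in\B_v$ inside $({\mathcal R}_v\cup\{y\})\cap\Omega$ of length $\leq 2|x-y|$ by asserting that $\CC_v$ is locally a $C^1$ half-submanifold with boundary $\B_v$ and that codimension one suffices for connectivity of the complement with a diameter bound. Neither claim is available: Remark~(1) after Definition~\ref{regular-branch-sets} only gives that $\CC_v$ is an $(n-1)$-submanifold of $\Omega\setminus\B_v$ with $\Omega\cap\partial\CC_v=\B_v$, which does not preclude $\CC_v$ from accumulating at $y$ with many sheets or spiralling into $y$ (in which case no rectifiable path to $y$ avoids $\CC_v$ at all); and in general the complement of a codimension-one set in a ball is not connected. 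The estimate you need is nonetheless true and provable by a simpler route that you should substitute: $v_f$ is locally Lipschitz on $\Omega$ (each $v^j$ is $C^1$ up to $\CC_v$ from either side with matching values) and $v_f\equiv 0$ on $\CC_v\cup\B_v$ (on $\CC_v$ all sheets coincide so $v_f^\alpha=0$; on $\B_v\subset\Gamma_v^{\textnormal{HS}}$ use Lemma~\ref{continuity}). Hence along the \emph{straight segment} $\gamma$ from $x$ to $y$ the Lipschitz function $v_f\circ\gamma$ has derivative zero a.e.\ on the closed set $\gamma^{-1}(\CC_v\cup\B_v)$ and derivative bounded by $\omega(2\epsilon)$ elsewhere, giving $|v_f(x)|\leq |x-y|\,\omega(2\epsilon)$ with no connectivity input. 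Two smaller points: your justification that the values $v^j(y)$ coincide at $y\in\B_v$ should go through $\B_v\subset\Gamma_v^{\textnormal{HS}}$ and Lemma~\ref{continuity} (the continuity argument you sketch does not rule out the sheets separating into several groups each branching at $y$), although the subsequent ordering argument for $Dv^1(y)=\cdots=Dv^Q(y)$ is correct; and the local identities must be extended from $C^1_c$ to the Lipschitz test field $\chi_\epsilon\zeta$, which is routine since $|Dv_f|^2\in L^1_{\rm loc}$ by Lemma~\ref{squash}.
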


\begin{proof} The first assertion follows by writing $v^{\alpha} = v_{f}^{\alpha} + v_{a}$ in the second, and noting that $\sum_{\alpha = 1}^{Q} v_{f}^{\alpha} = 0$ and that (as can easily be seen by integrating by parts, as $v_{a}$ is harmonic) 
	$$\int_\Omega\sum^n_{i,j=1}\left(|Dv_{a}|^2\delta_{ij}-2D_iv_{a}D_j v_{a}\right)D_i\zeta^j = 0.$$ 
	
	To prove the second assertion, let $v\in \FB_Q$. Let $\zeta\in C^1_c(\Omega;\R^n)$. For $\delta>0$ small, consider a smooth function $\gamma_\delta:\R\to \R$ such that $\gamma_\delta(t) = t+\frac{3}{4}\delta$ for $t<-\delta$, $\gamma_\delta(t)\equiv 0$ for $|t|<\delta/2$, and $\gamma_\delta(t) = t-\frac{3}{4}\delta$ for $t>\delta$, in such a way that $|\gamma'_\delta(t)|\leq 1$ and $|\gamma''_\delta(t)|\leq 3\delta^{-1}$ for all $t\in \R$. 
	
	We know from $(\FB4\text{II})$ that $\Gamma_v \cap \Omega \subset\{|v_f|=0\} \cap \Omega$ and that $\left.\gamma_\delta(v_f^\alpha)\right|_{\Omega}$ is a smooth function for each $\alpha=1,\dotsc,Q$. Thus by direct calculation and the dominated convergence theorem we have (using summation convention)
	\begin{align*}
		&\lim_{\delta\to 0}\int_{B_1(0)}\sum_{\alpha=1}^Q\left(|D\gamma_\delta(v_f^\alpha)|^2\delta_{ij} - 2D_i\gamma_\delta(v_f^\alpha)D_j\gamma_\delta(v_f^\alpha)\right)D_i\zeta^j \\
		&\hspace{2in}= \int_{B_1(0)}\sum^Q_{\alpha=1}\left(|Dv_f^\alpha|^2\delta_{ij} - 2D_i v_f^\alpha D_j v_f^\alpha\right)D_i\zeta^j.
	\end{align*}
	We now compute the limit on the left hand side in an alternative way. Let $\epsilon>0$. Since $v$ is generalised-$C^1$ in $\Omega$, we know that $|Dv_f| = 0$ on $\B_v \cap \Omega$. For each $x\in \CC_v \cap \Omega$, let $\rho_x>0$ denote the radius $r(v,x_0)>0$ from Lemma \ref{squeeze_classical}. Then since $\spt(\zeta)$ is compact we can pick finitely many points 
	$x_1, x_{2}, \ldots, x_{N} \in \CC_v \cap \Omega$ for which:
	$$\CC_v\cap \spt(\zeta)\cap B_\epsilon(\B_v)^c \subset \bigcup^N_{i=1}B_{\rho_{x_i}}(x_i).$$
	Then choose an open set $\mathcal{O}$ such that
	$$\dist(\mathcal{O},\CC_v\cup \B_v)>0\ \ \ \ \text{and}\ \ \ \ \spt(\zeta)\subset\mathcal{O}\cup B_{\epsilon}(\B_v)\cup\bigcup^N_{i=1}B_{\rho_{x_i}}(x_i).$$
	(e.g.\ take $\mathcal{O} = \Omega \setminus \overline{B_{\epsilon}(\B_{v}) \cup \cup_{i=1}^{N} B_{\rho_{x_{i}}}(x_{i})}$.)  
	Write $\mathcal{U}:= \{\mathcal{O}\}\cup \{B_\epsilon(\B_v)\}\cup\{B_{\rho_{x_i}}(x_i): i=1,\dotsc,N\}$; this is an open cover of $\spt(\zeta)$. Now let $(\phi_\beta)_{\beta\in A}$ be a smooth partition of unity subordinate to $\mathcal{U}$ (which of course depends on $\epsilon$ but is independent of $\delta$); note that:
	\begin{enumerate}
		\item [(I)] The indexing set $A$ has cardinality $|A|<\infty$ (i.e. is a finite number, depending on $\epsilon$);
		\item [(II)] $\mathcal{O}$ is contained in the set where $v_{f}$ is harmonic;
		\item [(III)] For each $\beta\in A$, there is some $B\in \mathcal{U}$ for which $\spt(\phi_\beta)\subset B$. This means that we can write $A$ as a disjoint union $A_{\CC}\cup A_{\B}\cup A_{\mathcal{O}}$ depending on the set in which the support of $\phi_\beta$ lies; indeed, $A_{\mathcal{O}} := \{\beta\in A: \spt(\phi_\beta)\subset \mathcal{O}\}$, $A_{\CC} := \{\beta\in A\setminus A_{\mathcal{O}}: \spt(\phi_\beta)\subset B_{\rho_{x_i}}(x_i)\text{ for some }i\in \{1,\dotsc,N\}\}$, and $A_\B := A\setminus (A_{\mathcal{O}}\cup A_{\CC})$;
		\item [(IV)] $\sum_{\beta\in A}\phi_\beta = 1$.
	\end{enumerate}
	Also, by Remark 4 following Definition~\ref{genC1}, since $v$ is generalised-$C^1$ and $|Dv_f| = 0$ on $\B_v \cap \Omega$ we have that
	$$\sup_{B_\epsilon(\B_v)\cap \spt(\zeta)\cap \CC_v^c}\sum_{\alpha=1}^Q|Dv_f^\alpha| \to 0 \ \ \ \ \text{as }\epsilon\to 0.$$
	Now consider the integral:
	\begin{equation}\tag{$\dagger$}\label{dagger}
	\int_{B_1(0)}\sum_{\alpha=1}^Q\left(|D\gamma_\delta(v_f^\alpha)|^2\delta_{ij} - 2D_i\gamma_\delta(v_f^\alpha)D_j\gamma_\delta(v_f^\alpha)\right)D_i(\phi_\beta \zeta^j)\ \ext x.
	\end{equation}
	If $\beta\in A_{\B}$, then by integrating by parts (which we can do as $\gamma_\delta(v_f^\alpha)$ is smooth) we have that this is equal to:
	\begin{align*}
		-\int_{B_1(0)}\sum_{\alpha=1}^Q\left[2D_\ell \gamma_\delta(v_f^{\alpha})D_{i\ell}\gamma_\delta(v_f^\alpha)\delta_{ij} - 2\Delta\gamma_\delta(v_f^\alpha)D_j\gamma_\delta(v_f^\alpha) - 2D_i\gamma_\delta(v_f^\alpha)D_{ij}\gamma_\delta(v_f^\alpha)\right]\phi_\beta\zeta^j
	\end{align*}
Since pointwise on $\spt(\gamma_\delta')$ we have $\Delta\gamma_\delta(v_f^\alpha) = \gamma''_\delta(v_f^\alpha)|Dv_f^\alpha|^2,$ we get:
	\begin{align*}
		\sum_{\beta\in A_{\B}}\int_{B_1(0)}\sum_{\alpha=1}^Q(|D\gamma_\delta(v_f^\alpha)|^2\delta_{ij}& - 2D_i\gamma_\delta(v_f^\alpha)D_j\gamma_\delta(v_f^\alpha))D_i(\phi_\beta\zeta^j)\\
		& = 2\int_{B_1(0)}\sum_{\alpha=1}^Q\gamma''_\delta(v_f^\alpha)|Dv_f^\alpha|^2D\gamma_\delta(v_f^\alpha)\cdot\zeta\cdot\left(\sum_{\beta\in A_{\B}}\phi_\beta\right).
	\end{align*}
	Now as $\spt(\gamma''_\delta)\subset[-\delta,\delta]$ and $\spt(\phi_\beta)\subset B_\epsilon(\B_v)$ for $\beta\in \A_\B$, the integral on the right hand side above (for each $\alpha$) only takes place over the set $\{|v_f^\alpha|<\delta\}\cap B_\epsilon(\B_v)$. Thus, since $|\gamma^{\prime}_{\delta}| \leq 1$, we can estimate it, in absolute value, by:
	$$\leq 2\sup_\R |\gamma_\delta''| \cdot \sup_{\spt(\zeta)\cap B_\epsilon(\B_v) \cap \CC_{v}^{c}}\sum_{\alpha=1}^Q|Dv_f^\alpha|\cdot\sup_{B_1(0)}|\zeta|\cdot\int_{B_{1/2}(0)}\sum_{\alpha=1}^Q \one_{\{|v_f^\alpha|<\delta\}} |Dv_f^{\alpha}|^2$$
	and so using the energy non-concentration estimate, Lemma \ref{noncon2}, and the fact that $\sup|\gamma_\delta''|\leq 2\delta^{-1}$, we see that for $\delta$ sufficiently small,
	$$\left|\sum_{\beta\in A_\B}\int_{B_1(0)}\sum^Q_{\alpha=1}\left(|D\gamma_\delta(v_f^\alpha)|^2\delta_{ij} - 2D_i\gamma_\delta(v_f^\alpha)D_j\gamma_\delta(v_f^\alpha)\right)D_i(\phi_\beta\zeta^j)\right| \leq C\sup_{\spt(\zeta)\cap B_{\epsilon}(\B_v)\cap \CC_v^c}\sum^Q_{\alpha=1}|Dv_f^\alpha|,$$
	where $C$ is independent of $\delta$ and $\epsilon$.	
	Now suppose $\beta\in A_{\mathcal{O}}$ in (\ref{dagger}). Then (\ref{dagger}) is equal to
	$$\int_{B_1(0)}\sum^Q_{\alpha=1}|\gamma'_\delta(v_f^\alpha)|^2\left(|Dv_f^{\alpha}|^2\delta_{ij} - 2D_i v_f^{\alpha}D_j v_f^{\alpha}\right)D_i(\phi_\beta \zeta^j)$$
	which as $\delta\downarrow 0$, converges to
	$$\int_{B_1(0)}\sum_{\alpha=1}^Q\left(|Dv_f^\alpha|^2\delta_{ij} - 2D_iv_f^\alpha D_jv_f^\alpha\right)D_i(\phi_\beta\zeta^j).$$
	By integrating by parts and making a pointwise calculation along $x\in \mathcal{O}$, this is equal to $0$ (note that the $v_f^\alpha$ are harmonic at such a point).
	
	Finally, suppose $\beta\in \A_\CC$ in (\ref{dagger}). Then we have:
	\begin{align*}
		&\int \left(|D\gamma_\delta(v_f^\alpha)|^2\delta_{ij} - 2D_i\gamma_\delta(v_f^\alpha)D_j\gamma_\delta(v_f^\alpha)\right)D_i(\zeta^{j}\phi_\beta)\\ 
		& \hspace{2in} = \int |\gamma'_\delta(v_f^\alpha)|^2\left(|Dv_f^\alpha|^2\delta_{ij} - 2D_iv_f^\alpha D_j v_f^\alpha\right)D_i(\zeta^j\phi_\beta)\\
		& \hspace{2.5in}\to \int\left(|Dv_f^\alpha|^2\delta_{ij} - 2D_i v_f^\alpha D_j v_f^\alpha\right)D_i(\zeta^j\phi_\beta)
	\end{align*}
	as $\delta\downarrow 0$, and moreover this final expression equals zero by Lemma \ref{squeeze_classical}(iii).
	
	Thus summing the above three expressions over $\beta\in A$, letting $\delta\to 0$ first, and then letting $\epsilon\downarrow 0$, we get the claimed identity.
\end{proof}

For  $v\in \FB_Q$ that is generalised-$C^{1}$ on an open set $\Omega \subset B_{1}$, and for $B_\rho(y)\subset \Omega$ we define:
$$N_{v;y}(\rho):= \frac{\rho^{2-n}\int_{B_\rho(y)}|Dv|^2}{\rho^{1-n}\int_{\del B_{\rho}(y)}|v|^2}.$$
This is well-defined whenever $v$ is not identically zero on $B_\rho(y)$; indeed, if $\int_{\del B_{\rho}(y)}|v|^2 = 0$, then the squash inequality, Lemma \ref{squash}, for $v$ would imply that $|Dv|=0$ on $B_\rho(y)$, and hence $v\equiv 0$ on $B_\rho(y)$.

By combining the squash inequality (Lemma \ref{squash}) and squeeze identity (Lemma \ref{squeeze}) we can prove in the standard fashion (see e.g. \cite{almgrenalmgren} or \cite{simon2016frequency}) that, where defined, $N_{v;y}(\rho)$ is a non-decreasing function of $\rho$. In particular for each $y\in \Omega$ either there is a $\rho>0$ for which $v|_{B_\rho(y)}\equiv 0$, or the limit
$$N_{v}(y):= \lim_{\rho\downarrow 0}N_{v;y}(\rho)$$
exists (in $[0, \infty)$). When defined, we call $N_v(y)$ the \textit{frequency} of $v$ at $y$. It is also a standard consequence of monotonicity of 
$N_{v; y}(\cdot)$ that if $\Omega$ is connected, then either $v \equiv 0$ in $\Omega$ or $v \not\equiv 0$ on each ball $B_{\rho}(y) \subset \Omega$; hence, if $\Omega$ is connected, then  
$N_{v;y}$ is well-defined  for each $y \in \Omega$ and $\rho \in (0, {\rm dist} \, (y, \partial \Omega))$ unless $v \equiv 0$ in $\Omega$. As the squash and squeeze identities also hold for $v_f,$ we can also define $N_{v_f;y}(\rho)$ and $N_{v_f}(y)$ in the same fashion, and the preceding facts hold with $v_{f}$ in place of $v$. Thus we have the following:

\begin{theorem}[Monotonicity of the frequency function]\label{frequency}
	Let $v\in \FB_Q$ and suppose that $v$ is of class $GC^1$ on a connected open set $\Omega \subset B_{1/2}$, and that $v \not\equiv 0$ in $\Omega$. Then $v \not\equiv 0$ on each ball 
	$B_{\rho}(y) \subset \Omega,$ $N_{v;y}(\rho)$ is well-defined  for each $y \in \Omega$ and $\rho \in (0, {\rm dist} \, (y, \partial \Omega)),$ and $N_{v; y}(\rho)$ is a monotonically non-decreasing function of $\rho;$ 
	in particular, the frequency $N_{v}(y) = \lim_{\rho \to 0} \, N_{v; y}(\rho)$ exists (as a number in $[0, \infty)$) for each $y \in \Omega$, and $N_{v}(y)$ is an upper semi-continuous function of $y$. Furthermore:
	\begin{itemize}
	\item[(i)] for each $y \in \Omega$ and  $0 < \sigma \leq \rho < {\rm dist} \, (y, \partial \, \Omega)$, we have that 
	$$\left(\frac{\sigma}{\rho}\right)^{2N_{v;y}(\rho)}\rho^{-n}\int_{B_\rho(y)}|v|^2 \leq \sigma^{-n}\int_{B_\sigma(y)}|v|^2 \leq\left(\frac{\sigma}{\rho}\right)^{2N_{v}(y)}\rho^{-n}\int_{B_\rho(y)}|v|^2;$$
  \item [(ii)] if $N_{v;y}(\rho)$ is constant for $\rho\in (\rho_1,\rho_2)$, then $v$ is a homogeneous function with respect to the variable 
  $|x-y|$ on the interval $(\rho_{1}, \rho_{2})$, with degree of homogeneity equal to the constant value of $N_{v;y}(\rho)$ on this interval.
	\end{itemize}
	
	Moreover, the same conclusions hold with $v_f$ in place of $v$ whenever $v_{f} \not\equiv 0$ in $\Omega$.
\end{theorem}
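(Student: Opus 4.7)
\textbf{Proof plan for Theorem~\ref{frequency}.} The plan is to carry out the standard Almgren frequency computation using the squash inequality (Lemma~\ref{squash}) and the squeeze identity (Lemma~\ref{squeeze}) as the two key first-order differential inputs, combined with Cauchy--Schwarz. For $y \in \Omega$ and $\rho \in (0, {\rm dist}(y, \partial \Omega))$, set $h(\rho) = \int_{\partial B_\rho(y)}|v|^2$ and $D(\rho) = \int_{B_\rho(y)}|Dv|^2$, so that $N_{v;y}(\rho) = \rho D(\rho)/h(\rho)$. First I would establish the nondegeneracy $v \not\equiv 0$ on each ball $B_\rho(y) \subset \Omega$: the set $U = \{y \in \Omega : v \equiv 0 \text{ on a neighborhood of } y\}$ is open by definition, and one shows $U$ is closed in $\Omega$ by a standard argument using the squash inequality (which gives $\int_{B_\rho(y)}|Dv|^2 \leq \int_{\partial B_\rho(y)} v \cdot v_\nu$, so $v \equiv 0$ on $\partial B_\rho(y)$ forces $v \equiv 0$ on $B_\rho(y)$) together with continuity (Lemma~\ref{continuity}) and a covering/connectedness argument.

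Next, the standard coarea formula and change of variables give $h'(\rho) = (n-1)h(\rho)/\rho + 2\int_{\partial B_\rho(y)} v \cdot v_\nu$ and $D'(\rho) = \int_{\partial B_\rho(y)}|Dv|^2$ for a.e.\ $\rho$, where $v_\nu = \sum_{\alpha}(x-y)\cdot Dv^{\alpha}/|x-y|$ componentwise. Approximating the characteristic function of $B_\rho(y)$ in the squash inequality by $C^1_c$ cutoffs gives, for a.e.\ $\rho$, the fundamental bound $D(\rho) \leq \int_{\partial B_\rho(y)} v \cdot v_\nu$. Similarly, testing the squeeze identity with (mollifications of) $\zeta^j(x) = (x^j - y^j)\eta_\epsilon(|x-y|)$ and letting $\eta_\epsilon \to \mathbf{1}_{[0,\rho]}$ gives the identity $\rho D'(\rho) = (n-2)D(\rho) + 2\rho \int_{\partial B_\rho(y)} |v_\nu|^2$. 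Differentiating $\log N_{v;y}$ and substituting yields
$$(\log N_{v;y})'(\rho) = 2\left[\frac{\int_{\partial B_\rho(y)}|v_\nu|^2}{D(\rho)} - \frac{\int_{\partial B_\rho(y)} v \cdot v_\nu}{h(\rho)}\right].$$
The Cauchy--Schwarz inequality $(\int v \cdot v_\nu)^2 \leq h(\rho) \int |v_\nu|^2$ combined with the squash bound $\int v \cdot v_\nu \geq D(\rho)$ shows this is $\geq 0$, establishing monotonicity; hence $N_v(y) = \lim_{\rho \downarrow 0}N_{v;y}(\rho)$ exists. Upper semicontinuity follows since $N_v(y) = \inf_\rho N_{v;y}(\rho)$ and $y \mapsto N_{v;y}(\rho)$ is continuous for each fixed $\rho$ (by continuity of $v$).

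For (i), the formula for $h'$ together with squash yields $(\log H)'(s) \geq 2N_{v;y}(s)/s$, where $H(s) = s^{1-n}h(s)$. Integrating from $s$ to $\rho$ and using $N_{v;y}(t) \geq N_v(y)$ gives $H(s) \leq H(\rho)(s/\rho)^{2N_v(y)}$, which when integrated in $s$ yields the upper bound $\sigma^{-n}\int_{B_\sigma(y)}|v|^2 \leq (\sigma/\rho)^{2N_v(y)}\rho^{-n}\int_{B_\rho(y)}|v|^2$. The lower bound follows from showing that $s \mapsto s^{-n-2N_{v;y}(\rho)}\int_{B_s(y)}|v|^2$ is nonincreasing on $(0, \rho]$; this reduces to showing $H(s) \leq (n+2N_{v;y}(\rho))\Psi(s)$ with $\Psi(s) = s^{-n}\int_{B_s(y)}|v|^2$, which follows from the monotonicity of $s \mapsto s^{-n-2N_v(y)}\int_{B_s(y)}|v|^2$ and the scale comparison $N_{v;y}(s) \leq N_{v;y}(\rho)$. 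For (ii), if $N_{v;y}(\rho)$ is constant on $(\rho_1, \rho_2)$, then $(\log N_{v;y})' = 0$ a.e.\ there, forcing equality simultaneously in Cauchy--Schwarz and in the squash inequality on this interval; the Cauchy--Schwarz equality forces $v_\nu = \lambda(\rho) v$ (componentwise) a.e.\ on each $\partial B_\rho(y)$, which standardly integrates to radial homogeneity of each $v^\alpha$ on the annulus, with degree equal to the constant value of $N_{v;y}$. The corresponding statements for $v_f$ are identical since the squash inequality and squeeze identity both hold with $v_f$ in place of $v$.

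The main obstacle I expect is the careful approximation justifying the derivation of the boundary identities $D(\rho) \leq \int_{\partial B_\rho} v \cdot v_\nu$ and $\rho D'(\rho) = (n-2)D(\rho) + 2\rho\int_{\partial B_\rho}|v_\nu|^2$ from the $C^1_c$-test-function forms of Lemma~\ref{squash} and Lemma~\ref{squeeze}, since $v$ may have branch points in $\Omega$. The squash inequality is directly amenable to cutoff approximation, but for the squeeze identity one must verify that the test vector field $(x-y)\eta_\epsilon(|x-y|)$ indeed lies in $C^1_c(\Omega; \mathbb{R}^n)$ and that the resulting limits pass through properly using $v \in W^{1,2}_{\text{loc}}(\Omega)$. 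A secondary subtlety is in (ii): the equality case pushes pointwise equalities through boundary integrals, and one must argue that equality in Cauchy--Schwarz over $\partial B_\rho(y)$ for a.e.\ $\rho$, together with $GC^1$ regularity, suffices to force $v$ itself (not just suitable averages) to be homogeneous.
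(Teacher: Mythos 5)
Your overall route---deriving the differentiation formulas for $h(\rho)=\int_{\partial B_\rho(y)}|v|^2$ and $D(\rho)=\int_{B_\rho(y)}|Dv|^2$ from Lemma~\ref{squash} and Lemma~\ref{squeeze} by radial cutoff approximation and then running the Almgren computation with Cauchy--Schwarz---is exactly the ``standard fashion'' the paper has in mind (the paper gives no detailed proof, deferring to \cite{almgrenalmgren} and \cite{simon2016frequency}). Your derivations of monotonicity, of the upper bound in (i), of upper semicontinuity, of the nondegeneracy statement and of (ii) are sound; in particular you correctly observe that monotonicity survives having only the squash \emph{inequality} $D(\rho)\leq A(\rho):=\int_{\partial B_\rho(y)}v\cdot v_\nu$, via the chain $\int_{\partial B_\rho(y)}|v_\nu|^2/D(\rho)\geq \int_{\partial B_\rho(y)}|v_\nu|^2/A(\rho)\geq A(\rho)/h(\rho)$.

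There is, however, a genuine gap in your derivation of the \emph{lower} bound in (i). You correctly reduce it to showing $s\,h(s)\leq (n+2N_{v;y}(\rho))\int_{B_s(y)}|v|^2$ for $s\leq\rho$, but then assert this ``follows from the monotonicity of $s\mapsto s^{-n-2N_v(y)}\int_{B_s(y)}|v|^2$''; that monotonicity is equivalent to the \emph{reverse} inequality $s\,h(s)\geq (n+2N_v(y))\int_{B_s(y)}|v|^2$, so it cannot deliver what you need. What is actually required is the upper bound $\bigl(\log (s^{1-n}h(s))\bigr)'=2A(s)/h(s)\leq 2N_{v;y}(s)/s$ for $s\leq\rho$, i.e.\ $A(s)\leq s^{-1}N_{v;y}(s)h(s)=D(s)$; together with the squash bound $A(s)\geq D(s)$ this is precisely the squash \emph{identity}, which Lemma~\ref{squash} does not supply. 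The squash inequality only controls $\bigl(\log(s^{1-n}h(s))\bigr)'$ from below, which is why the upper bound in (i) goes through but the lower one does not. The repair is to upgrade Lemma~\ref{squash} to an identity for coarse blow-ups: by $(\FB4)$ and Lemma~\ref{continuity}, each $v_f^\alpha$ is harmonic on the open set $\{|v_f|>0\}$, so the truncations $\gamma_\delta(v_f^\alpha)$ from the proof of Lemma~\ref{squeeze} are smooth on $B_1$; integrating by parts in $\sum_\alpha\int D\gamma_\delta(v_f^\alpha)\cdot D(\gamma_\delta(v_f^\alpha)\zeta)$ yields an error term bounded by $C\sup|\zeta|\int\one_{\{0<|v_f^\alpha|<\delta\}}|Dv_f^\alpha|^2\to 0$, giving $\int |Dv_f|^2\zeta=-\int\sum_\alpha v_f^\alpha Dv_f^\alpha\cdot D\zeta$, and adding back the harmonic average gives the identity for $v$. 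Without this step the lower bound in (i)---which the paper relies on, e.g.\ to convert the $L^2$ decay in (II)$'$ into the frequency bound $N_{\widetilde v;y}(\rho)\geq 1+\mu$ in the proof of Theorem~\ref{classification}---is not established. Your secondary worry about (ii) is unfounded: equality in the monotonicity computation for a.e.\ $\rho$ does force $v_\nu=(N/\rho)v$ on a.e.\ sphere, and this integrates radially in the usual way.
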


Armed with properties $(\FB1)-(\FB6)$ (subsection~\ref{initial-cb}) and the further properties established in Sections~\ref{continuity}-\ref{C1squeeze}, in this section we prove that any coarse blow-up $v\in \FB_Q$ is  of class $GC^{1,\alpha}$ for some fixed $\alpha = \alpha(n,Q) \in (0, 1),$ and that $v$ satisfies a uniform decay estimate. 
The first step of the proof is to classify the homogeneous degree 1 blow-ups as having graphs that are either hyperplanes or classical cones. This is done by first proving that if a homogeneous degree 1 coarse blow-up $v$ has a subspace of translation invariance (spine) of dimension $\leq n-2$, then $GC^1$ regularity must hold away from the spine; this ensures the validity of the squeeze identity, and consequently the monotonicity of the frequency function associated with $v$. Since the degree of homogeneity of $v$ is $1$, this implies that $\Gamma_{v}$ is contained in the spine, but by property $(\FB4\textnormal{II})$ this is impossible since the spine has zero 2-capacity. Thus the spine dimension of any homogeneous degree 1 coarse blow-up must be $ \geq n-1$, in which case it is easy to see that the classification must hold. With this classification at our disposal, in the second step we run again the regularity argument (that gives $GC^{1}$ regularity away from the spine for homogeneous degree 1 blow-ups in the first instance), but now for general coarse blow-ups. This will prove $GC^{1,\alpha}$ regularity of general coarse blow-ups, this time giving also a uniform decay estimate and a uniform $\alpha = \alpha (n, Q) \in (0, 1)$.

\subsection{Classification of homogeneous degree 1 coarse blow-ups}

\begin{theorem}[Classification of homogeneous degree 1 blow-ups]\label{classification}
If $v\in \FB_Q$ is homogeneous of degree 1 in $B_1$ then either there is a varifold $W \in {\mathcal C}_{Q}$ such that ${\mathbf v}(v) = W \res (\R \times B_{1})$  or 
 ${\mathbf v}(v) = Q|L| \res (\R \times B_{1})$ for some hyperplane $L$.
\end{theorem}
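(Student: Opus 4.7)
The strategy I would follow is the one sketched in the introduction: reduce the classification to showing that the translation-invariance subspace $S_v := \{s \in \R^n : v(\cdot+s) \equiv v\}$ of a $1$-homogeneous coarse blow-up $v$ must have dimension at least $n-1$. Granted that reduction, a direct analysis completes the proof: after rotating coordinates so $S_v \supseteq \{(0,0)\}\times\R^{n-1}$, the function $v$ depends only on $x^2$; property $(\FB3)$ plus $1$-homogeneity forces $v_a$ to be a linear function of $x^2$, and by property $(\FB5\textnormal{III})$ I can reduce to the case $v_a \equiv 0$. On each of the half-lines $\{x^2>0\}$ and $\{x^2<0\}$, property $(\FB4\textnormal{II})$ applies (since $v$ has no branching away from $\{x^2=0\}$) and a $1$-homogeneous harmonic function of one real variable is linear; together with the ordering $v^1 \le \cdots \le v^Q$ and the vanishing average $\sum_\alpha v^\alpha = 0$, this forces the graph of $v$ to be a union of half-hyperplanes meeting along $S_v$, so that $\mathbf v(v)$ is either $Q|L|$ for a hyperplane $L$ or a classical cone in $\CC_Q$.

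The main work, then, is to rule out $\dim S_v \le n-2$. Supposing this for contradiction, I would first show that $v \in GC^1(B_1\setminus S_v)$. The approach is a tangent-map / inductive argument: at any $z \in B_1 \setminus S_v$, one considers rescalings of $v$ around $z$ and extracts subsequential limits using the compactness property $(\FB6)$, together with $(\FB5\textnormal{I})$--$(\FB5\textnormal{III})$. By $1$-homogeneity of $v$, any such tangent function is invariant under translations in $\R z + S_v$ and so has strictly larger spine than $v$ itself; an induction on the ambient dimension $n$ (equivalently, on the spine codimension) then classifies the tangent function as an element of $\CC_Q$ or as $Q|L|$. Applying the coarse blow-up $\epsilon$-regularity theorem (Theorem~\ref{thm:B7}) after rotating and rescaling upgrades this closeness into generalised-$C^{1,\alpha}$ regularity of $v$ in a neighbourhood of $z$.

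With $v \in GC^1(B_1\setminus S_v)$ established, Lemma~\ref{squeeze} yields the squeeze identity for test vector fields supported in $B_1\setminus S_v$, and combined with the squash inequality (Lemma~\ref{squash}) Theorem~\ref{frequency} gives monotonicity of Almgren's frequency $N_{v;y}(\rho)$ for every $y \in B_1\setminus S_v$. A rescaling argument using this monotonicity together with $1$-homogeneity and Lemma~\ref{unique-cont} then forces $\Gamma_v \subset S_v$: at a putative $z \in \Gamma_v \setminus S_v$ one produces a tangent function whose spine strictly contains $\R z + S_v$, and iterating contradicts the standing bound $\dim S_v \le n-2$. Once $\Gamma_v \subset S_v$, property $(\FB4\textnormal{II})$ ensures that $v$ agrees a.e.\ with a smooth harmonic function on a neighbourhood of each point of $B_1\setminus S_v$; since $\dim S_v \le n-2$ implies that $S_v$ has zero $W^{1,2}$-capacity, a standard removability argument extends $v$ to a harmonic function on all of $B_1$. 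A $1$-homogeneous harmonic function is linear, and compatibility of global linearity on $B_1$ with the pointwise ordering $v^1 \le \cdots \le v^Q$ forces all sheets to coincide, i.e., $\mathbf v(v) = Q|L|$ for some hyperplane $L$. But then $\dim S_v \ge n-1$, contradicting the standing assumption.

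The hardest step is the generalised-$C^1$ regularity off the spine under the very hypothesis $\dim S_v \le n-2$ one is trying to contradict. Theorem~\ref{thm:B7} demands closeness of $v$ to an element of $\CC_Q$, which is produced only by tangent-map analysis, which in turn requires a classification of the tangent limits. The clean way to resolve this apparent circularity is a joint simultaneous induction on $n$ (or some equivalent well-founded parameter), in which at each stage the statements of Theorem~\ref{classification}, the main regularity theorem (Theorem~\ref{coarse_reg}), and Theorem~\ref{thm:B7} in all strictly lower-dimensional cases are available. Verifying that this hierarchy is non-circular, and that the tangent-map spine-dimension-increase argument at $z \notin S_v$ genuinely produces limits with strictly larger spines (rather than, say, rescalings that recover $v$ itself), is the central technical obstacle.
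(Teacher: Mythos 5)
Your skeleton matches the paper's proof (reduce to showing the spine has dimension $\geq n-1$; prove generalised-$C^{1}$ regularity off the spine; squeeze identity and frequency monotonicity; conclude $\Gamma_{v}$ lies in the spine; removable singularity plus linearity give the contradiction), but the engine that drives the regularity step is missing, and it is the heart of the matter. You assert that at $z\notin S_{v}$ ``any such tangent function is invariant under translations in $\R z+S_{v}$''; that invariance (Lemma~\ref{splitting}) requires the subsequential limit of the rescalings $v_{z_{i},\rho_{i}}$ to itself be homogeneous of degree $1$, which is not automatic and cannot be extracted from frequency monotonicity, since at this stage of the argument monotonicity is not yet available. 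The paper supplies homogeneity of the limit through a quantitative dichotomy proved by contradiction: at each $z\in\Gamma_{v}$ off the spine and each small $\rho$, \emph{either} the $\epsilon$-regularity conclusion of Theorem~\ref{thm:B7} holds with a decay estimate, \emph{or} the reverse Hardt--Simon inequality (\ref{E:RHS}) holds; if both fail along a sequence, (\ref{homog-control}) forces the limit to be homogeneous on an annulus, Lemma~\ref{unique-cont} propagates homogeneity inward, and only then does Lemma~\ref{splitting} increase the spine. Taking $d$ to be the \emph{smallest} spine codimension in $\{2,\dots,n\}$ admitting a counterexample makes the spine-increased limit land in $\H_{1}$, i.e.\ it is a classical cone, so Theorem~\ref{thm:B7} applies and closes the contradiction. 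This dissolves the circularity you flag: Theorem~\ref{thm:B7} is independent of the classification (it rests on Theorem~\ref{thm:fine_reg}, imported from the earlier work), Theorem~\ref{coarse_reg} is proved \emph{after} and using Theorem~\ref{classification}, and the only induction is the minimality of $d$ within a fixed $n$ --- there is no induction on the ambient dimension. Note also that the iterated dichotomy is what yields the \emph{uniform} decay alternatives (I)/(II)$^{\prime}$, and it is these estimates (via a Campanato argument), not the mere pointwise existence of classified tangent functions, that give $GC^{1,\mu}$ regularity off the spine.

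The step ``$\Gamma_{v}\subset S_{v}$'' as you describe it does not close. Producing at $z\in\Gamma_{v}\setminus S_{v}$ a tangent function with strictly larger spine contradicts nothing about $v$ itself: the tangent function is a different object, and ``iterating'' would at best bound the dimension of $\Gamma_{v}\setminus S_{v}$, not show it is empty. The paper's mechanism is a frequency gap: alternative (II)$^{\prime}$ at a branch point $y$ off the spine forces $N_{\widetilde{v}}(y)\geq 1+\mu$, alternative (I) at a classical point forces $N_{\widetilde{v}}(y)=1$ and hence $y\in S(\widetilde{v})$, while homogeneity gives $N_{\widetilde{v}}(0)=1$ and upper semicontinuity gives $N_{\widetilde{v}}\leq 1$ everywhere --- a contradiction in either case. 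For this comparison you need frequency monotonicity at base points $y$ and radii $\rho$ for which $B_{\rho}(y)$ meets $S_{v}$, hence the squeeze identity for arbitrary $\zeta\in C^{1}_{c}(B_{1};\R^{n})$, not only for fields supported off the spine. The paper obtains this by an excision argument using that $S_{v}$ has zero $2$-capacity \emph{and} that $|D\widetilde{v}|$ is bounded off the spine by homogeneity; you invoke capacity only for the final harmonic-removability step. Without extending the squeeze identity across $S_{v}$, Theorem~\ref{frequency} applied on $\Omega=B_{1}\setminus S_{v}$ gives monotonicity only for $\rho<\dist(y,S_{v})$, which is insufficient to compare $N_{\widetilde{v}}(y)$ with the degree of homogeneity at the origin.
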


In the proof of this theorem we shall use the following elementary result, which is a slight variant of a well-known fact concerning limits of rescalings of homogeneous functions: 

\begin{lemma}\label{splitting}
For $i=1, 2, 3, \ldots,$ let $g_{i}  \, : \, B_{1} \to {\mathcal A}_{Q}(\R)$ be a continuous function such that $g_{i}$ is homogeneous of degree 1 in $B_{1}$, i.e.\ 
$g_{i}(\lambda \, x) = \lambda g_{i}(x)$ whenever $\lambda >0$ and $\lambda  x, x \in B_{1}$. Let $(z_{i})$ be a sequence of points in $B_{1}$ with  
$z_{i} \to z$ for some $z \in B_{1}$ and let $(\rho_{i})$ be a sequence of positive numbers with $\rho_{i} \to 0$. Let 
$h_{i}(x) = g_{i}(z_{i} + \rho_{i}x).$ If $h_{i} \to h$ locally uniformly on $B_{1}$, $h$ is homogeneous of degree 1 in $B_{1}$, and if $\widetilde{h}$ is the homogeneous degree 1 extension of $h$ to ${\mathbb R}^{n}$, then $\widetilde{h}(x + tz) = \widetilde{h}(x)$ for   all $x \in {\mathbb R}^{n}$ and $t \in \R$.
\end{lemma}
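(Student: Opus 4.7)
\textbf{Proof proposal for Lemma~\ref{splitting}.}

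The plan is to exploit the homogeneity of each $g_i$ together with an algebraic identity that rewrites the translated point $z_i + \rho_i(x+tz)$ as a positive scalar multiple of a point of the form $z_i + \widetilde{\rho}_i \widetilde{x}_i$, where $\widetilde{x}_i \to x$ and $\widetilde{\rho}_i/\rho_i \to 1$; passing to the limit in the resulting identity will then give the translation invariance on a neighbourhood of the origin, which propagates to all of $\R^{n}$ by degree-1 homogeneity of $\widetilde{h}$.

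If $z=0$, there is nothing to prove, so assume $z \neq 0$. Fix $x \in \R^{n}$ and $t \in \R$; by the homogeneity of $\widetilde{h}$ (as in the last paragraph below) it suffices to consider $|t|$ small enough that $1+\rho_i t >0$ for all large $i$ and $x+tz \in B_1$. The key identity, verified by direct expansion, is
$$z_i + \rho_i(x+tz) = (1+\rho_i t)\bigl[z_i + \widetilde{\rho}_i \widetilde{x}_i\bigr], \qquad \widetilde{\rho}_i := \tfrac{\rho_i}{1+\rho_i t}, \quad \widetilde{x}_i := x - t(z_i - z).$$
Applying $g_i$ and using its degree-1 homogeneity (first for $\lambda=1+\rho_i t >0$ on the right, then to rewrite in terms of $h_i$, noting $\widetilde{\rho}_i/\rho_i = 1/(1+\rho_i t)$), we obtain
$$h_i(x+tz) \;=\; g_i\bigl(z_i + \rho_i(x+tz)\bigr) \;=\; (1+\rho_i t)\, h_i\!\left( \tfrac{\widetilde{x}_i}{1+\rho_i t} \right).$$
Since $\widetilde{x}_i \to x$ and $\rho_i t \to 0$, we have $\widetilde{x}_i/(1+\rho_i t) \to x$; by local uniform convergence $h_i \to h$ and continuity of $h$, taking $i \to \infty$ yields
$$h(x+tz) \;=\; h(x)$$
for all $x$ and all sufficiently small $|t|$ (with $x, x+tz \in B_1$).

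Finally, fix an arbitrary $x \in \R^{n}$ and $t \in \R$. For any $\lambda > 0$, the homogeneous extension satisfies $\widetilde{h}(x+tz) = \lambda\, \widetilde{h}\bigl(\lambda^{-1} x + (\lambda^{-1}t)z\bigr)$. Choosing $\lambda$ large enough that $\lambda^{-1}x \in B_1$ and $|\lambda^{-1}t|$ is within the range for which the previous paragraph applies (with $y = \lambda^{-1}x$ in place of $x$ and $s = \lambda^{-1}t$ in place of $t$), we get $\widetilde{h}(\lambda^{-1}x + \lambda^{-1}t z) = \widetilde{h}(\lambda^{-1}x)$, whence multiplying by $\lambda$ and applying homogeneity once more yields $\widetilde{h}(x+tz) = \widetilde{h}(x)$, as required.

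\textbf{Main obstacle.} The argument is essentially a routine scaling computation; the only care needed is in verifying the algebraic identity above and in checking that for large $i$ all points involved lie in $B_1$ (so that $g_i$ is defined there and $h_i$ can be evaluated at the required arguments). The homogeneity of $g_i$ and the local uniform convergence $h_i \to h$ then make the limiting step automatic.
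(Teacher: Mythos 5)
Your proof is correct and follows essentially the same route as the paper's: both rewrite $z_i+\rho_i(x+tz)$ as a positive multiple $(1+\rho_i t)\bigl(z_i+\widetilde{\rho}_i\widetilde{x}_i\bigr)$ of a point of the rescaled form, apply the degree-1 homogeneity of $g_i$, and pass to the limit using local uniform convergence and continuity of $h$. The only difference is that by carrying the general parameter $t$ through the algebraic identity you get $h(x+tz)=h(x)$ for all admissible $t$ in one step, whereas the paper first treats $t=\pm 1$ and then recovers intermediate values of $t$ via a dichotomy (either $x/t$ or $x/(1-t)$ lies in $B_{1-|z|}$); your version is a mild streamlining of the same argument.
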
 

\begin{proof} By homogeneity of $\widetilde{h}$, it suffices to verify that $\widetilde{h}(x + z) = \widetilde{h}(x)$ for each $x \in \R^{n}$. If $x, x+z \in B_{1}$, then for sufficiently large $i$ we have,
$h_{i}(x+ z) = g_{i}(z_{i} + \rho_{i}(x + z)) = g_{i}((1 + \rho_{i})z_{i} + \rho_{i}(z - z_{i}) + \rho_{i}x) = (1+ \rho_{i}) g_{i}(z_{i} + \rho_{i}(1+\rho_{i})^{-1}(z - z_{i} + x))
=(1 + \rho_{i}) h_{i}((1 + \rho_{i})^{-1}(z- z_{i} + x))$. Letting $i \to \infty$ in this, we get that $\widetilde{h}(x + z) = \widetilde{h}(x)$ if $x, x+z \in B_{1}.$ Replacing $x$ with $x-z$, we also have that $\widetilde{h}(x - z) = \widetilde{h}(x)$ if $x, x-z \in B_{1}.$ 
Now fix any   
$x \in B_{\frac{1}{2}(1 - |z|)}(0).$ Then for any $t \in (0, 1)$, we have that $|x| < (1 - |z|) \max\{t, 1-t\}$, i.e.\ either $\frac{x}{t} \in B_{1-|z|}(0)$ or $\frac{x}{1-t} \in B_{1-|z|}(0)$; if $\frac{x}{t} \in B_{1-|z|}(0),$ then 
$\widetilde{h}(x + t z) = t\widetilde{h}\left(\frac{x}{t} + z\right) = t \widetilde{h}\left(\frac{x}{t}\right) = \widetilde{h}(x)$, and if 
$\frac{x}{1-t} \in B_{1-|z|}(0)$ then $\widetilde{h}(x + t z) = \widetilde{h}(x + tz - z) = (1-t) \widetilde{h}\left(\frac{x}{1-t}  - z\right) =(1-t) \widetilde{h}\left(\frac{x}{1-t}\right) = \widetilde{h}(x).$ Thus we have shown that $\widetilde{h}(x + t z) = \widetilde{h}(x)$ for any $x \in B_{\frac{1}{2}(1 - |z|)}(0)$ and any $t \in [0, 1]$.  This also implies that for any $x \in \R^{n} \setminus 
B_{\frac{1}{2}(1 - |z|)}(0)$, setting $t= \frac{1}{4}(1 - |z|) |x|^{-1}$ and noting that $t \in [0, 1]$ and $tx \in B_{\frac{1}{2}(1 - |z|)}(0)$, we have $\widetilde{h}(x + z) = t^{-1} \widetilde{h}(t x + t z) =  t^{-1}\widetilde{h}(t x)= \widetilde{h}(x).$ Thus $\widetilde{h}(x + z) = \widetilde{h}(x)$ for all $x \in \R^{n}$ as required. \end{proof}

\begin{proof}[Proof of Theorem~\ref{classification}]
	First note that $v_a$ is harmonic (by property $(\FB3)$) and homogeneous of degree 1 on $B^n_1(0)$, and so is linear. Hence 
	if $v = Q\llbracket v_{a} \rrbracket$ then the conclusion holds with ${\bf v}(v) = Q|L| \res (\R \times B_{1})$ where $L = {\rm graph} \, v_{a}$ (a hyperplane). Else by $(\FB5\text{III})$ we  have that 
	$\|v - v_{a}\|_{L^{2}(B_{1}(0))} ^{-1}(v - v_{a})\in \FB_{Q}.$  So it suffices to establish the theorem (with the conclusion 
	${\bf v}(v)  = W \res (\R \times B_{1})$ for some $W \in {\mathcal C}_{Q}$) for $v \in \widetilde{\FB}_{Q}$
where 	$$\widetilde{\FB}_{Q} = \left\{v \in \FB_{Q} \, : \, \frac{\del(v/R)}{\del R} = 0 \; \mbox{a.e. in} \; B_1, \; v_{a} = 0, \; 
	\|v\|_{L^{2}(B_{1}(0))} = 1\right\}.$$
	 For each $v \in \widetilde{\FB}_{Q}$, 
	let $\widetilde{v}:\R^n\to {\mathcal A}_{Q}(\R)$ denote the homogeneous degree 1 extension of $v$ to $\R^{n}$. Denote by $S(\widetilde{v})$ the set of points $z \in \R^{n}$ such that $\widetilde{v}$ is invariant under translation by $z$ (i.e.\ $\widetilde{v}(x + z) = \widetilde{v}(x)$ for every $x \in \R^{n}$), and note that by homogeneity of $\widetilde{v}$, we have that $S(\widetilde{v})$ is a linear subspace of $\R^{n}$. So we can write $\widetilde{\FB}_{Q}= \cup_{k=0}^n\H_k$, where $\H_k:= \{v\in \widetilde{\FB}_{Q} \, : \, \dim(S(\widetilde{v})) = n-k\}$. Note $\H_0 = \emptyset$ since $\|v\|_{L^{2}(B_{1})} = 1$ and $v_a=0$ for each $v \in \widetilde{\FB}_{Q}$. If $v \in \H_{1}$  then by homogeneity we have $\mathbf{v}(\widetilde{v})\in \CC_Q$ and so the conclusion (with $W = {\mathbf v}(\widetilde{v})$) follows.
So to prove the theorem, we need to show that $\H_{k} = \emptyset$ for each $k=2, 3, \ldots, n$. Assuming this is false, let $d$ be the smallest integer in $\{2,3,\dotsc,n\}$ for which $\H_{d} \neq\emptyset$, and fix an element $v \in \H_{d}.$ 
	
	We claim the following: \emph{for any given compact subset $K\subset B_1\setminus S(\tilde{v})$, and any $\alpha \in (0, 1)$, there exists 
	$\epsilon = \epsilon(v,K,n,Q, \alpha)\in (0,{\rm dist} \, (K, \partial B_1 \cup S(\widetilde{v})))$ such that the following holds}: for each $z\in K\cap \Gamma_{v}$, each $\rho\in (0,\epsilon]$ and some fixed constant $C = C(n, Q, \alpha) \in (0, \infty)$ either: 
	\begin{itemize}
	\item[(a)] The conclusions of Theorem \ref{thm:B7} hold on $B_{3\rho/8}(z)$; in particular $v$ is a generalised-$C^{1,\alpha}$ function in $B_{3\rho/8}(z)$ and there is a function $\psi_{z} \, : \, \R^{n} \to {\mathcal A}_{Q}(\R)$ with ${\mathbf v}(\psi_{z}) \in \CC_{Q}$ such that
	\begin{equation}\label{E:Estimate}
	\widetilde{\rho}^{-n-2}\int_{B_{\widetilde{\rho}}(z)}{\mathcal G}(v(x), \psi_{z}(x))^{2} \, \ext x \leq C\left(\frac{\widetilde{\rho}}{\rho}\right)^{2\alpha}\cdot\rho^{-n-2}\int_{B_\rho(z)}|v|^2
	\end{equation}
	for all $0<\widetilde{\rho}\leq 3\rho/8$; or 
	\item[(b)] We have the \textit{reverse Hardt-Simon inequality}, i.e.,
	\begin{equation}\label{E:RHS}
	\sum^Q_{\alpha=1}\int_{B_\rho(z)\setminus B_{\rho/2}(z)}R_z^{2-n}\left(\frac{\del(v^\alpha/R_z)}{\del R_z}\right)^2 \geq \epsilon\rho^{-n-2}\int_{B_\rho(z)}|v|^2\;,
	\end{equation}
	where $R_z:= |x-z|.$  
	\end{itemize}
	To prove this we argue by contradiction, so suppose the claim is not true (with $C$ to be chosen depending only on $n,$ $Q$ and $\alpha$). Then for each $i=1,2,\dotsc$, there are numbers $\epsilon_{i} > 0$ with $\epsilon_i\to 0$, points $z,z_i\in K\cap \Gamma_v$ with $z_i\to z$, and radii 
	$\rho_{i} >0$ with $\rho_i\to 0$ such that assertion (a) with $\rho = \rho_{i}$ and $z= z_{i}$ fails for each $i,$ and also 
\begin{equation}\label{homog-control} 	
\sum_{\alpha=1}^Q\int_{B_{\rho_i}(z_i)\setminus B_{\rho_i/2}(z_i)}R_{z_i}^{2-n}\left(\frac{\del(v^\alpha/R_{z_i})}{\del R_{z_i}}\right)^2 < \epsilon_i\rho_i^{-n-2}\int_{B_{\rho_i}(z_i)}|v|^2.
\end{equation}
Set $w_i:= v_{z_i,\rho_i}$, and note that $w_{i} \in \FB_Q$ by $(\FB5\text{I}),$ and $w_{i}$ is continuous on $B_{1}$ by Lemma~\ref{continuity}. By $(\FB6)$ and Lemma~\ref{continuity}, we can find a subsequence (which we pass to) and an element $w_*\in \FB_Q$ such that  
	$w_i\to w_*$ locally uniformly and locally weakly in $W^{1,2}$ on $B_{1}$.  Moreover, since $v_{a} = 0$, we also have that 
	$(w_{\ast})_{a} = 0$.

Now note that for any function $u \in C^{1}(\overline{B}_{1}; \R^{Q})$, any $r, s \in [1/2, 1]$ and any $\omega \in {\mathbb S}^{n-1}$, we have that $\left|\frac{|u(r\omega)|}{r} - \frac{|u(s\omega)|}{s}\right| \leq \int_{1/2}^{1} \left|\frac{\ext(u(t\omega)/t)}{\ext t}\right|\ext t,$ which implies, by the triangle inequality and the Cauchy--Schwarz inequality, 
$$|u(r\omega)|^{2} \leq C\left(|u(s\omega)|^{2} + \int_{1/2}^{1}t^{n-1}\left|\frac{\ext (u(t\omega)/t)}{\ext t}\right|^{2} \ \ext t\right),$$
where $C = C(n),$ which in turn gives 
$$\int_{{\mathbb S}^{n-1}}|u(r\omega)|^{2} \, \ext\omega 
\leq C\left(\int_{{\mathbb S}^{n-1}} |u(s\omega)|^{2}  \, \ext\omega + \int_{B_{1} \setminus B_{1/2}} \left|\frac{\del (u/R)}{\del R}\right|^{2}\right),$$
where $R(x) = |x|$. Multiplying this by $r^{n-1}$ and integrating over $r \in [1/2, 1]$, and then multiplying the resulting inequality by $s^{n-1}$ and integrating it over $s \in [1/2, 3/4],$ we obtain, after rearranging terms, 
$$\int_{B_{1}}|u|^{2}  
\leq C\left(\int_{B_{3/4}} |u|^{2}  + \int_{B_{1} \setminus B_{1/2}} \left|\frac{\del (u/R)}{\del R}\right|^{2}\right),$$
where $C = C(n).$ By an approximation argument this holds for any $u \in W^{1, 2}(B_{1}; \R^{Q})$. Applying this with $u = w_{i}$  we get, by virtue of (\ref{homog-control}) and the fact that $\|w_i\|_{L^2(B_1)}=1,$ 
that $\int_{B_{3/4}} |w_{i}|^{2} > C^{-1}$ for all sufficiently large $i$, and hence that $w_*\not\equiv 0$. 

 Also we see from (\ref{homog-control}) that $w_*$ is homogeneous of degree 1 in $B_1\setminus B_{1/2},$ and hence by Lemma~\ref{unique-cont}  $w_{\ast}$ is homogeneous of degree 1 in $B_1.$ Let $\widetilde{w}_*$ denotes the homogeneous degree 1 extension of $w_{\ast}$ to $\R^{n}.$ By Lemma~\ref{splitting} (applied with $g_{i}(x) = \|v(z_{i} + \rho_{i} (\cdot))\|^{-1}_{L^{2}(B_{1})}v(x)$), we see that 
	$\{t z \, : \, t \in \R\} \subset S(\widetilde{w}_{\ast})$. Since we also have that 
	$S(\widetilde{v}) \subset   S(\widetilde{w}_{\ast}),$ $z \not\in S(\widetilde{v})$ (since $z \in K$) and that $S(\widetilde{w}_{\ast})$ is a linear subspace of $\R^{n}$, we must have   
	$\dim(S(\widetilde{w}_*))\geq d+1$; thus to prevent a contradiction to the definition of $d$, we must have 
	$\dim(S(\widetilde{w}_*)) \in \{n-1,n\}$. However as $(w_*)_a\equiv 0$ and $w_*\not\equiv 0$, we cannot have $\dim(S(\widetilde{w}_*)) = n$, and so we must have $\dim(S(\widetilde{w}_*)) = n-1$, and hence 
	$\mathbf{v}(w_*) = W_{\ast} \res (\R \times B_{1})$ for $W_{\ast}  = {\mathbf v}(\widetilde{w}_{\ast}) \in \CC_Q$. But then we can apply the $\epsilon$-regularity property, Theorem~\ref{thm:B7}, with $\psi = w_{\ast}$ and $v=(w_{i})_{0, 3/4} \equiv \|w_{i}(\frac{3}{4}(\cdot)\|_{L^{2}(B_{1})}^{-1}w_{i}(\frac{3}{4}(\cdot))$ for all sufficiently large $i$ to conclude that 
	alternative (a) above must hold with $z = z_{i}$, $\rho = \rho_{i},$ $\BC_{z_{i}} = {\bf v}(\|v(z_{i} + \rho_{i}(\cdot)\|_{L^{2}(B_{1})}w_{\ast})$ and with the constant $C = C(n, Q, \alpha)$ given by Theorem~\ref{thm:B7}. This is contrary to our assumption, so the dichotomy that (a) or (b) must hold is established.

	Combining (\ref{E:RHS}) with $(\FB4\text{I})$, we then get the following dichotomy: if $z\in K\cap \Gamma_v$ and $\rho\in(0,\epsilon]$ then either:
	\begin{enumerate}
		\item [(i)] The conclusions of Theorem \ref{thm:B7} hold on $B_{3\rho/8}(z)$; in particular $v$ is generalised-$C^{1,\alpha}$ on $B_{3\rho/8}(z)$ and there is a function $\psi_{z} \, : \, \R^{n} \to {\mathcal A}_{Q}(\R)$ with ${\mathbf v}(\psi_{z}) \in \CC_Q$ for which
		we have the estimate (\ref{E:Estimate}), or
		\item [(ii)] We have that (\ref{E:RHS}) holds and that 
		$$\sum^Q_{\alpha=1}\int_{B_{\rho/2}(z)}R_{z}^{2-n}\left(\frac{\del(v^{\alpha}/R_z)}{\del R_z}\right)^2 \leq \theta\sum^Q_{\alpha=1}\int_{B_\rho(z)}R_z^{2-n}\left(\frac{\del(v^\alpha/R_z)}{\del R_z}\right)^2,$$
	\end{enumerate}
	where $\theta = \theta(v,K,n,Q)\in (0,1)$. We claim that from this, the following dichotomy (I) or (II) follows for each $z \in K \cap \Gamma_{v}$: either 
		\begin{enumerate}
		\item [(I)] The conclusions of Theorem \ref{thm:B7} holds on some neighbourhood of $z$, and moreover there is a function $\psi_{z} \, : \, \R^{n} \to {\mathcal A}_{Q}(\R)$ with ${\mathbf v}(\psi_{z}) \in \CC_Q$ for which
		we have the estimate
		$$\rho^{-n-2}\int_{B_\rho(z)}{\mathcal G}(v(x), \psi_{z}(x))^2 \, \ext x \leq C\rho^{2\mu}\int_{B_\epsilon(z)}|v|^2$$
		for some $C = C(v,K,n,Q)$ and all $\rho\in (0,3\epsilon/8]$; or
		\item [(II)] We have that (\ref{E:RHS}) holds with $\rho = 2^{-i}\epsilon$ for each $i = 1, 2, 3, \ldots$, and hence 
		$$\sum^Q_{\alpha=1}\int_{B_\sigma(z)}R_z^{2-n}\left(\frac{\del(v^\alpha/R_z)}{\del R_z}\right)^2 \leq \beta\left(\frac{\sigma}{\rho}\right)^{2\mu}\sum^Q_{\alpha=1}\int_{B_\rho(z)}R_z^{2-n}\left(\frac{\del(v^\alpha/R_z)}{\del R_z}\right)^2$$
		for all $0<\sigma\leq \rho/2\leq\epsilon/2.$
	\end{enumerate}
	Here $\beta = \beta(v,K,n,Q)\in (0,\infty)$ and $\mu = \mu(v,K,n,Q)\in(0,1)$. 
	Indeed, for each fixed $z \in K \cap \Gamma_{v}$, the dichotomy (i) or (ii) above holds for $\rho = 2^{-i}\epsilon$, $i=0,1,2,\dotsc$. Let $I$ be the first time (i) holds (i.e.\ $I$ is the smallest integer $i \geq 0$ such that (i) holds with 
	$\rho = 2^{-i}\epsilon$). If $I= 0$ we have alternative (I); also if $I \geq 1$, then iterating (ii) for $i=0,1,\dotsc I-1$ and combining with the estimate provided in (i) as well as $(\FB4\text{I})$ and (\ref{E:RHS}), we again have (I). If 
	 if no such $I$ exists, i.e., if (ii) always holds, then iterating (ii) for all $i$ we get alternative (II).
	  
	Finally, in case (II) holds, we can again use $(\FB4\text{I})$ and (\ref{E:RHS}) in conjunction with the estimate in (II) to replace (II) with:
	\begin{enumerate}
		\item [(II)$^{\prime}$] For all $0<\sigma\leq \rho/2\leq \epsilon/4$ we have
		\begin{equation}\label{decay-on-branch-set}
		\sigma^{-n-2}\int_{B_\sigma(z)}|v|^2 \leq \beta\left(\frac{\sigma}{\rho}\right)^{2\mu}\rho^{-n-2}\int_{B_\rho(z)}|v|^2.
		\end{equation} 
	\end{enumerate}
	Note that the set of points $z \in \Gamma_{v} \cap {\rm int} \, K$ where alternative (I) holds is 
	${\mathcal C}_{\left. v\right|_{{\rm int} \, K}}$, and hence we have shown that for each point $z \in \Gamma_{v} \cap {\rm int} \, K \setminus 
	{\mathcal C}_{\left. v\right|_{{\rm int} \, K}}$, the estimate (\ref{decay-on-branch-set}) holds; from this it is straightforward to check (e.g.\ using the Campanato lemma \cite[Lemma 4.3]{wickramasekera2014general}) that $\left. v\right|_{{\rm int} \, K}$ is generalised-$C^{1,\mu}$ in ${\rm int} \, K$, where $\mu= \mu(v,K,n,Q)$. In particular as $K\subset B_1\setminus S(\widetilde{v})$ was an arbitrary compact set, we see that $v$ is generalised-$C^1$ in $B_1 \setminus S(\widetilde{v})$.
		
	We now claim that $\Gamma_{v}\subset S(\widetilde{v})$. Our method for showing this will rely on the frequency function, which now can be brought into play in view of the generalised-$C^1$ regularity of $v$ on $B_1\setminus S(\widetilde{v})$. Indeed, by Lemma~\ref{squeeze} we have that the squeeze identity 
	$$\int_{B_1(0)}\sum_{\alpha=1}^Q\left(|D\widetilde{v}^\alpha|^2\delta_{ij}-2D_{i}\widetilde{v}^\alpha D_j \widetilde{v}^\alpha\right)D_i\zeta^j\ \ext x = 0$$
holds for $\zeta^{j} \in C^{1}_{c}(B_{1} \setminus S(\widetilde{v}))$.  Since $S(\widetilde{v})$ is a linear subspace of dimension at most $n-2$, it has zero 2-capacity, and hence, since we also have that $D\widetilde{v}$ is bounded in $B_{1}\setminus S(\widetilde{v})$ by homogeneity of $\widetilde{v}$, we may perform a standard excision argument to see that in fact we can take $\zeta^{j} \in C^{1}_{c}(B_1(0)).$
		Armed with this squeeze identity and the squash inequality (Lemma \ref{squash}), we can use standard arguments (see e.g.\ \cite{simon2016frequency}) to show that (since $\widetilde{v} \not\equiv 0$ in $\R^{n}$) the frequency $N_{\widetilde{v}}(y)$ is well-defined at every point $y\in B_1(0),$  and in fact that all conclusions of Theorem~\ref{frequency} hold with $\widetilde{v}$ in place of $v$. In particular, as $\widetilde{v}$ is homogeneous of degree 1, we have that $N_{\widetilde{v}}(0) = 1$. It follows from upper semi-continuity of $N_{\widetilde{v}}$ and homogeneity of $\widetilde{v}$ that $N_{\widetilde{v}}(y) \leq N_{\widetilde{v}}(0) = 1$ for all $y\in B_1(0)$. Moreover frequency monotonicity and homogeneity of $\widetilde{v}$ give that if $N_{\widetilde{y}}(y) = N_{\widetilde{v}}(0) = 1$, then $\widetilde{v}$ is translation invariant along directions parallel to $y$, i.e. $y\in S(\widetilde{v})$, and so $S(\widetilde{v}) = \{y\in \R^n: N_{\widetilde{v}}(y) = N_{\widetilde{v}}(0) = 1\}$. We also have that for each $y\in B_1(0)$ and $0<\sigma\leq\rho$:
	\begin{equation}\label{E:in}
	\left(\frac{\sigma}{\rho}\right)^{2N_{\widetilde{v};y}(\rho)}\rho^{-n}\int_{B_\rho(y)}|\widetilde{v}|^2 \leq \sigma^{-n}\int_{B_\sigma(y)}|\widetilde{v}|^2 \leq\left(\frac{\sigma}{\rho}\right)^{2N_{\widetilde{v}}(y)}\rho^{-n}\int_{B_\rho(y)}|\widetilde{v}|^2.
	\end{equation}
	Now if $y\in \B_{v}\cap (B_1\setminus S(\widetilde{v}))$, then (II)$^{\prime}$ must hold for some $\epsilon = \epsilon(y) >0$ 
	(which can be taken to be the $\epsilon$  corresponding to some fixed compact set $K = K(y) \subset B_1\setminus S(\widetilde{v})$ with $y \in K$ in the argument leading to (I) and (II)$^{\prime}$); in particular, by combining (II)$^{\prime}$ with (\ref{E:in}), we have for fixed $\rho \in (0, \epsilon/2]$ and all $\sigma \in (0, \rho/2]$:
	$$\left(\frac{\sigma}{\rho}\right)^{2N_{\widetilde{v};y}(\rho)}\rho^{-n}\int_{B_\rho(y)}|\widetilde{v}|^2 \leq \sigma^{-n}\int_{B_\sigma(y)}|\widetilde{v}|^2 \leq \sigma^2\cdot\beta\left(\frac{\sigma}{\rho}\right)^{2\mu}\rho^{-n-2}\int_{B_\rho(y)}|\widetilde{v}|^2,$$
	and thus $\sigma^{1+\mu - N_{\widetilde{v};y}(\rho)} \geq \widetilde{C}>0$ for some $\widetilde{C} = \widetilde{C}(v,y,n,Q,\rho)$ and $\mu = \mu(v,y,n,Q)$. As we may take $\sigma\downarrow 0$, we see that $N_{\widetilde{v};y}(\rho)\geq 1+\mu$. As $\rho\in (0,\epsilon/2)$ was arbitrary, we can then take $\rho\downarrow 0$ to see that $N_{\widetilde{v}}(y)\geq 1+\mu>1 = N_{\tilde{v}}(0)$, which is a contradiction. Thus we must have $\B_{\widetilde{v}}\subset S(\widetilde{v})$. So if we have $\Gamma_{v}\not\subset S(\widetilde{v})$, then we can find $z\in B_1\setminus S(\widetilde{v})$ for which (I) holds. In particular, as each cone in $\CC_Q$ is determined by linear functions, we readily deduce from the estimate in (I) that $\rho^{-n}\int_{B_\rho(z)}|v|^2 \leq C\rho^{2}$ for all $\rho \in (0, \epsilon]$ and some $C$ independent of $\rho,$ so in the same way as above, we have $N_{\widetilde{v}}(z)\geq 1$, and thus $N_{\widetilde{v}}(z) = 1$, and so $z\in S(\widetilde{v})$, which is a contradiction. Thus we must have $\Gamma_{v}\subset S(\widetilde{v})$.
	
	Hence from $(\FB4)$ we know $\widetilde{v}$ is harmonic on $\R^n\setminus S(\widetilde{v})$. In particular as $d\equiv \dim(S(\widetilde{v}))$, this means that $\widetilde{v}$ is determined by a continuous function $f$, defined on $\R^{n-d}$, which is harmonic away from $0\in \R^{n-d}$, and thus, since $d \leq n-2$,  it follows that $0$ is a removable singularity of $f$, and so $\widetilde{v}$ is harmonic on all of $\R^n$ and hence is linear. By $(\FB2)$ and the fact that $\widetilde{v}(0) = 0$ (since $v$ is continuous) this implies that $\widetilde{v}^1\equiv \widetilde{v}^2\equiv\cdots \equiv \widetilde{v}^Q$ are all the same linear function, 
	contradicting the fact that $d\leq n-2$. We must therefore have $\H_{k} = \emptyset$ for all $k \in \{2, 3, \ldots, n\}$, and this completes the proof of the theorem.
\end{proof}

\subsection{Generalised-$C^{1, \alpha}$ regularity of coarse blow-ups}\label{sec:blow-up-reg}

Employing the classification of homogeneous degree 1 elements of $\FB_Q$, i.e., Theorem~\ref{classification}, and recycling its method of proof, we can now establish generalised-$C^{1,\alpha}$ regularity, together with a uniform decay estimate, for arbitrary elements of $\FB_Q$, for some fixed $\alpha = \alpha(n, Q) \in (0, 1)$. 

\begin{theorem}\label{coarse_reg}
	There exists $\alpha = \alpha(n,Q)\in (0,1)$ such that $\FB_Q\subset GC^{1,\alpha}(B_{1/2}(0);\A_Q(\R))$. Moreover
	 if $v\in \FB_Q$ and if $0 \in \Gamma_{v}^{\textnormal{HS}}$, then there exists $\phi:\R^{n} \to \A_Q(\R)$ with $\mathbf{v}(\phi)\in \CC_Q$ or 
	 ${\mathbf v}(\phi) = Q|L|$ for some hyperplane $L$ such that
	for every $\sigma$, $\rho$ with $0 < \sigma \leq \rho/2 \leq 3/16$, we have 
	$$\sigma^{-n-2}\int_{B_\sigma(0)}\G(v(x) - v_{a}(0),\phi(x))^2 \, \ext x\leq C\left(\frac{\sigma}{\rho}\right)^{2\alpha} \cdot \rho^{-n-2}\int_{B_{\rho}}|v|^2,$$
	where $C = C(n, Q) \in (0, \infty).$
\end{theorem}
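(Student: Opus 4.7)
The plan is to obtain a uniform excess decay estimate at every $z \in \Gamma_v^{\textnormal{HS}} \cap B_{1/2}$; away from $\Gamma_v^{\textnormal{HS}}$, property $(\FB4\textnormal{II})$ gives local harmonicity and hence classical regularity of $v$. By $(\FB5\textnormal{I})$, $(\FB5\textnormal{III})$ and translation the task reduces to the case $z = 0$ with $v_a(0) = 0$, $Dv_a(0) = 0$ and $\|v\|_{L^2(B_1)} = 1$. Writing $E(v, \rho) := \rho^{-n-2}\int_{B_\rho}|v|^2$, the target is $E(v, \sigma) \leq C(\sigma/\rho)^{2\alpha} E(v, \rho)$ for $0 < \sigma \leq \rho/2 \leq 3/16$, together with a choice of $\phi$ as in the statement.

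The core step is a dichotomy, to be established with $\theta \in (0, 1/4)$ and $\alpha \in (0, 1)$ depending only on $n, Q$: either (a) $\int_{B_1}\G(v, \psi)^2 < \epsilon_0$ for some $\psi$ with $\mathbf{v}(\psi) \in \CC_Q$ (where $\epsilon_0$ is the constant from Theorem~\ref{thm:B7}), in which case Theorem~\ref{thm:B7} applied to any sequence of varifolds in $\S_Q$ generating $v$ (as sanctioned by the remark following that theorem) immediately produces the decay with $\phi$ a nearby classical cone; or (b) $E(v, \theta) \leq \theta^{2\alpha}$. I would prove this by compactness/contradiction. If both fail for a sequence $v_k \in \FB_Q$, by $(\FB6)$ extract a subsequential limit $w \in \FB_Q$ satisfying $\|w\|_{L^2(B_1)} = 1$, $w_a(0) = 0$, $Dw_a(0) = 0$, $0 \in \Gamma_w^{\textnormal{HS}}$, $w$ still at $L^2$-distance $\geq \sqrt{\epsilon_0}$ from every $\CC_Q$-cone (by $L^2$ lower semicontinuity of that distance), and $E(w, \theta) \geq \theta^{2\alpha}$.

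The central task is then to show $w$ is homogeneous of degree $1$ on $B_1$. Granted this, Theorem~\ref{classification} gives $\mathbf{v}(w) = W\res(\R\times B_1)$ with either $W \in \CC_Q$ or $W = Q|L|$; the former contradicts the $L^2$-distance lower bound to $\CC_Q$, while the latter, combined with $w_a(0)=0$, $Dw_a(0)=0$ and the fact that the average of $Q\llbracket \ell\rrbracket$ is the linear function $\ell$, forces $w \equiv 0$, contradicting $\|w\|_{L^2(B_1)} = 1$. To obtain the homogeneity, I would iterate the dichotomy at dyadic scales $\theta^j$: the Hardt--Simon inequality $(\FB4\textnormal{I})$ at $0$ applied to the rescalings $w_k^{(j)}(x) := v_k(\theta^j x)/(\theta^j E(v_k, \theta^j)^{1/2}) \in \FB_Q$ controls $\int_{B_{1/2}} R^{2-n}|\partial(w_k^{(j)}/R)/\partial R|^2$ by a constant multiple of $E(v_k, \theta^{j-1})/E(v_k, \theta^j)$, and a diagonal argument over $k$ and $j$ (using the forced upper bound on these ratios from iterated failure of alternative~(b)) passes in the limit to vanishing of the radial-derivative integral on arbitrarily large balls, whence radial homogeneity of $w$ via Lemma~\ref{unique-cont}.

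With the dichotomy in hand, standard iteration produces the decay at $0$: either (a) triggers at some scale and Theorem~\ref{thm:B7} completes the proof with $\phi$ a classical cone, or (b) iterates indefinitely, and the best linear approximations $\ell^{(j)}$ to $v$ on $B_{\theta^j}$ form a Cauchy sequence (by linear rescalings of the $L^2$ decay) whose limit $\ell_\infty$ provides $\phi = Q\llbracket \ell_\infty\rrbracket$. Applying this decay at every $z \in \Gamma_v^{\textnormal{HS}} \cap B_{1/2}$, combined with the Campanato lemma (cf.\ \cite[Lemma 4.3]{wickramasekera2014general}) and the structure at classical singular points provided by Theorem~\ref{thm:B7}, yields $v \in GC^{1,\alpha}(B_{1/2}; \A_Q(\R))$. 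The main obstacle is the homogeneity of the blow-up limit $w$: no frequency function is available at this stage (Theorem~\ref{frequency} applies only a posteriori once $GC^1$ regularity is known), so one is forced to rely entirely on the Hardt--Simon inequality transported through nested rescalings, which requires the delicate simultaneous diagonal extraction described above.
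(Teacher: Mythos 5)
Your overall architecture (reduce to $z=0$ with $v_a(0)=Dv_a(0)=0$ and $\|v\|_{L^2(B_1)}=1$, prove a two-alternative dichotomy by compactness using Theorem~\ref{classification}, then iterate and finish with Campanato) matches the paper's, but your dichotomy is the wrong one and its proof has a genuine gap at exactly the step you flag as the main obstacle: showing the compactness limit $w$ is homogeneous of degree $1$. Your alternative (b) is the decay statement $E(v,\theta)\leq\theta^{2\alpha}$ with $E(v,\rho)=\rho^{-n-2}\int_{B_\rho}|v|^2$, so its failure along the sequence $v_k$ gives only the \emph{lower} bound $E(v_k,\theta)>\theta^{2\alpha}$, i.e.\ non-degeneracy of the excess at scale $\theta$. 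The Hardt--Simon inequality $(\FB4\textnormal{I})$ is an \emph{upper} bound for the radial-derivative integral in terms of the excess; combining it with the ratio bound $E(v_k,\theta^{j-1})/E(v_k,\theta^j)<\theta^{-2\alpha}$ forced by iterated failure of (b) yields only $\int_{B_{1/2}}R^{2-n}\bigl(\partial(w_k^{(j)}/R)/\partial R\bigr)^2\leq C\theta^{-2\alpha}$, a bound of order one --- not the $o(1)$ needed for the weak $W^{1,2}$ limit to be radially homogeneous. Nothing in your setup makes the normalized radial-derivative integral tend to zero, so $w$ need not be homogeneous of degree $1$, Theorem~\ref{classification} cannot be invoked, and the contradiction does not close.

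The paper avoids this by taking as its second alternative the \emph{reverse} Hardt--Simon inequality $\sum_\alpha\int_{B_{3/8}\setminus B_{3/16}}R^{2-n}\bigl(\partial(v^\alpha/R)/\partial R\bigr)^2\geq\epsilon\int_{B_{3/8}}|v|^2$, with $\epsilon=\epsilon_i\to0$ in the contradiction argument. The failure of that alternative is precisely the smallness of the radial-derivative integral relative to the excess; this passes to the limit to give homogeneity of $w_*$ on the annulus $B_{3/8}\setminus B_{3/16}$ and then, via the unique-continuation Lemma~\ref{unique-cont}, on the whole ball. Theorem~\ref{classification} then forces $\mathbf{v}(w_*)\in\CC_Q$ (the planar case $Q|L|$ is excluded since $(w_*)_a\equiv0$ would give $w_*\equiv0$, contradicting the non-vanishing established from the annulus estimate), so Theorem~\ref{thm:B7} applies to the tail of the sequence and contradicts the assumed failure of (a). The decay alternative you want is then \emph{derived}, not postulated: when reverse Hardt--Simon holds, combining it with $(\FB4\textnormal{I})$ gives geometric decay of the radial-derivative integral across scales, which converts back into the $L^2$ decay toward $Q\llbracket\ell\rrbracket$. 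If you replace your alternative (b) by the reverse Hardt--Simon inequality, the remainder of your scheme (iteration, Cauchy sequence of affine approximations, and the final $GC^{1,\alpha}$ assembly at points of $\Gamma_v$) goes through essentially as in the paper; no frequency function is needed at this stage.
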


\begin{proof}
In view of property $(\FB5\textnormal{I})$ and the fact that $\|v\|_{L^{2}(B_{1})} \leq 1$ for each $v \in \FB_{Q}$, it suffices to prove the claimed estimate for $\rho=3/8,$ so assume $\rho = 3/8$.  First note that we can repeat the first part of the argument of Theorem~\ref{classification}, leading to the dichotomy that (a) or (b) holds, to establish the existence of $\epsilon = \epsilon(n, Q) \in (0, 1/2)$ such that the same dichotomy ((a) or (b)) must hold with $z = 0$ and $\alpha = 1/2$ (say) for any $v \in \FB_{Q}$ 
such that $0 \in \Gamma_{v}^{\textnormal{HS}},$ provided we additionally assume that $v_{a}(0) = 0$ and $Dv_{a}(0) = 0$; specifically, there is $\epsilon = \epsilon(n, Q) \in (0, 1/2)$ such that for each $v \in \FB_{Q}$ with $0 \in \Gamma_{v}^{\textnormal{HS}}$, $v_{a}(0) = 0$ and $Dv_{a}(0) = 0,$ either: 
	\begin{itemize}
	\item[(a)] The conclusions of Theorem \ref{thm:B7} hold on $B_{9/64}(0)$; in particular $v$ is a generalised-$C^{1,1/2}$ graph on $B_{9/64}(0)$ and there is a function $\psi_{0} \, : \, \R^{n} \to {\mathcal A}_{Q}(\R)$ with ${\mathbf v}(\psi_{0}) \in \CC_{Q}$ such that 
	 we have the estimate
	\begin{equation}\label{E:Estimate2}
	\rho^{-n-2}\int_{B_{\rho}(0)}{\mathcal G}(v(x), \psi_{0}(x))^{2} \, \ext x \leq C\rho\int_{B_{3/8}(0)}|v|^2
	\end{equation}
	for all $0<\rho\leq 9/64$, where $C = C(n, Q) \in (0, \infty)$, or 
	\item[(b)] We have 
		\begin{equation}\label{E:RHS-general}
	\sum^Q_{\alpha=1}\int_{B_{3/8}(0)\setminus B_{3/16}(0)}R^{2-n}\left(\frac{\del(v^\alpha/R)}{\del R}\right)^2 \geq \epsilon\int_{B_{3/8}(0)}|v|^2,
	\end{equation}
	where $R(x)= |x|$.  
	\end{itemize}
To prove this we argue by contradiction exactly as in the proof of Theorem~\ref{classification}, but with $\|v_{i}\|_{L^{2}(B_{1})}^{-1}v_{i}$ taking the place of $w_{i} = v_{z_{i}, \rho_{i}}$ appearing in that argument, where $(v_{i}) \subset \FB_{Q}$  is a general sequence such that both options (a) and (b) with $v = v_{i}$ and $\epsilon = \epsilon_{i}$ are assumed to fail, with $\epsilon_{i} \to 0;$ note that this argument utilises the classification of homogeneous degree 1 elements provided by Theorem~\ref{classification} to reach a contradiction. 

Still subject to the conditions $v_{a}(0) = 0$, $Dv_{a}(0)=0$, this then leads to the final dichotomy, as in the proof of Theorem~\ref{classification} (and by the same argument), namely that either the statement (I) holds with $z= 0$ (and with the estimate 
$\rho^{-n-2}\int_{B_\rho(0)}{\mathcal G}(v(x), \psi_{0}(x))^2 \, \ext x \leq C\rho^{2\mu}\int_{B_{3/8}(0)}|v|^2$
		for  all $\rho\in (0,9/64]$), or the statement (II)$^{\prime}$ holds (with the estimate $\sigma^{-n-2}\int_{B_\sigma(0)}|v|^2 \leq \beta\left(\frac{\sigma}{\rho}\right)^{2\mu}\rho^{-n-2}\int_{B_\rho(0)}|v|^2$ for all $0<\sigma \leq \rho/2\leq 3/32$), 
where now the constants $C$, $\mu$, $\beta$ all depend only on $n$ and $Q$. In either case, this provides the desired estimate in the present theorem (with $L = \{0\} \times \R^{n}$, $\alpha = \mu$) in the special case $v_{a}(0) = 0$ and $Dv_{a}(0) = 0$. The claimed estimate in the general case (i.e.\ without the assumption $v_{a}(0) = 0$ and $Dv_{a}(0) = 0$) follows immediately from this special case in view of property $(\FB5\textnormal{III})$ and a standard derivative estimate for harmonic functions (applied to $v_{a}$). 

Finally, to see the claim that $v$ is of class $GC^{1, \alpha}$ in $B_{1/2},$ note that we can apply, for any $z \in \Gamma_{v} \cap B_{1/2}$, the estimate just proved with $v_{z, 1/2}$ in place of $v$ to obtain the corresponding decay estimate at base point $z$ and with some $\phi_{z} \,: \, \R^{n} \to \A_Q(\R)$ in place of $\varphi$ where $\mathbf{v}(\phi_{z})\in \CC_Q$ or 
	 ${\mathbf v}(\phi_{z}) = Q|L_{z}|$ for some hyperplane $L_{z}$. Then, setting  
$\CC_{v} = \{z \in \Gamma_{v} \cap B_{1/2} \, : \,  {\bf v}(\varphi_{z}) \in {\mathcal C}_{Q}\}$, 
${\mathcal B}_{v} = \Gamma_{v} \cap B_{1/2} \setminus {\mathcal C}_{v}$ and 
${\mathcal R}_{v} = B_{1/2} \setminus \Gamma_{v}$, we can employ property $(\FB4)$ to verify the requirement (A)(i) of the definition of generalised-$C^{1, \alpha}$ (i.e.\ Definition~\ref{genC1alpha}); Theorem~\ref{thm:B7} to verify the requirement (A)(ii) of the definition; and standard pointwise estimates for harmonic functions together with the decay estimate of the present theorem to verify the requirements A(iii) and (B) of the definition.  
\end{proof}

\subsection{Proof of Theorem~A}
We can now prove our main result in this work, Theorem~\ref{thm:A}. We will accomplish this by making use of two key ingredients we now have at our disposal: (i) the asymptotic decay estimates provided by Theorem~\ref{coarse_reg} for the coarse blow-ups, valid in uniform-sized neighbourhoods of a classical singularity or a branch point of a coarse blow-up; (ii) the estimate provided by Theorem~\ref{thm:fine_reg}, which gives decay of a varifold as in Theorem~\ref{thm:A} towards a unique classical tangent cone if and when it reaches, upon rescaling about a  point, a scale at which the degeneration of the varifold towards a hyperplane is not too rapid (i.e.\ a scale at which the fine excess relative to a classical cone is significantly smaller than the coarse excess relative to any hyperplane).

\begin{proof}[Proof of Theorem~\ref{thm:A}]
	We first claim the following: \emph{there exist $\epsilon = \epsilon(n, Q) \in (0, 1)$ and $\theta = \theta(n, Q) \in (0, 1)$ 
	such that if $P$ is a hyperplane of $\R^{n+1}$ with ${\rm dist}_{\H} \,(P \cap (\R \times B_{1}), \{0\} \times B_{1}) < \epsilon$, 
	and if $V \in \S_{Q}$ is such that 
	$\Theta_{V}(0) \geq Q$, $(\omega_{n}2^{n})^{-1}\|V\|(B_{2}^{n+1}(0)) < Q + 1/2$ and, 
	$\hat{E}_{V, P}^{2} \equiv \int_{\pi_{P}^{-1}(P \cap B_{1}^{n+1}(0))}  {\rm dist}^{2}(x, P) \, \ext\|V\| < \epsilon$, then either:}
	\begin{itemize}
	\item[(i)] \emph{there is a hyperplane $\widetilde{P}$ with 
	${\rm dist}_{\H} \, (\widetilde{P} \cap (\R \times B_{1}), P \cap (\R \times B_{1})) < C \hat{E}_{V, P}$, and  
	$\theta^{-n-2} \int_{\pi^{-1}_{\widetilde{P}}(\widetilde{P} \cap B_{\theta}^{n+1}(0))} {\rm dist}^{2}, (x, \widetilde{P}) \, \ext\|V\| \leq \frac{1}{2} \hat{E}_{V, P}^{2}$, or};
	\item[(ii)] \emph{there is a cone $\BC \in \CC_{Q}$ with ${\rm dist}_{\H} \, ({\rm spt} \, \|\BC\| \cap (\R \times B_{1}), P \cap (\R \times B_{1})) < C \hat{E}_{V, P}$, and $\rho^{-n-2}\int_{\R\times B_\rho}\dist^2(X,\spt\|\BC\|)\ \ext\|V\|\leq C\rho^{2\mu} \hat{E}^2_{V,P}$ for all $\rho\in (0,\theta/8]$.}
	\end{itemize}
\emph{Here $C = C(n, Q) \in (0, \infty)$ and $\mu  = \mu(n, Q) \in (0, 1)$}. 

To prove this, we argue by contradiction. So suppose we have a sequence of varifolds $(V_{k})_{k} \subset \S_{Q}$ and a sequence of hyperplanes $(P_{k})_{k}$ with ${\rm dist}_{\H} \,(P_{k} \cap (\R \times B_{1}), \{0\} \times B_{1}) \to 0$ such that 
$\Theta_{V_{k}}(0) \geq Q$, $(\omega_{n}2^{n})^{-1}\|V_{k}\|(B_{2}^{n+1}(0)) < Q + 1/2$ and 
	$\hat{E}_{V_{k}, P_{k}} \to 0$. It suffices to prove, with $\theta = \theta(n, Q) \in (0, 1)$, $\mu = \mu(n, Q) \in (0, 1)$ and 
	$C = C(n, Q) \in (0, \infty)$ to be chosen, that we have for infinitely many $k$, either:
	\begin{itemize}
		\item[(I)] there is a hyperplane $\widetilde{P}_{k}$ with 
	${\rm dist}_{\H} \, (\widetilde{P}_{k} \cap (\R \times B_{1}), P_{k} \cap (\R \times B_{1})) < C \hat{E}_{V_{k}, P_{k}}$ and 
	$\theta^{-n-2} \int_{\pi^{-1}_{\widetilde{P}_{k}}(\widetilde{P}_{k} \cap B_{\theta}^{n+1}(0))} {\rm dist}^{2}(X, \widetilde{P}_{k}) \, \ext\|V_{k}\|(X) \leq \frac{1}{2} \hat{E}_{V_{k}, P_{k}}^{2}$, or;
	\item[(II)] there is $\BC_{k} \in \CC_{Q}$ with ${\rm dist}_{\H} \, ({\rm spt} \, \|\BC_{k}\| \cap (\R \times B_{1}), P_{k} \cap (\R \times B_{1})) < C \hat{E}_{V_{k}, P_{k}}$ and $\rho^{-n-2}\int_{\R\times B_\rho}\dist^2(X,\spt\|\BC_k\|)\ \ext\|V_k\| \leq C\rho^{2\mu}{\hat E}^2_{V_{k}, P_{k}}$ for all $\rho \in (0, \theta/8]$.
	\end{itemize}
Let $\Gamma_{k} \, : \, \R^{n+1} \to \R^{n+1}$ be a rotation such that $\Gamma_{k}(P_{k}) = \{0\} \times \R^{n}$ and $\|\Gamma_{k} - {\rm Identity}\| \to 0$. Let $\widetilde{V}_{k} = 
(\Gamma_{k})_\# V_{k}$. Then $\hat{E}_{k} = \hat{E}_{\widetilde{V}_{k}, \{0\} \times \R^{n}} = \hat{E}_{V_{k}, P_{k}} \to 0$. Let 
$v \in \FB_{Q}$ be a coarse blow-up of $(\widetilde{V}_{k})$. Since $\Theta_{\widetilde{V}_{k}}(0) \geq Q$, we have that $0 \in \Gamma_{v}^{\textnormal{HS}}$ and $v_{a}(0) =0$ (by the remark following the list of properties $(\FB1)-(\FB6)$ in Section~\ref{initial-cb}), so by Theorem~\ref{coarse_reg}, there is $\phi \, : \, \R^{n} \to {\mathcal A}_{Q}(\R)$ with ${\bf v}(\phi) \in \CC_{Q}$ or ${\bf v}(\phi) = Q|L|$ for some hyperplane $L$ such that  
	for every $\sigma \in (0, 3/16]$, 
	\begin{equation}\label{linear-est}
	\sigma^{-n-2}\int_{B_\sigma(0)}\G(v(x),\phi(x))^2 \, \ext x\leq C_{1}{\sigma}^{2\alpha} \int_{B_{3/8}}|v|^2,
	\end{equation}
	where $C_{1} = C_{1}(n, Q) \in (0, \infty)$ and $\alpha = \alpha(n, Q) \in (0, 1)$. Since $\int_{B_{1}}|v|^{2} \leq 1$, this and homogeneity of $\varphi$ imply that 
	\begin{equation}\label{cone-bound}
	\int_{B_{1}} |\phi|^{2} \leq C_{2}
	\end{equation}
	for some $C_{2} = C_{2}(n, Q) \in (0, \infty).$ Also, note that (\ref{linear-est}) implies that 	
	$\sigma^{-n-2}\int_{B_\sigma(0)} |v_{a}(x) - \phi_{a}(x)|^2 \, \ext x\leq 2Q^{-2}C_{1}{\sigma}^{2\alpha} \int_{B_{3/8}}|v|^2$ for all $\sigma \in (0, 3/16]$, so since $v_{a}$ is harmonic, $v_{a}(0) = 0$, and $\phi$ is homogeneous of degree 1, it follows that 
	\begin{equation}\label{avgs}
	\phi_{a}(x) = Dv_{a}(0)\cdot x \;\; \mbox{for $x \in \R^{n}$}.
	  \end{equation}
	Choose $\theta = \theta(n, Q) \in (0, 1)$ such that 
	\begin{equation}\label{conditions}
	\max\{C_{1}(2\theta)^{\alpha}, (2\theta)^{\alpha}\} < \min\{3/16, \epsilon\}
	\end{equation}
	 where $\epsilon = \epsilon(n, Q, \alpha)$ is as in Theorem~\ref{thm:B7} (taken with $\alpha$ equal to the present value of $\alpha$ given by  (\ref{linear-est})). 

\emph{Case 1: $(2\theta)^{-n-2}\int_{B_{2\theta}} |v(x) - \phi_{a}(x)|^{2} \, \ext x < (2\theta)^{\alpha}$.}  In this case, set $\widetilde{P}_{k} = \Gamma_{k}^{-1} (\graph \, {\hat E}_{k}\phi_{a})$. Noting by (\ref{avgs}) that $\phi_{a}$ is linear, it is then straightforward to verify that option (I) holds for infinitely many $k$, with $C = C(n, Q)$.

\emph{Case 2: Case 1 fails.} In this case we have 
\begin{equation}\label{case2} 
(2\theta)^{-n-2}\int_{B_{2\theta}} |v(x) - \phi_{a}(x)|^{2} \, \ext x \geq (2\theta)^{\alpha};
\end{equation}
it follows that we must also have that ${\bf v}(\phi) \in \CC_{Q}$, for if not then ${\bf v}(\phi) = Q|L|$ for some hyperplane whence 
$L = {\rm graph} \, \phi_{a}(x)$ and $\phi^{j}(x) = \phi_{a}(x)$ for every $j=1, 2, \ldots, Q,$ so that  by (\ref{linear-est}),  
	$\sigma^{-n-2}\int_{B_\sigma(0)}|v(x) - \phi_{a}(x)|^2 \, \ext x\leq C_{1}{\sigma}^{2\alpha} \int_{B_{3/8}}|v|^2$ for all $\sigma \in (0, 3/16]$ and thus taking $\sigma = 2\theta$ and using (\ref{conditions}) we see that Case 1 must hold contrary to our assumption. Now note that there is a sequence of rotations 
	$\widetilde{\Gamma}_{k} \, : \, \R^{n+1} \to \R^{n+1}$ with $\widetilde{\Gamma}_{k}({\rm graph} \, \hat{E}_{k}\phi_{a}) = \{0\} \times \R^{n}$ 
	and $\|\widetilde{\Gamma}_{k} - {\rm Identity}\| \to 0$ such that 
	$$\widetilde{v}(x) \equiv \|v(2\theta (\cdot)) - \phi_{a}(2\theta (\cdot))\|_{L^{2}(B_{1})}^{-1}(v(2\theta x) - \phi_{a}(2\theta x))$$ is the coarse blow-up of a subsequence of $((\eta_{0, 2\theta})_\# \widetilde{\Gamma}_{k \, \#} \, \widetilde{V}_{k})_{k}$.  Then we have by (\ref{linear-est}), 
$$\int_{B_{1}} {\mathcal G}(\widetilde{v}, \widetilde{\phi})^{2} < C_{1}(2\theta)^{2\alpha}(2\theta)^{2}\|v(2\theta (\cdot)) - \phi_{a}(2\theta (\cdot))\|_{L^{2}(B_{1})}^{-2} \leq C_{1} (2\theta)^{\alpha} < \epsilon,$$
where $\widetilde{\phi}(x) = \|v(2\theta (\cdot)) - \phi_{a}(2\theta (\cdot))\|_{L^{2}(B_{1})}^{-1}\phi_{f}(2\theta x).$ Since $\|\widetilde{v}\|_{L^{2}(B_{1})} = 1$ and $(\widetilde{\phi})_{a} \equiv 0,$ we see, by the choice of $\epsilon$ and Theorem~\ref{thm:B7}(i), that for infinitely many $k$,  the hypotheses of Theorem~\ref{thm:fine_reg} are satisfied with $W_{k} = (\eta_{0, 2\theta})_\# \widetilde{\Gamma}_{k \, \#} \, \widetilde{V}_{k}$ in place of $V$ and $\hat{\BC}_{k} \equiv {\bf v}({\hat E}_{W_{k}, \{0\} \times \R^{n}} \widetilde{\phi})$ in place of $\BC.$ Hence by applying Theorem~\ref{thm:fine_reg}, and noting that by (\ref{case2}) and (\ref{cone-bound}) we have $\int_{B_{1}} |\widetilde{\phi}|^{2} \leq (2\theta)^{-\alpha}C_{2} \equiv C = C(n, Q),$ we see that option (II) must hold for infinitely many $k$ (with ${\bf C}_{k} = \left(\widetilde{\Gamma}_{k} \circ \Gamma_{k}\right)^{-1}_{\#} \, \widetilde{\BC}_{k},$ where $\widetilde{\BC}_{k}$ is the cone $\widetilde{\BC}$ provided by Theorem~\ref{thm:fine_reg} when that theorem is 
applied with $W_{k}$ in place of $V$, $\hat{\BC}_{k}$ in place of $\BC$, and $\mu = \alpha$). This establishes the claim asserted at the beginning of the proof. 

Now we can apply the claim iteratively to deduce that for any hyperplane $P$ with ${\rm dist}_{\H} \,(P \cap (\R \times B_{1}), \{0\} \times B_{1}) < \epsilon$, one of the following must hold:

\begin{itemize} 
\item[(i)$^{\prime}$] \emph{there is a sequence of hyperplanes $(P_{k})_{k}$ with $P_{1} = P,$    
	${\rm dist}_{\H} \, ({P}_{k+1} \cap (\R \times B_{1}), P_{k} \cap (\R \times B_{1})) < C \hat{E}_{(\eta_{0, \theta^{k}})_\# V, P_{k}}$ and 
	$\hat{E}_{(\eta_{0, \theta^{k}})_\# V, P_{k+1}}^{2}\leq \frac{1}{2} \hat{E}_{(\eta_{0, \theta^{k-1}})_\# V, P_{k}}^{2}$ for all $k \geq 1$, or};
	
	\item[(ii)$^{\prime}$] \emph{there is an integer $I \geq 1$ and a finite sequence of hyperplanes $P_{1} = P, P_{2}, \ldots, P_{I}$ such that \textnormal{(i)$^{\prime}$} holds for for $k=1, 2, \ldots, I-1$ (if $I \geq 2$), and there is $\BC \in \CC_{Q}$ with 
	${\rm dist}_{\H} \, ({\rm spt} \, \|\BC\| \cap (\R \times B_{1}), P_{I} \cap (\R \times B_{1})) < C \hat{E}_{(\eta_{0, \theta^{I-1}})_\#V, P_{I}}$ and $$(\rho\theta^{I-1})^{-n-2}\int_{\R\times B_{\rho\theta^{I-1}}}\dist^2(X,\spt\|\BC\|)\ \ext\|V\| \leq C\rho^{2\mu}{\hat E}^2_{(\eta_{0, \theta^{{I-1}}})_\#V, P_{I}}$$ for all $\rho \in (0, \theta/8]$}.
\end{itemize} 

From these, it is standard to deduce that there are constants $\beta = \beta(n, Q) \in (0, 1)$ and $C = C(n, Q)\in (0, \infty)$ such that  for any hyperplane $P$ with ${\rm dist}_{\H} \,(P \cap (\R \times B_{1}), \{0\} \times B_{1}) < \epsilon$, we have either: 
\begin{itemize}
\item[(A)]  there is a (unique) hyperplane $P_{0}$  with ${\rm dist}_{\H} \, ({\rm spt} \, P_{0} \cap (\R \times B_{1}), P \cap (\R \times B_{1})) < C \hat{E}_{V, P}$ and $\hat{E}_{(\eta_{0, \rho})_\# V, P_{0}} \leq C\rho^{\beta} \hat{E}_{V, P}$ for all $\rho \in (0, \theta/8]$, or; 
\item[(B)] there is a (unique) cone $\BC_{0} \in \CC_{Q}$ with ${\rm dist}_{\H} \, ({\rm spt} \, \|\BC_{0}\| \cap (\R \times B_{1}), P \cap (\R \times B_{1})) < C \hat{E}_{V, P}$ and $\rho^{-n-2}\int_{\R\times B_\rho}\dist^2(X,\spt\|\BC\|)\ \ext\|V\| \leq C\rho^{2\beta}{\hat E}^2_{V, P}$ for all $\rho \in (0, \theta/8]$.
\end{itemize}

Indeed, (A) holds if  (i)$^{\prime}$ holds, and (B) holds if (ii)$^{\prime}$ holds. In particular, $V$ has a unique tangent cone at $0$ which is $\BC_{0}$ if (B) holds, and $k|P_{0}|$ for some constant integer $k$ if (A) holds in which case $k=Q$ since $\Theta_{V}(0) \geq Q$ and 
$(\omega_{n}2^{n})^{-1}\|V\|(B_{2}^{n+1}(0)) \leq Q + 1/2$; thus we must have that $\Theta_{V}(0) = Q$. 

To complete the proof, note that if the hypotheses of Theorem~\ref{thm:A} are satisfied with  $\epsilon = \epsilon(n, Q)$ sufficiently small, then for any $Z \in \R \times B_{3/4}$ with $\Theta_{V}(Z) \geq Q$,  we may repeat the argument leading to (A) or (B) with $(\eta_{Z, 1/4})_\#V$ in place of $V$.  This gives $\{Z \in \R \times B_{3/4} \, : \, \Theta_{V}(Z) \geq Q\} = 
\{Z \in \R \times B_{3/4} \, : \, \Theta_{V}(Z) = Q\}$. Set
\begin{align*}
{\mathcal B}_{V} = \left\{Z \in \R \times B_{3/4} \, : \, \Theta_{V}(Z) = Q, \;\mbox{and (A) holds  with $(\eta_{Z, 1/4})_\# V$ in place of $V$}\right.\\ 
&\hspace{-2in}\left.\mbox{and a hyperplane $P_{Z}$ in place of $P_{0}$}\right\},
\end{align*} 
\begin{align*}
\CC_{V} = \left\{Z \in \R \times B_{3/4} \, : \, \Theta_{V}(Z) = Q, \; \mbox{and (B) holds  with $(\eta_{Z, 1/4})_\# V$ in place of $V$}\right.\\ 
&\hspace{-2.5in}\left.\mbox{and a cone $\BC_{Z} \in \CC_{Q}$ in place of $\BC_{0}$}\right\}, \;\; {\rm and}
\end{align*} 
$$\Omega= B_{1/2}(0)\setminus \pi(\B_{V} \cup \CC_{V}),$$
where $\pi:\R\times \R^n\to \{0\}\times\R^n$ is the orthogonal projection. Then 
$\B_{V} \cup \CC_{V} = \{Z \in B_{3/4} \, : \, \Theta_{V}(Z) \geq Q\},$ and in particular $\B_{V} \cup \CC_{V}$ is relatively closed in $\R \times B_{3/4}$ so $\Omega$ is an open subset of $B_{1/2}$. We have, by Theorem~\ref{sheeting} and the decay estimates provided by (A) and (B)  that for each $Y\in \Omega$ and 
$\rho_Y = \frac{1}{2}\dist(Y,\pi(\B_V \cup \CC_V))$, the varifold $V \res (\R \times B_{\rho_{Y}}(Y))$ is the sum of multiplicity 1 varifolds corresponding to $Q$ embedded, ordered graphs of smooth solutions to the minimal surface equation on $B_{\rho_{Y}}(Y)$ with small 
gradient;  this gives functions $u^{j} \, : \, \Omega \to \R$ with $u^{1} \leq u^{2} \leq \cdots \leq u^{Q}$ solving the minimal surface equation and with small gradient, such that $V \res \R \times \Omega  = \sum_{j=1}^{Q} |{\rm graph} \, u_{j}|$. Moreover, since for each $z\in B_{1/2} \setminus\Omega$, $\pi^{-1}(z) \cap \spt\|V\|$ is a single-point $Z \in {\mathcal B}_{V} \cup \CC_{V}$, we can extend these functions to obtain a function $u \, : \, B_{1/2} \to {\mathcal A}_{Q}(\R)$ with  $u^{1} \leq u^{2}\leq \cdots \leq u^{Q}$. 
Setting $\CC_{u} = \pi(\CC_{V})$, ${\mathcal B}_{u} = \pi({\mathcal B}_{V})$ and ${\mathcal R}_{u} = \Omega$, the desired properties for $u$ to be generalised-$C^{1,\alpha}$ in $B_{1/2}$ follow by using: (i) the decay estimate provided by alternative (A) together with Theorem~\ref{thm:B} (or Theorem~\ref{thm:fine_reg}) to give the local description of $u$ near points in $\CC_{u}$; (ii) the decay estimate provided by alternative (B) to verify differentiability of $u$ at points in $\B_{u};$ and (iii) standard elliptic estimates together with the Hausdorff distance estimates between $P_{Z}$ and $P$, and between $C_{Z}$ and $P,$ as provided by (A) and (B) for any hyperplane $P$ close to $\{0\} \times \R^{n}$, to verify that $Du$ is in $C^{0, \alpha}(K)$ for any compact $K \subset {\mathcal R}_{u} \cup \B_{u}$.
\end{proof}

\appendix
\section{Hausdorff dimension bound for the branch set of coarse blow-ups}\label{app:A}

Here we show that the results in Part~\ref{main-thm-proof} above can be used to bound the Hausdorff dimension of the branch set $\B_v$ for any $v \in \FB_{Q}$, i.e.\ we prove the following result:
\begin{theoremA}\label{AP1} $\dim_\H(\B_v)\leq n-2$ for every $v\in \FB_Q$. 
\end{theoremA}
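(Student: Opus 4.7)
The plan is to adapt Almgren's frequency-function dimension reduction, carrying it out for the average-free part $v_f := v - v_a$ of $v \in \FB_Q$. I first observe that $\B_v \subset \Gamma_v$ and, by Lemma~\ref{continuity}, $v^j(z) = v_a(z)$ for each $j$ and every $z \in \Gamma_v$; in particular $v_f(y) = 0$ at every $y \in \B_v$, and $v_f \not\equiv 0$ on any neighbourhood of such a point (else it would lie in $\mathcal{R}_v$). By Theorem~\ref{coarse_reg} we have $v \in GC^{1,\alpha}(B_{1/2})$, so the squeeze identity for $v_f$ (Lemma~\ref{squeeze}) holds locally on the open set where $v_f \not\equiv 0$, and Theorem~\ref{frequency} applied to $v_f$ provides a well-defined, monotone frequency $N_{v_f;y}(\rho)$ and limit frequency $N_{v_f}(y)$ at every $y \in \B_v$; moreover, the $GC^{1,\alpha}$ decay bound $|v_f(x)| \leq C|x-y|^{1+\alpha}$ at $y \in \B_v$ (obtained by integrating the H\"older estimate on $Dv$ from Definition~\ref{genC1alpha}(c) and subtracting the smooth $v_a$) gives $N_{v_f}(y) \geq 1$.

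Next, I would fix $y \in \B_v$ and a sequence $\rho_k \downarrow 0$, and set $E_k := \|v_f(y + \rho_k \cdot)\|_{L^2(B_1)}$ and $\varphi_k(x) := E_k^{-1} v_f(y + \rho_k x)$. The squash inequality (Lemma~\ref{squash}) yields a uniform $W^{1,2}_{\textnormal{loc}}(B_1)$-bound, so along a subsequence $\varphi_k \weakly \varphi$ weakly in $W^{1,2}_{\textnormal{loc}}(B_1)$ and strongly in $L^2_{\textnormal{loc}}(B_1)$, with $\|\varphi\|_{L^2(B_1)} = 1$ and $\varphi_a \equiv 0$. \emph{The principal technical obstacle} is upgrading this to strong $W^{1,2}_{\textnormal{loc}}(B_1)$ convergence, since no variational principle analogous to Almgren's Dirichlet-energy-minimising hypothesis is available here. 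My plan is to combine the locally uniform convergence supplied by the H\"older continuity estimate (Lemma~\ref{continuity}) with the energy non-concentration estimate (Lemma~\ref{noncon2}) applied separately to $\varphi_k$ and $\varphi$: for any $\delta > 0$ and all sufficiently large $k$, uniform convergence confines the difference $\varphi_k - \varphi$ to the sublevel set $\{|\varphi_k^j| < 2\delta\} \cap \{|\varphi^j| < 2\delta\}$, on which Lemma~\ref{noncon2} bounds $\int |D\varphi_k^j|^2 + \int |D\varphi^j|^2$ by $O(\delta)$; off this set, the gradient contributions vanish in the limit by uniform convergence. Letting $k \to \infty$ followed by $\delta \downarrow 0$ delivers the required strong convergence.

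With strong convergence in hand, $\varphi$ has constant frequency $N_{v_f}(y)$ on every scale, so by Theorem~\ref{frequency}(ii), $\varphi$ is homogeneous of degree $N_{v_f}(y)$ on $B_1$. A diagonal argument using the dilation property $(\FB 5\textnormal{I})$, the subtract-linear-part property $(\FB 5\textnormal{III})$, and the compactness property $(\FB 6)$ shows that, after an appropriate translation-and-normalisation of the defining sequence of varifolds, both $\varphi$ and its homogeneous extension $\widetilde\varphi$ to $\R^n$ belong to $\FB_Q$, hence inherit all of $(\FB 1)$--$(\FB 6)$ and are continuous by Lemma~\ref{continuity}.

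Finally, with the homogeneity of tangent maps and their $\FB_Q$-membership in place, I would apply the Federer dimension reduction scheme of \cite{almgrenalmgren} (see also \cite[Appendix~A]{simon1983lectures}), stratifying $\B_v$ by $k := \dim S(\widetilde\varphi)$, where $S(\widetilde\varphi) \subset \R^n$ is the translation-invariance subspace. The case $k = n$ is immediate from $\widetilde\varphi_a \equiv 0$ together with $\|\widetilde\varphi\|_{L^2(B_1)} = 1$. The case $k = n - 1$, in which $\widetilde\varphi$ reduces to a $Q$-valued function of a single variable, is excluded by a separate analysis combining properties $(\FB 2)$--$(\FB 4)$ with the degree-$1$ classification of Theorem~\ref{classification}: any such $\widetilde\varphi$ is ultimately shown to correspond either to a multiplicity-$Q$ hyperplane (contradicting $\widetilde\varphi_a \equiv 0$ and the normalisation) or to a classical cone in $\CC_Q$, the latter forcing $y \in \CC_v$ via Theorem~\ref{thm:fine_reg} and contradicting $y \in \B_v$. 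The Federer reduction therefore yields $\dim_\H(\B_v) \leq n - 2$.
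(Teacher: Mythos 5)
Your overall architecture --- frequency of $v_f$ at branch points, strong $W^{1,2}$ convergence of normalised rescalings via local uniform convergence plus energy non-concentration, homogeneous tangent maps with spine of dimension $\leq n-2$, then Almgren-style dimension reduction --- is the same as the paper's. But two steps have genuine gaps. The first is the compactness step: you claim local uniform convergence of $\varphi_k = E_k^{-1}v_f(y+\rho_k\cdot)$ from Lemma~\ref{continuity}. That lemma bounds the H\"older seminorm of $v$ by $\|v\|_{L^2(B_1)}$; applied to $v_{y,\rho_k}\in\FB_Q$ it controls $(v_{y,\rho_k})_f$ only after normalising by $\|v(y+\rho_k\cdot)\|_{L^2}$, and the ratio $\|v(y+\rho_k\cdot)\|_{L^2}/E_k\to\infty$ whenever $v(y)\neq 0$ or $Dv_a(y)\neq 0$, since $v_f$ decays at least like $\rho^{1+\alpha}$ at a branch point while $v$ decays at most like $\rho$. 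So the rescaled seminorms of $\varphi_k$ blow up and you get no equicontinuity. What is needed is a scale-invariant $C^{0,1}$ estimate for $v_f$ in terms of $\|v_f\|_{L^2}$ alone (the paper's Lemma~\ref{AL1}), whose proof itself requires first establishing $N_{v_f}(z)\geq 1$ at every $z\in\Gamma_v^{\textnormal{HS}}$ via the decay of Theorem~\ref{coarse_reg} and then a Campanato argument using harmonicity of $v_f$ off $\Gamma_v$. Without this ingredient the strong-convergence argument does not close.

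The second gap concerns the frequency bound and the $(n-1)$-stratum. The decay $|v_f(x)|\leq C|x-y|^{1+\alpha}$ that you cite in fact yields $N_{v_f}(y)\geq 1+\alpha$ (combine it with the lower half of the monotonicity inequality in Theorem~\ref{frequency}(i)), not merely $\geq 1$, and this strict gap above $1$ is what the argument needs: a tangent map homogeneous of degree $d\geq 1+\alpha>1$ with $(n-1)$-dimensional spine reduces to a one-variable $\R^Q$-valued function, harmonic off the origin and homogeneous of degree $>1$, hence identically zero --- contradicting the doubling bound that makes the tangent map non-trivial. Settling for $N\geq 1$ forces you into the degree-$1$ classification of Theorem~\ref{classification}, and your use of it requires $\widetilde\varphi\in\FB_Q$. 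That membership is not justified: $(\FB5\textnormal{III})$ only permits subtracting the \emph{linear} part $\ell_{v}$ of $v_a$ at the base point, and the blow-up of $v_f$ at $y$ agrees with the blow-up of $v-\ell_{v,y}$ only when $N_{v_f}(y)<2$; at branch points of frequency $\geq 2$ (which cannot be excluded) the quadratic part of $v_a$ dominates the normalisation and the two blow-ups differ. The paper avoids this entirely: it never places the tangent map $w_*$ in $\FB_Q$, but instead derives the squash inequality and squeeze identity for $w_*$ directly from the strong convergence, and then runs the dimension reduction on the set $\{N_{v_f}\geq 1+\alpha\}$ using upper semicontinuity of the frequency and the $\delta$-approximation lemma of \cite{simon1996theorems}. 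If you correct the frequency bound to $1+\alpha$ and add the Lipschitz estimate for $v_f$, your argument becomes essentially the paper's.
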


\textbf{Remark:} When $Q=2$, given Theorem \ref{thm:A} this follows immediately from \cite{simon2016frequency}; moreover, from \cite{krummel2013fine} we in fact know that $\B_v$ is countably $(n-2)$-rectifiable, with $v-v_a$ having unique blow-ups at $\H^{n-2}$-a.e. point in $\B_v$.

Before starting the proof of this, we point out the following $C^{0,1}$ estimate for the average-free part of each $v\in \FB_Q$, which we shall use in the proof.

\begin{lemmaA}\label{AL1}
	If $v\in \FB_Q$ then: 
	\begin{itemize}
	\item[(i)] for each $z\in B_{1/2}(0) \cap \Gamma_{v}^{\textnormal{HS}}$, and for each $\sigma, \rho$ with $0<\sigma\leq\rho/2<3/32$, we have that
	$$\sigma^{-n}\int_{B_\sigma(z)}|v_f|^2 \leq \left(\frac{\sigma}{\rho}\right)^{2}\rho^{-n}\int_{B_\rho(z)}|v_f|^2;$$
	\item[(ii)] for each $\rho \in (0, 1)$ we have 
	$$\|v_f\|_{C^{0,1}(B_\rho)}:= \sup_{B_{\rho}} \, |v_{f}| + \sup_{x_{1}, x_{2} \in B_{\rho}:\, x_{1} \neq x_{2}} \,  \frac{|v_{f}(x_{1}) - v_{f}(x_{2})|}{|x_{1} - x_{2}|} \leq 
 C\left(\int_{B_1}|v_f|^2\right)^{1/2},$$
where $C = C(n, Q, \rho) \in (0, \infty).$
\end{itemize}
\end{lemmaA}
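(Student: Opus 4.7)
The plan is to invoke the $GC^{1,\alpha}$ regularity of $v$ established in Theorem~\ref{coarse_reg} to bring the frequency function machinery of Part~\ref{main-thm-proof} into play for $v_f$, and then exploit frequency monotonicity together with $v_f(z) = 0$ at points of $\Gamma_v^{\textnormal{HS}}$. First, since $v \in GC^{1,\alpha}(B_{1/2};\A_Q(\R))$ by Theorem~\ref{coarse_reg} and $v_a$ is harmonic, $v_f \in GC^{1,\alpha}(B_{1/2};\A_Q(\R))$; Lemmas~\ref{squash} and~\ref{squeeze} together with Theorem~\ref{frequency} then give that, provided $v_f \not\equiv 0$ in $B_{1/2}$ (else (i) is trivial), $N_{v_f;z}(\rho)$ is monotone nondecreasing and $N_{v_f}(z) := \lim_{\rho \to 0^+} N_{v_f;z}(\rho)$ exists for every $z \in B_{1/2}$.

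For part (i): fix $z \in B_{1/2} \cap \Gamma_v^{\textnormal{HS}}$; Lemma~\ref{continuity} yields $v_f(z) = 0$. The central step is to show $N_{v_f}(z) \geq 1$. Suppose instead that $\lambda := N_{v_f}(z) < 1$, and pick $\beta \in (\lambda,1)$. The lower bound in Theorem~\ref{frequency}(i) gives, for some fixed $\rho_0 > 0$ and all $\sigma \in (0, \rho_0]$,
\[
\sigma^{-n}\int_{B_\sigma(z)}|v_f|^2 \;\geq\; A\,\sigma^{2\lambda}, \qquad A := \rho_0^{-n-2\lambda}\int_{B_{\rho_0}(z)}|v_f|^2 \;>\; 0,
\]
whereas the H\"older estimate from Lemma~\ref{continuity} (applied to $v$, combined with the smoothness of the harmonic piece $v_a$ and $v_f(z) = 0$) yields
\[
\sigma^{-n}\int_{B_\sigma(z)}|v_f|^2 \;\leq\; C\,\sigma^{2\beta}
\]
for some finite $C$. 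Hence $A \leq C\sigma^{2(\beta-\lambda)} \to 0$ as $\sigma \to 0$, contradicting $A > 0$. With $N_{v_f}(z) \geq 1$ established, the upper inequality in Theorem~\ref{frequency}(i) delivers precisely the asserted decay.

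For part (ii), I would bound $|Dv_f|$ pointwise on the open set $B_\rho \setminus \Gamma_v^{\textnormal{HS}}$. At such a $z$, property $(\FB4\textnormal{II})$ ensures $v_f$ is harmonic in some neighbourhood of $z$; letting $z^{\ast}$ be a nearest point of $\overline{\Gamma_v^{\textnormal{HS}}}$ and $d := |z - z^{\ast}|$, applying (i) at $z^{\ast}$ (or, when $z^{\ast}$ is only a limit point of $\Gamma_v^{\textnormal{HS}}$, via a limiting argument using continuity of $v_f$) gives $(2d)^{-n-2}\int_{B_{2d}(z^{\ast})}|v_f|^2 \leq C\int_{B_1}|v_f|^2$. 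Since $B_d(z) \subset B_{2d}(z^{\ast})$ and $v_f$ is harmonic on $B_d(z)$, the interior harmonic gradient estimate then yields
\[
|Dv_f(z)|^2 \;\leq\; C\,d^{-n-2}\int_{B_d(z)}|v_f|^2 \;\leq\; C\int_{B_1}|v_f|^2,
\]
a uniform bound independent of $d$. Combined with continuity of $v_f$ on $B_\rho$ (Lemma~\ref{continuity}) and the fact that $\Gamma_v^{\textnormal{HS}}$ has empty interior (else $v_f$ would be identically zero on that interior by part~(i)), this gives the claimed Lipschitz bound; the general $\rho \in (0,1)$ reduces to $\rho < 1/2$ via the rescaling $(\FB5\textnormal{I})$.

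The main obstacle is the proof that $N_{v_f}(z) \geq 1$ at $z \in \Gamma_v^{\textnormal{HS}}$: it hinges on the delicate interplay between the frequency-driven lower bound (which extrapolates smallness of $\int |v_f|^2$ from large to small scales) and the H\"older-driven upper bound from Lemma~\ref{continuity}, the H\"older exponent having to be chosen strictly larger than the hypothetical frequency value $\lambda$. Once this frequency lower bound is in hand, everything else assembles straightforwardly from the regularity and harmonic-function estimates already at our disposal.
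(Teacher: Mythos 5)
Your overall strategy is sound and reaches the same two key facts as the paper --- that $N_{v_f}(z)\geq 1$ at every $z\in\Gamma_v^{\textnormal{HS}}$, and that the Lipschitz bound follows by combining this with harmonicity of $v_f$ away from $\Gamma_v^{\textnormal{HS}}$ --- but by a genuinely different route in both halves. For (i), the paper derives the upper bound $\sigma^{-n}\int_{B_\sigma(z)}|v_f|^2\leq C\sigma^2$ directly from the decay estimate of Theorem~\ref{coarse_reg} (decomposing $\G(v-v_a(z),\phi)^2=\G(v_f,\phi_f)^2+Q|v_a-v_a(z)-\phi_a|^2$ and using homogeneity of $\phi_f$), and then plays this sharp rate against the frequency lower bound; you instead use only the $C^{0,\beta}$ estimate of Lemma~\ref{continuity} with $\beta$ arbitrarily close to $1$, which is weaker but still suffices to exclude $N_{v_f}(z)<1$. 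For (ii), the paper runs a Campanato iteration combining the decay of (i) at points of $\Gamma_v^{\textnormal{HS}}$ with the harmonic decay elsewhere, whereas you prove a uniform pointwise gradient bound on $B_\rho\setminus\Gamma_v^{\textnormal{HS}}$ via the interior gradient estimate on the ball $B_{d}(z)$, $d=\dist(z,\Gamma_v^{\textnormal{HS}})$, together with (i) at the nearest point. Both routes are legitimate; yours avoids the Campanato lemma at the cost of an explicit patching argument at the end. Note that, like the paper, you still need the full strength of Theorem~\ref{coarse_reg} (not just Lemma~\ref{continuity}) because the frequency monotonicity of Theorem~\ref{frequency} is only available once $GC^1$ regularity of $v$ is known; you correctly acknowledge this.

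Two steps need repair. First, your stated lower bound $\sigma^{-n}\int_{B_\sigma(z)}|v_f|^2\geq A\sigma^{2\lambda}$ with $A=\rho_0^{-n-2\lambda}\int_{B_{\rho_0}(z)}|v_f|^2$ does not follow from Theorem~\ref{frequency}(i) as written: the lower bound there carries the exponent $2N_{v_f;z}(\rho_0)$, which is $\geq 2\lambda$ by monotonicity, and since $\sigma/\rho_0\leq 1$ this makes the bound \emph{weaker}, not stronger, than $A\sigma^{2\lambda}$. The fix is to first fix $\beta\in(\lambda,1)$ and then choose $\rho_0$ so small that $N_{v_f;z}(\rho_0)<\beta'$ for some $\beta'\in(\lambda,\beta)$ (possible since $N_{v_f;z}(\rho)\downarrow\lambda$), which gives $\sigma^{-n}\int_{B_\sigma(z)}|v_f|^2\geq A'\sigma^{2\beta'}$ and the same contradiction against $C\sigma^{2\beta}$. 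Second, in (ii) the passage from ``$|Dv_f|\leq M$ on the open set $B_\rho\setminus\Gamma_v^{\textnormal{HS}}$ plus continuity'' to a Lipschitz bound on all of $B_\rho$ is not automatic (the Cantor function is continuous with vanishing derivative on a dense open set), and the empty-interior observation you invoke is not the relevant point. What makes the patching work is that $v_f$ vanishes \emph{identically} on $\Gamma_v^{\textnormal{HS}}$ (Lemma~\ref{continuity}): on any segment $[x_1,x_2]$, the increments of $v_f$ over the components of the complement of $\Gamma_v^{\textnormal{HS}}$ are controlled by $M$ times their lengths, while at the endpoints of those components $v_f=0$, so the increments telescope and one gets $|v_f(x_1)-v_f(x_2)|\leq M|x_1-x_2|$. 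Make that explicit and the proof is complete.
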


\begin{proof}
	Fix $v\in \FB_Q$. Then for each $z\in B_{1/2}(0) \cap \Gamma_{v}^{\textnormal{HS}}$, we know from Theorem~\ref{coarse_reg} that there is a function $\phi:B_1(0)\to \A_Q(\R),$ with $\mathbf{v}(\phi)\in \CC_Q$ or ${\mathbf v}(\phi) = Q|L|$ for some hyperplane $L$, for which
	$$\sigma^{-n-2}\int_{B_\sigma(z)}\G(v - v_{a}(z),\phi)^2 \leq C\left(\frac{\sigma}{\rho}\right)^{2\alpha}\cdot\rho^{-n-2}\int_{B_\rho(z)}|v|^2$$
	for every $0<\sigma\leq \rho/2\leq 3/32$, where $\alpha = \alpha(n,Q)\in (0,1)$ and $C = C(n,Q)\in (0,\infty)$. Writing 
	$v^{j} = v_f^{j} + v_a$ and $\phi^{j} = \phi_{f}^{j} + \phi_{a},$ and noting that $\sum_{j=1}^{Q} v_{f}^{j} = \sum_{j=1}^{Q} \phi_{f}^{j} = 0$ 
	pointwise, 
	we have that $\G(v - v_{a}(z),\phi)^2 = \G(v_{f}, \phi_{f})^{2} + Q|v_{a} - v_{a}(z) - \phi_{a}|^{2}$, so it follows from the above inequality that
	\begin{align*}
	\sigma^{-n-2}\int_{B_\sigma(z)}|v_f|^2 & \leq 2\sigma^{-n-2}\int_{B_\sigma(z)}|\phi_f|^2 
	+ 2\sigma^{-n-2}\int_{B_\sigma(z)}\G(v - v_{a}(y),\phi)^2\\
	& \leq C\int_{B_{1/2}(z)}|\phi_f|^2  + C\int_{B_{1/2}(z)}|v|^2.\\
	\end{align*}
	Note that from Theorem~\ref{frequency}, we know that the frequency function associated with $v_{f}$ is well-defined and is monotone. In the same manner as in the proof of Proposition \ref{classification}, the preceding inequality then shows that the frequency 
	$N_{v_f}(z)\geq 1$ for every $z\in B_{1/2}(0) \cap \Gamma_{v}^{\textnormal{HS}}$. Thus from frequency monotonicity, we therefore see that for each such $z$ and all $0<\sigma\leq\rho <1/2$:
	$$\sigma^{-n}\int_{B_\sigma(z)}|v_f|^2 \leq \left(\frac{\sigma}{\rho}\right)^2\rho^{-n}\int_{B_\rho(z)}|v_f|^2$$
	which is the desired estimate in part~(i). To see part~(ii), first consider the case $\rho = 1/2$. Notice that if $z\in B_{1/2}(0) \setminus \Gamma_{v}^{\textnormal{HS}}$, then from $(\FB4\text{II})$ we see that, setting $\rho_{z} = \dist(z,\Gamma_v\cup\del B_1(0)),$ 
	$v_f$ is harmonic on $B_{\rho_{z}}(z)$ (i.e.\ $\left. v_{f}^{j}\right|_{B_{\rho_{z}}(z)}  
	\equiv \left. (v^{j} - v_{a})\right|_{B_{\rho_{z}}(z)} \, : \, B_{\rho_{z}}(z) \to \R$ is harmonic for each $j=1, 2, \ldots, Q$). 
	 Applying standard estimates for harmonic functions, we thus see that for such $z$ and all $\sigma, \rho$ with $0<\sigma \leq \rho< \rho_{z}$ and any constant $b \in {\mathcal A}_{Q}(\R)$, 
	$$\sigma^{-n}\int_{B_\sigma(z)}|v_f - v_{f}(z)|^2 \leq C\left(\frac{\sigma}{\rho}\right)^2\rho^{-n}\int_{B_\rho(z)} |v_f - b|^2,$$
where $C = C(n).$ From here we may apply standard Campanato-style arguments (see e.g.\ \cite[Lemma 4.3]{wickramasekera2014general} or \cite{minter2021campanato}) to reach the desired conclusion for $\rho = 1/2$. In view of property $(\FB5\textnormal{I})$, the claimed estimate for arbitrary $\rho \in (0, 1)$ follows from the case $\rho = 1/2.$ 
\end{proof}

\begin{proof}[Proof of Theorem \ref{AP1}]
	Fix $v\in \FB_Q$ and assume that there is some $j \in \{1, \ldots, Q\}$ such that $v^{j} \not\equiv v_{a}$ on $B_{1}$. Then it follows from Theorem~\ref{frequency} that for every ball $B_{\rho}(y) \subset B_{1}$, $v_{f}$ is not identically zero on $B_{\rho}(y).$  Set $w:=v_f$. We know from Theorem~\ref{coarse_reg} that $w$ is generalised-$C^{1,\alpha}$ for some $\alpha = \alpha(n,Q)$, and from Theorem~\ref{frequency} we know that at every $x_0\in \B_v$ the frequency function $N_{w;x_0}$ is monotone and that the frequency $N_{w}(x_0)$ is well-defined. From the bounds provided by Theorem~\ref{coarse_reg} and the monotonicity of $N_{w;x_0}$ we can readily check that $N_{w}(x_0)\geq 1+\alpha$ at every $x_0\in \B_v$. So set:
	$$\CF:= \{x\in B_1(0):N_w(x)\geq 1+\alpha\}.$$
	The above tells us that $\B_v\subset \CF$, and so it suffices to show $\dim_\H(\CF)\leq n-2$. For $y\in \CF$ and $\rho>0$ set $w_{y,\rho}(x):= \|w(y+\rho x)\|_{L^2(B_1(0))}^{-1}w(y+\rho x)$. Now from $(\FB5\text{I})$, we know that $v_{y,\rho}\in \FB_Q$, and moreover
	$$\|v(y+\rho x)\|_{L^2(B_1(0))}\cdot (v_{y,\rho})_f = \|w(y+\rho x)\|_{L^2(B_1(0))}\cdot w_{y,\rho}.$$
	In particular, as $(v_{y,\rho})_f$ and $w_{y,\rho}$ only differ by a multiplicative constant, we have that the squash inequality (Lemma \ref{squash}) and squeeze identity (Lemma \ref{squeeze}) hold for $w_{y,\rho}$ from the corresponding results for $(v_{y,\rho})_f$, i.e.\
	$$\int_{B_1}|Dw_{y,\rho}|^2\zeta \leq -\int_{B_1(0)}\sum^Q_{\alpha=1}w_{y,\rho}^\alpha Dw_{y,\rho}^\alpha \cdot D\zeta$$
	$$\int_{B_1}\sum^Q_{\alpha=1}\sum^n_{i,\ell=1}\left(|Dw_{y,\rho}^\alpha|^2\delta_{i\ell} - 2D_iw^\alpha_{y,\rho} D_{\ell}w^\alpha_{y,\rho}\right)D_i\zeta^\ell = 0$$
	for any $\zeta, \zeta^{\ell} \in C^{1}_{c}(B_{1})$. For any $r \in (0,1)$, the squash inequality provides a bound of the form $\|Dw_{y,\rho}\|_{L^2(B_{r})} \leq C(1-r)^{-1}\|w_{y,\rho}\|_{L^2(B_{1})} = C(1-r)^{-1}$ where $C = C(n)$. This tells us that for each $r \in (0, 1)$, we have a uniform bound on $\|w_{y,\rho}\|_{W^{1,2}(B_{r})}$  depending on $r$ but independent of $\rho$, and so given any sequence $(\rho_{j})$ with $\rho_{j} \to 0^{+}$, we may pass to a subsequence without relabelling (using a diagonal argument) to ensure that $w_{y,\rho_j}\to w_* \in W^{1,2}_{\rm loc}(B_{1})$, where the convergence is locally strongly in $L^2$ and locally weakly in $W^{1,2}.$ Moreover, by  frequency monotonicity, we have for $0 < r < 1$ the doubling condition 
	$\rho^{-n}\|v_{f}\|^{2}_{L^{2}(B_{\rho}(y))} \leq C(v, r) (r \rho)^{-n}\|v_{f}\|^{2}_{L^{2}(B_{r \rho}(y))}$  and so $w_{*} \not\equiv 0$ on any ball in $B_{1}$.
	
	We now claim that in fact the convergence $w_{y,\rho_j}\to w_*$ is in the strong $W^{1,2}$ topology locally on $B_{1}$. By standard results it suffices to show that $\|Dw_{y,\rho_j}\|_{L^2(B_{r})}\to \|Dw_*\|_{L^2(B_{r})}$ for each $r \in (0, 1)$. So fix $r \in (0, 1)$. We know from the local weak convergence in $W^{1,2}$ that
	$$\|Dw_*\|_{L^2(B_{r})} \leq \liminf_{j\to\infty}\|Dw_{y,\rho_j}\|_{L^2(B_{r})}$$
	and so it suffices to show that $\limsup_{j\to\infty}\|Dw_{y,\rho_j}\|_{L^2(B_{r})} \leq \|Dw_*\|_{L^2(B_{r})}$. To show this, first note that applying the energy non-concentration estimate, Lemma \ref{noncon2}, to $v_{y,\rho_j}$, with $\delta$ replaced by $\delta\cdot\|v(y+\rho_j x)\|_{L^2(B_1(0))}^{-1}\cdot\|w(y+\rho_j x)\|_{L^2(B_1(0))}$, we get for any $\delta>0$
	\begin{equation}\label{A:0}
	\int_{B_{r}}\sum^Q_{\alpha=1}\one_{\{|w^\alpha_{y,\sigma}|<\delta\}}|Dw^\alpha_{y,\sigma}|^2 \leq C(1-r)^{-2}\delta,
	\end{equation}
	for some $C = C(n,Q)$. We now claim that we in fact have local uniform convergence $w_{y,\rho_j}\to w_*$ in $B_{1}$. Indeed, applying Lemma \ref{AL1} to $v_{y,\rho_j}$ we get local uniform bounds on the $C^{0,1}$ norm of $w_{y,\rho_j}$ for all $j$, and thus by the Arzel\`a-Ascoli theorem, we can pass to a subsequence to ensure that in fact $w_{y,\rho_j}\to w_*$ locally uniformly, and in particular $w_*$ is in $C^{0,1}$. Moreover this shows that for any $\delta>0$  and any compact $K \subset B_{1}$ we have, for all $j$ sufficiently large (depending on $\delta$ and $K$), 
		$$\{|w_{y,\rho_j}|\geq \delta\} \cap K \subset \{|w_*|\geq 3\delta/4\} \cap K\subset \{|w_*|\geq \delta/2\} \cap K \subset \{|w_{y,\rho_j}| \geq\delta/4\} \cap K$$
	and thus from property $(\FB4)$, $w_{y,\rho_j}$ is harmonic on $\{|w_*| > \delta/2\}$, and we can upgrade the convergence to $C^2$ convergence on $K\cap \{|w_*|\geq 3\delta/4\}$; in particular we have $Dw_{y,\rho_j}\to Dw_*$ strongly in $L^2$ on this set.

Now fix $\epsilon>0$ and choose $\delta\in (0,\epsilon)$ such that $C(1-r)^{-2}\delta<\epsilon$, where $C$ is as in (\ref{A:0}). Then we have
	\begin{align*}
	\int_{B_{r}}|Dw_{y,\rho_j}|^2 & =  \int_{B_{r}} \sum_{\alpha=1}^Q\one_{\{|w^\alpha_{y,\rho_j}|<\delta\}}|Dw^\alpha_{y,\rho_j}|^2 + 
	\int_{B_{r}} \sum^Q_{\alpha=1}\one_{\{|w^\alpha_{y,\rho_j}|\geq\delta\}}|Dw^\alpha_{y,\rho_j}|^2 \\
	& \leq C(1-r)^{-2}\delta + \int_{B_{r}\cap \{|w_*|\geq 3\delta/4\}}|Dw_{y,\rho_j}|^2\\
	& < \epsilon + \epsilon + \int_{B_{r}\cap \{|w_*|\geq3\delta/4\}}|Dw_*|^2\\
	& \leq 2\epsilon + \int_{B_{r}}|Dw_*|^2\; ,
	\end{align*}
	where the second and third inequalities holds for all $j$ sufficiently large. Thus taking $\epsilon\downarrow 0$ we get
	$$\limsup_{j\to\infty}\int_{B_{r}}|Dw_{y,\rho_j}|^2 \leq \int_{B_{r}}|Dw_*|^2$$
	as desired; thus $w_{y,\rho_j}\to w_*$ strongly in $W^{1,2}(B_{r})$ for every $r \in (0, 1)$.
	
	In particular, we are now able to take $\rho = \rho_j$ in the squash inequality and squeeze identity for $w_{y,\rho}$ and take $j\to\infty$ to see that
	$$\int_{B_1(0)}|Dw_*|^2\zeta\leq -\int_{B_1(0)}\sum^Q_{\alpha=1}w^\alpha_*Dw^\alpha_*\cdot D\zeta$$
	$$\int_{B_1(0)}\sum^Q_{\alpha=1}\sum^n_{i,j=1}\left(|D^\alpha w_*|^2 \delta_{ij} - 2D_i w^\alpha_* D_j w^\alpha_*\right) D_i\zeta^j = 0$$
	for each $\zeta, \zeta^{\ell} \in C^1_c(B_1(0))$, i.e., the squash and squeeze identities hold for $w_*$. In the same way as in Theorem \ref{frequency}, we can define a frequency function for $w_*$. Moreover from the strong $W^{1,2}_{\text{loc}}$ convergence, we can now show that: $N_{w_*; 0}(\rho) = N_w(y)\geq 1+\alpha$ for every $\rho \in (0, 1)$ and thus $w_*$ is homogeneous of degree $N_{w_*}(0)$, and extends to $\R^{n}$ as homogeneous degree $N_{w_*}(0)$ function for which the above squash and squeeze identities hold for all $\zeta, \zeta^{\ell} \in C^{1}_{c}(\R^{n})$. Then the homogeneity of $w_*$ and frequency monotonicity give in the usual way that $N_{w^*}(y) \leq N_{w^*}(0)$ for all $y\in \R^n$ and that the \textit{spine} $S_{w_*}:= \{x_0\in \R^n: N_{w_*}(0) = N_{w_*}(x_0)\}$ is a linear subspace, along which $w_*$ is translation invariant. Note that the frequency is upper semi-continuous with respect to both the function and spatial variables when the convergence is strong in $W^{1,2}.$

	Clearly we cannot have $\dim(S_{w_*}) =n$; if this were the case $w_*$ would be a constant, and hence zero, but we have shown that $w_*$ is non-zero. Moreover we cannot have $\dim(S_{w_*}) = n-1$; for if this were the case then we can find an $(n-1)$-dimensional subspace $L$ along which $w_*$ is translation invariant, and so $w_*$ is determined by a function on $\R$ with values in $\R^{Q}$ which is harmonic on $\R\setminus\{0\}$ and homogeneous of degree $N_{w_*}(0) \geq 1+\alpha$, but no such function exists. So we have that 
$\dim(S_{w_*})\leq n-2$. 
	
	Now the above analysis was completed at an arbitrary point of $\CF$. To prove that $\dim_\H(\CF)\leq n-2$, we shall follow the dimension reduction argument established in \cite{almgrenalmgren} (and revisited in \cite{simon1996theorems}). Indeed, setting $\eta:\R^n\to \R^n$ to be $\eta_{y,\rho}(x):= \rho^{-1}(x-y)$, we claim that for each $y\in \CF$ and $\delta>0$, there is an $\epsilon = \epsilon(v,y,\delta)$ such that for $\rho\in (0,\epsilon]$:
	\begin{equation}\label{E:neighbourhood}
	\eta_{y,\rho}\left(\{x\in B_\rho(y): N_{w}(x)\geq N_{w}(y)-\epsilon\}\right)\subset \text{the $\delta$-neighbourhood of }L_{y,\rho}
	\end{equation}
	for some $(n-2)$-dimensional subspace $L_{y,\rho}$ of $\R^n$. Indeed, if this were false, we could find $\delta>0$ and $y\in \CF$ where it fails, i.e., there are sequences $0<\rho_k<\epsilon_k\downarrow 0$ such that
	$$\{x\in B_1(0): N_{w_{y,\rho_k}}(x) \geq N_w (y) - \epsilon_k\}\not\subset \text{the $\delta$-neighbourhood of }L$$
	for every $(n-2)$-dimensional subspace $L$ of $\R^n$. But we know that (up to a subsequence) $w_{y,\rho_k}\to w_*$ for some $w_*$ as above, with $N_{w_*}(0) = N_w(y)$. In particular we know $\dim(S_{w_*})\leq n-2$, and so as $S_{w_*}$ is the set of points where the frequency of $w_*$ takes the maximal value $\Theta_{w_*}(0)$, we know that there is a $(n-2)$-dimensional subspace $L_0\supset S(w_*)$ and $\alpha>0$ such that
	$$N_{w_*}(x)<N_{w_*}(0) -\alpha\ \ \ \ \text{for all }x\in \overline{B}_1(0)\text{ with }\dist(x,L_0)\geq \delta.$$
	Then we must have by upper semi-continuity of the frequency, for all $k$ sufficiently large,
	$$\{x\in B_1(0): N_{w_y,\rho_k}(x)\geq N_w(0) - \alpha\}\subset \{x:\dist(x,L_0)<\delta\}$$
	which is a contradiction to the original assumption, and so we have established (\ref{E:neighbourhood}).
	
	Now fix $\delta>0$. Define $\CF_i$, $i\in \{1,2,\dotsc\}$, to be the set of points $y\in \CF$ for which (\ref{E:neighbourhood}) holds with $\epsilon = i^{-1}$. Then by (\ref{E:neighbourhood}) we know that $\CF = \cup_{i=1}^\infty \CF_i$. Next for each $q\in \{1,2,\dotsc\}$ set
	$$\CF_{i,q}:= \left\{y\in \CF_i: N_{w}(y) \in \left(\frac{q-1}{i},\frac{q}{i}\right]\right\}$$
	and note that clearly $\CF = \cup_{i,q} \CF_{i,q}$. For any $y\in \CF_{i,q}$ we trivially have by definition that
	$$\CF_{i,q}\subset \{x: N_{w}(x) > N_{w}(y) - i^{-1}\}$$
	and thus by (\ref{E:neighbourhood}), for each $\rho\leq i^{-1}$,
	$$\eta_{y,\rho}(\CF_{i,q}\cap B_\rho(y))\subset \text{the $\delta$-neighbourhood of }L_{y,\rho}$$
	for some $(n-2)$-dimensional subspace $L_{y,\rho}$ of $\R^{n}$.  From this \lq\lq$\delta$-approximation property'', the proof can then be concluded by applying \cite[Section 3.4, Lemma 3]{simon1996theorems}.
\end{proof}
	
\bibliographystyle{alpha} 
\bibliography{references}

\end{document}